\newtheorem{theorem}{Theorem}
\newtheorem{corollary}[theorem]{Corollary}
\newtheorem{definition}{Definition}
\newtheorem{lemma}[theorem]{Lemma}
\newtheorem{proposition}[theorem]{Proposition}
\newtheorem{remark}{Remark}
\newcommand{\De}{D_\Gamma}
\numberwithin{equation}{section}
\numberwithin{theorem}{section}
\let\oldsqrt\sqrt
\def\sqrt{\mathpalette\DHLhksqrt}
\def\DHLhksqrt#1#2{%
\setbox0=\hbox{$#1\oldsqrt{#2\,}$}\dimen0=\ht0
\advance\dimen0-0.2\ht0
\setbox2=\hbox{\vrule height\ht0 depth -\dimen0}%
{\box0\lower0.4pt\box2}}
\newcommand{\bhs}[1]{\mathfrak B_{#1}}
\renewcommand{\tilde}{\widetilde}
\renewcommand{\bar}{\overline}
\renewcommand{\hat}[1]{\widehat{#1}}
\newcommand{\wt}[1]{\widetilde{#1}}
\newcommand{\rest}[1]{\big\rvert_{#1}} 
\newcommand\lra{\longrightarrow}
\newcommand\xlra[1]{\xrightarrow{\phantom{x} #1 \phantom{x}}}
\newcommand\pa{\partial}
\newcommand\eps\varepsilon
\renewcommand\epsilon\varepsilon
\newcommand\iie{\operatorname{iie}}
\newcommand\CI{{\mathcal{C}}^{\infty}}
\newcommand\CIc{{\mathcal{C}}^{\infty}_c}
\newcommand{\lrpar}[1]{\left( #1 \right)}
\newcommand{\lrspar}[1]{\left[ #1 \right]}
\newcommand\ang[1]{\langle #1 \rangle}
\newcommand{\norm}[1]{\lVert #1 \rVert}
\newcommand{\abs}[1]{\left\lvert #1 \right\rvert}
\newcommand\BQ{\operatorname{BQ}}
\newcommand\Che{\operatorname{Che}}
\newcommand\Id{\operatorname{Id}}
\newcommand\Ind{\operatorname{Ind}}
\newcommand\pt{\operatorname{pt}}
\newcommand\sign{\operatorname{sign}}
\newcommand\supp{\operatorname{supp}}
\newcommand\Mand{\text{ and }}
\newcommand\Min{\text{ in }}
\newcommand\Mon{\text{ on }}
\newcommand\Mwhere{\text{ where }}
\newcommand\paperintro%
\newcommand\paperbody%
\newcommand\bbB{\mathbb{B}}
\newcommand\bbC{\mathbb{C}}
\newcommand\bbK{\mathbb{K}}
\newcommand\bbN{\mathbb{N}}
\newcommand\bbQ{\mathbb{Q}}
\newcommand\bbR{\mathbb{R}}
\newcommand\bbS{\mathbb{S}}
\newcommand\bbZ{\mathbb{Z}}
\newcommand\cC{\mathcal{C}}
\newcommand\cD{\mathcal{D}}
\newcommand\cE{\mathcal{E}}
\newcommand\cF{\mathcal{F}}
\newcommand\cG{\mathcal{G}}
\newcommand\cL{\mathcal{L}}
\newcommand\cN{\mathcal{N}}
\newcommand\cS{\mathcal{S}}
\newcommand\cT{\mathcal{T}}
\newcommand\cU{\mathcal{U}}
\newcommand\cW{\mathcal{W}}
\newcommand\sB{\mathscr{B}}
\newcommand\sC{\mathscr{C}}
\newcommand\sD{\mathscr{D}}
\newcommand\sF{\mathscr{F}}
\newcommand\sG{\mathscr{G}}
\newcommand\sM{\mathscr{M}}
\newcommand\sW{\mathscr{W}}
\newcommand\sX{\mathscr{X}}
\newcommand\sY{\mathscr{Y}}
\newcommand\sZ{\mathscr{Z}}
\DeclareMathAlphabet{\mathpzc}{OT1}{pzc}{m}{it}
\newcommand{\sing}[1]{\widehat{  #1 }} 
\newcommand{\sm}[1]{#1} 
\newcommand{\res}[1]{\widetilde{ #1 }} 
\newcommand{\cov}[1]{{ #1 }_{\Gamma}} 
\newcommand{\transv}{\mathrel{\text{\tpitchfork}}}
\newcommand{\tpitchfork}{%
  \vbox{
    \baselineskip\z@skip
    \lineskip-.52ex
    \lineskiplimit\maxdimen
    \m@th
    \ialign{##\crcr\hidewidth\smash{$-$}\hidewidth\crcr$\pitchfork$\crcr}
  }%
}
\definecolor{darkgreen}{cmyk}{1,0,1,.2}
\definecolor{m}{rgb}{1,0.1,1}
\definecolor{b}{rgb}{0,0.1,1}
\newcommand{\striad}[1]{( {#1}; \pa_0 {#1}, \pa_1 {#1})} 
\newcommand{\triad}[1]{(\sing{#1}; \pa_0 \sing{#1}, \pa_1 \sing{#1})} 
\newcommand{\tK}{\operatorname{K}}
\newcommand{\tL}{\operatorname{L}}
\newcommand{\tN}{\operatorname{N}}
\newcommand{\tS}{\operatorname{S}}
\newcommand{\id}{\operatorname{id}}
\newcommand{\op}{\operatorname{op}}
\begin{document}
\pagestyle{myheadings}
\markboth{Pierre Albin and Paolo Piazza}{Stratified surgery}


\title{Stratified surgery and  K-theory invariants of the signature operator}

\author{Pierre Albin and Paolo Piazza}
\maketitle

\begin{abstract}

In work of Higson-Roe the fundamental role of the signature as a homotopy and bordism invariant for oriented manifolds is made manifest in how it and related secondary invariants define a natural transformation between the (Browder-Novikov-Sullivan-Wall) surgery exact sequence and a long exact sequence of $C^*$-algebra K-theory groups.

In recent years the (higher) signature invariants have been extended from closed oriented manifolds to a class of stratified spaces known as L-spaces or Cheeger spaces. In this paper we show that secondary invariants, such as the $\rho$-class, also extend from closed manifolds to Cheeger spaces. We revisit a surgery exact sequence for stratified spaces originally introduced by Browder-Quinn and obtain a natural transformation analogous to that of Higson-Roe. We also discuss geometric applications.

\end{abstract}

\section{Introduction}

The discovery by Milnor of smooth manifolds that are homemorphic to $\bbS^7$ but not diffeomorphic to it,  a milestone of modern mathematics, gave rise to the development of methods for classifying smooth manifolds within a given homotopy class. (The undecidability of the word problem makes an unrestricted classification impossible.) A convenient formulation of these methods is the surgery exact sequence of Browder, Novikov, Sullivan, and Wall which, roughly speaking, relates the `structure set' $\tS(X)$ of homotopy equivalences to a given smooth oriented manifold $X$ (with a bordism equivalence relation) with the set $\tN(X)$ of degree one maps preserving normal bundle information, known as  `normal invariants', (also with a bordism equivalence relation) and an algebraically defined $L$-group depending only on $\Gamma=\pi_1X,$ the fundamental group of $X,$
\begin{equation}\label{surgery-seq}
	\xymatrix{
	\ldots \ar[r]&
	\tL_{m+1}(\bbZ\Gamma) \ar@{..>}[r] &
	\tS(X) \ar[r]&
	\tN(X) \ar[r]&
	\tL_m(\bbZ\Gamma)}.
\end{equation}
(See below and, e.g., 
\cite{Wall, ranicki-ags, lueck-survey,Crowley-Luck-Macko} for more on the surgery exact sequence.)

In a series of papers Higson and Roe \cite{higson-roeI, higson-roeII,higson-roeIII}
established the remarkable result that there are natural maps out of the surgery  sequence \eqref{surgery-seq}, into a long exact sequence of K-theory groups of certain $C^*$-algebras  and that these maps make the resulting
diagram commute.
 The $C^*$-algebras in question are $C^*(X_{\Gamma})^{\Gamma}$ and $D^*(X_{\Gamma})^{\Gamma}$, obtained as the closures of the $\Gamma$-equivariant operators on the universal cover $X_{\Gamma}$ of $X$ 
 that satisfy a  finite propagation property and, in addition, are respectively  `locally compact' or  `pseudolocal'.
 The former $C^*$-algebra is an ideal in the latter so we have a short exact sequence
\begin{equation*}
	0\rightarrow  C^*(X_\Gamma)^\Gamma\rightarrow 
	D^*(X_\Gamma)^\Gamma\rightarrow 
	D^*(X_\Gamma)^\Gamma/C^*(X_\Gamma)^\Gamma\rightarrow 0
\end{equation*}
which gives rise to a long exact sequence in K-theory known as the {\em analytic surgery sequence} of Higson and Roe. Making use of the canonical isomorphisms
\begin{equation*}
	K_{*+1}  (D^*(X_\Gamma)^\Gamma/C^*(X_\Gamma)^\Gamma)= K_{*} (X)
	\quad\text{and}\quad  
	K_{*}  (C^*(X_\Gamma)^\Gamma)=K_* (C^*_r \Gamma)
\end{equation*}
with the $K$-homology of $X$ and the K-theory of the reduced $C^*$-algebra of $\Gamma,$ the long exact sequence reads
\begin{equation}\label{eq:HR}
	\cdots\rightarrow K_{m+1}  (C^*_r\Gamma)
	\rightarrow K_{m+1}  (D^*(X_\Gamma)^\Gamma)
	\rightarrow  K_{m}(X)
	\rightarrow K_m (C^*_r\Gamma)
	\rightarrow \cdots
\end{equation}
The result of Higson and Roe is thus a commutative diagram of long exact sequences
\begin{equation}\label{HRdiagram-universal}
	\xymatrix{
	\tL_{m+1}(\bbZ\Gamma) \ar@{..>}[r]\ar[d]^-{\gamma} &
	\tS(X) \ar[r]\ar[d]^-{\rho} &
	\tN(X) \ar[r]\ar[d]^-{\beta} &
	\tL_m(\bbZ\Gamma) \ar[d]^-{\gamma} \\
	K_{m+1}  (C^*_r\Gamma)[\tfrac12] \ar[r] &
	K_{m+1}  (D^*(X_\Gamma)^\Gamma)[\tfrac12] \ar[r] &
	K_{m}(X)[\tfrac12] \ar[r] &
	K_m (C^*_r\Gamma)[\tfrac12] }
\end{equation}
where we use the short-hand $A[\tfrac12]$ to indicate $A \otimes_{\bbZ} \bbZ[\tfrac12]$ whenever $A$ is an Abelian group. These maps were  recast by the second author and Schick \cite{PS-akt} in a more index-theoretic light, using
in a crucial way properties of the signature operator on Galois $\Gamma$-coverings. 
Geometric applications of the interplay between the two sequences have been given, for example, in 
\cite{Chang-Weinberger}, \cite{Wahl_higher_rho}, \cite{Zenobi:Mapping}, \cite{WY-finite}, \cite{Weinberger-Xie-Yu}.

In this paper we generalize all of this to the setting of stratified spaces.

\begin{theorem}
Every $m$-dimensional, oriented, smoothly stratified Cheeger space, $\sing X,$ with fundamental group $\Gamma$, gives rise to a commutative diagram
\begin{equation}\label{eq:IntThm}
	\xymatrix{
	\tL_{\BQ}(\sing X \times I) \ar@{..>}[r] \ar[d]^-{\Ind_{{\rm APS}}} & 
	\tS_{\BQ}(\sing X) \ar[r]  \ar[d]^-{\rho} &
	\tN_{\BQ}(\sing X) \ar[r] \ar[d]^-{\beta} &
	\tL_{\BQ}(\sing X) \ar[d]^-{\Ind_{{\rm APS}}}\\
	K_{m+1}(C^*_r\Gamma)[\tfrac12] \ar[r] &
	K_{m+1}(D^* (\cov{\sing X})^\Gamma)[\tfrac12] \ar[r] &
	K_{m}(\sing X)[\tfrac12] \ar[r] &
	K_{m}(C^*_r\Gamma)[\tfrac12] }
\end{equation}
between the Browder-Quinn surgery exact sequence and the Higson-Roe analytic surgery sequence.
\end{theorem}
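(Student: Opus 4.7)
The plan is to follow the Higson-Roe program in its index-theoretic reformulation by Piazza and Schick, with the signature operator of a closed oriented manifold replaced by the signature operator $\eth_{\sign}$ of a Cheeger space associated to a self-dual mezzoperversity. All three vertical maps will be built from $\eth_{\sign}$: the map $\beta$ from its analytic K-homology class, the map $\rho$ from its class in $K_{m+1}(D^*(\cov{\sing X})^\Gamma)$, and $\Ind_{\mathrm{APS}}$ from its Atiyah-Patodi-Singer index in $K_m(C^*_r\Gamma)$ on a stratified bordism with product structure near the boundary.

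First I would define each of the three maps on representatives. For a Browder-Quinn normal invariant $f\colon\sing Y\to\sing X$, set $\beta(f)=f_*[\eth_{\sing Y}]-[\eth_{\sing X}]$ in $K_m(\sing X)[\tfrac12]$; after inverting $2$ the signature K-homology class becomes independent of the choice of self-dual mezzoperversity, by the Cheeger-cobordism invariance established in the authors' earlier work. For a Browder-Quinn structure represented by a stratified homotopy equivalence $f\colon\sing Y\to\sing X$, lift to universal covers and define $\rho(f)=\tilde f_*\rho(\eth_{\cov{\sing Y}})-\rho(\eth_{\cov{\sing X}})$; this requires the $\Gamma$-equivariant signature operator on the universal cover of a Cheeger space to be $L^2$-invertible modulo $\Gamma$-compacts, which follows from the stratified Hodge theory underlying the mezzoperversity formalism. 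Finally, a Browder-Quinn L-cycle is a Cheeger bordism $\sing W$ with boundary matching a fixed reference, and $\Ind_{\mathrm{APS}}(\sing W)$ is the APS index of $\eth_{\cov{\sing W}}$ in $K_m(C^*_r\Gamma)$.

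Independence of representatives reduces, in each case, to invariance of the corresponding class under Browder-Quinn transverse isovariant bordisms. The crucial geometric input is that any self-dual mezzoperversity on the boundary of such a bordism extends across the interior, and any two extensions are themselves Cheeger-cobordant; the standard cylinder argument then yields invariance of both the K-homology and APS index classes. Invariance of $\rho$ is the delocalized counterpart: for a Browder-Quinn homotopy equivalence of Cheeger bordisms with invertible boundary signature operators, the difference of rho classes at the two ends vanishes in $K_{m+1}(D^*(\cov{\sing X})^\Gamma)[\tfrac12]$.

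Commutativity of the four squares is then a separate delocalized index formula for each. The outer two squares express compatibility of $\Ind_{\mathrm{APS}}$ with the forgetful map to K-homology and with the map from L-theory to normal invariants, and both reduce to a Baum-Douglas-style geometric description of the signature K-class on a Cheeger bordism. The square to the right of $\rho$ is verified by comparing a structure $f$ with its associated normal invariant via a stratified mapping cylinder, in the spirit of Piazza-Schick. The square to the left of $\rho$ is the heart of the diagram and the principal expected obstacle: one must show that the image in $K_{m+1}(D^*(\cov{\sing X})^\Gamma)$ of the APS index of an L-cycle equals the difference of the rho classes at its two ends. Establishing this delocalized APS index theorem on Cheeger bordisms is the main new analytic ingredient required, and should follow by pushing the $b$-calculus and adiabatic-limit arguments of the smooth Higson-Roe-Piazza-Schick setting through the iterated-edge pseudodifferential calculus adapted to mezzoperversity boundary conditions.
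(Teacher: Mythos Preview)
Your proposal has a genuine gap in the definition of the $\rho$-map. You propose to set $\rho(f)=\tilde f_*\rho(\eth_{\cov{\sing Y}})-\rho(\eth_{\cov{\sing X}})$, justifying this by the claim that the $\Gamma$-equivariant signature operator on the universal cover is ``$L^2$-invertible modulo $\Gamma$-compacts''. But invertibility modulo $C^*(\cov{\sing X})^\Gamma$ only gives you a class in $K_*(D^*/C^*)$, i.e., the K-homology class; it does \emph{not} produce a class in $K_*(D^*(\cov{\sing X})^\Gamma)$. A rho class requires an actual involution in $D^*$, which in turn requires genuine invertibility of the operator (so that $\mathrm{sign}(D)$ makes sense, not merely a chopping function applied to $D$). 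The signature operator on a Cheeger space is not invertible in general --- its kernel is the $L^2$-cohomology --- so the individual terms $\rho(\eth_{\cov{\sing Y}})$, $\rho(\eth_{\cov{\sing X}})$ in your formula are undefined.

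The paper resolves this by a mechanism entirely absent from your outline: the \emph{Hilsum--Skandalis perturbation}. Given a transverse stratified homotopy equivalence $f\colon\sing M\to\sing X$, one constructs a bounded perturbation $C_f\in C^*(\cov{\sing Z})^\Gamma$ (where $\sing Z=\sing X\sqcup(-\sing M)$) such that the perturbed signature operator $D_\Gamma^{\cW\sqcup f^\sharp\cW}+C_f$ is genuinely invertible. The rho class of the structure is then defined as the rho class of this perturbed operator, pushed forward to $\sing X$. This perturbation is equally essential for the $\Ind_{\mathrm{APS}}$ map: the APS index class on a bordism requires an invertible boundary operator, and it is again the Hilsum--Skandalis perturbation (built from the boundary homotopy equivalence) that furnishes it. Without this ingredient, neither $\rho$ nor $\Ind_{\mathrm{APS}}$ is well-defined on the level of representatives, and the whole argument cannot begin. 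Your identification of the delocalized APS index theorem as the analytic core of the left-hand square is correct, but that theorem must be applied to these perturbed operators, and the proof that the resulting classes are independent of the choice of mezzoperversity (Banagl's bordism argument, adapted) is itself a substantial part of the work.
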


We prove a better version of this diagram in \S\ref{sect:all-strata} involving the signature operator on all of the strata of $\sing X,$ but refer the reader to the text so as not to introduce more notation.\\

The top row of \eqref{eq:IntThm} is the surgery sequence for stratified spaces of Browder-Quinn \cite{BQ}. One of our contributions in this paper is a detailed treatment in \S\ref{sect:BQ} of the Browder-Quinn surgery exact sequence in the setting of smoothly stratified spaces (a.k.a. Thom-Mather stratified spaces). The original treatment in \cite{BQ} is quite sparse and its generalization in \cite{W-book} uses algebraic tools applicable in its setting of homotopically stratified spaces while for our purposes it is necessary to have geometric proofs that stay within the category of smooth stratifications. Our treatment naturally draws heavily from these two sources.

A feature of the Browder-Quinn surgery sequence is that if $\sing X$ is a Witt space \cite{Siegel:Witt} or a Cheeger space \cite{banagl-annals, ABLMP, ALMP:Hodge} then all of the spaces that arise in the surgery sequence are also Witt spaces, respectively Cheeger spaces. This allows us to bring to bear the analysis that we have developed in joint work with Eric Leichtnam and Rafe Mazzeo \cite{ALMP:Witt, ALMP:Hodge, ALMP:Novikov} to define the vertical maps in \eqref{eq:IntThm} and to show that the diagram commutes. Notice that while the vertical maps are defined in analogy with \cite{PS-akt}, there are
substantial technical differences, especially in the Cheeger case, where ideal boundary conditions must be chosen.

In detail, the vertical maps out of the Browder-Quinn L-groups are Atiyah-Patodi-Singer index classes;
the map out of the normal invariants $\tN_{\BQ}(\sing X)$ is given in terms of the fundamental class, in K-homology, associated
to the signature operator on a Cheeger space; finally the rho-map is a true secondary invariant associated to 
a suitable perturbation of the signature operator. In the smooth case 
this rho-map is directly connected with well-known numeric rho invariants; we comment on the validity 
of this principle in the singular case at the end of the paper.
As already remarked, all of these constructions 
 depend upon the definition of ideal boundary conditions; these depend, in turn, on the choice of a {\it mezzoperversity}
 and a major theme in this article is the detailed analysis of the dependence of these classes on the choice of a mezzoperversity
 and the proof of the remarkable fact that our maps are in fact all independent of the choice of a mezzoperversity.
 
 We end this introduction with a remark on the spin-Dirac operator. One of the advantages of the 
approach to the Higson-Roe theorem given in \cite{PS-akt} is that it applies also to the spin-Dirac operator,
giving in particular the mapping of the Stolz surgery sequence for positive scalar metrics to the 
analytic surgery sequence of Higson-Roe, see \cite{PS-Stolz}. In a future publication, building also on \cite{a-gr-spin, Piazza-Vertman, a-gr-dirac},
we plan to discuss the extension of the results in \cite{PS-Stolz} to the stratified setting. \\

 The paper is organized as follows. In Section \ref{sect:BQ} we give a rigorous and detailed treatment
 of the relevant results stated in the paper by Browder and Quinn. In Section \ref{sect:classes}
 we specialize to Cheeger spaces and give a coarse theoretic treatment of some of the
 results in  \cite{ALMP:Witt, ALMP:Hodge, ALMP:Novikov}; in particular we define the fundamental K-homology class 
 of a Cheeger space without boundary and  the associated index class. Finally, in the invertible case, 
 we introduce the rho class of an invertible perturbation of the signature operator.
In Section \ref{sect:bordism} we pass to manifolds with boundary, with a particular emphasis on the notion
of Cheeger space bordism. It is in this section that we explain the statement of the delocalized Atiyah-Patodi-Singer index
theorem on Cheeger spaces, a key tool in our analysis, and we illustrate its proof, building on \cite{PS-Stolz, PS-akt}.
In Section \ref{sect:HS}
we recall and expand results around the Hilsum-Skandalis perturbation for the signature operator
on the disjoint union of two Cheeger spaces that are stratified homotopy equivalent. In Section \ref{sect:mapping}
 we finally define the vertical maps in the diagram that maps the Browder-Quinn
surgery sequence to the Higson-Roe surgery sequence; we prove the well-definedness of these maps and that
they are independent of the choice of a mezzoperversity. We then prove the commutativity
of the squares of the diagram. We end this section by observing that it is in fact possible to consider
different diagrams, each one  associated to an individual closed stratum. 
Section \ref{sect:applications}, the last section of the paper, presents some geometric applications
of our main result, in the spirit of \cite{Chang-Weinberger}.

\bigskip
\noindent
{\bf Acknowledgements.}
The first author was partially supported by  Simons Foundation grant \#317883 and NSF grant DMS-1711325, and is grateful to Sapienza Universit\`a di Roma, to Universit\'e  Paris 7 (\'Equipe Alg\`ebres d'Op\'erateurs),  and to M.I.T. for their hospitality during the completion of this work.
Financial support for the visits to Sapienza was provided by  {\it Istituto Nazionale di Alta Matematica (INDAM)}.\\
The second author thanks Universit\'e Paris 7  (\'Equipe Alg\`ebres d'Op\'erateurs)  
for hospitality and financial support during the completion of this
work; additional funding for these visits was provided by {\it Istituto Nazionale di Alta Matematica} and {\it Ministero dell'Istruzione,
dell' Universit\`a e della Ricerca}, {\it MIUR}, under the project
{\it Spazi di Moduli e Teoria di Lie}.\\
We are grateful to Markus Banagl, Jim Davis, Wolfgang L\"uck, Thomas Schick, Georges Skandalis, Shmuel  Weinberger and Vito Felice Zenobi
for their interest in this work and for interesting and useful discussions.\\

\tableofcontents

\subsection*{Notation}

Below we will occasionally use diagrams such as
\begin{equation*}
	(\sing M; \pa_1\sing M, \pa_2\sing M) \xlra f 
	(\sing Y; \pa_1\sing Y, \pa_2 \sing Y).
\end{equation*}
This should be understood to imply that 
$\pa \sing M = \pa_1 \sing M \cup \pa_2\sing M,$ 
$\pa \sing Y = \pa_1 \sing Y \cup \pa_2\sing Y,$ also 
that $\pa_1 \sing M \cap \pa_2 \sing M = \pa(\pa_i \sing M)$ (and similarly for $\sing Y$),
and that $f$ restricts to maps
\begin{equation*}
	f|: \pa_1 \sing M \lra \pa_1 \sing Y, \quad
	f|: \pa_2 \sing M \lra \pa_2 \sing Y,
\end{equation*}
which we sometimes denote $\pa_1f,$ $\pa_2f,$ respectively.
Occasionally it will be useful to decompose the boundary of a space into more than two pieces, in which case similar conventions are in effect.

Our main object of study will be smoothly stratified spaces. As reviewed below, this will mean Thom-Mather straified pseudomanifolds. A bordism between two stratified spaces will be a stratified space with boundary and a bordism between two stratified spaces with boundary will be a stratified space with corners (as is well known, e.g.,  \cite[\S8.3]{Wall:DiffTop}, this is only useful if restrictions are placed on part of the boundary). For a careful discussion of these concepts we refer the reader to \cite{Verona} (see also \cite[\S6]{Albin:Hodge}). While we do not use the language of `n-ads' as in \cite{Wall}, it is clear that the constructions below extend to `n-ads of stratifed spaces'.

Note that the boundary of a manifold with corners is not itself a manifold with corners, but rather a union of manifolds with corners with various identifications of boundary faces. Following Melrose (see, e.g., \cite{MelroseSlides}), an `articulated manifold (without boundary)' is a finite union of connected components of the boundary of a compact manifold with corners (thus guaranteeing that the identifications of boundary faces are consistent).
More generally, an `articulated manifold with corners' is a finite union of boundary hypersurfaces of the boundary of a compact manifold with corners (not necessarily making up full connected components of the boundary).

Similarly an `articulated stratified space (without boundary)' refers to a finite union of connected components of the boundary of a compact stratified space with corners, and an {`articulated stratified space with corners' is a finite union of boundary hypersurfaces of a stratified space with corners. 
 
Working with `articulated stratified spaces' is analogous to working with n-ads in the category of stratified spaces; for instance, if $(\sing M; \pa_1\sing M, \pa_2\sing M)$ is as above and $\sing M$ is a stratified space with corners then $\pa_i\sing M$ are articulated stratified spaces with corners.
An alternate approach, see, e.g., \cite[\S2.6]{Wall:DiffTop}, is to `round the corners' and work with spaces with boundary.

\section{Browder-Quinn Surgery}\label{sect:BQ}

We will make use of the surgery theory for stratified spaces of Browder and Quinn \cite{BQ}.
Some of the results we need for the purpose of defining maps into K-theory are implicit in their exposition so we have decided to include a more explicit description of this surgery theory.
In carrying this out we have benefitted from Weinberger's exposition \cite[\S 7.1]{W-book} where some of the proofs below are sketched (e.g., the $\Pi$-$\Pi$ theorem, Theorem \ref{thm:pipi}), as well as from
\cite{Wall, Ranicki, Davis-Petrosyan, Crowley-Luck-Macko, Anderson, Quinn} and \cite{Dovermann-Rothenberg} (from which we adapted the proof of the Wall realization theorem, Theorem \ref{thm:WallReal}). We are also happy to acknowledge useful conversations and email exchanges with Shmuel Weinberger, Markus Banagl, Wolfgang L\"uck, and Jim Davis.

\subsection{Browder-Quinn stratified spaces and transverse maps}\label{sec:BQStrat}$\,$\\
Although there are many notions of stratified spaces, perhaps the most common is that of a Whitney stratified space. If $L$ is a smooth manifold then a Whitney stratification of a subset $\sing X \subseteq L$ is a locally finite collection of pairwise disjoint smooth submanifolds covering $\sing X,$ known as strata, satisfying the `frontier condition'
\begin{equation*}
	Y \cap \bar{Y'} \neq \emptyset \implies Y \subseteq \bar{Y'}
\end{equation*}
and `Whitney's condition (B)' concerning the relations of the tangent spaces of the strata. For example, Whitney showed \cite{Whitney:Tangents} that algebraic varieties admit Whitney stratifications.
It was subsequently shown by Thom and Mather \cite{Mather} that in a Whitney stratified space neighborhoods of the strata have geometric structure and this was abstracted in the notion of Thom-Mather stratified space.

A further abstraction was given by Browder and Quinn \cite{BQ} (cf. \cite{Hughes-Weinberger, Dovermann-Schultz, W-book}).
They fix a category $\sF$ of `manifolds with fibrations' such as smooth manifolds and locally trivial smooth fiber bundles, PL manifolds and block bundles with manifold fibers, topological manifolds and locally trivial topological fiber bundles, or Poincar\'e spaces and maps whose homotopy fiber satisfies Poincar\'e duality. Although Browder-Quinn do not specify what properties are necessary in the category $\sF,$ an important property is that there be a notion of pull-back in the category $\sF.$

An $\sF$-stratified space is a topological space $\sing X$ filtered by subsets $X_a$ indexed by a partially ordered set $A$ satisfying the following.
If for each $a\in A$ we let 
\begin{equation*}
	X_{\pa a} = \bigcup \{ X_b : b \in A, b <a \}
\end{equation*}
then each $X_a$ is equipped with a closed neighborhood $N_a =N(X_a)$ of $X_{\pa a}$ in $X_a$ and a projection $\nu_a: \pa N_a \lra X_{\pa a}$ such that\\
i) $X_a \setminus X_{\pa a}$ and $\pa N_a$ are manifolds in $\sF,$\\
ii) $N_a$ is the mapping cylinder of $\nu_a$ (with $\pa N_a$ and $X_{\pa a}$ corresponding to the top and bottom of the cylinder),\\
iii) If $a, b \in A,$ $b<a,$ $W_b = X_b \setminus \mathrm{int}(N_b),$ then 
\begin{equation*}
	\nu_a\rest{} : \nu_a^{-1}(W_b) \lra W_b
\end{equation*}
is a fibration in $\sF.$\\

If $\sing X$ and $\sing M$ are two $\sF$-stratified spaces whose filtrations are indexed by the same partially ordered set $A,$ then a filtration-preserving map $f: \sing X \lra \sing M$ of $\sF$-stratified sets is said to be {\bf transverse} if each fibration in $\sing X$ is the pull-back along $f$ of the corresponding fibration in $\sing M.$

\begin{remark}
When $\cF$ is equal to the category of smooth manifolds and locally trivial smooth fiber bundles, Browder-Quinn $\cF$-stratified spaces are the same as Thom-Mather stratified spaces. One could show this by, for example, proceeding as in \cite{ALMP:Witt} and proving that any Browder-Quinn $\sF$-stratified space can be `resolved' to an $\sF$-space `with corners' and iterated fibration structures, and any such can be collapsed to a Browder-Quinn $\sF$-stratified space. Instead of developing this, we will work directly with Thom-Mather stratified spaces in establishing the Browder-Quinn surgery sequence below.
\end{remark}

\subsection{Smoothly stratified spaces and transverse maps} \label{sec:TransMaps}$\,$\\
From now on we will only work with $\sF$ equal to smooth manifolds and locally trivial smooth fiber bundles, i.e., the setting of Thom-Mather stratified spaces. For this class of spaces there is a construction going back to Thom \cite{Thom:Ensembles} and carried out in \cite{ALMP:Witt} that replaces a stratified space, $\sing X,$ with its `resolution', $\res X,$ a manifold with corners and an iterated fibration structure. We now recall this construction.\\

Let $\sing X$ be a stratified space with singular strata 
\begin{equation*}
	\cS(\sing X) = \{ \sm Y_1, \sm Y_2, \ldots, \sm Y_\ell \}.
\end{equation*}
Each $\sm Y_i$ is subset of $\sing X$ that inherits the structure of an open manifold (indeed, of the interior of a manifold with corners). We write $\sm Y_i < \sm Y_j$ if the closure of $Y_j$ in $\sing X$ contains $Y_i.$ The closure of $Y_j$ in $\sing X$ is given by
\begin{equation*}
	\sing Y_j = \bigcup \{ \sm Y_i : \sm Y_i \leq \sm Y_j \}
\end{equation*}
and is itself a stratified space. Every point in $\sm Y_i$ has a neighborhood in $\sing X$ homeomorphic to a ball in $\bbR^{\dim \sm Y_i}$ times the cone over a stratified space $\sing Z_i,$ known as the link of $\sm Y_i$ in $\sing X.$

The resolution of $\sing X,$ denoted $\res X,$ is a smooth manifold with corners. Each stratum $\sm Y_i$ of $\sing X$ corresponds to a collective boundary hypersurface $\bhs{Y_i}$ of $\sing X,$ by which we mean a collection of boundary hypersurfaces no two of which intersect. Each collective boundary hypersurface participates in a fiber bundle,
\begin{equation*}
	\res Z_i - \bhs{Y_i} \xlra{\phi_{Y_i}} \res Y_i,
\end{equation*}
where the base is the resolution of $\sing Y_i$ and the typical fiber is the resolution of $\sing Z_i.$ If $\sm Y_i$ and $\sm Y_j$ are strata of $\sing X$ with $\sm Y_i < \sm Y_j$ then $\bhs{Y_i} \cap \bhs{Y_j} \neq \emptyset$ and we have a commutative diagram of fiber bundle maps
\begin{equation*}
	\xymatrix 
	{ \bhs{Y_i} \cap \bhs{Y_j} \ar[rr]^-{\phi_{Y_j}} \ar[rd]_-{\phi_{Y_i}} 
		& & \bhs{Y_iY_j} \subseteq \bhs{Y_j} \ar[ld]^-{\phi_{Y_iY_j}} \\
	& \res Y_i & }
\end{equation*}
where $\bhs{Y_iY_j}$ is a collective boundary hypersurface of $\bhs{Y_j}.$
We refer to a manifold with corners together with these collective boundary hypersurface fiber bundle maps as a manifold with an iterated fibration structure.

There is a canonical `blow-down map' between a manifold with corners and an iterated fibration structure $\res X$ and a stratified space $\sing X,$
\begin{equation*}
	\beta: \res X \lra \sing X
\end{equation*}
which collapses the fibers of the boundary fiber bundles to their base. Note that $\beta$ is a diffeomorphism between the interior of $\res X$ and the regular part of $\sing X.$

A continuous map between stratified spaces is {\bf stratum preserving} if the inverse image of a stratum is a union of strata.
A stratum preserving map $\sing M \xlra{\sing F} \sing X$ is {\bf smooth} if it lifts to a smooth map $\res M \xlra{\res{F}} \res X.$ 
We denote the space of such maps by
\begin{equation*}
	\CI_{\Phi}(\res M, \res X) \subseteq \CI(\res M, \res X)
\end{equation*}
and the corresponding maps between $\sing M$ and $\sing X$ by $\CI(\sing M, \sing X).$ Note that we have a natural identification
\begin{equation*}
	\beta_*:\CI_{\Phi}(\res M, \res X) \lra \CI(\sing M, \sing X)
\end{equation*}

The smooth map $\res F$ necessarily induces a fiber bundle map between the collective boundary hypersurfaces of $\res M$ and those of $\res X,$
\begin{equation}\label{def:FibBdleMap}
	\xymatrix{
	\res M \ar@{}[r]|-*[@]{\supseteq}  & \bhs{N} \ar[r]^-{\res F} \ar[d]_-{\phi_N} 
		& \bhs{Y}  \ar[d]^-{\phi_Y}  \ar@{}[r]|-*[@]{\subseteq} &  \res X \\
	& \res N \ar[r]^-{\res{F|_{\sing N}}} & \res Y }
\end{equation}
We will say that $\sing M \xlra{\sing F} \sing X$ and $\res M \xlra{\res F} \res X$ are {\bf transverse} if the commutative diagram of fiber bundles \eqref{def:FibBdleMap} is a pull-back diagram. We denote the class of such maps by
$\CI_{\transv}(\res M, \res X)$ and $\CI_{\transv}(\sing M, \sing X);$ the identification $\beta_*$ restricts to an identification
\begin{equation*}
	\beta_*:\CI_{\transv}(\res M, \res X) \lra \CI_{\transv}(\sing M, \sing X).
\end{equation*}
Note that $\CI_{\transv}(\sing M, \sing X)$ are the transverse maps of Browder-Quinn. (Indeed, the fibrations $\nu_a$ of a Browder-Quinn stratified space correspond in the smooth category to the fiber bundle maps $\phi_Y.$)
These maps are also used in \cite[Part I, \S4]{Fulton-Mac} and \cite[\S5.4]{IH2} where they are called `normal non-singular'. A weaker notion called `homotopy transverse' was used by Weinberger \cite[\S 5.2]{W-book}.

An example from \cite{IH2} is the inclusion $H\cap \sing X \hookrightarrow \sing X$ when $\sing X$ is a stratified subset of a smooth manifold and $H$ is a smooth submanifold transverse to the strata of $\sing X.$
Another example from the same source is the fiber bundle projection map for a fiber bundle over a stratified space with fiber a smooth manifold.\\

\begin{remark}
Browder-Quinn considered these maps first in an equivariant situation. If $G$ is a compact Lie group and $L,$ $L'$ are spaces with $G$-actions then a map $f: L \lra L'$ is {\em isovariant} if, for any $x \in L,$ $g \in G,$ 
\begin{equation*}
	f(gx) = gf(x), \; \Mand \; gf(x)=f(x) \iff gx=x.
\end{equation*}
An isovariant map is {\em transverse linear} if whenever $H \subseteq G$ is a subgroup, and $L^{\{H\}}$ denotes the subset of $L$ consisting of points whose isotropy group is conjugate to $H,$ there are $G$-vector bundle tubular neighborhoods 
\begin{equation*}
	L^{\{H\}} \subseteq U, \quad 
	(L')^{\{H\}} \subseteq U'
\end{equation*}
such that $f$ restricts to a $G$-linear vector bundle map $U \lra U'.$
Transverse linear isovariant maps are examples of transverse maps for the stratification of a space into the orbit types of a group action.
\end{remark}

There are some properties of a stratified space $\sing X$ such that the existence of a transverse map $\sing f: \sing M \lra \sing X$ implies that $\sing M$ also has this property. For example, if the dimensions of all of the links of $\sing X$ are odd, then the dimensions of all of the links of $\sing M$ must be odd as well. A class of stratified spaces determined by such a property will be said to be {\bf preserved by transverse maps}. As examples of such classes let us mention, in order of increasing generality: IP spaces, Witt spaces, and Cheeger spaces or L-spaces. Recall that a stratified space is a Witt space if, whenever the link of a stratum is even dimensional, its middle degree middle perversity intersection homology vanishes. A Witt space is an IP space (intersection Poincar\'e space) if, whenever the link $\hat Z$ of a stratum is odd dimensional, its middle perversity homology in degree $\tfrac12(\dim \hat Z-1)$ is torsion-free. A stratified space is an L-space if there is a self-dual sheaf compatible with the intersection homology sheaves of upper and lower middle perversity \cite{banagl-annals}. Smoothly stratified L-spaces are known as Cheeger spaces. 

\subsection{Surgery definitions}$\,$\\
When carrying out surgery constructions we will need to use stratified spaces with corners. 
By a stratified space with corners we mean a stratified space with collared corners \cite[Definition 2]{Albin:Hodge} or an `abstract stratification with faces' in the sense of Verona \cite[\S 5]{Verona}.
An `articulated stratified space with corners' is a union of boundary hypersurfaces of a stratified space with corners; thus it is  a union of stratified spaces with corners together with identifications of certain of their boundary hypersurfaces.
Smoothness of maps to and from articulated spaces is defined in the natural way, i.e., continuity on the whole and smoothness on each stratified space with corners.

\begin{definition}
Let $\sing M$ and $\sing X$ be oriented stratified spaces.\\
i) A {\bf BQ-transverse map}  $f: \sing M \lra \sing X$ is a transverse map that is orientation preserving and restricts to a diffeomorphism between strata of dimension less than five.\\
ii) A {\bf BQ-normal map} $f: \sing M \lra \sing X$ is a BQ-transverse map such that, in the notation of \S\ref{sec:BQStrat}, for each $a \in A,$ $f$ restricts to a degree one normal map
\begin{equation*}
	f_a: M_a \setminus \mathrm{int}(N(M_a)) \lra X_a \setminus \mathrm{int}(N(X_a))
\end{equation*}
meaning that there is an smooth vector bundle $\tau \lra X_a\setminus\mathrm{int}(N(X_a))$ and a bundle isomorphism $b: \nu_{M_a \setminus \mathrm{int}(N(M_a))} \lra f_a^*\tau$ covering $f_a.$ (We will not explicitly keep track of the bundle data as it will not affect our analytic maps.)\\
iii) A {\bf BQ-equivalence} $f: \sing M \lra \sing X$ is a BQ-transverse map whose restriction to each stratum is a homotopy equivalence. 
(By Miller's criterion \cite{Miller} (see \cite[Corollary 1.11]{ALMP:Novikov}), $f$ is a BQ-equivalence if and only if there is a BQ-transverse map $g: \sing X \lra \sing M$ and homotopies of $f\circ g$ and $g \circ f$ to the respective identities through BQ-transverse maps.)
\end{definition}

One could also work with simple homotopy equivalences  and there is an s-cobordism theorem in this context \cite[pg. 34]{BQ}.

Given a stratified space, its dimension function will refer to the function on the poset of strata that assigns to each stratum its dimension.

\begin{definition}\label{def:SurgeryGps}
Let $\sing X$ be a smooth oriented stratified space, possibly with boundary.
(Our convention is that the spaces below are allowed to be empty and a map between empty sets counts as an equivalence.)\\
a) 
Let $\cL_{\BQ,d}(\sing X)$ denote the set of diagrams
\begin{equation}\label{eq:BQL}
	(\sing M; \pa \sing M) \xlra{\phi}
	(\sing Y; \pa \sing Y) \xlra{\omega}
	\sing X
\end{equation}
where $\sing M$ and $\sing Y$ are oriented stratified spaces with corners, $d$ is the dimension function of $\sing M$ (if omitted, $\sing M$ and $\sing X$ are assumed to have the same dimension function), $\phi$ is BQ-normal, $\pa\phi$ is a BQ-equivalence (between articulated stratified spaces with corners), and $\omega$ is BQ-transverse. We refer to these diagrams as (Browder-Quinn) {\bf $\cL$-cycles over $\sing X.$}\\
A {\bf null bordism of an $\cL$-cycle over $\sing X$} as above will mean a diagram
\begin{equation*}
	(\sing N; \pa_1 \sing N, \pa_2 \sing N) \xlra{\Phi}
	(\sing Z; \pa_1 \sing Z, \pa_2 \sing Z) \xlra{\Omega}
	\sing X \times I
\end{equation*}
between stratified spaces with corners, with the same dimension function as $\sing M \times I,$ where $\Phi$ is BQ-normal, $\Phi|:\pa_2\sing N \lra \pa_2\sing Z$ is a BQ-equivalence, $\Omega$ is BQ-transverse, and
\begin{equation*}
	\lrpar{
	(\pa_1\sing N, \pa_{12}\sing N) \xlra{\Phi|} 
	(\pa_1 \sing Z, \pa_{12}\sing Z) \xlra{\pi\circ\Omega|} 
	\sing X}
	=
	\lrpar{
	(\sing M; \pa \sing M) \xlra{\phi}
	(\sing Y; \pa \sing Y) \xlra{\omega}
	\sing X}
\end{equation*}
(with $\pi:\sing X \times I \lra \sing X$ the projection). In this case we say that the $\cL$-cycle is null bordant.\\
The $\cL$-cycles over $\sing X$ naturally form an Abelian monoid with addition induced by disjoint union and zero given by the diagram with $\sing M = \sing Y = \emptyset.$ 
We say that two $\cL$-cycles over $\sing X,$ $\alpha,$ $\beta$ are equivalent if $\alpha + \beta^{\op}$ is null bordant, where $\beta^{\op}$ denotes $\beta$ with orientations reversed.
This is an equivalence relation, known as $\cL$-bordism, and the set of equivalence classes, denoted $\tL_{\BQ,d}(\sing X),$ is known as the {\bf Browder-Quinn L-group of $\sing X.$}\\
b) The (Browder-Quinn) {\bf normal invariants}, denoted $\cN_{\BQ}(\sing X)$ is the subset of $\cL_{\BQ}(\sing X)$ in which, with notation as above, $\sing Y = \sing X$ and $\omega =\id.$ A {\bf normal bordism} is an $\cL$-bordism between normal invariants which, in the notation above, has $\sing Z = \sing X \times I$ and $\Omega = \id.$ The set of normal invariants modulo normal bordisms is denoted $\tN_{\BQ}(\sing X).$\\

If $\sing X$ has boundary
we denote by $\cN_{\BQ}(\sing X, \pa \sing X)$ the subset of $\cN_{\BQ}(\sing X)$ in which, with notation as above,  $\phi|_{\pa \sing M}$ is a (stratum preserving) diffeomorphism. A normal bordism, relative to $\pa \sing X,$ will be a normal bordism as above in which $\pa_2 \sing N = \sing K \times I$ and $\Phi|_{\pa_2\sing N} = \phi|_{\pa \sing M} \times \id.$ The corresponding set of normal invariants is denoted $\tN_{\BQ}(\sing X, \pa\sing X).$\\

c) The (Browder-Quinn) {\bf Thom-Mather structures}, denoted $\cS_{\BQ}(\sing X)$ is the subset of $\cN_{\BQ}(\sing X)$ in which, with notation as above, $\phi$ is a BQ-equivalence. Two such objects are equivalent if there is a normal bordism between them which, in the notation above, has $\Phi$ a BQ-equivalence. The set of equivalence classes is denoted $\tS_{\BQ}(\sing X)$ and is known as the (Browder-Quinn) {\bf  structure set.}\\

Similarly, if $\sing X$ has boundary then $\cS_{\BQ}(\sing X, \pa \sing X)$ is the subset of $\cN(\sing X, \pa\sing X)$ in which $\phi$ is a BQ-equivalence. Equivalence classes modulo normal bordism relative to $\pa \sing X$ in which $\Phi$ is a BQ-equivalence form the set $\tS_{\BQ}(\sing X, \pa\sing X).$

\end{definition}

\begin{remark} \label{rem:PiPiSuff}
A BQ-transverse map $f:\sing X \lra \sing W$ induces a homomorphism $f_*:\cL_{\BQ,d}(\sing X) \lra \cL_{\BQ,d}(\sing W).$
It is easy to see that if $f$ is a BQ-equivalence then it induces an isomorphism $\tL_{\BQ,d}(\sing X) \cong \tL_{\BQ,d}(\sing W).$ Browder and Quinn point out \cite[Proposition 4.7]{BQ} that this is true for any BQ-transverse $f$ that satisfies the $\Pi$-$\Pi$ condition below. For example, $\tL_{\BQ, d}(\sing X) \cong \tL_{\BQ,d}(\sing X \times I).$
\end{remark}

\begin{remark}
The Pontrjagin-Thom constructions can be fit together using transversality to identify $\cN_{\BQ}(\sing X)$ with homotopy classes of maps from $\sing X$ into $G/O$ that are constant on strata of dimension less than five. Since we do not use this we do not elaborate, see \cite[pg. 140]{W-book}.
\end{remark}

\begin{remark} \label{rem:Classical}
If $\sing X$ is a smooth manifold, then $\tL_{BQ,d}(X) = \tL_d(\pi_1(X)).$
Indeed, our definition of $\tL_{BQ,d}(X)$ coincides with Wall's $L_d^1(X)$ from \cite[Chapter 9]{Wall} save that we required $Y$ in Definition \ref{def:SurgeryGps} to be smooth. However $X$ smooth and Wall's realization theorem (\cite[Theorem 10.4]{Wall} and Theorem \ref{thm:WallReal} below) shows that this does not change the group we obtain.
\end{remark}

The inclusion maps between the sets above descend to maps between the equivalence classes
\begin{equation*}
	\tS_{\BQ}(\sing X) \xlra{\eta} \tN_{\BQ}(\sing X) \xlra{\theta} \tL_{\BQ}(\sing X),
\end{equation*}
We will show that this sequence is exact 
and extends to the left
\begin{equation*}
	\xymatrix{
	\tL_{\BQ}(\sing X \times I) \ar@{..>}[r] & \tS_{\BQ}(\sing X)}
\end{equation*}
in that there is an action of the L-group of $\sing X \times I$ on the structure set of $\sing X.$
Moreover the extended sequence is exact in that two elements of $\tS_{\BQ}(\sing X)$ have the same image under $\eta$ if and only if they are in the same orbit of the L-group. For stratified spaces with boundary there are analogous sequences relative to the boundary. In fact the extension to the left of the sequence above, for a stratified space without boundary, is
\begin{equation*}
	\xymatrix{
	&\tS_{\BQ}(\sing X \times I, \sing X \times \pa I) \ar[r]^-{\eta} &
	\tN_{\BQ}(\sing X\times I, \sing X \times \pa I) \ar[r]^-{\theta} &
	\tL_{\BQ,d_{\sing X \times I}}(\sing X \times I) 
	\ar@{..>} `r[d] `[l] `[llld] `[dll] [dll]\\ 
	&\tS_{\BQ}(\sing X) \ar[r]^-{\eta} &
	\tN_{\BQ}(\sing X) \ar[r]^-{\theta} &
	\tL_{\BQ,d_{\sing X}}(\sing X)
	}
\end{equation*}
where, e.g.,  $d_{\sing X}$ is the dimension function of $\sing X.$
One can iterate and extend the sequence further to the left.

 
\subsection{The $\Pi$-$\Pi$ condition}

\begin{definition}
We say that a map $h: \sing M \lra \sing N$ between stratified spaces with corners satisfies {\bf the $\Pi$-$\Pi$ condition} if:\\
For every connected component of a stratum of $\sing N,$ $S_N,$ there is exactly one connected component of a stratum of $\sing M,$ $S_M,$ such that $h(S_M) \cap S_N \neq\emptyset.$ Moreover, $h(S_M) \subseteq S_N$ and $h_*: \pi_1(S_M) \lra \pi_1(S_N)$ is an isomorphism.
\end{definition}

The $\pi$-$\pi$ theorem (or surgery lemma) in our context is implicit in \cite{BQ} and presented by Weinberger in  \cite[pg. 140]{W-book}, where a proof is also sketched. We formulate it as in Quinn's thesis \cite[Theorem 2.4.4]{Quinn:Thesis} and prove it following \cite{W-book}.

\begin{theorem}[BQ $\Pi$-$\Pi$ theorem] \label{thm:pipi}
Let $\sing M,$ $\sing Y,$ $\sing X$ be stratified spaces with boundary, with the same dimension function, together with decompositions of their boundaries, e.g.,
$\pa \sing X = \pa_0 \sing X \cup \pa_1 \sing X,$ into two codimension zero stratified spaces with common boundary.
Consider a diagram
\begin{equation}\label{eq:pipi1}
	(\sing M; \pa_0\sing M, \pa_1\sing M) \xlra f 
	(\sing Y; \pa_0\sing Y, \pa_1 \sing Y) \xlra \omega 
	(\sing X; \pa_0\sing X, \pa_1\sing X)
\end{equation}
in which $f$ is BQ-normal, $f|: \pa_0 \sing M \lra \pa_0 \sing Y$ is a BQ-equivalence, $\omega$ is BQ-transverse and orientation preserving.

If the inclusion of $\pa_1 \sing Y$ into $\sing Y$ satisfies the $\Pi$-$\Pi$ condition then there is a bordism 
between \eqref{eq:pipi1} and
\begin{equation}\label{eq:pipi1'}
	(\sing M'; \pa_0\sing M, \pa_1\sing M') \xlra {f'} 
	(\sing Y; \pa_0\sing Y, \pa_1 \sing Y) \xlra \omega 
	(\sing X; \pa_0\sing X, \pa_1\sing X)
\end{equation}
where $f'$ satisfies the same properties as $f$ but is moreover a BQ-equivalence.
Explicitly this bordism is a diagram of oriented stratified spaces with corners
\begin{equation*}
	\sing N \xlra F \sing Y \times I \xlra {\omega \times \id} \sing X \times I
\end{equation*}
in which $\pa\sing N = \sing M \cup \sing M' \cup \pa_0\sing M \times I \cup \sing P,$ with $F$ is a BQ-normal map satisfying
\begin{equation}\label{eq:pipi1B}
	(\sing N; \sing M, \sing M', \pa_0\sing M \times I, \sing P)
	\xlra{ (F; f, f', f|\times \id, F|)}
	(\sing Y \times I; \sing Y \times \{0\}, \sing Y \times \{1\}, \pa_0\sing Y \times I, \pa_1\sing Y \times I).
\end{equation}

\end{theorem}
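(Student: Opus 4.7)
The plan is to reduce the stratified statement to the classical Wall $\pi$-$\pi$ theorem for manifolds with corners by induction on the depth of $\sing Y$ (equivalently, by induction on the poset of strata, working upwards from the deepest). The base case is the depth-zero case in which $\sing M, \sing Y, \sing X$ are smooth manifolds with corners; there the hypothesis reduces to the classical relative $\pi$-$\pi$ assumption that $\pi_1(\pa_1 Y) \to \pi_1(Y)$ is an isomorphism componentwise, and the conclusion is Wall's realization-and-surgery theorem from \cite[Ch.~3]{Wall}. The inductive hypothesis assumes the theorem for all $\sing Y'$ of strictly smaller depth.

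For the inductive step, I would choose a minimal singular stratum $S \subset \sing Y$ (closed, since it is minimal in the frontier order) and set $S' = f^{-1}(S)\subset \sing M$. By BQ-transversality $S'$ is a smooth manifold with corners, by BQ-normality
\begin{equation*}
    f|_{S'} : (S'; \pa_0 S', \pa_1 S') \lra (S; \pa_0 S, \pa_1 S)
\end{equation*}
is a degree-one normal map of manifolds with corners whose restriction to $\pa_0 S'$ is a homotopy equivalence, and the $\Pi$-$\Pi$ hypothesis restricted to $S$ is exactly the classical $\pi$-$\pi$ assumption needed to apply Wall to $f|_{S'}$. This yields a manifold cobordism on $S'$ rel $\pa_0 S'$ to a homotopy equivalence. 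The next step is to thicken this cobordism to an honest bordism of stratified spaces: a neighborhood $N_S$ of $S$ in $\sing Y$ is the mapping cylinder of a locally trivial fibration $\pa N_S \to S$ with link fiber, and BQ-transversality of $f$ and $\omega$ means the corresponding neighborhoods in $\sing M$ and in $\sing X$ pull back from this fibration. Running the Wall cobordism fibrewise over $S$ (which makes sense because the fibration is locally trivial and the cobordism is supported in the interior of $S'$ rel $\pa_0 S'$) produces a stratified bordism over $N_S \times I \subset \sing Y \times I$, BQ-normal and BQ-transverse by construction, replacing $f$ by a map $f''$ which is a BQ-equivalence in a neighborhood of $S'$.

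Finally, I would excise open neighborhoods of $S$ and $S'$ and apply the induction hypothesis to the triple
\begin{equation*}
    \big(\sing M'' \setminus \mathrm{int}(\tU_{S'})\big) \xlra{f''} \big(\sing Y \setminus \mathrm{int}(N_S)\big) \xlra{\omega} \big(\sing X \setminus \mathrm{int}(\omega^{-1}(N_S))\big),
\end{equation*}
where the previously-BQ-equivalent pieces $\pa\tU_{S'} \to \pa N_S$ are incorporated into the new ``$\pa_0$'' and $\pa_1 \sing Y \setminus \mathrm{int}(\pa_1 \sing Y \cap N_S)$ is the new ``$\pa_1$''. The resulting target has depth one less than $\sing Y$. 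Stacking the bordism of the previous paragraph with the bordism delivered by induction, and then collapsing back the excised tubular neighborhoods, gives the desired bordism \eqref{eq:pipi1B}; the decomposition of $\pa \sing N$ into $\sing M, \sing M', \pa_0\sing M \times I$ and the extra face $\sing P$ (coming from the non-$\pa_0$ part of the Wall cobordisms stacked across strata) falls out of the construction. The main obstacle is the bookkeeping in this last step: one must verify that the $\Pi$-$\Pi$ condition is inherited by the excised target, i.e.\ that the inclusion of the new $\pa_1$ into $\sing Y \setminus \mathrm{int}(N_S)$ is still a $\Pi$-$\Pi$ pair stratum by stratum. This reduces, via a stratum-wise van Kampen argument using the fact that $N_S$ deformation-retracts onto $S$ along the cylinder structure and that $\pa N_S \to S$ has connected fibres on each stratum, to the original $\Pi$-$\Pi$ condition on $\pa_1 \sing Y \hookrightarrow \sing Y$ together with the fact that $f''|_{S'}$ has already been arranged to be a BQ-equivalence, so all new $\pi_1$-isomorphism checks are either trivial or inherited from the hypothesis.
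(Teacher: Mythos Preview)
Your proposal is correct and follows essentially the same inductive strategy as the paper: induct on depth, invoke Wall's classical $\pi$-$\pi$ theorem on the deepest (smooth) stratum, extend the resulting cobordism to the tubular neighbourhood by pull-back along the transverse fibration, graft onto the product bordism, excise the neighbourhood, and apply the inductive hypothesis to the lower-depth remainder. Two small points: (i) your expression $\sing X \setminus \mathrm{int}(\omega^{-1}(N_S))$ is ill-typed since $\omega:\sing Y\to\sing X$ --- you mean to remove the tubular neighbourhood of the corresponding depth-$k$ stratum of $\sing X$; (ii) the inheritance of the $\Pi$-$\Pi$ condition after excision does not require van Kampen or connected links --- the paper simply observes that removing a tubular neighbourhood of the deepest stratum is the first step of resolution and does not change the homotopy type of any remaining stratum or of $\pa_1\sing Y$, so the $\pi_1$-isomorphisms are inherited verbatim.
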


\begin{proof}
We proceed by induction on the depth of the stratified space.
Our base case is when $\sing Y,$ $\sing M,$ and $\sing X$ are smooth manifolds with boundary and this is Theorem 3.3 in Wall's book \cite{Wall}, since in dimension less than five our maps are diffeomorphisms by definition.

Suppose the theorem is established for all stratified spaces with boundary whose stratification has depth less than $k$ and consider \eqref{eq:pipi1} where $\sing Y$ (and hence $\sing M,$ $\sing X$) has a stratification of depth $k.$ Denote the subsets of depth $k$ by a ${}^\dagger$ decoration and note that these are smooth manifolds and that transversality of the maps in \eqref{eq:pipi1} implies that these subsets are preserved by these maps. 
Thus we obtain
\begin{equation}\label{eq:pipi2}
	\striad{M^{\dagger}} \xlra{f^{\dagger}} \striad {Y^{\dagger}} \xlra {\omega^{\dagger}} \striad{X^{\dagger}}
\end{equation}
satisfying the same conditions as the diagram \eqref{eq:pipi1}.
Since the $\Pi$-$\Pi$ condition holds, there is a bordism satisfying the same conditions as \eqref{eq:pipi1B},
\begin{equation}\label{eq:pipi3}
	{N^\dagger} \xlra {F^{\dagger}}  Y^{\dagger} \times I \xlra{ \omega^{\dagger} \times \id } X^{\dagger} \times I
\end{equation}
between \eqref{eq:pipi2} and
\begin{equation*}
	\striad{{M^{\dagger}}'} \xlra{{f^{\dagger}}'} \striad {{Y^{\dagger}}} \xlra {{\omega^{\dagger}}} \striad {{X^{\dagger}}}
\end{equation*}
with ${f^{\dagger}}'$ a homotopy equivalence.

Transversality of $f$ and $\omega$ guarantee that we can find neighborhoods $\cT_{M^{\dagger}} \subseteq \sing M,$ $\cT_{Y^{\dagger}} \subseteq \sing Y,$ and $\cT_{X^{\dagger}}\subseteq \sing X$ that fiber over $M^{\dagger},$ $Y^{\dagger},$ and $X^{\dagger}$ respectively, such that each square in 
\begin{equation}\label{eq:pipi4}
	\xymatrix{
	\cT_{M^{\dagger}} \ar[rr]^-{f|} \ar[d]^-{\phi_M^\dagger} 
		& & \cT_{Y^{\dagger}} \ar[rr]^-{\omega|} \ar[d]^-{\phi_Y^\dagger} 
		& & \cT_{X^{\dagger}} \ar[d]^-{\phi_X^\dagger}\\
	M^{\dagger} \ar[rr]^-{f^{\dagger}} & & Y^{\dagger} \ar[rr]^-{\omega^{\dagger}} & & X_{\dagger} }
\end{equation}
where $f|$ and $\omega|$ denote the restrictions of $f$ and $\omega$ respectively, is a pull-back square.
Note that for $i=0,1$ we have 
\begin{equation*}
	\pa_i \sing M \cap \cT_{M^{\dagger}} = (\phi_M^\dagger)^{-1}(\pa_i M^{\dagger})
\end{equation*}
and similarly for $\sing Y$ and $\sing X,$ so the top row of \eqref{eq:pipi4} is a diagram satisfying conditions analogous to \eqref{eq:pipi1}.

We have an extension of this diagram to a similar diagram over the bordism \eqref{eq:pipi3}
\begin{equation}\label{eq:pipi5}
	\xymatrix{
	{F^{\dagger}}^*(\cT_{Y^{\dagger}} \times I) \ar[rr]^-{\bar F^{\dagger}} \ar[d] 
		& & \cT_{Y^{\dagger}} \times I \ar[rr]^-{\omega| \times \id} \ar[d] & & \cT_{X^{\dagger}}\times I \ar[d]\\
	N^{\dagger} \ar[rr]^-{F^{\dagger}} & & Y^{\dagger}\times I \ar[rr]^-{\omega^{\dagger}\times\id} & &  X_{\dagger} \times I }
\end{equation}
with ${F^{\dagger}}^*(\cT_{Y^{\dagger}})|_{M^{\dagger}} = \cT_{M^{\dagger}}$ and $\bar F^{\dagger}$ the induced map on the pull-back.
Restricting this diagram to the `fixed' $\pa_0$ part of the boundary we get
\begin{equation*}
	\xymatrix{
	(\pa_0\sing M \cap \cT_{M^{\dagger}}) \times I \ar[rr]^-{f|\times \id} \ar[d] & &
		(\pa_0\sing Y \cap \cT_{Y^{\dagger}}) \times I \ar[rr]^-{\omega| \times \id} \ar[d] & &
		(\pa_0\sing X \cap \cT_{X^{\dagger}}) \times I \ar[d] \\
	\pa_0 M^{\dagger} \times I \ar[rr]^-{f^{\dagger}|\times \id} & &
		\pa_0 Y^{\dagger} \times I \ar[rr]^-{\omega^{\dagger}| \times \id} & &
		\pa_0 X^{\dagger} \times I }
\end{equation*}
and so the top row of \eqref{eq:pipi5} is an bordism, satisfying conditions analogous to \eqref{eq:pipi1B}, from the top row of \eqref{eq:pipi4} to
\begin{equation}\label{eq:pipi6}
	( {f^{\dagger}}')^*\cT_{Y^{\dagger}} \xlra { {\bar f^{\dagger}}' } 
		\cT_{Y^{\dagger}} \xlra {\omega | } 
		\cT_{X^{\dagger}}
\end{equation}
in which ${\bar f^{\dagger}}'$ is a BQ-equivalence.

Now, as in \cite[Theorem 2.14]{Browder:TransG} \cite[\S 4.3]{W-book}, we multiply each space in \eqref{eq:pipi1} by the unit interval and attach the top row of \eqref{eq:pipi5} to get a bordism
\begin{multline*}
	\sing M \times I 
	\bigcup_{\cT_{M^{\dagger}}\times \{1\} \sim {F^{\dagger}}^*(\cT_{Y^{\dagger}})|_{M^{\dagger}}}
	{F^{\dagger}}^*(\cT_{Y^{\dagger}}) 
	\xlra{  (f\times\id) \cup \bar F^{\dagger} }
	\sing Y \times I
	\bigcup_{\cT_{Y^{\dagger}}\times \{1\} \sim \cT_{Y^{\dagger}} \times \{0\}}
	\cT_{Y^{\dagger}} \times I \\
	\xlra{ \omega \times \id \cup \omega| \times \id }
	\sing X \times I
	\bigcup_{\cT_{X^{\dagger}}\times \{1\} \sim \cT_{X^{\dagger}} \times \{0\}}
	\cT_{X^{\dagger}} \times I 
\end{multline*}
from \eqref{eq:pipi1} to a similar diagram over $\sing X$ which we denote
\begin{equation}\label{eq:pipi7}
	\triad{P} \xlra{g} \triad Y \xlra \omega \triad X
\end{equation}
and which restricts to \eqref{eq:pipi6} in a neighborhood of the subsets of depth $k.$

Now we remove these neighborhoods of the subsets of depth $k$ to form
\begin{equation*}
	\sing P^+ = \sing P \setminus 
	( {f^{\dagger}}')^*\cT_{Y^{\dagger}}, \quad
	\sing Y^+ = \sing Y \setminus \cT_{Y^{\dagger}}, \quad
	\sing X^+ = \sing X \setminus \cT_{X^{\dagger}}.
\end{equation*}
These are stratified spaces with corners (see, e.g., \cite[\S6]{Albin:Hodge}, \cite{Verona}) 
and we define
\begin{equation*}
	\pa_0\sing Y^+ = (\pa_0\sing Y \cap \sing Y^+ ) \cup \pa\cT_{Y^{\dagger}}, \quad
	\pa_1\sing Y^+ = \pa_1\sing Y \cap \sing Y^+
\end{equation*}
and similarly for $\sing P^+$ and $\sing X^+.$
Note that the restrictions of $g$ and $\omega$ to $\pa_0\sing P^+$ and $\pa_0 \sing Y^+$ are BQ-equivalences so we have a diagram
\begin{equation*}
	(\sing P^+; \pa_0 \sing P^+, \pa_1\sing P^+) \xlra{g|}
	(\sing Y^+; \pa_0 \sing Y^+, \pa_1 \sing Y^+) \xlra{\omega|}
	(\sing X^+; \pa_0\sing X^+, \pa_1 \sing X^+)
\end{equation*}
satisfying conditions analogous to \eqref{eq:pipi1}. (Note that though the stratified space now has corners of codimension two, one can `smooth out the corners' as in \cite[\S2.6]{Wall:DiffTop}, \cite[\S3]{HMM2}.)\\

Moreover, the compatibility between the stratifications and the boundary faces implies that each stratum of  $\sing Y^+$ is homotopy equivalent to the corresponding stratum of $\sing Y,$ and the same is true for the strata of $\pa_1 \sing Y^+.$ (Indeed iterating this process of removing tubular neighborhoods of deepest strata produces the resolution of $\sing Y$ which does not change the homotopy type of the strata.) Significantly, the inclusion of $\pa_1\sing Y^+$ into $\sing Y^+$  satisfies the $\Pi$-$\Pi$ condition and since $\sing Y^+$ has depth less than $k$ we can apply our inductive hypothesis to find a bordism, satisfying conditions analogous to \eqref{eq:pipi1B},
\begin{equation*}
	\sing N^+ \xlra G \sing Y^+ \times I \xlra \Omega \sing X^+ \times I
\end{equation*}
between 
\begin{equation*}
\begin{gathered}
	\triad{P^+} \xlra {g|} \triad {Y^+} \xlra {\omega|} \triad {X^+} \\
	\Mand
	\triad{{P^+}'} \xlra {g'} \triad {Y^+} \xlra {\omega|} \triad {X^+}
\end{gathered}\end{equation*}
with $g'$ a  BQ-equivalence.
Since the bordism does not change the spaces $\pa(( {f^{\dagger}}')^*\cT_{Y^{\dagger}}),$ $\pa \cT_{Y^{\dagger}},$ $\pa \cT_{X^{\dagger}}$ or the maps between them, we can glue in the bordism \eqref{eq:pipi6} to finally obtain an bordism between \eqref{eq:pipi1} and \eqref{eq:pipi1'} with 
\begin{equation*}
	\sing M' = {{P^+}'} \cup ( {f^{\dagger}}')^*\cT_{Y^{\dagger}},
\end{equation*}
and $f' = g' \cup {\bar f^{\dagger}}'$ a BQ-equivalence as required.
\end{proof}

The key to applying the $\Pi$-$\Pi$ theorem is a result of Wall that allows us to represent every class in $\tL_{\BQ}(\sing X)$ by a `restricted' representative. This is sometimes referred to as ``$L^1=L^2$" evoking the notation of \cite[Chapter 9]{Wall}.

\begin{definition}
Let $\sing X$ be a stratified space (possibly with boundary).
An $\cL$-cycle over $\sing X,$
\begin{equation*}
	(\sing M; \pa\sing M) \xlra{\phi} (\sing Y;\pa\sing Y) \xlra{\omega} \sing X,
\end{equation*}
is a {\bf restricted $\cL$-cycle} if $\omega: \sing Y \lra \sing X$ satisfies the $\Pi$-$\Pi$ condition.

A null bordism of a restricted $\cL$-cycle over $\sing X,$
\begin{equation*}
	(\sing N; \pa_1 \sing N, \pa_2 \sing N) \xlra{\Phi} (\sing Z; \pa_1\sing Z, \pa_2\sing Z) \xlra{\Omega} \sing X \times I,
\end{equation*}
 is a {\bf restricted null bordism} if $\Omega: \sing Z \lra \sing X \times I$ satisfies the $\Pi$-$\Pi$ condition.
\end{definition}

\begin{remark}
If $\sing X$ has depth zero then these are the restricted cycles of \cite[Chapter 9]{Wall}, see Remark \ref{rem:Classical}.
\end{remark}

\begin{theorem}\label{thm:L1L2}
Let $\sing X$ be a stratified space (possibly with boundary).
Every element of $\tL_{\BQ}(\sing X)$ is $\cL$-bordant, relative to the boundary, to a restricted $\cL$-cycle over $\sing X.$
If a restricted $\cL$-cycle over $\sing X$ is null bordant, then it participates in a restricted null bordism.
\end{theorem}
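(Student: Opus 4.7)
The plan is to mimic the inductive structure of the Browder-Quinn $\Pi$-$\Pi$ theorem proved just above, using as the base case Wall's original $L^1=L^2$ result \cite[Theorem 9.4]{Wall} for smooth manifolds. Thus I proceed by induction on the depth of $\sing X,$ showing simultaneously the two assertions: any $\cL$-cycle is $\cL$-bordant rel.\ boundary to a restricted one, and any null-bordism of a restricted $\cL$-cycle can be replaced by a restricted null-bordism. The depth-zero case of both statements is \cite[Chapter 9]{Wall} together with Remark \ref{rem:Classical}.

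For the inductive step, start with an $\cL$-cycle
\begin{equation*}
(\sing M;\pa\sing M)\xlra{\phi}(\sing Y;\pa\sing Y)\xlra{\omega}\sing X
\end{equation*}
representing a class in $\tL_{\BQ}(\sing X).$ Let $X^{\dagger}$ denote the (smooth-manifold) union of deepest strata of $\sing X,$ and similarly $Y^{\dagger},$ $M^{\dagger}.$ Because $\omega$ is BQ-transverse, restriction gives a diagram $M^{\dagger}\to Y^{\dagger}\to X^{\dagger}$ of smooth manifolds satisfying the hypotheses of Wall's $L^1=L^2$ theorem. Apply that theorem to obtain a bordism of the smooth $\cL$-cycle that ends in an $\omega^{\dagger\prime}\colon {Y^{\dagger}}'\to X^{\dagger}$ satisfying the $\Pi$-$\Pi$ condition. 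By transversality, choose fibered tubular neighborhoods $\cT_{M^{\dagger}},\cT_{Y^{\dagger}},\cT_{X^{\dagger}}$ of the deepest strata and form a diagram of pull-back fiber bundles exactly as in \eqref{eq:pipi4}; then the smooth bordism extends (by pulling back along the cone fibrations) to a stratified $\cL$-bordism over $\sing X\times I$ that is supported in these tubular neighborhoods, precisely as in \eqref{eq:pipi5}. The surgeries on $\sing Y$ are matched by surgeries on $\sing M$ via the fact that $\phi$ is a degree-one normal map together with pull-back through the cone fibrations, so the BQ-normal character of $\phi$ is preserved.

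Now excise the open tubular neighborhoods of the new deepest fibers, obtaining a stratified space-with-corners triple $(\sing M^+; \pa\sing M^+)\to(\sing Y^+;\pa\sing Y^+)\to(\sing X^+;\pa\sing X^+)$ of strictly smaller depth, in which the new boundary component coming from $\pa\cT_{Y^{\dagger}}\to\pa\cT_{X^{\dagger}}$ is a fiber bundle with $\Pi$-$\Pi$ base and hence already satisfies the $\Pi$-$\Pi$ condition. Applying the inductive hypothesis relative to this boundary produces a further $\cL$-bordism after which the restricted map $\omega^+\colon\sing Y^+\to\sing X^+$ satisfies the $\Pi$-$\Pi$ condition on every remaining stratum. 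Gluing this second bordism to the tubular-neighborhood bordism gives the desired $\cL$-bordism, rel.\ boundary, to a cycle whose target map satisfies the $\Pi$-$\Pi$ condition on \emph{every} stratum of $\sing X,$ i.e., to a restricted $\cL$-cycle. The second assertion follows by running exactly the same argument on a null bordism $\sing N\xlra{\Phi}\sing Z\xlra{\Omega}\sing X\times I,$ treating the already-restricted component $\pa_1\sing N$ as the fixed $\pa_0$ of the $\Pi$-$\Pi$ theorem; the surgeries occur only away from this fixed face.

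The main technical obstacle is the bookkeeping needed to keep surgeries stratum-compatible: each modification on $\sing Y$ produced by Wall's handle moves on a deepest stratum must be thickened through the cone fibration and then matched by the corresponding normal-map surgery on $\sing M,$ all while preserving the BQ-normal/BQ-transverse properties and the collaring data required to glue back at the end. This is exactly the difficulty that was already handled in the proof of Theorem \ref{thm:pipi}; we are using the very same combination of fibered tubular neighborhoods, attaching-cylinder trick as in \cite[Theorem 2.14]{Browder:TransG}\cite[\S 4.3]{W-book}, and corner-smoothing as in \cite[\S2.6]{Wall:DiffTop}, so no new geometric input beyond Wall's classical result and the inductive hypothesis is required.
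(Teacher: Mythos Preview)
Your proposal is correct and follows essentially the same approach as the paper: induction on depth, with Wall's classical $L^1=L^2$ result \cite[Theorems 9.4, 9.5]{Wall} as the base case, restriction to the deepest stratum, propagation through tubular neighborhoods via pullback (as in the $\Pi$-$\Pi$ theorem), excision to reduce depth, and gluing. The paper packages both assertions into a single inductive statement about diagrams of stratified spaces with corners (so that the inductive hypothesis applies after excising tubes), and is slightly more explicit that Wall's handle surgeries on $1$- and $2$-spheres can be done in the interior of $Y^{\dagger}$ (via Whitney embedding, $\dim\geq 5$), which is what makes the bordism genuinely relative to the boundary; you should state this explicitly rather than leaving it implicit.
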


\begin{proof}
Our proof is parallel to that of the $\Pi$-$\Pi$ theorem. When possible we will simply refer back to the latter proof.\\

We will prove by induction on the depth of the stratifications that, whenever we have
\begin{equation}\label{eq:L1L21}
	\sing M \xlra{\phi} \sing Y \xlra{\omega} \sing X
\end{equation}
with $\phi$ BQ-normal, $\omega$ BQ-transverse, and $\sing M,$ $\sing Y,$ $\sing X$ stratified spaces {\em with corners} with the same dimension functions, there is a bordism relative to the boundary  to a similar diagram
\begin{equation*}
	\sing M' \xlra{\phi'} \sing Y' \xlra{\omega'} \sing X
\end{equation*}
in which $\omega'$ satisfies the $\Pi$-$\Pi$ condition.
Specifically, there is a diagram
\begin{equation*}
	(\sing N;\sing M, \sing M', \pa \sing M \times I) \xlra{(\Phi; \phi,\phi', \phi|\times\id)} 
	(\sing Z;\sing Y, \sing Y', \pa \sing Y \times I) \xlra{(\Omega; \omega, \omega', \omega|\times \id)}
	\sing X \times I
\end{equation*}
in which $\Phi$ is BQ-normal and $\Omega$ is BQ-transverse and moreover
\begin{equation*}
	\sing M \cap \sing M' = \emptyset, \quad
	\sing M \cap (\pa \sing M \times I) = \pa \sing M, \quad 
	\sing M' \cap (\pa \sing M \times I) = \pa \sing M'
\end{equation*}
and similarly for $\sing Z.$ \\

We proceed by induction on the depth of the stratified space.
Our base case is for smooth manifolds with corners.  If $\dim Y<5$ then the theorem is automatic since $f$ and $\omega$ are then both diffeomorphisms.
Assuming $\dim Y\geq 5,$ this case is handled by Wall in \cite[Theorems 9.4, 9.5]{Wall}, where he shows that this can be arranged by carrying out surgery on $Y$ along $1$-handles and $2$-handles and then modifying $M$ along the preimages of the corresponding embeddings. As pointed out in \cite[Proof of Lemma 3]{Milnor:Kill}, a theorem of Whitney implies that for $\dim Y>4$ any homotopy class of maps from $\bbS^1$ or $\bbS^2$ into $Y$ contains an embedding with image contained in the interior of $Y.$ Thus all of the modifications in $M$ and $Y$ can be carried out in their interiors. 

Suppose the theorem is established for all stratified spaces with boundary 
whose stratification has depth less than $k$ and consider \eqref{eq:L1L21} where $\sing Y$ (and hence $\sing M$) has a stratification of depth $k.$ Denote the subsets of depth $k$ by a ${}^\dagger$ decoration and note that these are smooth manifolds and that transversality of the maps in \eqref{eq:L1L21} implies that these subsets are preserved by these maps. 
Thus we obtain
\begin{equation*}
	(\sing{M^{\dagger}}, \pa\sing{M^{\dagger}}) \xlra{f^{\dagger}} 
	(\sing{Y^{\dagger}}, \pa\sing{Y^{\dagger}}) \xlra {\omega^{\dagger}}
	\sing X,
\end{equation*}
an element of $\cL_{\BQ}(\sing{X^{\dagger}}).$
Applying the base case to this situation we obtain an $\cL$-bordism, relative to the boundary, to an element satisfying the desired $\Pi$-$\Pi$ condition. Proceeding as in the proof of Theorem \ref{thm:pipi} we can lift this $\cL$-bordism to neighborhoods of these subsets and then graft it onto the product of \eqref{eq:L1L21} with the unit interval to obtain an $\cL$-bordism, relative to the boundary, between \eqref{eq:L1L21} and an element of $\cL_{\BQ}(\sing X)$ analogous to \eqref{eq:pipi7},
\begin{equation*}
	(\sing P, \pa\sing P) \xlra{g}
	(\sing V, \pa\sing V) \xlra{\alpha} 
	\sing X,
\end{equation*}
where $\alpha: \sing{V} \lra \sing X$ satisfies the $\Pi$-$\Pi$ condition for strata of depth $k.$

Now we remove consistent tubular neighborhoods of the subsets of depth $k$ to form $\sing P^+,$ $\sing V^+,$ and $\sing X^+$ as in the proof of Theorem \ref{thm:pipi}.
This gives a diagram
\begin{equation*}
	\triad{P^+} \xlra {g|} \triad {V^+} \xlra{\alpha|} \triad X^+
\end{equation*}
in which the stratification on $\sing V^+$ has depth less than $k.$ By our inductive hypothesis there is a bordism, relative to the boundary, between this and another diagram
\begin{equation*}
	\triad{{P^+}'} \xlra {g'} \triad{{V^+}'} \xlra{\alpha'} \triad X^+
\end{equation*}
for which $\alpha': {{V^+}'} \lra \hat X^+$ satisfies the $\Pi$-$\Pi$ condition.
Since the bordism is relative to the boundary we may glue in the previous bordism between neighborhoods of the strata of depth $k$ to obtain a bordism, relative to the boundary, between our original diagram and 
\begin{equation*}
	\sing{{P^+}'} \cup \cT_{P^\dagger}
	\xlra{g' \cup (g|)}
	\sing{{V^+}'} \cup \cT_{V^{\dagger}}
	\xlra{\alpha' \cup (\alpha|)}
	\sing X.
\end{equation*}
Finally, because of the compatibility between the stratifications and the boundary faces, the fact that $\alpha$ satisfies the $\Pi$-$\Pi$ condition between the strata of depth $k$ and $\alpha'$ satisfies the $\Pi$-$\Pi$ condition between strata of depth less than $k$ means that $\alpha' \cup (\alpha|)$ satisfies the $\Pi$-$\Pi$ condition on all strata.

\end{proof}

Our final preliminary result is to point out that the sum in $\tL^{\BQ}(\sing X \times I)$ which is induced by disjoint union can, when appropriate, be carried out by identifying boundary faces.

\begin{lemma} \label{lem:SumLemma}
Let 
\begin{equation*}
\begin{gathered}
	\alpha: (\sing M; \pa \sing M) \xlra{\phi} (\sing Y; \pa \sing Y) \xlra{\omega} \sing X \times I \\
	\beta: (\sing L; \pa \sing L) \xlra{\psi} (\sing W; \pa \sing W) \xlra{\theta} \sing X \times I
\end{gathered}
\end{equation*}
be two $\cL$-cycles over $\sing X \times I.$
Suppose that the pre-images in $\alpha$ lying above $\sing X \times \{0\}$ coincide with the pre-images in $\beta$ lying above $\sing X \times \{1\},$
\begin{equation*}
	\gamma: (\sing P; \pa \sing P) \xlra{\rho} (\sing V; \pa \sing V) \xlra{\xi} \sing X.
\end{equation*}
The class of $\alpha+\beta$ in $\tL_{\BQ}(\sing X \times I)$ is represented by the union of the diagrams along their common boundary, $\alpha \cup_{\gamma} \beta.$
\end{lemma}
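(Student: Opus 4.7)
The plan is to produce an explicit $\cL$-bordism over $\sing X\times I\times I_t$ whose $t=0$-slice is $\alpha\sqcup\beta$ and whose $t=1$-slice is $\alpha\cup_\gamma\beta$, following the classical surgery-theoretic cobordism that identifies disjoint union with connected sum along a shared boundary component, transposed to the stratified setting.

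First I would reparametrize $\omega$ and $\theta$ by a diffeotopy of $\sing X\times I$ (which does not affect $\cL$-classes via the obvious cylinder bordism) so that $\omega(\sing Y)\subseteq\sing X\times[\tfrac12,1]$, $\theta(\sing W)\subseteq\sing X\times[0,\tfrac12]$, and $\omega(\sing V)=\theta(\sing V)=\sing X\times\{\tfrac12\}$. Then I would build the interpolating bordism using the two-dimensional ``pair-of-pants'' region
\[
	P = \lrbrac{(s,t)\in I_s\times I_t:\ s\leq t/2 \Mor s\geq 1-t/2},
\]
whose $t=0$-slice is $\{0,1\}$ and whose $t=1$-slice is $[0,1]$; the two inner edges $\{s=t/2\}$ and $\{s=1-t/2\}$ meet at the interior corner $(1/2,1)$. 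Assemble
\[
	\sing Z_{\text{bord}} = (\sing Y\times I_t)\cup_{\sing V\times I_t}(\sing V\times P)\cup_{\sing V\times I_t}(\sing W\times I_t),
\]
identifying the $\sing V$-faces of $\sing Y\times I_t$ and $\sing W\times I_t$ with the lower and upper inner edges of $\sing V\times P$, and build $\sing N_{\text{bord}}$ in parallel from $\sing M,\sing L$ and $\sing P$, with the gluings using the shared BQ-equivalence $\rho:\sing P\to\sing V$. The map $\Omega:\sing Z_{\text{bord}}\to\sing X\times I\times I_t$ is given by the $t$-dependent interpolation of $\omega,\theta$ on the cylinder pieces, together with $(v,s,t)\mapsto(\xi(v),s,t)$ on $\sing V\times P$; these agree on the identification loci by construction.

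A direct inspection of the slices shows that the $t=0$-slice is $\alpha\sqcup\beta$ (the slice of $\sing V\times P$ reduces to two isolated points absorbed into the $\sing V$-faces of $\sing Y$ and $\sing W$), while the $t=1$-slice is $\alpha\cup_\gamma\beta$ together with a superfluous strip $\gamma\times I=\sing V\times[0,1]$ glued at the internal hypersurface $\sing V\times\{1/2\}$. This extra strip is independently null-bordant as an $\cL$-cycle over $\sing X\times I$: the null bordism $\sing P\times I\times I'\xrightarrow{\rho\times\id\times\id}\sing V\times I\times I'$ is valid precisely because $\rho:\sing P\to\sing V$ is a BQ-equivalence, being the restriction of $\phi$ to the boundary face $\sing P$ of the $\cL$-cycle $\alpha$. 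Grafting this null bordism onto our pair-of-pants bordism along the $\gamma\times I$ piece completes the desired $\cL$-bordism between $\alpha\sqcup\beta$ and $\alpha\cup_\gamma\beta$, which is enough since $\tL_{\BQ}(\sing X\times I)$ is by definition $\cL$-cycles modulo $\cL$-bordism.

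The main obstacle I anticipate is not the combinatorics but the verification that $\sing Z_{\text{bord}}$ and $\sing N_{\text{bord}}$ are bona fide stratified spaces with corners and that $\Phi$ is BQ-normal (with $\Phi|_{\pa_2}$ a BQ-equivalence) while $\Omega$ is BQ-transverse throughout. At the interior corner $(1/2,1)$ of $P$, where the two inner edges meet and the $\sing V$-faces of $\sing Y$ and $\sing W$ coalesce, a higher-codimension stratum arises; similarly the grafting locus $\sing V\times\{1/2\}$ in the $t=1$-slice carries a locally four-sheeted configuration. Confirming that the Thom--Mather control data patches consistently across these loci---which ultimately reduces to the BQ-equivalence property of $\rho$ together with the agreement of the three constituent maps on the identified faces---is the delicate technical step.
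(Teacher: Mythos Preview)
Your pair-of-pants idea is natural, but the specific region $P=\{(s,t):s\le t/2\text{ or }s\ge 1-t/2\}$ is not a manifold with corners: at $(1/2,1)$ three boundary arcs of $P$ meet (the two inner edges and the $t=1$ edge), so $\sing V\times P$ is not a stratified space with corners there. Correspondingly, your own description of the $t=1$ slice---$\sing Y\cup_{\sing V}\sing W$ with a fin $\sing V\times[0,1]$ attached transversally along $\sing V\times\{1/2\}$---is not a pseudomanifold: along that hypersurface four top-dimensional sheets meet, so the link is four points rather than two. This is not a matter of verifying Thom--Mather control data; the local model is simply wrong, and the grafting step you propose only grafts another piece onto an already illegal configuration. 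One can repair the idea by rounding the inner corner of $P$ away from $t=1$ and re-doing the gluings, but then the combinatorics change (the inner boundary becomes a single arc) and the argument needs to be rewritten.

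The paper avoids all of this by never varying the underlying space. It takes the \emph{glued} space as the bordism, $\sing N=(\sing M\cup_{\sing P}\sing L)\times I_t$ and $\sing Z=(\sing Y\cup_{\sing V}\sing W)\times I_t$, and varies only the map: $\Phi_0=\phi\cup\psi$, while for $t>0$ one sets $\Phi_t=\id\times\rho$ on a collar $(-t\eps,t\eps)\times\sing P$ and (essentially) $\phi\sqcup\psi$ off it. Since $\rho$ and the restrictions $\phi|_{\pa\sing M}$, $\psi|_{\pa\sing L}$ are BQ-equivalences, the $t=1$ face splits as $(\sing M\sqcup\sing L)$ together with a collar on which $\Phi_1$ is a BQ-equivalence; the latter, along with the side faces, is absorbed into $\pa_2$. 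Thus one obtains directly an $\cL$-bordism between $\alpha\cup_\gamma\beta$ (at $t=0$) and $\alpha\sqcup\beta$ (the non-collar part of $t=1$), with no extra strip to dispose of and no singular corner to resolve.
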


\begin{proof}
Let $\sing N = (\sing M \cup_{\sing P} \sing L) \times I,$
$\sing Z = (\sing Y \cup_{\sing V} \sing W) \times I,$ and consider a diagram
\begin{equation}\label{eq:SumLemma1}
	\sing N \xlra{\Phi} \sing Z \xlra{\Omega} \sing X \times I
\end{equation}
where $\Phi$ has the form $\Phi(x,t) = (\Phi_t(x),t)$ and similarly $\Omega(x,t) = (\Omega_t(x),t).$
By definition, $\sing P$ has a collar neighborhood in each of $\sing M$ and $\sing L$ and gluing these together we have a neighborhood of the form $(-\eps,\eps) \times \sing P$ in $\sing M \cup_{\sing P} \sing L$ and of the form 
$(-\eps,\eps) \times \sing P \times I$ in $\sing N.$
Similarly, we have a neighborhood of the form $(-\eps,\eps) \times \sing V$ in $\sing Y \cup_{\sing V} \sing W$ and of the form $(-\eps,\eps) \times \sing V \times I$ in $\sing Z.$
Note that
\begin{equation*}
	\sing M \cup_{\sing P} \sing L \setminus \lrpar{ (-\eps,\eps) \times P } 
	= \sing M \sqcup \sing L, \quad
	\sing Y \cup_{\sing V} \sing W \setminus \lrpar{ (-\eps,\eps) \times V } 
	= \sing Y \sqcup \sing W.
\end{equation*}
We choose $\Phi$ so that $\Phi_0 = \phi\cup\psi$ and for $t>0,$ $\Phi_t\rest{ (-t\eps,t\eps) \times P } = \id \times \rho,$ while off of this neighborhood $\Phi$ is essentially $\phi\sqcup\psi.$ We similarly choose $\Omega_t.$
With these choices, since $\phi\rest{\pa \sing M}$ and $\psi\rest{\pa\sing L}$ are BQ-equivalences, we recognize \eqref{eq:SumLemma1} as an $\cL$-bordism between the disjoint union of $\alpha$ and $\beta$ and their union along $\gamma.$
\end{proof}

\subsection{Surgery theorem }

With the preliminary results out of the way, we can establish the fundamental result of surgery: a normal map is normal bordant to an equivalence precisely when its surgery obstruction vanishes.

\begin{theorem} [Exactness part 1] \label{thm:exact1}
Let $\sing X$ be an oriented stratified space with boundary (possibly empty) and let
\begin{equation*}
	h: \sing K \lra \pa \sing X
\end{equation*}
be a (stratum preserving) diffeomorphism.\\
Given a stratified space $\sing M$ with boundary $\pa \sing M = \sing K,$ and a BQ-normal map
\begin{equation*}
	\phi: \sing M \lra \sing X
\end{equation*}
extending $h,$ there is a normal bordism relative to $h$ between $\phi$ and a BQ-equivalence if and only if $\phi$ is $\cL$-null-bordant.\\

Equivalently, the sequence
\begin{equation*}
	\tS_{\BQ}(\sing X, \pa \sing X) \xlra{\eta}
	\tN_{\BQ}(\sing X, \pa \sing X) \xlra{\theta}
	\tL_{\BQ}(\sing X)
\end{equation*}
is exact.

\end{theorem}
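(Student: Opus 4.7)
The plan is as follows.

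For the easy direction, I would observe that a normal bordism $\sing W \xlra{\Phi} \sing X\times I \xlra{\id} \sing X\times I$ rel $h$ from $\phi\colon \sing M \lra \sing X$ to a BQ-equivalence $\phi'\colon \sing M' \lra \sing X$ is already an $\cL$-null-bordism: setting $\pa_1\sing W := \sing M$ and $\pa_2\sing W := \sing M' \cup (\sing K \times I)$, the restriction $\Phi\rest{\pa_2\sing W}$ is the union of the BQ-equivalence $\phi'$ with the diffeomorphism $h\times\id$, hence a BQ-equivalence. This shows $\theta \circ \eta = 0$.

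For the harder direction, I would start with an $\cL$-null-bordism
\begin{equation*}
	(\sing N; \pa_1\sing N, \pa_2\sing N) \xlra{\Phi} (\sing Z; \pa_1\sing Z, \pa_2\sing Z) \xlra{\Omega} \sing X\times I.
\end{equation*}
Since $\id\colon \sing X \lra \sing X$ trivially satisfies the $\Pi$-$\Pi$ condition, the $\cL$-cycle associated to $\phi$ is already restricted, so Theorem \ref{thm:L1L2} applies to replace the null-bordism with one in which $\Omega$ satisfies $\Pi$-$\Pi$. A collaring argument near the corner where $\pa\sing M$ meets $\pa_2\sing N$, combined with the fact that $\phi\rest{\pa\sing M} = h$ is a diffeomorphism, then lets me arrange that $\pa_2\sing N$ contains a cylinder $\sing K\times I$ on which $\Phi = h\times\id$. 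Next I would invoke the $\Pi$-$\Pi$ theorem (Theorem \ref{thm:pipi}) on this diagram with the decomposition $\pa_0 := \pa_2$ (on which $\Phi$ is already a BQ-equivalence) and $\pa_1 := \pa_1$. The required $\Pi$-$\Pi$ condition on $\pa_1\sing Z \hookrightarrow \sing Z$ follows by transferring the $\Pi$-$\Pi$ property of $\Omega$ across the identification $\pi\circ\Omega\rest{\pa_1\sing Z} = \id_{\sing X}$: each stratum of $\pa_1\sing Z$ sits inside the corresponding (via $\Omega$) stratum of $\sing Z$ as its face at $\{0\}$, and the inclusion induces an isomorphism on fundamental groups.

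The output bordism $\sing{\tilde N}$ over $\sing Z \times I$ has $\pa\sing{\tilde N} = \sing N \cup \sing N' \cup (\pa_2\sing N \times I) \cup \sing P$ together with a BQ-equivalence $\Phi'\colon \sing N' \lra \sing Z$. The `fourth face' $\sing P$ is then a bordism from $\sing M = \pa_1\sing N$ to $\sing M' := \pa_1\sing N'$, and its induced map $\sing P \lra \pa_1\sing Z \times I = \sing X \times I$ is exactly the desired normal bordism between $\phi$ and the BQ-equivalence $\phi' := \Phi'\rest{\pa_1\sing N'}$; it is rel $h$ because the $\Pi$-$\Pi$ construction is carried out rel $\pa_2\sing N \supseteq \sing K\times I$.

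I expect the hardest step to be the stratum-by-stratum verification of the $\Pi$-$\Pi$ condition for $\pa_1\sing Z \hookrightarrow \sing Z$, which requires carefully matching the corner-fibration structure of $\sing Z$ (inherited from its role as a bordism fibered over $\sing X \times I$ via $\Omega$) with the strata of $\pa_1\sing Z$. An induction on stratum depth parallel to the one in the proof of Theorem \ref{thm:pipi} should dispatch this, and the bundle data attached to BQ-normal maps are carried through the construction in the standard way, as suppressed elsewhere in the paper.
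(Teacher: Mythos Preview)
Your proposal is correct and follows essentially the same route as the paper: apply Theorem~\ref{thm:L1L2} to upgrade to a restricted null bordism, verify the $\Pi$-$\Pi$ condition for $\pa_1\sing Z \hookrightarrow \sing Z$, invoke Theorem~\ref{thm:pipi}, and read off the desired normal bordism from the face $\sing P$.

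One remark: the step you flag as hardest --- checking that $\pa_1\sing Z \hookrightarrow \sing Z$ satisfies the $\Pi$-$\Pi$ condition --- is actually a one-liner in the paper, not a depth induction. Since $\Omega:\sing Z \lra \sing X \times I$ satisfies $\Pi$-$\Pi$ and $\Omega\rest{\pa_1\sing Z}$ is the identity onto $\sing X \times \{0\}$, the composite $\pa_1\sing Z \hookrightarrow \sing Z \xlra{\Omega} \sing X \times I$ is the inclusion $\sing X \times \{0\} \hookrightarrow \sing X \times I$, which manifestly satisfies $\Pi$-$\Pi$; since $\Omega$ induces bijections on stratum components and isomorphisms on their fundamental groups, the $\Pi$-$\Pi$ property for the inclusion $\pa_1\sing Z \hookrightarrow \sing Z$ follows by cancellation. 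No stratum-by-stratum work or corner-fibration analysis is needed.
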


\begin{proof}
If there is a normal bordism, relative to $h,$ between $\phi$ and a BQ-equivalence, $\phi': \sing M' \lra \sing X,$ say 
\begin{equation*}
	(\sing N; \sing M, \sing M', \sing K \times I) \xlra{\Phi}
	(\sing X\times I; \sing X \times \{0\}, \sing X \times \{1\}, \pa \sing X \times I) \xlra\id
	\sing X \times I,
\end{equation*}
then this bordism witnesses the triviality of $\phi$ in $\tL_{\BQ}(\sing X).$\\

On the other hand, if 
\begin{equation*}
	(\sing M;\sing K) \xlra{(\phi;h)} \\
	(\sing X; \pa \sing X) \xlra{\id}
	\sing X
\end{equation*}
is a null bordant $\cL$-cycle over $\sing X$ 
then, since it is a restricted cycle, there is by Theorem \ref{thm:L1L2} a restricted null bordism.
That is to say, there are maps of stratified spaces with corners
\begin{equation*}
	(\sing N; \pa_1 \sing N, \pa_2 \sing N) \xlra{\Phi}
	(\sing Z; \pa_1 \sing Z, \pa_2 \sing Z) \xlra{\Omega}
	\sing X \times I
\end{equation*}
with $\Phi$ BQ-normal, $\Phi|: \pa_2 \sing N \lra \pa_2 \sing Z$ a BQ-equivalence, $\Omega$  BQ-transverse, 
\begin{equation*}
	\lrpar{
	(\pa_1\sing N, \pa_{12}\sing N) \xlra{\Phi|} 
	(\pa_1 \sing Z, \pa_{12}\sing Z) \xlra{\pi\circ\Omega|} 
	\sing X}
	=
	\lrpar{
	(\sing M; \pa \sing M) \xlra{\phi}
	(\sing X; \pa \sing X) \xlra{\id}
	\sing X},
\end{equation*}
%
and $\Omega$ satisfies the $\Pi$-$\Pi$ condition.
Now $\pa_1 \sing Z = \sing X$ and $\Omega\rest{\pa_1\sing Z} = \id,$ so the inclusion of $\pa_1\sing Z$ into $\sing Z$ satisfies the $\Pi$-$\Pi$ condition.
Since $\Phi\rest{\pa_2\sing N}$ is a BQ-equivalence we can apply the $\Pi$-$\Pi$ theorem and find a bordism (not necessarily an $\cL$-bordism) between $\sing N \xlra\Phi \sing Z \xlra\Omega \sing X \times I$ and
\begin{equation*}
	(\sing N'; \pa_1\sing N', \pa_2\sing N') \xlra{\Phi'} 
	(\sing Z; \pa_1 \sing Z, \pa_2 \sing Z) \xlra{\Omega}
	\sing X \times I
\end{equation*}
with $\Phi'$ a BQ-equivalence, and
\begin{equation*}
	\lrpar{
	\pa_2\sing N' \xlra{\Phi'|}
	\pa_2 \sing Z }
	=
	\lrpar{
	\pa_2\sing N \xlra{\Phi|}
	\pa_2 \sing Z }.
\end{equation*}
The bordism itself has the form
\begin{equation*}
	\sing L \xlra{\Psi} \sing Z \times I \xlra\Gamma \sing X \times I^2
\end{equation*}
or, more explicitly,
\begin{equation*}
	(\sing L; \sing N, \sing N', \pa_2\sing N \times I, \sing P)
	\xlra{(\Psi;\Phi, \Phi', \Phi|\times I, \Psi|)}
	(\sing Z \times I; \sing Z \times \{0\}, \sing Z \times \{1\}, \pa_2\sing Z \times I, \pa_1 \sing Z \times I)
	\xlra{\Omega \times \id}
	\sing X \times I^2,
\end{equation*}
where this diagram serves to define $\sing P.$
Note that 
\begin{equation*}
	\pa\sing P 
	= \sing P \cap (\sing N \cup \sing N' \cup \pa_2\sing N \times I) 
	= \pa_1\sing N \cup \pa_1 \sing N' \cup \pa_{12}\sing N \times I
	= \sing M \cup \pa_1\sing N'\cup \sing K \times I
\end{equation*}
and recall that $\pa_1\sing Z = \sing X,$ so if we restrict this diagram to the boundary face $\sing P,$ we find
\begin{equation*}
	(\sing P; \sing M, \pa_1 \sing N', \sing K \times I) \xlra{(\Psi|;\phi, \Phi'|, h \times \id)}
	(\sing X \times I; \sing X \times \{0\}, \sing X \times \{1\}, \pa \sing X \times I) \xlra{\id}
	\sing X \times I.
\end{equation*}
We recognize this as a normal bordism, relative to $h:\sing K \lra \pa\sing X,$ (i.e., without changing $h$) 
between
\begin{equation*}
	(\sing M;\sing K) \xlra{(\phi;h)} 
	(\sing X; \pa \sing X) \xlra{\id}
	\sing X
	\Mand
	(\pa_1\sing N';\sing K) \xlra{(\Phi'|;h)} 
	(\sing X; \pa \sing X) \xlra{\id}
	\sing X
\end{equation*}
which, since $\Phi'|$ is a BQ-equivalence, proves the theorem.

\end{proof}

\subsection{Wall realization}$\,$\\
In this section we follow Dovermann-Rothenberg \cite[\S 8]{Dovermann-Rothenberg}.

\begin{theorem}[Wall realization] \label{thm:WallReal}
Let $\sing X$ be a stratified space (without boundary) and $\sing L \xlra{f} \sing X$ a BQ-equivalence.
Every element $\alpha \in \tL_{\BQ}(\sing X \times I)$ has a representative of the form
\begin{equation*}
	(\sing W; \pa_-\sing W, \pa_+\sing W) \xlra{F}
	(\sing X \times I; \sing X \times \{0\}, \sing X \times \{1\}) \xlra{\id}
	\sing X \times I
\end{equation*}
with
\begin{equation*}
	\lrspar{
	\pa_-\sing W \xlra{F|} \sing X \times \{0\} } = 
	\lrspar{\sing L \xlra{f} \sing X}.
\end{equation*}
\end{theorem}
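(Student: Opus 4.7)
The plan is to proceed by induction on the depth of the stratification of $\sing X$, paralleling the inductive structure of the proofs of Theorems~\ref{thm:pipi} and~\ref{thm:L1L2}. The base case, in which $\sing X$ is a smooth manifold, is the classical Wall realization theorem \cite[Theorem 5.8]{Wall}: by Remark~\ref{rem:Classical} we have $\tL_{\BQ}(\sing X \times I) \cong L_{m+1}(\bbZ \pi_1(\sing X))$, and $\alpha$ is realized over the given homotopy equivalence $f: \sing L \to \sing X$ by attaching framed handles along embeddings in (the interior of) $\sing L \times I$ in the usual way.

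For the inductive step, assume the theorem has been established for all stratified spaces of depth less than $k$ and suppose $\sing X$ has depth $k$. Let $\sing X^\dagger \subseteq \sing X$ denote the union of strata of maximal depth (a disjoint union of smooth manifolds), with tubular neighborhood fibration $\phi_{X^\dagger}: \cT_{X^\dagger} \to \sing X^\dagger$. Since $f$ is BQ-transverse, $\sing L^\dagger := f^{-1}(\sing X^\dagger)$ is a smooth manifold and $f^\dagger := f|_{\sing L^\dagger}$ is a homotopy equivalence. By Theorem~\ref{thm:L1L2} represent $\alpha$ by a restricted L-cycle $\sing N \xlra{\Phi} \sing Z \xlra{\Omega} \sing X \times I$ with $\Omega$ satisfying the $\Pi$-$\Pi$ condition; restriction of this cycle to the depth-$k$ stratum (which is preserved by transversality) produces a class $\alpha^\dagger \in \tL_{\BQ}(\sing X^\dagger \times I)$.

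Apply the base case to realize $\alpha^\dagger$ by a diagram $\sing W^\dagger \xlra{F^\dagger} \sing X^\dagger \times I \xlra{\id} \sing X^\dagger \times I$ with $\pa_-\sing W^\dagger = \sing L^\dagger$ and $F^\dagger|_{\pa_-} = f^\dagger$. Using the fibration $\phi_{X^\dagger}$ (and the corresponding fibration over $\sing L^\dagger$ inside $\sing L$ coming from BQ-transversality of $f$), pull $F^\dagger$ back to obtain a tubular neighborhood realization $\cT_{W^\dagger} \xlra{\bar F^\dagger} \cT_{X^\dagger} \times I$ whose $\pa_-$ boundary agrees with the portion of $f$ lying over $\cT_{X^\dagger}$, exactly as in the pull-back construction \eqref{eq:pipi5}. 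Now remove $\cT_{X^\dagger}$ to obtain a stratified-space-with-corners complement $\sing X^+$ of depth less than $k$, carrying as part of its boundary the hypersurface $\pa\cT_{X^\dagger}$; applying the inductive hypothesis to $\sing X^+$ with boundary data prescribed by $\bar F^\dagger$ on $\pa\cT_{X^\dagger}$ and by $f$ on $\sing L \setminus \sing L^\dagger$ produces a realization $\sing W^+ \xlra{F^+} \sing X^+ \times I$. Gluing $\sing W^+$ to $\cT_{W^\dagger}$ along their common hypersurface and smoothing the resulting codimension-two corners as in \cite[\S2.6]{Wall:DiffTop} yields the desired $\sing W \xlra{F} \sing X \times I$.

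The main obstacle is arranging compatibility of $F^+$ with $\bar F^\dagger$ along $\pa\cT_{X^\dagger} \times I$ while simultaneously preserving the prescribed bottom boundary $\pa_-\sing W = \sing L$. This forces the inductive hypothesis to be invoked in a form that tracks boundary constraints imposed both by the deepest-stratum realization (via the link fibration) and by the restriction of $f$ to $\sing L \setminus \sing L^\dagger$. The required relative version of the inductive statement follows by the same gluing-and-removal argument used in the inductive step of Theorem~\ref{thm:pipi}, applied here to the realization bordism rather than to the L-cycle itself.
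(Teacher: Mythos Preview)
Your approach is genuinely different from the paper's, and it has a real gap.

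The paper does \emph{not} argue by induction on depth. Instead it applies the results you cite (Theorems~\ref{thm:pipi} and~\ref{thm:L1L2}) as black boxes to the whole stratified space at once: one adds to $\alpha$ the trivial $\cL$-cycle $\sing L\times I \xlra{f\times\id} \sing X\times I$, improves the sum via Theorem~\ref{thm:L1L2} so that the reference map satisfies the $\Pi$-$\Pi$ condition, and then applies the $\Pi$-$\Pi$ theorem relative to the part of the boundary lying over $\sing X\times\{1\}$ together with the original $\pa\sing M$. The bordism produced by the $\Pi$-$\Pi$ theorem has a boundary face $\sing P$ over $\sing X\times\{0\}\times I$, and the restriction to $\sing P$ is the required normal cobordism; Lemma~\ref{lem:SumLemma} then shows it represents $\alpha$. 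No depth induction is needed at this stage because it has already been absorbed into Theorems~\ref{thm:pipi} and~\ref{thm:L1L2}.

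Your inductive scheme, by contrast, is missing the key ingredient: you never specify \emph{which} element of an $\tL_{\BQ}$-group you are realizing over the complement $\sing X^+$. The Wall realization theorem takes as input a class in the $L$-group together with a BQ-equivalence on the lower boundary; you supply boundary data (``prescribed by $\bar F^\dagger$ on $\pa\cT_{X^\dagger}$ and by $f$ on $\sing L\setminus \sing L^\dagger$'') but no $L$-class. There is no reason the restriction of your chosen restricted $\cL$-cycle to $\sing X^+$ should have bottom boundary matching these data, and even if you could arrange that, you give no argument that gluing the realizations over $\cT_{X^\dagger}$ and over $\sing X^+$ recovers a representative of the original $\alpha$ rather than of some other class. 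Making this work would require a decomposition or excision statement for $\tL_{\BQ}$ along the hypersurface $\pa\cT_{X^\dagger}$, which is not available at this point (and is roughly what \S\ref{sec:LESLBQ} later establishes in a different form). The paper's approach sidesteps all of this: the $\Pi$-$\Pi$ theorem already encodes the depth induction, and Lemma~\ref{lem:SumLemma} handles the bookkeeping that the output still represents $\alpha$.
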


Note that this representative is an element of $\cN_{\BQ}(\sing X \times I)$ and that the restriction
\begin{equation*}
	\pa_+\sing W \xlra{F|} \sing X \times \{1\}
\end{equation*}
gives another Thom-Mather structure on $\sing X.$

\begin{proof}
Choose an $\cL$-cycle representing $\alpha,$
\begin{equation*}
	(\sing M; \pa \sing M) \xlra{\phi}
	(\sing Y; \pa \sing Y) \xlra{\omega}
	\sing X \times I
\end{equation*}
and consider the null bordant $\cL$-cycle obtained from $f,$
\begin{equation*}
	(\sing L \times I; \sing L \times \pa I) \xlra{f \times \id}
	(\sing X \times I; \sing X \times \pa I) \xlra{\id}
	\sing X \times I.
\end{equation*}
Adding these $\cL$-cycles together produces another representative of $\alpha,$
\begin{equation*}
	(\sing M \sqcup \sing L \times I; \pa\sing M \sqcup \sing L \times \pa I) \xlra{\phi \sqcup f\times \id}
	(\sing Y \sqcup \sing X \times I; \pa\sing Y \sqcup \sing X \times \pa I) \xlra{\omega \sqcup \id}
	\sing X \times I,
\end{equation*}
We can improve this representative using Theorem \ref{thm:L1L2} to obtain 
\begin{equation*}
	(\sing M'; \pa\sing M \sqcup \sing L \times \pa I) \xlra{\phi'}
	(\sing Y'; \pa\sing Y \sqcup \sing X \times \pa I) \xlra{\omega'}
	\sing X \times I,	
\end{equation*}
with $\phi'$ and $\omega'$ equal to $\phi$ and $\omega$ when restricted to the boundary, and with $\omega$ satisfying the $\Pi$-$\Pi$ condition.
Let us write
\begin{equation*}
\begin{gathered}
	\pa_1 \sing M' = \pa \sing M \sqcup \sing L \times \{1\}, \quad
	\pa_2 \sing M' = \sing L \times \{0\} = \sing L \\
	\pa_1 \sing Y' = \pa \sing Y \sqcup \sing X \times \{1\}, \quad
	\pa_2 \sing Y' = \sing X \times \{0\} = \sing X.
\end{gathered}
\end{equation*}
By commutativity of
\begin{equation*}
	\xymatrix{
	\sing X \times \{0\} \ar[d]^{\omega'|} \ar@{^(->}[r] & \sing Y' \ar[d]^{\omega'} \\
	\sing X \times \{0\} \ar@{^(->}[r] & \sing X \times I }
\end{equation*}
we see that the inclusion $\pa_2\sing Y' \hookrightarrow \sing Y'$ satisfies the $\Pi$-$\Pi$ condition.
Since moreover the map
\begin{equation*}
	\phi'|: \pa_1 \sing M'  \lra \pa_1\sing Y'
\end{equation*}
is a BQ-equivalence, we can apply Theorem \ref{thm:pipi}, the $\Pi$-$\Pi$ theorem, relative to this part of the boundary to find a BQ-equivalence
\begin{equation*}
	\sing M'' \xlra{\phi''} \sing Y'
\end{equation*}
where $\pa \sing M'' = \pa_1 \sing M'' \sqcup \pa_2\sing M'',$ with $\pa_1\sing M''=\pa_1\sing M'$ and $\phi''$ is equal to $\phi'$ on $ \pa_1\sing M',$ and $\phi'':\pa_2\sing M'' \lra \pa_2\sing Y''$ a BQ-equivalence.
The BQ-equivalence $\phi''$ is related to $\phi'$ by a bordism
\begin{equation*}
	\sing N \xlra{\Phi} \sing Y' \times I
\end{equation*}
where $\pa \sing N = \sing M' \cup \sing M'' \cup (\pa_1\sing M' \times I) \cup \sing P,$
\begin{equation*}
	\sing M' \cap \sing M'' = \emptyset, \quad
	\sing M' \cap (\pa_1\sing M' \times I) = \pa_1\sing M' = \sing M''\cap (\pa_1\sing M' \times I), \quad
	\pa\sing P = \pa_2 \sing M' \sqcup \pa_2 \sing M''.
\end{equation*}
The restriction of $\Phi$ to $\sing P$ yields an element $\gamma$ of $\cL_{\BQ}(\sing X \times I)$
\begin{equation*}
	(\sing P; \sing L, \pa_2\sing M'') \xlra{\Phi|}
	(\pa_2 \sing Y' \times I = \sing X \times I; \sing X \times \{0\}, \sing X \times \{1\})
	\xlra{\id}
	\sing X \times I
\end{equation*}
of the kind required in the statement of the theorem.
(Incidentally, note that the fact that $\Phi|_{\sing P}$ is BQ-normal and not a BQ-equivalence is why the bordism $\sing N$ is not a null bordism for $\alpha.$)

The BQ-normal map $\Phi: \sing N \lra \sing Y' \times I$ is a null bordism of the $\cL$-cycle 
\begin{equation*}
	\sing M' \cup_{\sing L} \sing P \lra
	\sing Y' \cup_{\sing X} \sing X \times I \lra
	\sing X \times I
\end{equation*}
which shows, by Lemma \ref{lem:SumLemma}, that $\alpha$ and $\gamma$ represent the same class in $\tL_{\BQ}(\sing X \times I).$

\end{proof}

\begin{corollary} \label{cor:LAct}
Given $[\alpha] \in \tL_{\BQ}(\sing X \times I)$ and $[\beta] \in \tS_{\BQ}(\sing X)$ we can use the theorem to choose representatives of the form
\begin{equation*}
\begin{gathered}
	\beta: \sing M \xlra{f} \sing X \\
	\alpha: (\sing W; \sing M, \sing M') \xlra{(\phi;\id, \phi_2)}
	(\sing M \times [0,1]; \sing M \times \{0\}, \sing M \times \{1\}) \xlra{\id}
	\sing M \times I,
\end{gathered}
\end{equation*}
and then the class of $f\circ \phi_2: \sing M' \lra \sing X$ in $\tS_{\BQ}(\sing X)$ is well-defined and denoted $\pa(\alpha)(\beta).$
The map
\begin{equation*}
	\xymatrix @R=1pt {
	\tL_{\BQ}(\sing X \times I) \times \tS_{\BQ}(\sing X) \ar[r] & \tS_{\BQ}(\sing X) \\
	([\alpha],[\beta]) \ar@{|->}[r] & \pa(\alpha)(\beta) }
\end{equation*}
defines a group action of the Browder-Quinn L-group of $\sing X \times I$ on the structure set of $\sing X.$
\end{corollary}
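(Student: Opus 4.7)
The plan is to combine Theorem~\ref{thm:WallReal} (Wall realization) with the invariance of the Browder--Quinn L-group under BQ-equivalence (Remark~\ref{rem:PiPiSuff}), and then to use Lemma~\ref{lem:SumLemma} to install the group action structure.

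To produce the claimed representatives, I would start with any BQ-equivalence $f:\sing M\lra\sing X$ representing $\beta$. By Remark~\ref{rem:PiPiSuff}, the map $f\times\id:\sing M\times I\lra\sing X\times I$ induces an isomorphism $\tL_{\BQ}(\sing M\times I)\xrightarrow{\cong}\tL_{\BQ}(\sing X\times I)$, so $[\alpha]$ pulls back to a well-defined class $[\alpha']\in\tL_{\BQ}(\sing M\times I)$. Applying Theorem~\ref{thm:WallReal} to $[\alpha']$ with the BQ-equivalence $\id:\sing M\lra\sing M$ yields a representative of the form
\[
	(\sing W;\sing M,\sing M')\xlra{(\phi;\id,\phi_2)}(\sing M\times I;\sing M\times\{0\},\sing M\times\{1\})\xlra{\id}\sing M\times I,
\]
and composing with $f\times\id$ recovers a representative of $[\alpha]$ over $\sing X\times I$ of the shape asserted. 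I then set $\pa(\alpha)(\beta):=[f\circ\phi_2]\in\tS_{\BQ}(\sing X)$.

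Next I would verify well-definedness. Suppose $f_i:\sing M_i\lra\sing X$ ($i=1,2$) are representatives of $\beta$ and $(\sing W_i;\sing M_i,\sing M_i')\to\sing M_i\times I$ are Wall-type representatives of $[\alpha]$, producing $\phi_{2,i}:\sing M_i'\lra\sing M_i$. Since $\alpha_1$ and $\alpha_2$ (transported back to $\sing X\times I$) represent the same class, $\alpha_1+\alpha_2^{\op}$ is null bordant. Glue the two Wall cycles along $\sing M'\times I$-type collars using Lemma~\ref{lem:SumLemma} to obtain a single null-bordant $\cL$-cycle over $\sing X\times I$ whose $\pa_-$ end recovers $f_1\circ\phi_{2,1}$ and whose $\pa_+$ end recovers $f_2\circ\phi_{2,2}$. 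Theorem~\ref{thm:L1L2} converts this null bordism into a restricted one, and then the $\Pi$-$\Pi$ theorem (Theorem~\ref{thm:pipi}), applied relative to the two structure-set ends, upgrades the BQ-normal map on the total bordism into a BQ-equivalence without touching the two ends. The resulting stratified space is exactly a structure bordism between $f_1\circ\phi_{2,1}$ and $f_2\circ\phi_{2,2}$, so $\pa(\alpha)(\beta)$ is unambiguous.

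Finally, for the group action axioms: the identity $0\in\tL_{\BQ}(\sing X\times I)$ admits the trivial Wall representative $\sing W=\sing M\times I$, $\phi=\id$, $\phi_2=\id$, so $\pa(0)(\beta)=\beta$. For additivity, given $[\alpha_1],[\alpha_2]\in\tL_{\BQ}(\sing X\times I)$, I would first apply the construction to $[\alpha_2]$ and $\beta$ to obtain $\sing W_2$ with $\pa_+\sing W_2=\sing M'$ and a new BQ-equivalence $f\circ\phi_{2,2}:\sing M'\lra\sing X$, then apply the construction to $[\alpha_1]$ based at the class of $f\circ\phi_{2,2}$ to obtain $\sing W_1$; stacking $\sing W_2$ on top of $\sing W_1$ along $\sing M'$ gives, via Lemma~\ref{lem:SumLemma}, a Wall representative of $[\alpha_1]+[\alpha_2]$ whose top yields $\pa(\alpha_1)(\pa(\alpha_2)(\beta))$. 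Thus $\pa$ is a group action. The main obstacle will be the well-definedness step: keeping careful track of which pieces of the boundary are held fixed when one simultaneously varies both the representative of $\beta$ and the representative of $\alpha$, and confirming that the relative $\Pi$-$\Pi$ theorem applies without disturbing the two structure ends.
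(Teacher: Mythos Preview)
Your construction of the representatives and the group-action axioms are correct and match the paper's approach. The gap is in the well-definedness step. When you vary \emph{both} the representative of $\beta$ and the Wall representative of $\alpha$ simultaneously, the two Wall cycles $\alpha_1$ and $\alpha_2^{\op}$ (transported to $\sing X\times I$) do not share a common boundary piece: over $\sing X\times\{0\}$ and $\sing X\times\{1\}$ you see the four distinct maps $f_1$, $f_1\circ\phi_{2,1}$, $f_2\circ\phi_{2,2}$, $f_2$, no two of which need coincide. Lemma~\ref{lem:SumLemma} therefore does not apply, and your phrase ``glue along $\sing M'\times I$-type collars'' does not correspond to any available operation. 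Relatedly, you never invoke the structure bordism between $f_1$ and $f_2$, which is the only data linking $\sing M_1$ to $\sing M_2$.

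The paper avoids this by separating the two variations. First, with $\beta$ held fixed (so $\sing M_1=\sing M_2=\sing M$), the two Wall representatives of $\alpha$ genuinely share the end $\sing M$; gluing them produces a null element of $\tL_{\BQ}(\sing M\times I)$, and Theorem~\ref{thm:exact1} (which already packages your Theorem~\ref{thm:L1L2} plus Theorem~\ref{thm:pipi} argument) gives the required structure bordism. Second, to vary $\beta$, the paper inserts the structure bordism $(\sing N;\sing M,\sing L)\xrightarrow{F}\sing X\times I$ between the two Wall cycles, obtaining a three-tier stack $\alpha'\cup\sing N\cup\alpha^{\op}$ that now has matching boundaries at each seam; by Lemma~\ref{lem:SumLemma} this represents $[\alpha]+0-[\alpha]=0$, and one concludes as before. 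Your argument can be repaired along these lines, but as written the single-step gluing fails.
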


\begin{proof}
Since $f: \sing M \lra \sing X$ is a BQ-equivalence, $\tL_{\BQ}(\sing X\times I) = \tL_{\BQ}(\sing M \times I)$ and we can use the Wall representation theorem starting with the BQ-equivalence $\sing M \xlra{\id} \sing M$ to represent $\alpha$ as above.

If we fix the representative $\beta$ then any two representatives $\alpha,$ $\alpha'$ of $[\alpha]$ as above can be glued together along their common boundary and the result $\gamma \in \cL_{\BQ}(M \times I)$ represents the zero element of $\tL_{\BQ}(M \times I).$ It follows, from Theorem \ref{thm:exact1} applied to $\sing M \times I,$ that $\gamma$ is normal bordant relative to the boundary to a BQ-equivalence. Thus $f \circ \phi_2$ and $f \circ \phi_2'$ represent the same element of $\tS_{\BQ}(M).$\\

If the BQ-equivalence $\beta': \sing L \xlra{f'} \sing X$ represents the same class as $\beta,$ then there is a bordism between them
\begin{equation*}
	(\sing N; \sing M, \sing L) \xlra{(F;f,f')}
	(\sing X \times I; \sing X \times \{0\}, \sing X \times \{1\}).
\end{equation*}
Using the theorem we can find a representative of $[\alpha]$ of the form
\begin{equation*}
	\alpha': (\sing V; \sing L, \sing L') \xlra{(\psi:\id, \psi_2)}
	(\sing L \times [0,1]; \sing L \times \{0\}, \sing L \times \{1\}).
\end{equation*}
Now let us glue these, and $\alpha,$ together in the following order by matching the `lower boundary' of one row with the `upper boundary' of the following row,
\begin{equation*}
\xymatrix @R=5pt{
	(\sing V; \sing L, \sing L') \ar[rr]^-{(\psi:\id, \psi_2)} & &
	(\sing L \times [0,1]; \sing L \times \{0\}, \sing L \times \{1\})
	\ar[r]^-{f' \times \id} &
	\sing X \times I \\
	(\sing N; \sing M, \sing L) \ar[rr]^-{\id} & &
	(\sing N; \sing M, \sing L) \ar[r]^-{(F;f,f')} &
	\sing X \times I \\
	(\sing W^{\op}; \sing M', \sing M) \ar[rr]^-{(\phi;\phi_2, \id)} & &
	(\sing M \times [0,1]; \sing M \times \{0\}, \sing M \times \{1\}) \ar[r]^-{f \times \id} &
	\sing X \times I }
\end{equation*}
We end up with an cycle in $\cL_{\BQ}(X \times I)$ with $\pa(\alpha')(\beta')$ along the upper boundary and $\pa(\alpha)(\beta)$ along the lower boundary. Moreover, this cycle is null bordant since by Lemma \ref{lem:SumLemma}, it represents the class $[\alpha]+0-[\alpha] = 0.$ It follows as in the previous case that  $\pa(\alpha')(\beta')$ and $\pa(\alpha)(\beta)$ represent the same element in $\tS_{\BQ}(\sing X).$\\

Compatibility with the group operation on $\tL_{\BQ}(\sing X \times I)$ is easy as the operation is given by stacking normal bordisms together as in Lemma \ref{lem:SumLemma}.
\end{proof}

\begin{corollary}\label{Exactness part 2}[Exactness part 2]
Let $\sing X$ be a stratified space (without boundary).
The sequence
\begin{equation*}
	\xymatrix{
	\tN_{\BQ}(\sing X \times I, \sing X \times \pa I) \ar[r]^-{\theta} &
	\tL_{\BQ}(\sing X \times I) \ar@{..>}[r]^-{\pa} &
	\tS_{\BQ}(\sing X) \ar[r]^-{\eta} &
	\tN_{\BQ}(\sing X)
	}
\end{equation*}
is exact in that two elements of the L-group have the same action on the class of the identity map precisely when their difference is in the image of $\theta,$ and two elements in the structure set are in the same orbit precisely when they have the same image under $\eta.$
\end{corollary}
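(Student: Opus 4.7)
The plan is to establish the two exactness assertions separately, invoking the group-action framework of Corollary \ref{cor:LAct} to reduce the exactness at $\tL_{\BQ}(\sing X \times I)$ to the single statement that $\pa([\alpha])([\id]) = [\id]$ if and only if $[\alpha]$ lies in the image of $\theta.$ Throughout, the main technical tool will be Wall realization (Theorem \ref{thm:WallReal}), which lets one pick an $\cL$-cycle representative of any class whose lower boundary realizes any prescribed BQ-equivalence.

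For exactness at $\tS_{\BQ}(\sing X),$ suppose first that $[\beta'] = \pa([\alpha])([\beta])$ with $\beta = [f\colon\sing M \to \sing X].$ By Theorem \ref{thm:WallReal} $\alpha$ admits a representative $(\sing W;\sing M,\sing M')\xlra{F}\sing X\times I$ with $F|_{\pa_-}=f$ and $F|_{\pa_+}$ a BQ-equivalence representing $\beta';$ then $F$ itself is a BQ-normal map whose boundary restrictions are BQ-equivalences, hence a normal bordism witnessing $\eta([\beta])=\eta([\beta']).$ Conversely, given a normal bordism $\Phi\colon\sing N\to\sing X\times I$ between $\beta$ and $\beta',$ the resulting $\cL$-cycle $\sing N\xlra{\Phi}\sing X\times I\xlra{\id}\sing X\times I$ is already in the Wall form of Theorem \ref{thm:WallReal} with lower boundary representing $\beta,$ so the definition of the action directly gives $\pa([\Phi])([\beta])=[\beta'].$

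For exactness at $\tL_{\BQ}(\sing X\times I),$ the ``if'' direction reduces to the claim that every $[\alpha]$ in the image of $\theta$ acts trivially on $[\id].$ Any such $[\alpha]$ has a representative $\phi\colon\sing M\to\sing X\times I$ with $\phi|_{\pa\sing M}$ a diffeomorphism, and identifying $\pa\sing M$ with $\sing X\sqcup\sing X$ via this diffeomorphism presents the $\cL$-cycle as already being in Wall form with both boundary restrictions equal to $\id,$ so $\pa([\alpha])([\id])=[\id].$ For the converse, given $\pa([\alpha])([\id])=[\id],$ Wall realization with $f=\id$ produces a representative $(\sing W;\sing X,\sing X')\xlra{F}\sing X\times I$ where $F|_{\pa_-}=\id$ and $F|_{\pa_+}=\phi_2$ is a BQ-equivalence representing $[\id]$ in $\tS_{\BQ}(\sing X);$ hence there is a normal bordism $\Psi\colon\sing V\to\sing X\times I$ which is itself a BQ-equivalence and satisfies $\Psi|_{\pa}=\phi_2\sqcup\id.$ Gluing $\sing W$ to $\sing V$ along $\sing X'$ (rescaling so that $\sing W$ maps into $\sing X\times[0,1/2]$ and $\sing V$ into $\sing X\times[1/2,1]$) produces a BQ-normal map $\sing W\cup_{\sing X'}\sing V\to\sing X\times I$ whose boundary restriction is $\id\sqcup\id,$ hence an element of $\cN_{\BQ}(\sing X\times I,\sing X\times\pa I)$ whose image under $\theta$ equals $[\alpha]+[\sing V\to\sing X\times I]$ by Lemma \ref{lem:SumLemma}. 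The remaining summand vanishes via the explicit null bordism $\sing V\times I\xlra{(\Psi,\id)}\sing X\times I\times I$ (using the BQ-equivalence hypothesis on $\Psi$ to place $\sing V\times\{1\}\cup\pa\sing V\times I$ in the role of $\pa_2,$ leaving $\pa_1=\sing V\times\{0\}$ to recover the original cycle). The chief obstacle I anticipate is verifying the BQ-normal property of the glued map across the stratified seam $\sing X'$ and correctly organizing the corners and $\pa_1/\pa_2$ decomposition of the null bordism, both of which should follow from the collared product structure near $\sing X'$ in $\sing W$ and $\sing V.$
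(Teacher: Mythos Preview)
Your proposal is correct and follows essentially the same approach as the paper: Wall realization (Theorem~\ref{thm:WallReal}) together with gluing via Lemma~\ref{lem:SumLemma}. The one place where you proceed slightly differently is the ``only if'' direction at $\tL_{\BQ}(\sing X\times I)$: the paper works directly with two classes $x_1,x_2$, chooses Wall representatives with lower boundary $\id$, arranges (``without loss of generality'') that their upper boundaries coincide, and glues $\sing W_1$ to $-\sing W_2$ along that common upper boundary to obtain a normal invariant representing $x_1-x_2$. You instead reduce to $x_2=0$ via the group action, glue a structure bordism $\sing V$ onto $\sing W$ to force the upper boundary to be $\id$, and then kill the extra summand $[\sing V\to\sing X\times I]$ with the explicit product null bordism $\sing V\times I$. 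These are equivalent moves---your null bordism is exactly what justifies the paper's ``without loss of generality'' step---so your argument is in fact a slightly more explicit version of the paper's.
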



\begin{proof}
Given $x_1, x_2 \in \tL_{\BQ}(\sing X \times I)$ such that $\pa(x_1)(\id) = \pa(x_2)(\id)$ we can use the Wall representation theorem to find
\begin{equation*}
	(\sing W_i; \sing X, \pa_+\sing W_i) \xlra{F_i}
	(\sing X \times I; \sing X \times \{0\}, \sing X \times \{1\})
	\xlra{\id} \sing X \times I
\end{equation*}
representing $x_i.$ Without loss of generality 
$\pa_+\sing W_1 \xlra{F_1} \sing X$ and $\pa_+\sing W_2 \xlra{F_2} \sing X$ can be taken to be the same representative of $\pa(x_1)(\id),$ so that we may form
\begin{equation*}
	(\sing W_1 \sqcup_{\pa_+\sing W_i} -\sing W_2; \sing X, \sing X) 
	\xlra{F_1 \sqcup F_2}
	(\sing X \times I; \sing X \times \{0\}, \sing X \times \{1\})
	\xlra{\id} \sing X \times I
\end{equation*}
and recognize this as a representative of a class in $\tN_{\BQ}(\sing X \times I, \sing X \times \pa I)$ representing $x_1 -x_2.$ The converse follows by similar reasoning.


If $[\beta], [\beta'] \in \tS_{\BQ}(\sing X)$ have the same image under $\eta$ then there is a normal bordism $\alpha$ between $\beta$ and $\beta'.$ This normal bordism defines an element of $\tL_{\BQ}(\sing X \times I)$ whose action on $\tS_{\BQ}(\sing X)$ sends $[\beta]$ to $[\beta'].$
For the same reason elements in the structure set that are in an orbit of the action of an element of the L-group have the same image under $\eta.$

\end{proof}

\section{K-theory classes associated to the signature operator on Witt and Cheeger spaces}\label{sect:classes}

\subsection{Metric structures}$\,$\\
In order to do analysis we endow a stratified space with a Riemannian metric. 
Let $\sing X$ be a smoothly stratified space and $\res X$ its resolution to a manifold with corners and an iterated fibration structure.

Recall from, e.g., \cite{ALMP:Witt, ALMP:Hodge, ALMP:Novikov}, that an {\bf iterated incomplete edge metric} (briefly, an iie-metric) is a Riemannian metric on the interior of $\res X$ (or, better, a bundle metric on the iterated incomplete edge tangent bundle over all of $\res X$) that in a collar neighborhood of each collective boundary hypersurface $\bhs{Y}$ takes the form
\begin{equation*}
	dx^2 + x^2g_Z + \phi_Y^*g_Y.
\end{equation*}
Here $x$ is a boundary defining function for $\bhs{Y},$ i.e., a smooth non-negative function on $\res X$ that is positive except at $\bhs{Y} = \{x=0\}$ where it vanishes to exactly first order, and $g_Z$ and $g_Y$ are metrics with the same structure on the spaces $Z$ and $Y.$ (Thus this is really an inductive definition over the depth of a stratified space, with spaces of depth zero being assigned smooth Riemannian metrics, see {\em loc. cit.}.)

In particular we point out that an iie-metric on $\sing X$ includes a Riemannian metric on each stratum of $\sing X$ and that these metrics fit together continuously (even smoothly in that they lift to a smooth section over $\res X$). Thus endowing $\sing X$ with an iie-metric in particular gives $\sing X$ the structure of a `Riemannian Whitney (A) space' in the sense of Pflaum \cite[\S2.4]{Pflaum}. (Note that the latter concept is more general, e.g., if we were working with metrics that were asymptotically of the form $dx^2 + x^{2\ell}g_Z + \phi_Y^*g_Y$ for any $\ell$ we would still get a `Riemannian Whitney (A) space'.) In particular, by Theorem 2.4.7 in \cite{Pflaum}, the topology on $\sing X$ is that of the metric space with distance between two points given by taking the infimum over rectificable curves joining them.
As a metric space, $\sing X$ is complete and locally compact \cite[Theorem 2.4.17]{Pflaum} and hence a `length space'.

\subsection{Galois coverings}$\,$\\
Let $\sing{X}$ be a smoothly stratified pseudomanifold of arbitrary depth. 
Consider a Galois covering $\pi: \cov{\sing{X}} \to \sing{X}$
with Galois group $\Gamma$ and  fundamental domain $\widehat{\mathscr{F}}_{\Gamma}$. There is a natural 
way to define a topological stratification on $ \cov{\sing{X}}$. 
Decompose $ \cov{\sing{X}}$ into the
preimages under $\pi$ of the strata in $ \sing{X}$. 
Surjectivity of $\pi$ ensures that each stratum in the covering is mapped 
surjectively onto the corresponding stratum in $\sing{X}$. 
Since $\pi$ is  a local homeomorphism, it is 
straightforward to check that  $\cov{\sing{X}}$ and 
its fundamental domain  are again topological 
stratified spaces. \medskip

In fact, more is true: by using these local homeomorphisms we can induce a \emph{smooth}
stratification on $
\cov{\sing{X}}$ by simply pulling it up from the base,  
in either the Whitney as well as the Thom-Mather cases.
It is important to point out that, by definition, the link of a point  
$\tilde{p}\in  \cov{\sing{X}}$ is
equal to the link of its image, $p$, in the base. This construction exhibits the covering map $\pi$ as a transverse map and thus {\em if $\sing X$ belongs to a class $\sC$ as above, then so does $\cov{\sing{X}}.$}

Needless to say,  if  $ \cov{\sing{X}}$ is the universal covering 
space of $ \sing{X}$, the individual strata 
in $\cov{\sing{X}}$ need 
not be the universal covering of the 
corresponding strata in the base. 
We denote by $\cov{\sm{X}}$ the regular stratum of 
$\cov{\sing{X}}$ and observe that it is a Galois covering of the regular stratum $\sm{X}$ of 
$\sing{X}$ with fundamental domain $\mathscr{F}_\Gamma$ equal to the regular part of $\widehat{\mathscr{F}}_{\Gamma}$.
Let $g$ be an admissible incomplete edge metric on $\sm{X}$. We can lift $g$ to the Galois covering
$\cov{\sm{X}}$ where it becomes a $\Gamma$-invariant admissible incomplete 
edge metric $\widetilde{g}$. 
Moreover, there is an isometric embedding of $\mathscr{F}_\Gamma$ into $\sm{X}$ with complement of measure 
zero. We denote  by $\De$ the signature operator on $\cov{\sm{X}}$ associated to such a metric. 
 
%
%
%
 

\subsection{$C^*$ and $D^*$ algebras}$\,$\\
First of all, we need to fix an Hilbert space $H$ with a unitary action of $\Gamma$ and a $C^*$-representation from $C_0 (\cov{\sing{X}})$
to $\sB (H)$ intertwining the two actions of $\Gamma$. Notice that the representation is associated to the {\em stratified} Galois covering $\cov{\sing{X}}$ (and not to its regular part $\cov{\sm{X}}$).
We take $H=L^2 (\cov{X}, \Lambda^* \cov{X})$;
 the representation is given by the multiplication operator associated to the restriction 
of a function to the regular part $\cov{\sm{X}}$. To these data we can associate two $C^*$-algebras:
the Roe algebra $C^* (\cov{\sing{X}},H)^\Gamma$, obtained as the closure of the $\Gamma$-equivariant finite propagation locally compact
bounded operators on $H$, and the Higson-Roe algebra 
$D^* (\cov{\sing{X}},H)^\Gamma$, obtained as the closure of the $\Gamma$-equivariant finite propagation pseudolocal bounded operators 
on $H$. Since we shall be eventually interested in the K-theory groups of these $C^*$-algebras and since the $K$-theory
groups are independent of the choice of the (adequate) $\Gamma$ equivariant $C_c (\cov{\sing{X}})$-module $H$, we shall 
adopt the notation $C^* (\cov{\sing{X}})^\Gamma$ and $D^* (\cov{\sing{X}})^\Gamma$ for these two $C^*$-algebras.

\footnote{For technical reasons having to
do with functoriality one actually takes $H=L^2 (\cov{X}, \Lambda^* \cov{X})\otimes \ell^2 (\bbN).$}

Then we have the following fundamental

\begin{proposition}\label{prop:classes-0}
Let $(\sing{X},g)$ and $(\cov{\sing{X}},\tilde{g})$ as above. Assume that $\sing{X}$
is a Cheeger-space and let $\cW$ be
a  self-dual mezzoperversity for $D$. Then:
\begin{itemize}
\item there exists a closed $\Gamma$-equivariant self adjoint extension of $\De$ associated to $\cW$, denoted $\De^{\cW}$.
\item if $\phi\in C_0 (\bbR)$, then $\phi (\De^{\cW})\in C^* (\cov{\sing{X}})^\Gamma$. 
\item if $\chi$ is a chopping function (i.e. $\chi:\bbR\to [-1,1]$ is odd and $\lim_{x\to \pm\infty} \chi(x)=\pm 1$)
then $\chi  (\De^{\cW})\in D^* (\cov{\sing{X}})^\Gamma$. 
\end{itemize}
\end{proposition}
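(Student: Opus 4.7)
The plan is to pull back the self-dual mezzoperversity $\cW$ from $\sing X$ to a $\Gamma$-invariant self-dual mezzoperversity $\tilde\cW$ on $\cov{\sing X}$, use it to define the $\Gamma$-equivariant domain as in \cite{ALMP:Hodge}, and then transplant the by-now-standard coarse-geometric arguments of Roe to the stratified setting, using finite propagation speed and Rellich-type compactness adapted to this domain.

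\medskip

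First I would establish the self-adjoint extension. The strata of $\cov{\sing X}$ are in bijection (via $\pi$) with the strata of $\sing X$, and because $\pi$ is a local isometry the links along corresponding strata are isometric. Hence a choice of flat bundle of Cheeger ideal boundary conditions on each even-dimensional link stratum of $\sing X$ pulls back to a $\Gamma$-equivariant one on $\cov{\sing X}$; call this $\tilde\cW$. The construction of \cite{ALMP:Hodge} then produces a closed extension $\De^{\cW}$ of $\De$ whose domain consists of those $u\in L^2(\cov{\sing X},\Lambda^*)$ that satisfy, in the distributional sense near each singular stratum, the boundary condition imposed by $\tilde\cW$; $\Gamma$-equivariance of the domain is automatic because $\tilde\cW$ is $\Gamma$-equivariant and the construction is purely local near the strata. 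Self-duality of $\cW$ then implies self-adjointness exactly as in the compact case, because the arguments of \cite{ALMP:Hodge} localize near the strata, where the metric is identical to its lift.

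\medskip

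For the second bullet, I would first establish the two ingredients needed to place Schwartz-class functions of $\De^{\cW}$ into $C^*(\cov{\sing X})^\Gamma$: (i) finite propagation speed for the wave group $e^{it\De^{\cW}}$ and (ii) local compactness of $(\De^{\cW} \pm i)^{-1}$. For (i), the wave equation with Cauchy data in $\Dom(\De^{\cW})$ has unique solutions whose propagation is bounded by $|t|$ in the length metric induced by $\tilde g$; this follows from the results on well-posedness of the wave equation on Cheeger spaces in \cite{ALMP:Hodge}, together with a standard energy estimate, and is $\Gamma$-equivariant by uniqueness. For (ii), the inclusion $\Dom(\De^{\cW}) \hookrightarrow L^2_{\mathrm{loc}}$ is compact on any relatively compact set by the Rellich-type theorem in \cite{ALMP:Hodge}. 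Thus for $f \in C_c(\bbR)$ with $\hat f$ compactly supported, $f(\De^{\cW}) = \frac{1}{2\pi} \int \hat f(t) e^{it\De^{\cW}}\,dt$ has finite propagation, and $\chi_K f(\De^{\cW})$ is compact for any compactly supported $\chi_K$, i.e., $f(\De^{\cW})$ is locally compact and $\Gamma$-equivariant, hence lies in $C^*(\cov{\sing X})^\Gamma$. Since such $f$ are dense in $C_0(\bbR)$ in the sup norm, and functional calculus is continuous in this norm, the second bullet follows by closure.

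\medskip

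For the third bullet, given a chopping function $\chi$, note that $\chi^2 - 1 \in C_0(\bbR)$, so $\chi(\De^{\cW})^2 - I \in C^*(\cov{\sing X})^\Gamma$ by the second bullet; in particular $\chi(\De^{\cW})$ is bounded. To check pseudolocality I must show that $[\chi(\De^{\cW}), \varphi]$ is compact for every $\varphi \in C_c(\cov{\sing X})$, and that $\chi(\De^{\cW})$ has finite propagation. Choose an approximation $\chi_n$ with $\chi_n - \chi \in C_0(\bbR)$ and $\widehat{\chi_n}$ of compact support (for example, $\chi_n = \psi_n * \chi$ with $\psi_n$ a smooth mollifier of compactly supported Fourier transform, after truncating $\chi$ so that the difference is in $C_0$). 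Then $\chi_n(\De^{\cW})$ has finite propagation by the argument of the previous paragraph, and $\chi(\De^{\cW}) - \chi_n(\De^{\cW}) \in C^*(\cov{\sing X})^\Gamma$; a telescoping argument gives finite propagation of $\chi(\De^{\cW})$ modulo $C^*$. For pseudolocality, the commutator $[\chi_n(\De^{\cW}),\varphi]$ is locally compact because $\chi_n(\De^{\cW})$ has finite propagation and $\varphi$ is compactly supported, and the commutator with the $C^*$-correction is locally compact by the second bullet; taking $n \to \infty$ and using norm continuity closes the argument, yielding $\chi(\De^{\cW}) \in D^*(\cov{\sing X})^\Gamma$.

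\medskip

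The main obstacle is finite propagation and the Rellich compactness on the domain associated with $\tilde\cW$. These need the local-in-time well-posedness and energy estimates for the wave equation with Cheeger ideal boundary conditions, proved in \cite{ALMP:Hodge} in the compact case; the key technical point is that on the Galois cover these estimates pass through unchanged because they are local in space-time and the cover is locally isometric to the base. Once this is in hand, the standard Roe-algebra arguments go through verbatim.
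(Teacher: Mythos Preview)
Your strategy coincides with the paper's: pull back $\cW$ to a $\Gamma$-invariant mezzoperversity, invoke the local analysis of \cite{ALMP:Hodge} for the self-adjoint domain and its locally compact inclusion into $L^2$, and then run the standard Roe-type argument using finite propagation of the wave group. Two points, however, deserve correction.

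First, a minor oversimplification in (2): you assert unit-speed propagation of $e^{it\De^{\cW}}$ for all $t$ directly from energy estimates. Because the iie metric is incomplete, what is actually available (\cite[Theorem~5.3]{ALMP:Novikov}) is propagation $|s|$ only for $|s|<\delta$, for some $\delta>0$. The paper bootstraps from this via the group law, writing $e^{iT\De^{\cW}}$ as a finite product of factors $e^{i\tau_\ell \De^{\cW}}$ with $|\tau_\ell|<\delta$ (Roe's trick), each of finite propagation. You should make this step explicit rather than appeal to a global energy estimate that has not been established in this setting.

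Second, and more seriously, there is a gap in your pseudolocality argument for (3). You claim that $[\chi_n(\De^{\cW}),\varphi]$ is compact (``locally compact'') merely because $\chi_n(\De^{\cW})$ has finite propagation and $\varphi$ is compactly supported. This does not follow: finite propagation only forces the commutator to be supported near $\supp\varphi$, but $\psi\,\chi_n(\De^{\cW})$ is \emph{not} compact for compactly supported $\psi$, since $\chi_n\notin C_0(\bbR)$ and hence $\chi_n(\De^{\cW})$ is not locally compact. The paper avoids this by invoking Kasparov's lemma \cite[Lemmas~5.4.7 and 10.6.4]{higson-roe-book}: pseudolocality of $\chi(\De^{\cW})$ is equivalent to compactness of $f\,\chi(\De^{\cW})\,g$ for all bounded $f$ and compactly supported $g$ with disjoint supports. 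Given such $f,g$ at distance $\eta>0$, choose a chopping function $\chi_0$ whose distributional Fourier transform is supported in $(-\eta/2,\eta/2)$; then $f\,\chi_0(\De^{\cW})\,g=0$ by finite propagation, and $f\,\chi(\De^{\cW})\,g = f\,(\chi-\chi_0)(\De^{\cW})\,g$ is compact since $\chi-\chi_0\in C_0(\bbR)$ and the second bullet applies. Replace your commutator estimate with this argument and the proof goes through.
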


\begin{proof}
(1) 
The pull-back of the mezzoperversity $\cW$ along the covering map $\pi$ is a mezzoperversity on $\cov X,$ which we briefly denote $\cov{\cW}.$ The definition of the domain associated to a mezzoperversity in \cite{ALMP:Hodge} applies in the setting of $\cov X,$ as the asymptotic expansions on which it relies are carried out in distinguished neighborhoods of points on the singular strata and these are the same on $\sing X$ or $\cov{\sing X}.$ Similarly we can define $\cD_{\cov{\cW}}(d)$ and $\cD_{\cov{\cW}}(\delta)$ as in \cite{ALMP:Hodge} and see that they are mutually adjoint and that
\begin{equation*}
	\cD_{\cov{\cW}}(\De^{\cW}) = \cD_{\cov{\cW}}(d) \cap \cD_{\cov{\cW}}(\delta)
\end{equation*}
so that $\De^{\cW}$ with this domain is self-adjoint.

The analysis of \cite{ALMP:Hodge} that establishes that $\cD_{\cW}(D)$ includes compactly into $L^2( X; \Lambda^*X)$ implies that, for any compact subset $K \subseteq \cov{\sing X},$
\begin{equation*}
	\{ u \in \cD_{\cov{\cW}}(\De^{\cW}) : \supp (u) \subseteq K \}
\end{equation*}
includes compactly into $H.$

(2)
As far as the second item is concerned we initially tackle the local compactness of $\phi  (\De^{\cW})$.
We have to prove that if $g\in C_c (\cov{\sing{X}})$ then $g \phi  (\De^{\cW})$ and $ \phi  (\De^{\cW})g$   are compact operators.
By taking adjoints it suffices to see that $g \phi  (\De^{\cW})$ is compact.
Using the Stone-Weistrass theorem it suffices to establish this property for the 
function $\phi(x)=(i+x)^{-1}$. As this maps $H$ into $\cD_{\cov{\cW}}(\De^{\cW}),$ the local compactness of the inclusion of the latter into $H$ implies that of $\phi(\De^{\cW}).$

Next we consider the finite propagation property: by a density argument it suffices to see such a property 
 for smooth functions $\phi$ that are of rapid decay and have compactly
supported Fourier transform. Thus, let $\hat{\phi}$ be the Fourier transform of a smooth rapidly decaying 
$\phi$ and let
us assume that the support of $\hat{\phi}$ is contained in $[-R/2,R/2]$. 
We must show that there exists $S\in\bbR^+$ such that
$f \phi  (\De^{\cW}) g=0$ whenever the distance between the support of $f$ and $g$ is greater than $S$.
Proceeding precisely as in  \cite[Theorem 5.3]{ALMP:Novikov} we know that there exists a $\delta$ such that
$\exp(i s \De^{\cW})$
has propagation $|s|$ if $|s| < \delta$; thus
$f \exp(is \De^{\cW}) g=0$ if the distance of the supports of $f$ and $g$ is greater than $|s|$, with $s$ in the range 
$(-\delta,\delta)$.  Recall now
that, by functional calculus, we can write
$$\phi  (\De^{\cW})=\frac{1}{2\pi}\int  \exp(i s \De^{\cW}) \hat{\phi} (s) ds $$
where the integral converges weakly:
$$\ang{ \phi  (\De^{\cW})u,v} =\frac{1}{2\pi}\int  \ang{\exp(i s \De^{\cW})u,v} \hat{\phi} (s) ds $$
for each pair of compactly supported sections on $\cov{\sm{X}}$.
Assume initially that $R<\delta$. Then, from the above integral representation, we see that $\phi  (\De^{\cW})$
has finite propagation (in fact, propagation $R$) which is what we wanted to prove. For the general case we use a trick from \cite{roe-foliation}. Write $\phi=\sum_j f_j$ where the sum is finite and where $f_j$ has Fourier transform supported in 
$(T_j-\delta/2, T_j+\delta/2)$. Consider $g_j (x)= \exp(-iT_j x)f_j (x) $. Then $g_j (\De^{\cW})$ has propagation  $\delta$
by what we have just seen. Write now $$\exp(iT_j x)=\prod_{\ell =1}^k \exp (i\tau_\ell x)\quad\text{with}\quad |\tau_\ell |<\delta\,.$$ We have then 
$f_j (x)= \prod_{\ell =1}^k \exp (i\tau_\ell x) g_j (x)$ and thus 
$$f_j (\De^{\cW} )= \exp(i\tau_1 \De^{\cW})\circ \cdots \circ \exp (i\tau_k\De^{\cW}) \circ g_j (\De^{\cW})\,.$$
All the operators appearing  on the right hand side have finite propagations and we know that the composition of two
operators of finite propagation is again of finite propagation. Thus $f_j  (\De^{\cW} )$ has finite propagation. The proof of item  2
is complete.

(3) Let $\chi$ be a chopping function. 
Recall from \cite[Section 10.6]{higson-roe-book} that for every $t>0$ there exists a chopping function $\chi$
with (distributional) Fourier transform 
supported in $(-t,t)$. Moreover, if $\chi_0$ and $\chi$ are two arbitrary chopping function, then $\chi_0-\chi=\phi$,
with $\phi\in C_0 (\bbR)$. This implies immediately that if $\chi_0 (\De^{\cW})$ is of finite propagation
then  $\chi_1 (\De^{\cW})$ is a limit of  finite propagation operators; moreover $\chi_0 (\De^{\cW})g-\chi (\De^{\cW})g$
is a compact operator for any $g\in C_c (\cov{\sing{X}})$. We choose a chopping function $\chi_0$ with Fourier transform supported in 
$(-\delta/2,\delta/2)$. Then we know, from the previous arguments, that $\chi_0 (\De^{\cW})$ is of  propagation $\delta$.
Hence $\chi (\De^{\cW})$ is a limit of  finite propagation operators for each chopping function $\chi$.
It remains to see that $\chi (\De^{\cW})$ is pseudolocal, i.e. $[f, \chi (\De^{\cW})]$
is compact for any $f \in C_0 (\cov{\sing{X}})$. 
By Kasparov's Lemma, see \cite[Lemma 5.4.7]{higson-roe-book} and \cite[Lemma 10.6.4]{higson-roe-book}, we know that $\chi (\De^{\cW})$ is pseudolocal if and only if $f \chi (\De^{\cW}) g$
is compact for any choice of $f\in C (\cov{\sing{X}})$ bounded  and $g\in C_c  (\cov{\sing{X}})$ with disjoint supports.
Now, if $\eta>0$ is the distance between 
the support of $f$ and the support of $g$ and if we choose a chopping function $\chi_0$ with Fourier transform 
supported in $(-\eta/2,\eta/2)$ then we know that   $f \chi_0 (\De^{\cW}) g=0$. But then for an arbitrary chopping function
$\chi$ we have 
$$f \chi (\De^{\cW}) g= f\chi_0 (\De^{\cW}) g + f (\chi (\De^{\cW}) -\chi_0 (\De^{\cW}) )g = 0 + f (\chi (\De^{\cW}) -\chi_0 (\De^{\cW}) )g$$
and since $f$ is bounded and $(\chi (\De^{\cW}) -\chi_0 (\De^{\cW}) )g$
is compact we see that on the right hand side we do have a compact operator as required. The proof of item 3 is now complete.

\end{proof}

\subsection{K-homology classes}$\,$\\
Proposition \ref{prop:classes-0} allows us to recover, in the bounded picture, the fundamental classes that were defined in \cite{ALMP:Witt}
for Witt spaces  and in 
\cite{ALMP:Novikov} for Cheeger spaces. More precisely:

\begin{proposition}\label{prop:k-homology-class}
If $\sing{X}$ is an $n$-dimensional  Cheeger space endowed with a rigid iterated conic metric $g$ and if $\cW$ is a
self-dual mezzoperversity adapted to $g$ then there is a well defined $K$-homology signature class $[ D^{\cW} ]\in K_n (\sing{X})$.
\end{proposition}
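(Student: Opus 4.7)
The plan is to specialize Proposition~\ref{prop:classes-0} to the case of trivial fundamental group, so that $\cov{\sing X}=\sing X$ and $D_\Gamma$ becomes the signature operator $D$ on the regular part of $\sing X.$ The output of that proposition is then a closed self-adjoint extension $D^{\cW}$ together with two crucial properties: $\phi(D^{\cW})\in C^*(\sing X)$ for every $\phi\in C_0(\bbR),$ and $\chi(D^{\cW})\in D^*(\sing X)$ for every chopping function $\chi.$ I would then invoke the Paschke/Higson--Roe duality isomorphism
\begin{equation*}
	K_{n+1}(D^*(\sing X)/C^*(\sing X))\;\cong\;K_n(\sing X),
\end{equation*}
so that the construction of $[D^{\cW}]$ is reduced to assembling these operator-theoretic ingredients into a distinguished element of $K_{n+1}(D^*(\sing X)/C^*(\sing X)).$

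For the assembly step I would split into parities. If $n$ is odd, the operator $D^{\cW}$ is ungraded; since $\chi^2-1\in C_0(\bbR),$ Proposition~\ref{prop:classes-0} gives $\chi(D^{\cW})^2-1\in C^*(\sing X),$ so the image $F$ of $\chi(D^{\cW})$ in the quotient $D^*(\sing X)/C^*(\sing X)$ is a self-adjoint involution. The projection $\tfrac12(F+1)$ then defines a class in $K_0$ of the quotient, corresponding under Paschke duality to an element of $K_n(\sing X).$ If $n$ is even, I would use the self-duality of $\cW$: by construction it provides a chirality involution that preserves $\cD_{\cW}(D^{\cW})$ and anticommutes with $D^{\cW},$ equipping $L^2(\sing X;\Lambda^*\sing X)$ with a $\bbZ/2$-grading compatible with the $C^*$- and $D^*$-algebras. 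With respect to this grading $\chi(D^{\cW})$ is odd and self-adjoint with square $1$ modulo $C^*,$ so it defines a class in $K_1$ of the quotient, corresponding again to a class in $K_n(\sing X).$

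For well-definedness, any two chopping functions $\chi_0,\chi_1$ differ by a function in $C_0(\bbR),$ whence by Proposition~\ref{prop:classes-0} the difference $\chi_0(D^{\cW})-\chi_1(D^{\cW})$ lies in $C^*(\sing X)$ and the class in the quotient is unchanged. A similar argument, using the density of chopping functions in the self-adjoint functional calculus, shows that the assembled class is independent of the other small choices (e.g., of chirality normalization in the even case).

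The main obstacle is ensuring that the self-duality of $\cW$ interacts correctly with the closed domain $\cD_{\cW}(D^{\cW})$ so that the chirality operator in even dimensions actually descends to a genuine $\bbZ/2$-grading of the bounded transform $\chi(D^{\cW}).$ This is precisely what is encoded in the definition of a self-dual mezzoperversity from \cite{ALMP:Hodge}, so the hard analytic content has already been absorbed into Proposition~\ref{prop:classes-0}; what remains at this stage is the bookkeeping needed to translate those facts into an explicit cycle for $K_{n+1}(D^*(\sing X)/C^*(\sing X)).$
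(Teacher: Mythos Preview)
Your proposal is correct and follows essentially the same route as the paper: use Proposition~\ref{prop:classes-0} to see that $\chi(D^{\cW})$ lies in $D^*$ and that $\chi^2-1\in C_0(\bbR)$ forces $\chi(D^{\cW})^2-1\in C^*$, so $\chi(D^{\cW})$ is a self-adjoint involution in the quotient; combine with the grading in even dimensions and invoke Paschke duality. The only cosmetic difference is that you specialize to the trivial covering $\Gamma=\{e\}$ whereas the paper phrases the argument for the general $\Gamma$-equivariant algebras (both yield $K_n(\sing X)$ via Paschke duality), and you spell out the parity split and the independence of the chopping function more explicitly than the paper does.
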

\begin{proof}
Let $\chi$ be a chopping function; then  $\chi^2 -1 $ is an element in $C_0 (\bbR)$
and thus  $\chi  (\De^{\cW})$
is an involution in the quotient $D^* (\cov{\sing{X}})^\Gamma/C^* (\cov{\sing{X}})^\Gamma$. Thus, using also
the grading in even dimension, one defines an element
in $K_{n+1} (D^* (\cov{\sing{X}})^\Gamma/C^* (\cov{\sing{X}})^\Gamma)$ which is precisely $K_{n} (\sing{X})$
by Paschke duality.
\end{proof}

\begin{remark}\label{remark:equality-of-indeces}
The class we have just defined does coincide with the one defined
in \cite{ALMP:Novikov}: this follows from the proof of \cite[Theorem 10.6.5]{higson-roe-book}
and the correspondence between the unbounded and bounded picture for K-homology.
\end{remark}

\begin{remark}\label{remark:independence0}
The class $[ D^{\cW} ]_{\bbQ}\in K_n (\sing{X})\otimes \bbQ$ is independent of the choice of self-dual mezzoperversity
$\cW$; indeed the homological Chern character of $[ D^{\cW} ]_{\bbQ}$, in $H_* (\sing X,\bbQ)$ is equal to
the $L$-class of the Cheeger space, see \cite[Thorem 5.6]{ALMP:Novikov},  and  we know that the $L$-class
is  independent of the choice of $\cW$, see \cite[Section 5.1]{ALMP:Novikov}. We shall come back to this point later on.
\end{remark}

\subsection{Higson-Roe sequences associated to a Thom-Mather space}$\,$\\
If $\sing{X}$ is a Thom-Mather stratified space and $\cov{\sing X}$ is a Galois covering with structure group $\Gamma$,  then there is a  short exact sequence of $C^*$-algebras 
$$0\rightarrow  C^*(\cov{\sing X})^\Gamma\rightarrow 
D^*(\cov{\sing X})^\Gamma\rightarrow 
D^*(\cov{\sing X})^\Gamma/C^*(\cov{\sing X})^\Gamma\rightarrow 0$$ and thus a 6-term long exact sequence in K-theory:
\begin{equation}\label{eq:HR-stratified}
\cdots\rightarrow K_{m+1}  (C^*(\cov{\sing X})^\Gamma)
\rightarrow K_{m+1}  (D^*(\cov{\sing X})^\Gamma)
\rightarrow  K_{m+1}  (D^*(\cov{\sing X})^\Gamma/C^*(\cov{\sing X})^\Gamma)
\rightarrow K_m (C^*(\cov{\sing X})^\Gamma)\rightarrow \cdots
\end{equation}
This is the {\em analytic surgery sequence} of Higson and Roe associated to the $\Gamma$-compact 
$\Gamma$-space $\cov{\sing X}$.
Since we have  the canonical isomorphism 
$K_{*+1}  (D^*(\cov{\sing X})^\Gamma/C^*(\cov{\sing X})^\Gamma)= K_{*} (\sing X)$ 
we can also rewrite \eqref{eq:HR-stratified} as 
\begin{equation}\label{eq:HR-stratified-bis}
\cdots\rightarrow K_{m+1}   (C^*(\cov{\sing X})^\Gamma)
\rightarrow K_{m+1}  (D^*(\cov{\sing X})^\Gamma)
\rightarrow  K_{m}  (\sing X)
\rightarrow K_m (  C^*(\cov{\sing X})^\Gamma )\rightarrow \cdots
\end{equation}
Moreover, recall that we have a canonical isomorphism  $K_{*}  (C^*(\cov{\sing X})^\Gamma)=K_* (C^*_r \Gamma)$.

Now, in particular, all of the above is true with $\cov{\sing X}$ equal to the universal covering of $\sing X$
and $\Gamma=\pi_1 (\sing X)$. Consider now a closed stratum $\sing Y$ of $\sing X$: thus if $Y$ is a stratum of
$\sing X$ then 
\begin{equation*}
	\sing Y:= \overline{Y}= \bigcup \{ \sm Y_i : \sm Y_i \leq \sm Y \}
\end{equation*}
Consider $\Gamma (\sing Y):=\pi_1 (\sing Y)$; then we have a 6-term exact sequence similar to \eqref{eq:HR-stratified-bis}
but associated to the universal covering, $\Gamma (\sing Y) -  \sing Y_{\Gamma (\sing Y)} \to \sing Y$ of $\sing Y$.


\subsection{Index  classes}\label{subsect:index-classes}$\,$\\
Let now $\sing X$ be an $n$-dimensional  Cheeger space and let us choose a self-dual mezzoperversity $\cW$.
Then, by Proposition \eqref{prop:k-homology-class}, we have a K-homology class 
$[ D^{\cW} ]\in K_n (\sing{X})=K_{n+1} (D^* (\cov{\sing{X}})^\Gamma/C^* (\cov{\sing{X}})^\Gamma)$ and thus an index  class 
\begin{equation}\label{eq:index-class}\Ind (\De^{\cW}):=\partial [ D^{\cW} ]\in K_{n} (C^*(\cov{\sing X})^\Gamma),
\end{equation} with $\partial$ the connecting homomorphism in the Higson-Roe surgery sequence.
Following the proof given in \cite[Proposition 2.1]{PS-Stolz} this class corresponds to the one considered
in \cite{ALMP:Hodge} through the canonical isomorphism $K_{*}  (C^*(\cov{\sing X})^\Gamma)=K_* (C^*_r \Gamma)$; notice that the class
defined in  \cite{ALMP:Hodge} is a Mishchenko class, obtained by twisting the signature operator by the Mishchenko
bundle $\sing X_\Gamma \times_\Gamma C^*_r \Gamma$.

Both in the Higson-Roe formalism, see \cite{higson-roe4}, and in the Mishchenko formalism, we can also
consider the index class with values in the maximal version of our $C^*$-algebras. We denote the maximal group $C^*$-algebra
associated to $\Gamma$ as $C^*\Gamma$.

Finally, if $\sing Y$ is a closed $m$-dimensional stratum with fundamental group $\Gamma ({\sing Y})$ then $\cW$ induces a self-dual mezzoperversity $\cW(\sing Y)$ 
for $\sing Y$ 
 and we obtain a K-homology class  $[D^{\cW(\sing Y)}]\in K_m (\sing Y)$
and thus an Index class 
$\Ind(D^{\cW(\sing Y)}_{\Gamma(\sing Y)})\in K_m (C^*_r (\Gamma(\sing Y))$. 

\subsection{Rho classes associated to trivializing perturbations}$\,$\\
Let $\sing X$ be a Cheeger space endowed with an $\operatorname{iie}$ metric $g$.
We initially assume that $\sing X$ is odd dimensional. Let $\cW$ be a
self-dual mezzoperversity for $\sing X$ and let $D^{\cW}$ be the corresponding signature operator,
an unbounded self-adjoint operator on $L^2 (\sm{X},\Lambda^* \sm{X})$.
(Recall that $D^{\cW}$  is a short notation for the pair $(D,\cD_{\cW}(D))$, the (extension of the) signature operator on $(\sm{X},g)$, the regular part
of $\sing X$ endowed with the Riemannian metric $g$, with domain defined by
the self-dual mezzoperversity $\cW$.) Given a Galois $\Gamma$-covering $\cov{\sing X}$ of $\sing X$,
we also have the $\Gamma$-equivariant signature operator $\De^{\cW}$, a self-adjoint unbounded operator 
on $L^2 (\cov{X}, \Lambda^* \cov{X})$.
Let now $A$ be a bounded $\Gamma$-equivariant self-adjoint operator  on $L^2 (\cov{X}, \Lambda^* \cov{X})$.
Then $\De^{\cW}+A$, with domain equal to the domain of $\De^{\cW}$, is also
self-adjoint. Following \cite[Section 2B]{PS-akt} we make the assumption that $\De^{\cW}+A$
is $L^2$-invertible and that $A\in \mathfrak{M}(C^* (\cov{\sing X})^\Gamma)$,
the multiplier algebra of $C^* (\cov{\sing X})^\Gamma$. We refer to $A$ as a {\em trivializing perturbation}.
Then, using Proposition \ref{prop:classes-0} and 
\cite[Proposition 2.8]{PS-akt}, we see that 
\begin{equation}\label{rho-0}
\frac{\De^{\cW}+A}{|\De^{\cW}+A|} \;\;\text{is an element in}\;\; D^* (\cov{\sing X})^\Gamma\,.
\end{equation}
Moreover, $\De^{\cW}+A/|\De^{\cW}+A|$  is clearly an involution and thus 
$$ \frac{1}{2}\left( \frac{\De^{\cW}+A}{|\De^{\cW}+A|}  + 1 \right)$$
is a projection in $D^* (\cov{\sing X})^\Gamma$. We define the rho class 
associated to $\De^{\cW}+A$ as 
\begin{equation}\label{rho-odd}
\rho(\De^{\cW}+A):= \left[ \frac{1}{2}\left( \frac{\De^{\cW}+A}{|\De^{\cW}+A|}  + 1 \right) \right]\;\;\text{in}\;\; K_0 (D^* (\cov{\sing X})^\Gamma)\,.
\end{equation}
In the even dimensional case we consider the grading associated to the Hodge $\star$ operator and we demand
that the trivializing perturbation $A\in \mathfrak{M}(C^* (\cov{\sing X})^\Gamma)$ be odd with respect to this grading; thus $\De^{\cW}+A$
can be written as 
$$
    \begin{pmatrix}
      0 & \De^{\cW,-} + A^-\\  \De^{\cW,+} + A^+ & 0
    \end{pmatrix}
    $$
    We now fix a chopping function $\chi$ equal to the sign function on the spectrum of the invertible operator 
    $\De^{\cW}+A$; we also fix a $\Gamma$-equivariant isometry  $u: \Lambda_- (^{\operatorname{iie}}T^* X_\Gamma)
    \to \Lambda_+ (^{\operatorname{iie}}T^* X_\Gamma) $ and consider the induced bounded $\Gamma$-equivariant
    operator on the space of $L^2$ sections of these bundles, call it $U$. Observing that $\chi (\De^{\cW}+A)$ is also
    odd (see \cite[Lemma 10.6.2]{higson-roe-book}) we consider $U\chi( \De^{\cW}+A)_+$ which is a unitary
    in $D^* (\cov{\sing X})^\Gamma$. We then define the rho class in the even dimensional case 
    an 
    \begin{equation}\label{rho-even}
\rho(\De^{\cW}+A):= [U\chi( \De^{\cW}+A)_+]\;\;\text{in}\;\; K_1(D^* (\cov{\sing X})^\Gamma)
\end{equation}
As explained in \cite[page 118]{PS-akt} this is  well defined, independent of the choice of $u$. 

\section{Bordisms and associated K-theory classes}\label{sect:bordism}

\subsection{Bordisms of Cheeger spaces}$\,$\\
We recall here some fundamental facts established in \cite{ALMP:Novikov}. Assume that 
 $M$ is a topological space and denote by $\operatorname{Sig}_{n} (M)$ the bordism group of four-tuples 
 $(\sing X, g, \cW, r:\sing X \lra M)$ where $\sing X$ is an oriented Cheeger space of dimension $n,$ 
 $g$ is an adapted iterated incomplete conic metric  metric (briefly an $\operatorname{iie}$ metric), $\cW$ is a self-dual Hodge mezzoperversity adapted to $g$  and $r:\sing X \lra M$ is a continuous map. An admissible bordism between $(\sing X, g, \cW, r:\sing X\lra M)$ and $(\sing X',g',\cW', r':\sing X' \lra M)$ is a four-tuple $(\sing{\sX}, G, \sW, R: \sing{\sX} \lra M)$  consisting of:\\
i) a smoothly stratified, oriented, compact pseudomanifold with boundary $\sing{\sX}$, whose boundary is $\sing X \sqcup (-\sing X'),$
and whose strata near the boundary are collars of the strata of $\sing X$ or $\sing X',$\\
ii) an $\operatorname{iie}$ metric $G$ on $\sing{\sX}$ that near the boundary
is of the collared form $dx^2 +g$ or $dx^2 + g',$ \\
iii) an adapted self-dual mezzoperversity $\sW$ that extends, in a collared way, that of $\sing X$ and $\sing X',$\\
iv) a map $R: \sing{\sX} \lra M$ that extends $r$ and $r'.$\\
We shall briefly say that $(\sing X, g, \cW, r:\sing X\lra M)$ and $(\sing X',g',\cW', r':\sing X' \lra M)$ are Cheeger-bordant
through $(\sing{\sX}, G, \sW, R: \sing{\sX} \lra M)$.
We are mainly interested in the case $M=B\Gamma$, so that a map $r:\sing X\to B\Gamma$ defines a Galois $\Gamma$-covering $\sing X_\Gamma$.
We  have the following important results, see \cite{ALMP:Novikov} for proofs:

\begin{theorem}\label{theo:main-bordism}
If $(\sing X,g,\cW,r)$ and $(\sing X',g',\cW',r')$ are $n$-dimensional and Cheeger-bordant through
$(\sing{\sX}, G, \sW, R: \sing{\sX} \lra M)$ then:
\item{1]} the numeric Fredholm indices associated to $D^{\cW}$ and $(D^\prime)^{\cW'}$ are equal;
\item{2]} there exists a well defined relative K-homology class $[D^{\sW}]\in K_{n+1} (\sing{\sX},\partial \sing{\sX})$;
\item{3]} if $\partial:  K_{n+1} (\sing{\sX},\partial \sing{\sX})\to K_n (\partial \sing{\sX})\equiv K_n (\sing X\cup (-\sing X'))$
is the connecting homomorphism associated to the long exact sequence of the pair $(\sing{\sX},\partial\sing{\sX})$ then 
$$\partial [D^{\sW}]= [D^{\cW}]-[(D^\prime)^{\cW^\prime}]\;\;\text{ in }\;\;K_n (\partial \sing{\sX})\otimes_{\bbZ} \bbZ[\frac{1}{2}];$$
\item{4]} the  signature
index classes  associated to $(\sing X, g, \cW, r:\sing X\lra B\Gamma)$ and $(\sing X',g',\cW', r':\sing X' \lra B\Gamma)$ are equal 
 in $K_* (C^*_r \Gamma)\otimes_{\bbZ} \bbZ[\frac{1}{2}]$.
 \item{5]}  If $\cW$ and $\cW'$ are adapted  to $g$ and $g'$ on the same Cheeger space $ \sing X$ and $r:X\to M$
 is a continuous map  then 
$ (\sing X,g,\cW,r:\sing X\to M)$ is Cheeger-bordant to $(\sing X,g',\cW',r:\sing X\to M)$. In particular, the numeric Fredholm index, in $\bbZ$,
and the signature index
class, in $K_* (C^*_r \Gamma)\otimes_{\bbZ} \bbZ[\frac{1}{2}]$, are independent of the choice of self-dual mezzoperversity.
\end{theorem}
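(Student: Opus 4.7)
The strategy is to build parts (1)--(4) on top of the relative K-homology class constructed in (2), and to handle (5) via an explicit cylinder bordism. Parts (1) and (4) are morally already present in \cite{ALMP:Novikov}, so the task is largely to recast them in the bounded picture of K-homology developed in Section \ref{sect:classes}.

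The heart of the proof is the extension of Proposition \ref{prop:classes-0} to the stratified bordism $\sing{\sX}$, which is what delivers (2). On the $\Gamma$-cover, the collared product structure of $(G,\sW)$ near $\partial\sing{\sX}$ puts the signature operator on $\cov{\sing{\sX}}$ in the form $\sigma(\partial_x + \De^{\cW})$ on the collar of $\sing X$, and the analogous form on the collar of $-\sing X'$. Taking the spectral projections of the boundary operators as APS boundary conditions, one obtains a self-adjoint extension whose resolvent is compact locally away from the boundary. Repeating the three arguments in the proof of Proposition \ref{prop:classes-0} (local compactness of $(\De^{\sW}\pm i)^{-1}$, finite propagation of the wave group, and pseudolocality of chopping functions), one checks that $\phi(\De^{\sW}) \in C^*(\cov{\sing{\sX}})^\Gamma$ for $\phi \in C_0(\bbR)$ and $\chi(\De^{\sW}) \in D^*(\cov{\sing{\sX}})^\Gamma$ for any chopping function $\chi$. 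Since these operators are trivial on the boundary collar modulo the corresponding algebras on $\cov{\partial\sing{\sX}}$, Paschke duality for the pair yields the class $[D^{\sW}] \in K_{n+1}(\sing{\sX}, \partial\sing{\sX})$.

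Part (3) then follows from naturality of the connecting homomorphism: the restriction of $\sW$ to $\partial\sing{\sX}$ is $\cW \sqcup \cW'$, with $-\sing X'$ contributing the opposite sign, and the collared product decomposition makes the geometric restriction realize the K-theoretic boundary map. The factor of $\bbZ[\tfrac12]$ absorbs the usual sign conventions for the signature operator. Parts (1) and (4) will then follow from (3) by pushing forward, respectively, to a point and to $B\Gamma$ followed by assembly, since the composition $K_{n+1}(\sing{\sX}, \partial\sing{\sX}) \xrightarrow{\partial} K_n(\partial\sing{\sX}) \to K_n(\pt)$ (respectively to $K_n(C^*_r\Gamma)$) vanishes on relative classes. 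One can alternatively appeal directly to \cite{ALMP:Novikov} and use \cite[Proposition 2.1]{PS-Stolz} to identify the bounded-picture class with the Mishchenko class defined there.

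For (5), I would take $\sing{\sX} = \sing X \times I$ with a metric $G$ interpolating $g$ and $g'$ (a convex combination, then rounded to a product near each boundary), and define $\sW$ by choosing on each singular stratum of $\sing X \times I$ a homotopy between the self-dual flat subbundles that describe $\cW$ and $\cW'$; such homotopies exist because the relevant space of self-dual choices on a cylinder over a single stratum is path-connected. The resulting four-tuple, with reference map $r \circ \pi$, is an admissible bordism, and the independence assertions of (5) are then immediate from (1) and (4). The hardest step is the analytic input to (2): one must control the self-adjoint domain, the compact resolvent, and the propagation of the signature operator on a stratified space with boundary equipped with an APS condition adapted to a mezzoperversity, reducing it via the collar to the closed interior case (where \cite{ALMP:Hodge, ALMP:Novikov} apply verbatim) and to a cylinder calculation, and checking that these pieces are compatible with finite propagation and pseudolocality globally.
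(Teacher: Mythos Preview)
In the paper this theorem is not proved but quoted from \cite{ALMP:Novikov}; the only part whose argument is spelled out (later, in the proof of Proposition~\ref{prop:well-def-on-S}) is item~5], and there your proposal has a genuine gap.

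You propose to take the product stratification on $\sing X\times I$ and to choose $\sW$ as a \emph{homotopy} of self-dual flat subbundles between $\cW$ and $\cW'$, asserting that ``the relevant space of self-dual choices on a cylinder over a single stratum is path-connected.'' This is exactly the point that fails: two self-dual mezzoperversities at a given stratum are Lagrangian subbundles of the middle-degree cohomology bundle of the link, and there is no reason these should be homotopic through Lagrangians (or even isomorphic as flat bundles). If they were, item~5] would be trivial and the whole independence discussion in \cite{ALMP:Novikov} unnecessary.

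The actual argument, due to Banagl and recalled in the proof of Proposition~\ref{prop:well-def-on-S}, avoids connecting $\cW$ to $\cW'$ altogether. One takes $\sX=\sing X\times[0,1]$ but with a \emph{non-product} stratification: each singular stratum $Y^k$ of $\sing X$ contributes three strata $Y^k\times[0,1/2)$, $Y^k\times(1/2,1]$, and $Y^k\times\{1/2\}$. The link of the last is the suspension $SZ^k$, whose middle-perversity middle-degree intersection homology vanishes, so $\sX$ is Witt along $Y^k\times\{1/2\}$ and no mezzoperversity is needed there. One can then put $\cW$ on the left half and $\cW'$ on the right half independently, with no compatibility required across $t=1/2$. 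This is the missing idea; your homotopy argument would not go through.

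For items (1)--(4) your outline is broadly compatible with how they are established in \cite{ALMP:Novikov}, though the paper simply cites that reference rather than redoing the analysis in the bounded picture.
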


\begin{remark}\label{rmk:12Enough}
The statements in \cite{ALMP:Novikov} are given with values in 
$K_0(C^*\Gamma)\otimes_{\bbZ} \bbQ$ but it is easy to see that the arguments given there establish the same results in $K_0(C^*\Gamma)\otimes_{\bbZ} \bbZ[\tfrac{1}{2}]$.
\end{remark}

\medskip
\noindent
The main idea behind the formulation and the proof of item 5] is due to Markus Banagl, see \cite{banagl-annals}.

\smallskip
\noindent
{\bf Notation.} Since the signature index class $\Ind(D_\Gamma^{\cW})\in K_* (C^*_r \Gamma)\otimes_{\bbZ} \bbZ[\frac{1}{2}]$ associated to a Galois covering 
$r:\sing X\to B\Gamma$ is in fact independent of the cloice of $\cW$, we shall often denote it simply by
$\Ind(D_\Gamma)$ or even $\Ind(\sing X_{\Gamma})$.

\subsection{The signature operator on Cheeger spaces with cylindrical ends}$\,$\\
Let  $\Gamma - \cov{\sing{\sX}}\rightarrow \sing{\sX}$ be a Galois $\Gamma$-covering of an even dimensional  Cheeger space with boundary.
We consider a rigid $\operatorname{iie}$ metric $g$ on the regular part $\sX$ which is collared near $\pa \sX$
and we lift it to a $\Gamma$-equivariant rigid $\operatorname{iie}$ metric on $\cov{\sX}$.
We also consider
the Cheeger spaces with cylindrical ends, $\sing{\sX}_\infty$, $ \sing{\sX}_{\Gamma,\infty}$, obtained by attaching $(-\infty,0]\times \pa\sing{\sX}$ and $(-\infty,0]\times \pa \cov{\sing{\sX}}$ to 
$\sing{\sX}$, $ \sing{\sX}_{\Gamma}$ respectively. 
We endow $(-\infty,0]\times \pa\sX$ and $(-\infty,0]\times \pa \cov{\sX}$ with product metrics and we obtain in this way 
 global metrics on $\sX_\infty$ and  $\sX_{\infty,\Gamma}$.
If $\sW$ is a self-dual mezzoperversity on $\sX$ then we obtain in a natural way a self-dual mezzoperversity
$\sW_\infty$ on $\sm{X}_\infty$ and thus, by lifting, a $\Gamma$-equivariant self-dual mezzoperversity 
$\sW_{\Gamma,\infty}$ on  $\sm{X}_{\infty,\Gamma}$. 
We denote by $\pa \sW$ the self-dual mezzoperversity induced on the boundary, see Theorem \ref{theo:main-bordism}.
Finally, we denote by $P_0$ the multiplication operator by the characteristic function of the attached semi-cylinder.

\begin{proposition}\label{prop:cylindrical-domain}
Let $D_\infty$ and $D_{\infty,\Gamma}$ be the  the signature operators on $\sm{X}_\infty$ and  $\sm{X}_{\infty,\Gamma}$
respectively. By employing $\sW_\infty$  and 
$\sW_{\Gamma,\infty}$ we can define self-adjoint extensions $D_\infty^{\sW}$ and $D_{\infty,\Gamma}^{\sW}$
\end{proposition}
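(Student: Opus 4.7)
The plan is to construct $D^{\sW}_\infty$ by combining the self-adjoint extension of the signature operator on the compact piece $\sing{\sX}$ (a Cheeger space with boundary, built from $\sW$) with a natural product-type extension on the attached cylinder $(-\infty,0]\times \pa \sing{\sX},$ the latter built from the self-adjoint extension of the boundary signature operator determined by $\pa \sW.$ Since $g$ is rigid and collared near $\pa \sing{\sX},$ on the cylinder the signature operator takes the product form $D_\infty = \sigma(\partial_t + B),$ with $\sigma$ a bundle isometry coming from Clifford multiplication by $dt$ and $B$ the odd signature operator of the boundary Cheeger space $(\pa \sing{\sX}, g|_{\pa \sing{\sX}}).$ Because $\pa \sing{\sX}$ is itself a Cheeger space and $\pa \sW$ is a self-dual mezzoperversity adapted to $g|_{\pa \sing{\sX}}$ by Theorem \ref{theo:main-bordism}, Proposition \ref{prop:classes-0} furnishes a self-adjoint extension $B^{\pa\sW}$ of $B.$

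Concretely, I would define $\cD_{\sW_\infty}(d)$ as the graph closure of smooth sections on $\sm{X}_\infty$ with compact support there that, in each distinguished neighborhood of a singular stratum, admit the asymptotic expansions dictated by $\sW_\infty,$ and define $\cD_{\sW_\infty}(\delta)$ analogously. Then set
\begin{equation*}
    \cD_{\sW_\infty}(D_\infty) := \cD_{\sW_\infty}(d) \cap \cD_{\sW_\infty}(\delta)
\end{equation*}
and $D_\infty^{\sW} := d + \delta$ on this domain. No additional condition is imposed at the asymptotic end $t \to -\infty$ because the cylinder is metrically complete off the strata, and the product structure of $\sW_\infty$ ensures that the mezzoperversity conditions along the strata of $(-\infty, 0] \times \pa \sing{\sX}$ are the pullbacks of those coming from $\pa \sW.$

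To prove self-adjointness I would follow the scheme of \cite{ALMP:Hodge}: show that $\cD_{\sW_\infty}(d)$ and $\cD_{\sW_\infty}(\delta)$ are mutually adjoint by localizing with a partition of unity subordinate to the decomposition $\sing{\sX}_\infty = \sing{\sX} \cup ((-\infty,0]\times \pa \sing{\sX}).$ On the compact piece $\sing{\sX}$ the analysis of \cite{ALMP:Hodge} gives the required mutual adjointness. On the cylindrical end one may expand along the fiber in the spectral decomposition of $B^{\pa\sW}$ and reduce to a family of one-dimensional problems $\sigma(\partial_t + \lambda)$ on $(-\infty, 0];$ essential self-adjointness here is classical, and away from the strata Chernoff's theorem for complete manifolds applies directly. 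Combining these local statements via the partition of unity yields self-adjointness of $D_\infty^{\sW}.$ The $\Gamma$-equivariant version $D_{\infty,\Gamma}^{\sW}$ is constructed identically on $\sing{\sX}_{\infty,\Gamma}$: the data lift $\Gamma$-equivariantly, the domain is $\Gamma$-invariant, and every step of the argument is local, hence transfers verbatim to the covering.

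The main obstacle I anticipate is verifying the compatibility condition at the join $\pa \sing{\sX} \times \{0\}$: one must check that sections in the proposed domain restrict correctly to the domain of $B^{\pa\sW}$ on $\pa \sing{\sX},$ so that the two local self-adjoint extensions glue without forcing an APS-type boundary condition at $t = 0.$ This is a trace statement to be read off from the asymptotic expansions along the strata of $\pa \sing{\sX},$ using that $\sW_\infty$ is by construction the cylindrical extension of $\pa \sW$ along the attached half-line.
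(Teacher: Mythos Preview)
Your overall strategy---define the domain by the mezzoperversity conditions and then localize---matches the paper, and the formula $\cD_{\sW_\infty}(D_\infty) = \cD_{\sW_\infty}(d) \cap \cD_{\sW_\infty}(\delta)$ is exactly what the paper writes down. However, your particular choice of decomposition creates the very obstacle you flag at the end, and one intermediate claim is actually false: the operators $\sigma(\partial_t + \lambda)$ on the half-line $(-\infty,0]$ are \emph{not} essentially self-adjoint without a boundary condition at $t=0$ (they have nontrivial deficiency indices), and Chernoff's theorem does not apply to the half-cylinder because it is not complete at $t=0$. For the same reason, invoking \cite{ALMP:Hodge} directly on the compact piece $\sing{\sX}$ is delicate, since $\sing{\sX}$ has boundary.

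The paper sidesteps all of this by localizing differently. Since the domain is localizable (membership is tested after multiplication by cutoffs in $\CI_\Phi$), and since a neighborhood of the gluing locus $\pa\sing{\sX}\times\{0\}$ in $\sing{\sX}_\infty$ is, via the collar, isometric to a piece of the \emph{full} cylinder $\pa\sm X \times \bbR$, the only model space on which one must verify self-adjointness is $\pa\sm X \times \bbR$---a space with no boundary at $t=0$ at all. There the paper offers two one-line arguments: Fourier transform in the $\bbR$-factor reduces to the already-known self-adjointness of $D^{\pa\sW}$ on the closed Cheeger space $\pa\sing{\sX}$; alternatively, $\pa\sm X \times \bbR$ is a $\bbZ$-cover of the closed Cheeger space $\pa\sm X \times \bbS^1$, so Proposition~\ref{prop:classes-0} applies. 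To repair your argument, simply choose the partition of unity so that one cutoff is supported in the interior of $\sing{\sX}$ and the other on a two-sided cylindrical neighborhood $(-\infty,\eps)\times\pa\sing{\sX}$; then no matching condition at $t=0$ ever arises.
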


\begin{proof}
Extend the iterated fibration structure from $\sm X$ to $\sm X_\infty$ by including the cylindrical direction in the base of each fiber bundle. Define
\begin{multline*}
	D_{\infty}^{\sW} = (D_{\infty}, \cD_{\sW_\infty}(D_{\infty})), \;
	\Mwhere
	\cD_{\sW,\infty}(D) = \{ u\in \sD_{\max}(D_{\infty}) : \text{ at each singular stratum, }\\
	\text{ $u$ satisfies the ideal boundary condition corresponding to $\sW_{\infty}$} \}.
\end{multline*}
It is easy to see that this is a self-adjoint domain.
Indeed, this domain is localizable (an element is in the domain if and only if it is in the domain after multiplying by any function in $\CI_{\Phi}(\sm X),$ see \cite[\S2]{ALMP:Hodge} and, e.g., the discussion after assumption 3.8 in \cite{ALMP:Hodge}.)
 and so it suffices to show that the corresponding domain on the full cylinder $\pa \sm X \times \bbR$ is self-adjoint; here we could either use Fourier transform in the $\bbR$-factor to reduce to the self-adjointness of $D^{\pa\sW},$ or alternately recognise $\pa \sm X \times \bbR$ as a cover of $\pa \sm X \times \bbS^1,$ consider the pull-back of $\pa\sW$ to this product, and then appeal to Proposition \ref{prop:classes-0} above.

\end{proof}

\subsection{Perturbations and coarse APS-index classes}$\,$\\
In this subsection we shall define APS-index classes associated to the self-adjoint operator $D_{\infty,\Gamma}^{\sW}$.
%
We consider the $C^*$-algebra
$C^* (\sing{\sX}_\Gamma\subset \sing{\sX}_{\Gamma,\infty})^\Gamma$
obtained by closing the  operators $T$ in  $C^*_c (\sing{\sX}_{\Gamma,\infty})^\Gamma$ with 
the additional property that $\exists$ $R>0$ such that $\phi T=0=T\phi$ whenever $\phi\in C_c (\sing{\sX}_{\Gamma,\infty})$
and $d(\operatorname{supp}(\phi), \sing{\sX}_\Gamma)>R$.
See for example \cite[Definition 1.7]{PS-Stolz}. A similar definition can be given for 
$D^* (\sing{\sX}_\Gamma\subset \sing{\sX}_{\Gamma,\infty})^\Gamma$. One can prove that the inclusion $c:
\sing{\sX}_\Gamma\hookrightarrow\sing{\sX}_{\Gamma,\infty}$ induces K-theory isomorphisms:
\begin{equation}\label{relative-higson-roe}
K_* (C^* (\sing{\sX}_\Gamma)^\Gamma)\simeq K_* (C^* (\sing{\sX}_\Gamma\subset \sing{\sX}_{\Gamma,\infty})^\Gamma)\,;\quad
K_* (D^* (\sing{\sX}_\Gamma)^\Gamma)\simeq K_* (D^* (\sing{\sX}_\Gamma\subset \sing{\sX}_{\Gamma,\infty})^\Gamma)\,.
\end{equation}
Notice that $C^* (\sing{\sX}_\Gamma\subset \sing{\sX}_{\Gamma,\infty})^\Gamma)$ and  $D^* (\sing{\sX}_\Gamma\subset \sing{\sX}_{\Gamma,\infty})^\Gamma$ are ideals in 
$D^* (\sing{\sX}_{\Gamma,\infty})^\Gamma$.

We assume the existence of a trivializing perturbation $C_\pa$  for the signature operator 
 $D_{\Gamma}^{\pa \sW}$ on $\pa \sX_\Gamma$: this means, as before, that $C_\pa$ is bounded,
 that $D_{\Gamma}^{\pa \sW}+C_\pa$ (with domain equal to the domain of $D_{\Gamma}^{\pa \sW}$)
 is $L^2$-invertible and that 
 $C_\pa\in \mathfrak{M}(C^* (\pa\sing{\sX}_\Gamma)^\Gamma)$. $C_{\pa}\otimes \Id_{\bbR}$ then defines
 a bounded operator on $L^2 (\pa\sing{\sX}_\Gamma\times \bbR)$.
We can then graft this perturbation on $ \sX_{\Gamma,\infty}$ and obtain a bounded perturbation $C_\infty$
for $D_{\infty,\Gamma}^{\sW}$. 
In the case of interest to us it will be the case that $C_\infty$
is a limit of finite propagation operators  
and so we assume this property in what follows.
In fact, we might more generally consider a perturbation $B_\infty$ which is  a limit of finite propagation operators and such that 
\begin{equation}\label{perturbation-B}
 P_0 B_\infty P_0 - P_0 C_\infty P_0 \in C^* (\sing{\sX}_\Gamma\subset \sing{\sX}_{\Gamma,\infty})^\Gamma\,. \end{equation}

\begin{proposition}\label{prop:relative}
Let $C_{\pa}$, $C_\infty$ and $B_\infty$ be as above.
If $\phi\in C_0 (\bbR)$ and if 
$\chi$ is a chopping function equal to the sign function on the spectrum of $D_{\Gamma}^{\pa \sW} + C_\pa$, then:
\item{1]} $\phi  (D_{\Gamma,\infty}^{\sW}+B_\infty)\in C^* ( \sing{\sX}_{\Gamma,\infty})^\Gamma$;
\item{2]} $\chi (D_{\Gamma,\infty}^{\sW}+B_\infty)\in D^* ( \sing{\sX}_{\Gamma,\infty})^\Gamma$;
\item{3]}  $\chi (D_{\Gamma,\infty}^{\sW}+B_\infty)$ is an involution modulo $C^* (\sing{\sX}_\Gamma\subset \sing{\sX}_{\Gamma,\infty})^\Gamma$.
\end{proposition}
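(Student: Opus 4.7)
The proof will closely follow the pattern of Proposition \ref{prop:classes-0}, extending the arguments from the compact stratified case to the cylindrical-end setting $\sing X_{\Gamma,\infty}$. Self-adjointness of $D^{\sW}_{\Gamma,\infty}$ was established in Proposition \ref{prop:cylindrical-domain}, and since $B_\infty$ is bounded and self-adjoint, $D^{\sW}_{\Gamma,\infty}+B_\infty$ is self-adjoint on the same domain and admits a continuous functional calculus. All three items will be read off from this.

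For item 1, I will reduce via Stone-Weierstrass to the resolvent $\phi_0(x)=(i+x)^{-1}$. Local compactness of $\phi_0(D^{\sW}_{\Gamma,\infty}+B_\infty)$ follows from the compactness of the inclusion of the localized domain into $L^2(\sing X_{\Gamma,\infty})$, inherited from the compact-case result of \cite{ALMP:Hodge} via the localizability of $\cD_{\sW_\infty}(D_\infty)$ used in the proof of Proposition \ref{prop:cylindrical-domain}. For the finite-propagation property I will use the wave-equation argument as in Proposition \ref{prop:classes-0}(2): the unitary group $\exp(is(D^{\sW}_{\Gamma,\infty}+B_\infty))$ has propagation bounded by $|s|$ for $|s|<\delta$ (the bounded perturbation $B_\infty$ does not spoil this since it is itself a limit of finite propagation operators, so a Duhamel expansion controls the speed), and then one represents a Schwartz $\phi$ by its Fourier transform, using Roe's decomposition trick \cite{roe-foliation} when the Fourier support exceeds $\delta$. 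Item 2 is then obtained from item 1 exactly as in Proposition \ref{prop:classes-0}(3): fix a chopping function $\chi_0$ whose Fourier transform has support in $(-\delta/2,\delta/2)$ so that $\chi_0(D^{\sW}_{\Gamma,\infty}+B_\infty)$ has finite propagation, note that any two chopping functions differ by a $C_0$ function which by item 1 gives an element of $C^*(\sing X_{\Gamma,\infty})^\Gamma$, and verify pseudo-locality via Kasparov's Lemma.

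Item 3 is the substantive new ingredient and is the main obstacle. The hypothesis that $\chi$ agrees with the sign function on the spectrum of $D^{\pa\sW}_\Gamma+C_\pa$ implies that $\chi^2-1$ is a $C_0$-function vanishing in a neighborhood of this spectrum. On the cylindrical end, $D^{\sW}_{\Gamma,\infty}+C_\infty$ has the product form built from the invertible operator $D^{\pa\sW}_\Gamma+C_\pa$ and the $\bbR$-derivative $\pa_t$, so a spectral argument along the cylinder—mirroring the APS analysis of \cite{PS-Stolz}—shows that $P_0\bigl(\chi^2(D^{\sW}_{\Gamma,\infty}+C_\infty)-\Id\bigr)P_0$ lies in the ideal $C^*(\sing X_\Gamma\subset \sing X_{\Gamma,\infty})^\Gamma$, because after applying the functional calculus the result is supported near $\sing X_\Gamma$ modulo a term that vanishes on the spectrum of the cylindrical operator. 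Finally, the assumption \eqref{perturbation-B} that $P_0(B_\infty-C_\infty)P_0\in C^*(\sing X_\Gamma\subset \sing X_{\Gamma,\infty})^\Gamma$ will be used to transfer this conclusion from $C_\infty$ to $B_\infty$ via a functional-calculus perturbation estimate along the lines of \cite[Section 2B]{PS-akt}, together with the fact that $C^*(\sing X_\Gamma\subset \sing X_{\Gamma,\infty})^\Gamma$ is a closed two-sided ideal in $D^*(\sing X_{\Gamma,\infty})^\Gamma$. The main technical difficulty is the precise spectral-separation estimate on the cylindrical end, which requires a careful analysis of the product structure of the domain $\cD_{\sW_\infty}(D_\infty)$ near $\pa\sing X$; this is where the hypothesis that $C_\pa$ is a trivializing perturbation is essential.
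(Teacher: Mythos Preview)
Your proposal is correct and follows essentially the same route as the paper: the paper's proof simply invokes Proposition~\ref{prop:classes-0} together with the purely functional-analytic arguments of \cite[Lemma~2.25]{PS-akt} for items~1 and~2, and \cite[Proposition~2.26]{PS-akt} for item~3, and what you have written is a faithful unpacking of those references. The only minor imprecision is in item~3, where you should note that $\chi^2-1$ vanishes not just on the spectrum of $D_{\Gamma}^{\pa\sW}+C_\pa$ but on the full spectrum of the translation-invariant cylindrical operator (which has the same spectral gap), so that $(\chi^2-1)$ applied to the cylindrical model operator is identically zero.
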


\begin{proof}
For 1] and 2] we use Proposition \ref{prop:classes-0} and the purely functional analytic arguments given in 
\cite[Lemma 2.25]{PS-akt}.
For the third item we use the proof 
of Proposition 2.26 in \cite{PS-akt}, which is once again purely functional analytic.
%

\end{proof}

Given $C_\pa$ as above, choosing $B_\infty=C_\infty$ and using Proposition \ref{prop:relative}
we can define a coarse relative index class
\begin{equation*}
\Ind^{\operatorname{rel}}( D_{\Gamma,\infty}^{\sW}+C_\infty):= \pa [\chi (D_{\Gamma,\infty}^{\sW}+C_\infty)]\in K_* (C^* (\sing{\sX}_\Gamma\subset \sing{\sX}_{\Gamma,\infty})^\Gamma)
\end{equation*}
and thus, using \eqref{relative-higson-roe}, a coarse APS-index class 
\begin{equation*}
\Ind( D_{\Gamma}^{\sW},C):= c_*^{-1} (\Ind^{\operatorname{rel}}( D_{\Gamma,\infty}^{\sW}+C_\infty))\in K_* (C^* (\sing{\sX}_\Gamma)^\Gamma)\simeq K_* (C^*_r \Gamma)
\end{equation*}
Notice that the left hand side is just notation; we have not really defined a perturbation $C$ on $\sing{\sX}_\Gamma$.\\
One can prove, following the arguments in the proof of \cite[Proposition 2.33]{PS-akt}, that for $C_\pa$, $C_\infty$
and $B_\infty$ as above:
\begin{equation*}
\Ind^{\operatorname{rel}}( D_{\Gamma,\infty}^{\sW}+C_\infty)= \Ind^{\operatorname{rel}}( D_{\Gamma,\infty}^{\sW}+B_\infty)
\in K_* (C^* (\sing{\sX}_\Gamma\subset \sing{\sX}_{\Gamma,\infty})^\Gamma)
\end{equation*}
where the right hand side is well defined because of item 3] of Proposition \ref{prop:relative}.\\


\subsection{The delocalized APS index theorem}$\,$\\
Let $\sX$, $\sX_\Gamma$, $\sX_\infty$, $\sX_{\Gamma,\infty}$, 
$D^{\sW}$, $D_\Gamma^{\sW}$, $D_\infty^{\sW}$, $D_{\Gamma,\infty}^{\sW}$,
$D^{\pa \sW}$, $D^{\pa \sW}_{\Gamma}$ and $C_{\pa}$  as in the previous subsections. We assume $\sX$ to be even dimensional.
By assumption $C_\pa$
is a trivializing perturbation  for $D_{\Gamma}^{\pa \sW} $; assume that  $C_\pa\in C^* ({\pa \sW})^\Gamma)$, so that
 $C_\pa$ is a norm limit of finite propagation operators. Consequently $C_\infty$ is also a norm limit of finite propagation operators. 
We can consider 
the rho class $\rho (D_{\Gamma}^{\pa \sW} + C_\pa)\in K_0 (D^* (\pa \sX_{\Gamma})^\Gamma)$
and the coarse-APS index class $\Ind( D_{\Gamma}^{\sW},C)\in K_0 (C^* (\sX_{\Gamma})^\Gamma)$.
Let $\iota: C^* (\sX_{\Gamma})^\Gamma \hookrightarrow D^* (\sX_{\Gamma})^\Gamma$ be the natural inclusion and consider 
$j_* : K_0 (D^* (\pa \sX_{\Gamma})^\Gamma))\to K_0 (D^* ( \sX_{\Gamma})^\Gamma)$
induced by the inclusion of $\pa \sX_{\Gamma}$ into $\sX_{\Gamma}$. Our main tool in the next section will be the 
{\it delocalized APS index theorem
for  perturbed signature operators on Cheeger spaces}:

\begin{theorem}
\label{theo:k-theory-deloc} 
If the trivializing perturbation $C_\pa$ is a norm limit of finite propagation
operators, then 
the following equality holds
\begin{equation}\label{k-theory-deloc}
\iota_* ( \Ind( D_{\Gamma}^{\sW},C))= j_*(\rho( D_{\Gamma}^{\pa \sW} + C_{\pa})) \quad\text{in}\quad K_{0}
(D^*(  \sX_{\Gamma})^\Gamma).
\end{equation}
\end{theorem}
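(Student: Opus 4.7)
The plan is to adapt the K-theoretic proof of the delocalized APS index theorem of Piazza--Schick \cite{PS-Stolz, PS-akt} to the present Cheeger-space setting. The key observation is that both classes in \eqref{k-theory-deloc} can be read off from a single projection on the manifold with cylindrical end, namely
\[
  P := \tfrac{1}{2}\lrpar{\chi(D_{\Gamma,\infty}^{\sW}+C_\infty)+1} \in D^*(\sX_{\Gamma,\infty})^\Gamma,
\]
differing only by elements of the ideal $C^*(\sX_\Gamma\subset\sX_{\Gamma,\infty})^\Gamma$. By Propositions~\ref{prop:classes-0} and \ref{prop:relative}, $P$ is a genuine projection and its image in the quotient $D^*/C^*(\sX_\Gamma\subset\sX_{\Gamma,\infty})^\Gamma$ is well defined.

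First, I would isolate the cylindrical contribution. On the attached end $(-\infty,0]\times\pa\sX_\Gamma$ the lifted metric is of product type, so the restriction of $D_{\Gamma,\infty}^{\sW}+C_\infty$ there, acting on the mezzoperversity domain $\sW_\infty$ of Proposition~\ref{prop:cylindrical-domain}, takes the product form $\sigma(\pa_t + D_\Gamma^{\pa\sW}+C_\pa)$ for a suitable Clifford action $\sigma$ (this is the content of the Fourier-transform argument used in the proof of Proposition~\ref{prop:cylindrical-domain}). Since $D_\Gamma^{\pa\sW}+C_\pa$ is $L^2$-invertible on $\pa\sX_\Gamma$, an explicit spectral computation, identical to the one in \cite[\S 2.6]{PS-akt}, shows that $\chi$ applied to this product operator produces a projection $P_\infty$ on the cylinder that is unitarily equivalent to $\tfrac12(\chi(D_\Gamma^{\pa\sW}+C_\pa)+1)$ tensored with the identity in the axial direction.

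Next, the hypothesis that $C_\pa$ is a norm limit of finite propagation operators, combined with the finite propagation and local compactness properties assembled in Proposition~\ref{prop:classes-0} and the proof of Proposition~\ref{prop:relative}, implies that $P - P_\infty$ lies in the ideal $C^*(\sX_\Gamma\subset\sX_{\Gamma,\infty})^\Gamma$. Consequently the relative index class $\Ind^{\operatorname{rel}}(D_{\Gamma,\infty}^{\sW}+C_\infty)=\pa[\chi(D_{\Gamma,\infty}^{\sW}+C_\infty)]\in K_0(C^*(\sX_\Gamma\subset\sX_{\Gamma,\infty})^\Gamma)$ is represented by $[P]-[P_\infty]$. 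Pushing forward to $K_0(D^*(\sX_\Gamma\subset\sX_{\Gamma,\infty})^\Gamma)$ and using the isomorphisms \eqref{relative-higson-roe} to transfer the result back to $K_0(D^*(\sX_\Gamma)^\Gamma)$, the contribution of $[P]$ becomes trivial (it is a projection in the larger algebra), while the contribution of $-[P_\infty]$ is, by naturality of the Higson--Roe push-forward along $\pa\sX_\Gamma\hookrightarrow\sX_\Gamma$ and the explicit product form of $P_\infty$, precisely $j_*\rho(D_\Gamma^{\pa\sW}+C_\pa)$ up to the signs that enter the definitions.

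The main obstacle I expect is Step one: verifying that the self-adjoint extension of $D_{\Gamma,\infty}^{\sW}$ on the cylindrical end really does decompose as a product under the mezzoperversity domain $\sW_\infty$, with the grafted perturbation $C_\infty$ restricting to $C_\pa\otimes\Id_{\bbR}$ in a way compatible with functional calculus. This is the analytic heart of the matter; the remaining K-theoretic manipulations are formal, essentially reproducing the argument of \cite[Theorem 4.1]{PS-akt} and its sharpening in \cite{PS-Stolz}. The localizability of the mezzoperversity domain (cited in the proof of Proposition~\ref{prop:cylindrical-domain}) is what ultimately makes the cylindrical model tractable and allows the smooth proof to be transplanted without further analytic input.
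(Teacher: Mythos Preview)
Your approach is essentially the same as the paper's: reduce to the Piazza--Schick argument in \cite{PS-Stolz,PS-akt} by observing that, once Propositions~\ref{prop:classes-0}, \ref{prop:cylindrical-domain}, and \ref{prop:relative} supply the requisite functional-analytic input (self-adjointness with the mezzoperversity domain, product structure on the cylinder, membership in the correct $C^*$- and $D^*$-algebras), the K-theoretic manipulations go through verbatim. Your identification of the cylindrical product decomposition as the analytic crux is also correct, and you rightly point to Proposition~\ref{prop:cylindrical-domain} for this.

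There is, however, one genuine omission. You assert that ``the remaining K-theoretic manipulations are formal, essentially reproducing the argument of \cite[Theorem 4.1]{PS-akt}'', but the paper explicitly flags that not every step of \cite{PS-akt} is purely functional-analytic: the proof of \cite[Proposition~5.33]{PS-akt} uses input specific to the smooth setting and does \emph{not} transplant directly to Cheeger spaces. The paper closes this gap by invoking Zenobi's alternative proof of that proposition, \cite[Proposition~3.20]{Zenobi:Mapping}, which is formulated abstractly enough (in the context of metric spaces with $\Gamma$-actions, designed for topological manifolds) to apply here without change. Your sketch would need to isolate this step and either reprove it in the Cheeger setting or, as the paper does, appeal to Zenobi. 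Without that, the claim that everything after the cylindrical analysis is ``formal'' is not justified.
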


\begin{proof}
All the arguments given in  \cite{PS-Stolz} and then \cite{PS-akt} are functional analytic with the exception
of the proof of Proposition 5.33 in  \cite{PS-akt}. However, the alternative proof of this particular Proposition given by Zenobi, see
Proposition 3.20 in  \cite{Zenobi:Mapping} , applies verbatim to the present context.
\end{proof}

Let now  $\sX$  be odd  dimensional. After inverting 2 we can reduce the delocalized  APS index theorem 
on $\sX$ to the one on $\sX\times S^1$  by a suspension argument. This is discussed carefully in \cite[\S5]{Zenobi:Mapping} where a different description of the group $\tK_*(D^*(\sing X_{\Gamma})^{\Gamma})$ is given for metric spaces with $\Gamma$-actions. These arguments apply in our situation largely unchanged.

\section{Stratified homotopy equivalences and associated perturbations}\label{sect:HS}

\subsection{The Hilsum-Skandalis replacement}$\,$\\
Let $\sing X$ be a Cheeger space, $r: \sing X \lra B\Gamma$ the classifying map for the universal cover of $\sing X,$ $\sG(r)$ the Mishchenko bundle associated to $r,$ and $\cW_X$ a self-dual mezzoperversity on $\sing X.$
If $\sing M$ is another Cheeger spaces and $f: \sing M \lra \sing X$ a stratified homotopy equivalence then (see \cite[Theorem 4.6]{ALMP:Novikov}), there is a  `Hilsum-Skandalis replacement' for the pull-back of differential forms by $f,$
\begin{equation*}
	HS(f): 
	L^2(X; \Lambda^* {}^{\iie}T^*X \otimes \sG(r)) \lra
	L^2(M; \Lambda^* {}^{\iie}T^*M \otimes \sG(f\circ r)),
\end{equation*}
that we can use to define a self-dual mezzoperversity $\cW_M=f^{\sharp}(\cW_X)$ on $\sing M.$ These data satisfy
\begin{itemize}
\item $HS(f)d^{\sG(r)} = d^{\sG(f\circ r)}HS(f)$ and 
	$HS(f)(\cD_{\cW_X}(d^{\sG(r)})) \subseteq \cD_{\cW_M}(d^{\sG(f\circ r)}),$
\item There is an $L^2$-bounded operator $\Upsilon$ acting on $\cD_{\cW_X}(d^{\sG(r)}),$ such that
\begin{equation*}
	\Id - HS(f)'HS(f) = d_{\sG(r)}\Upsilon + \Upsilon d_{\sG(r)},
\end{equation*} 
where $HS(f)'$ denotes the adjoint with respect to the quadratic form defined
by the Hodge operator.
\end{itemize}
We point out that the boundedness of $HS(f)$ on $L^2(X; \Lambda^* {}^{\iie}T^*X \otimes \sG(r)),$ together with the first of these properties, implies that $HS(f)$ is bounded as a map
\begin{equation*}
	HS(f): \cD_{\cW_X}(d^{\sG(r)}) \lra \cD_{\cW_M}(d^{\sG(f\circ r)}),
\end{equation*}
when these spaces are endowed with the respective $d$-graph norm.
Similarly $\Upsilon$ is bounded as an operator on the Hilbert space $\cD_{\cW_X}(d^{\sG(r)}).$
Note however that $HS(f)$ does not map $L^2$ differential forms into the maximal domain of $d;$ indeed, if a differential form extends to be smooth on the closure of $\res X$ and its exterior derivative fails to be in $L^2,$ then the same will be true of its image under $HS(f).$

\subsection{The compressed Hilsum-Skandalis replacement}$\,$\\
Following \cite{PS1}, we will also make use of a compressed version of the Hilsum-Skandalis replacement. In this case the replacement will make use of a fixed mezzoperversity and will have the property that it maps all of the $L^2$ differential forms into the domain of $d.$
Recall that one of the main
results in \cite{ALMP:Novikov} is that  the resolvents
of $D^{\cW}_{\cG(r)}$ and $D^{f^{\sharp}\cW}_{G(r\circ f)}$ are $C^*_r \Gamma$-compacts.

\begin{definition}
Let $\sing X$ be a Cheeger space, $\cW_X$ a self-dual mezzoperversity, $r: \sing X \lra B\Gamma$ the classifying map for the universal cover of $\sing X$ and $f: \sing M \lra \sing X$ be a smooth stratified map.
For each $\mu: \bbR \lra \bbR$ an even, rapidly decreasing function, we define the {\em compressed Hilsum-Skandalis replacement of $f$} to be the operator
\begin{equation*}
\begin{gathered}
	HS_{\mu}(f): 
	L^2(X; \Lambda^* {}^{\iie}T^*X \otimes \sG(r)) \lra
	L^2(M; \Lambda^* {}^{\iie}T^*M \otimes \sG(f\circ r)),
	\\
	HS_{\mu}(f) = \mu(D^{f^{\sharp}\cW_X}) \circ HS(f) \circ \mu(D^{\cW_X}).
\end{gathered}
\end{equation*}
\end{definition}
As elements of the functional calculus we know that, e.g., 
\begin{equation*}
	\mu(D^{\cW_X}): 
	L^2(X; \Lambda^* {}^{\iie}T^*X \otimes \sG(r)) \lra
	L^2(X; \Lambda^* {}^{\iie}T^*X \otimes \sG(r))
\end{equation*}
commutes with $D^{\cW_X}$ and is a bounded operator with range contained in the domain of  $D^{\cW_X}.$ In fact the range is contained in the domain
\begin{equation*}
	\cD_{\cW_X}(D^{\infty}) = \bigcap_{\ell \in \bbN} 
	\{\omega \in \cD_{\cW_X}(D):
	D\omega, \ldots, D^{\ell}\omega \in \cD_{\cW_X}(D) \}
\end{equation*}
as $x^{\ell}\mu(x)$ is a rapidly decreasing function for any $\ell \in \bbN.$
Since this domain is compactly included in 
$L^2(X; \Lambda^* {}^{\iie}T^*X \otimes \sG(r)),$ it follows that $\mu(D^{\cW_X})$ is a compact operator. Moreover since $\mu$ is even and $d$ commutes with $(d+\delta)^2,$ $d$ commutes with $\mu(D).$
Thus,
\begin{equation*}
	HS_{\mu}(f) \text{ is a compact operator and }
	HS_{\mu}(f) d^{\cW_X} = d^{f^{\sharp}\cW_X} HS_{\mu}(f).
\end{equation*}
The compressed Hilsum-Skandalis replacement satisfies properties similar to those of $HS(f),$ see Lemma 9.7 in \cite{PS1}.

\subsection{The Hilsum-Skandalis perturbation}$\,$\\
On $\sm X \sqcup -\sm M$ consider the operators 
\begin{equation*}
	d_{\sm X \sqcup -\sm M}
	= \begin{pmatrix}
	d_{\sm X} & 0 \\
	0 & d_{\sm M}
	\end{pmatrix}, \quad
	\tau_{\sm X \sqcup -\sm M}
	= \begin{pmatrix}
	\tau_{\sm X} & 0 \\
	0 & -\tau_{\sm M}
	\end{pmatrix}, 
\end{equation*}
and, for $t \in [0,1],$ the operator
\begin{equation}\label{eq:DefL}
\begin{gathered}
	\cL_t:\cD_{\cW_X \sqcup f^{\sharp}\cW_X}(d_{\sm X \sqcup -\sm M})
	\lra
	\cD_{\cW_X \sqcup f^{\sharp}\cW_X}(d_{\sm X \sqcup -\sm M})\\
	\cL_t = 
	\begin{pmatrix}
	\Id - HS(f)'HS(f) & (1-it\gamma\Upsilon)\circ HS(f)'\\
	HS(f)\circ (1+ it\gamma \Upsilon) & \Id
	\end{pmatrix}.
\end{gathered}
\end{equation}
We point out that $\cL_t$ is bounded as an operator on the space $\cD_{\cW_X \sqcup f^{\sharp}\cW_X}(d_{\sm X \sqcup -\sm M})$ endowed with its $d_{\sm X \sqcup -\sm M}$-graph norm, and let $|\cL_t| = \sqrt{\cL_t^*\cL_t}$ denote the operator defined by the functional calculus on this Hilbert space (or equivalently as a bounded operator on $L^2$-differential forms).

As in \cite{HiSka}, the Hilsum-Skandalis replacement can be used to construct a perturbation of the signature operator
\begin{equation*}
	D^{\cW_X \sqcup f^{\sharp}\cW_X} 
	= 
	\begin{pmatrix}
	D^{\cW_X} & 0 \\
	0 & -D^{\cW_M}
	\end{pmatrix}
	\Mon \sing X \sqcup (-\sing M)
\end{equation*}
that results in an invertible operator. Indeed, for sufficiently small $t,$ the operator
\begin{equation*}
	D^{\cW_X \sqcup f^{\sharp}\cW_X} + \cC_t(f)
	= \tfrac1i U_t\circ \underline{D}_t \circ U_t^{-1}
\end{equation*}
is invertible, where $\underline{D}_t$ is the operator obtained from $D^{\cW_X \sqcup f^{\sharp}\cW_X}$ by making two replacements: 
\begin{equation*}
	d_{\sm X \sqcup -\sm M}
	\mapsto
	\begin{pmatrix}
	d_X & tHS(f)' \\ 0 & d_M
	\end{pmatrix}, \quad
	\tau_{\sm X \sqcup -\sm M}
	\mapsto
	\mathrm{sign}\lrpar{
	\tau_{\sm X \sqcup -\sm M}
	\circ \cL_t } = 
	\tau_{\sm X \sqcup -\sm M} \circ \mathrm{sign}(\cL_t)
\end{equation*}
and 
\begin{equation*}
	U_t = \abs
	{\tau_{\sm X \sqcup- \sm M}
	\circ \cL_t }^{1/2} = |\cL_t|^{1/2}.
\end{equation*}

\begin{lemma}\label{lem:UncompPert}
The operator $\cC_t(f)$ is a bounded operator relative to $D^{\cW_X \sqcup f^{\sharp}\cW_X};$ that is, $\cC_t(f)$ is bounded as a map
\begin{equation*}
	\cC_t(f): \cD_{\cW_X \sqcup f^{\sharp}\cW_X}(D) \lra 
	L^2(X \sqcup -M;\Lambda_*(X \sqcup -M)).
\end{equation*}
The operator $D^{\cW_X \sqcup f^{\sharp}\cW_X} + \cC_t(f)$ is invertible for small enough $t>0.$
\end{lemma}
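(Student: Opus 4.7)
The plan is to adapt the perturbation argument of Hilsum--Skandalis \cite{HiSka}, as carried out in the smooth case in \cite{PS-akt} and extended to the Witt setting in \cite{PS1}, to the present stratified context. The two structural properties of $HS(f)$ and $\Upsilon$ recalled above---commuting with $d^{\sG(r)}$ on the mezzoperversity domain, and providing the chain homotopy $\mathrm{Id}-HS(f)'HS(f)=d\Upsilon+\Upsilon d$---are precisely the inputs Hilsum--Skandalis's algebraic argument requires; the remaining work is to verify that the analytic manipulations are compatible with the ideal boundary conditions defined by $\cW_X\sqcup f^{\sharp}\cW_X$.

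\textbf{Boundedness.} First I would observe that $\cL_t$ is bounded, with bounded inverse for small $t$, as an operator on $L^2$ and on $\cD_{\cW_X\sqcup f^{\sharp}\cW_X}(d)$ equipped with its $d$-graph norm: boundedness on $L^2$ follows from $L^2$-boundedness of each entry, while boundedness on the $d$-domain follows from the fact, already noted above, that $HS(f)$ maps $\cD_{\cW_X}(d)$ boundedly into $\cD_{f^{\sharp}\cW_X}(d)$ and that $\Upsilon$ acts boundedly on $\cD_{\cW_X}(d)$. Functional calculus then yields bounded, boundedly invertible square roots $U_t=|\cL_t|^{1/2}$ (and $\mathrm{sign}(\cL_t)$) on both of these Hilbert spaces. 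The operator $\tfrac{1}{i}\underline{D}_t$ differs from $D^{\cW_X\sqcup f^{\sharp}\cW_X}$ only by the zero-th order modifications $tHS(f)'$ of $d$ and $(\mathrm{sign}(\cL_t)-\mathrm{Id})$ of $\tau$; hence it is bounded from $\cD(D)$ to $L^2$, and conjugation by the bounded, boundedly invertible $U_t^{\pm1}$ preserves this property. This is exactly the claim that $\cC_t(f)\colon\cD_{\cW_X\sqcup f^{\sharp}\cW_X}(D)\to L^2$ is bounded.

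\textbf{Invertibility.} Since $D^{\cW_X\sqcup f^{\sharp}\cW_X}+\cC_t(f)=\tfrac{1}{i}U_t\underline{D}_tU_t^{-1}$, invertibility reduces to invertibility of $\underline{D}_t$ for small $t>0$. The operator $\underline{D}_t$ is the Dirac-type operator associated to the Hilbert complex $(\cD(d_{\sm X\sqcup-\sm M}),\,d+tHS(f)')$ together with the modified Hodge duality $\tau\circ\mathrm{sign}(\cL_t)$. The twisted differential is, up to shift and sign, the differential of the mapping cone of $tHS(f)$, which is acyclic for every $t>0$ because $HS(f)$ is a chain homotopy equivalence with chain homotopy $\Upsilon$. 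A standard Hodge-theoretic argument for an acyclic Hilbert complex (\cite[\S2]{HiSka}, see also the stratified version in \cite{PS1}) then produces a lower bound of the form $\|\underline{D}_tu\|_{L^2}\geq ct\|u\|_{L^2}$ on $\cD(D)$, which transfers under the bounded conjugation by $U_t^{\pm1}$ to invertibility of $D^{\cW_X\sqcup f^{\sharp}\cW_X}+\cC_t(f)$.

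\textbf{Main obstacle.} The principal technical difficulty is verifying that the twisted differential $d+tHS(f)'$ is still a closed operator on $\cD_{\cW_X\sqcup f^{\sharp}\cW_X}(d)$ whose Hilbert-space adjoint is the corresponding modification of $\delta$ on $\cD_{\cW_X\sqcup f^{\sharp}\cW_X}(\delta)$, so that the Hilbert-complex machinery really applies with the mezzoperversity boundary conditions in place. Equivalently, one has to check that the mezzoperversity $f^{\sharp}\cW_X$ constructed in \cite{ALMP:Novikov} is compatible with the off-diagonal coupling term $tHS(f)'$ in the twisted complex; this is where the compressed Hilsum--Skandalis variant $HS_{\mu}(f)$ introduced above and the fact that $\mu(D^{\cW_X})$ maps $L^2$ into $\cD_{\cW_X}(D^{\infty})$ may have to be invoked in place of the raw $HS(f)$ at intermediate steps. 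Once this compatibility is settled, the algebraic Hilsum--Skandalis argument carries over unchanged and yields both assertions simultaneously.
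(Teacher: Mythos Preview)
Your boundedness and invertibility arguments follow the same route as the paper: write $\underline D_t$ as $D$ plus bounded perturbations built from $E_t$, $\mathrm{sign}(\cL_t)-\Id$, and $U_t^{\pm1}-\Id$, then conjugate; for invertibility, reduce to acyclicity of the cone complex and cite \cite[Lemme~2.1]{HiSka} and \cite[\S3]{Wahl_higher_rho}. Two points, however, deserve correction.

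First, you invoke the functional calculus for $U_t=|\cL_t|^{1/2}$ and $\mathrm{sign}(\cL_t)$ without saying why $\cL_t$ is invertible. The paper supplies the missing observation: $\cL_0=R'R$ with $R=\begin{pmatrix}\Id&0\\ HS(f)&\Id\end{pmatrix}$, which is manifestly invertible, so $\cL_t$ is invertible for $t$ small by continuity. Without this you cannot form $\mathrm{sign}(\cL_t)$ as a bounded operator, and the rest of the argument stalls.

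Second, your ``Main obstacle'' paragraph points in the wrong direction. The uncompressed lemma is proved in the paper \emph{without} any appeal to the compressed replacement $HS_\mu(f)$; the latter appears only in the subsequent lemma, where one wants the stronger conclusion that the perturbation extends to a compact operator on all of $L^2$. For the present statement the relevant compatibility is simply that $HS(f)$, $HS(f)'$, and $\Upsilon$ are bounded on the $d$-graph-norm Hilbert space $\cD_{\cW_X\sqcup f^\sharp\cW_X}(d)$, which you already recorded; once that is in hand, the Hilsum--Skandalis algebraic argument (together with the self-adjointness of $D^{\cW_X\sqcup f^\sharp\cW_X}$ established in \cite{ALMP:Hodge}) goes through verbatim. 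There is no need to verify a separate closedness/adjointness statement for $d+tHS(f)'$ with ideal boundary conditions, and bringing in $HS_\mu$ here would only obscure the logic.
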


\begin{proof}
The boundedness of $\cC_t(f)$ relative to $D^{\cW_X \sqcup f^{\sharp}\cW_X}$ follows from the fact that $\cL_t$ is a bounded operator on $\cD_{\cW_X}(d).$ 
With notation similar to \cite[Proof of Proposition 3.4]{Zenobi:Mapping}, we can write 
\begin{equation*}
	E_t =
	\begin{pmatrix}
	0 & tHS(f)' \\ 0 & 0
	\end{pmatrix}, \quad
	\mathrm{sign}(\cL_t) = \Id + G_t, \quad
	U_t = \Id+ H_t', \quad
	U_t^{-1} = \Id +F_t'
\end{equation*}
with $E_t,$ $G_t,$ $H_t',$ $F_t'$ bounded operators on $\cD_{\cW_X\sqcup f^{\sharp}\cW_X}(d),$ with its $d$-graph norm as well as on 
$L^2(X \sqcup -M;\Lambda_*(X \sqcup -M)).$
Then, e.g., in the even dimensional case we can write
\begin{equation*}
	D_t = \tfrac1i (1 + F_t')\circ
	((d+E_t)
	+ \tau_{\sm X \sqcup -\sm M} \circ(1+ G_t) \circ
	(d+E_t) \circ 
	\tau_{\sm X \sqcup -\sm M} \circ(1+ G_t) )
	\circ (1 + H_t') \\
	= D + \cC_t(f)
\end{equation*}
and it follows that $\cC_t(f)$ is bounded as a map 
from $\cD_{\cW_X \sqcup f^{\sharp}\cW_X}(D)$ to $L^2(X \sqcup -M;\Lambda_*(X \sqcup -M)).$\\
Note that $\cL_0$ satisfies
\begin{equation*}
	\cL_0 = R'R, \quad R =
	\begin{pmatrix}
	\Id & 0 \\ HS(f) & \Id
	\end{pmatrix}
\end{equation*}
and, since $R$ is invertible, this shows that $\cL_0$ is invertible and hence $\cL_t$ is invertible for small enough $t.$
The invertibility of $D^{\cW_X \sqcup f^{\sharp}\cW_X} + \cC_t(f)$ as an unbounded operator with domain $\cD_{\cW_X \sqcup f^{\sharp}\cW_X}(D_{X \sqcup -M})$ now follows as in \cite[Lemme 2.1]{HiSka}, \cite[\S3]{Wahl_higher_rho}. 
\end{proof}

A similar result holds for the signature operator on $X\sqcup (-M)$, with mezzoperversity 
given by $\cW_X$ and $  f^{\sharp}\cW_X$ and   twisted by the Mishchenko bundle
$\sG(r)$ on $X$ and $\sG(f\circ r)$ on $M$. In this case we use the Hilsum-Skandalis replacement
$HS(f): \cD_{\cW_X}(d^{\sG(r)}) \lra \cD_{\cW_M}(d^{\sG(f\circ r)})$.
\subsection{The compressed Hilsum-Skandalis perturbation} $\,$\\
We can repeat the argument from the previous subsection replacing $HS(f)$ by $HS_{\mu}(f).$ The resulting perturbation, which we denote $\cC_{t,\mu}(f)$ and refer to as the {\em compressed Hilsum-Skandalis perturbation}, satisfies an improved version of Lemma \ref{lem:UncompPert}.

\begin{lemma}
The operator $\cC_{t,\mu}(f)$ extends from $\cD_{\cW_X \sqcup f^{\sharp}\cW_X}(D_{\sm X \sqcup -\sm M})$ to a compact operator
\begin{equation*}
	\cC_{t,\mu}(f): 
	L^2(X \sqcup -M;\Lambda_*(X \sqcup -M)) \lra 
	L^2(X \sqcup -M;\Lambda_*(X \sqcup -M)).
\end{equation*}
The operator $D^{\cW_X \sqcup f^{\sharp}\cW_X} + \cC_{t,\mu}(f)$ is invertible.
\end{lemma}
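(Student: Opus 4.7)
The plan is to mirror the proof of Lemma \ref{lem:UncompPert}, with $HS(f)$ systematically replaced by $HS_\mu(f)$, and then leverage the compactness of $HS_\mu(f)$ to strengthen the relative boundedness bound on $\cC_{t,\mu}(f)$ to a genuine compactness statement. Concretely, I would first introduce the compressed analogs $\cL_{t,\mu}$, $U_{t,\mu} = |\cL_{t,\mu}|^{1/2}$, $\underline{D}_{t,\mu}$, $E_{t,\mu}$, $G_{t,\mu}$, $H_{t,\mu}'$, and $F_{t,\mu}'$ defined exactly as in \eqref{eq:DefL} and the lines that follow, but with $HS(f)$ replaced by $HS_\mu(f)$ throughout, and verify the factorization
\begin{equation*}
	D^{\cW_X \sqcup f^\sharp \cW_X} + \cC_{t,\mu}(f) = \tfrac{1}{i}\, U_{t,\mu} \circ \underline{D}_{t,\mu} \circ U_{t,\mu}^{-1}.
\end{equation*}

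For compactness: as noted in the paragraph introducing $HS_\mu(f)$, both $\mu(D^{\cW_X})$ and $\mu(D^{f^\sharp\cW_X})$ are compact operators on $L^2$, since their ranges lie in $\cD_{\cW}(D^\infty)$, a domain that is compactly included in $L^2$. Hence $HS_\mu(f)$ and its Hodge adjoint $HS_\mu(f)'$ are compact. Expanding the factorization above exactly as in the proof of Lemma \ref{lem:UncompPert}, the perturbation $\cC_{t,\mu}(f)$ becomes a polynomial in bounded operators in which every monomial contains at least one factor of $HS_\mu(f)$ or $HS_\mu(f)'$; consequently $\cC_{t,\mu}(f)$ extends to a compact operator on $L^2(X \sqcup -M;\Lambda_*(X \sqcup -M))$.

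For invertibility, I would check that $\cL_{0,\mu}$ factors as $R_\mu' R_\mu$ with
\begin{equation*}
	R_\mu = \begin{pmatrix} \Id & 0 \\ HS_\mu(f) & \Id \end{pmatrix}
\end{equation*}
manifestly invertible (inverse obtained by changing the sign of $HS_\mu(f)$), so that $\cL_{t,\mu}$ remains invertible on the relevant graph-norm Hilbert space for sufficiently small $t > 0$. The argument of \cite[Lemme 2.1]{HiSka}, used already in the proof of Lemma \ref{lem:UncompPert}, then delivers invertibility of $D^{\cW_X \sqcup f^\sharp\cW_X} + \cC_{t,\mu}(f)$ as an unbounded self-adjoint operator with domain $\cD_{\cW_X \sqcup f^\sharp\cW_X}(D)$.

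The main obstacle will be verifying that the factorization $\cL_{0,\mu} = R_\mu' R_\mu$, which in the uncompressed setting is underpinned by the homotopy identity $\Id - HS(f)'HS(f) = d\Upsilon + \Upsilon d$, carries over after compression. One needs a compressed analog $\Upsilon_\mu = \mu(D^{\cW_X})\, \Upsilon\, \mu(D^{\cW_X})$ (or a similar construction) and a careful check that the resulting compressed identity, combined with the functional calculi used to define $U_{t,\mu}$ and $\mathrm{sign}(\cL_{t,\mu})$, still produces operators that preserve the compact ideal in the right way. Once these algebraic and bookkeeping points are dispatched, the remainder of the argument is a direct transcription of the uncompressed proof.
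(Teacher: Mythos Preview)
Your overall plan is the right one and matches the paper's approach, but the compactness argument as written has a genuine gap. The expansion of $\cC_{t,\mu}(f)$ is \emph{not} ``a polynomial in bounded operators in which every monomial contains at least one factor of $HS_\mu(f)$ or $HS_\mu(f)'$''. First, the operators $G_{t,\mu}=\mathrm{sign}(\cL_{t,\mu})-\Id$, $H_{t,\mu}'=|\cL_{t,\mu}|^{1/2}-\Id$, $F_{t,\mu}'=|\cL_{t,\mu}|^{-1/2}-\Id$ arise from continuous functional calculus and are not polynomials in $HS_\mu(f)$. Second, and more seriously, the expansion contains the \emph{unbounded} operator $d$: terms such as $d\circ H_{t,\mu}'$ and $F_{t,\mu}'\circ d$ appear, and knowing only that $H_{t,\mu}'$, $F_{t,\mu}'$ are compact on $L^2$ is not enough to conclude that these composites extend to bounded (let alone compact) operators on $L^2$. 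In the uncompressed Lemma~\ref{lem:UncompPert} this was harmless because one only claimed boundedness from $\cD_{\cW}(D)$ to $L^2$; here you must upgrade to $L^2\to L^2$, and that requires more than compactness.

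What the paper actually uses is a \emph{smoothing} property: $\cL_{t,\mu}=\Id+H_{t,\mu}$ with $H_{t,\mu}$ compact and, crucially, both $H_{t,\mu}$ and $H_{t,\mu}^*$ mapping $L^2$ into $\cD_{\cW_X\sqcup f^\sharp\cW_X}(D^\infty)$. One then invokes \cite[Lemma~A.12]{PS1} to show that this property is stable under the functional calculi producing $G_{t,\mu}$, $H_{t,\mu}'$, $F_{t,\mu}'$. With this in hand every occurrence of $d$ in the expansion lands on an element of $\cD(D^\infty)$, so each term is compact on $L^2$. This is the missing ingredient in your sketch; ``preserving the compact ideal'' is not sufficient.

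A smaller point: your ``main obstacle'' is somewhat misdiagnosed. The factorization $\cL_{0,\mu}=R_\mu'R_\mu$ is a purely algebraic matrix identity and holds for any $T$ in place of $HS(f)$; no homotopy formula is needed there. Where a compressed $\Upsilon_\mu$ (and the analogue of the identity $\Id-HS(f)'HS(f)=d\Upsilon+\Upsilon d$) is genuinely required is in the invocation of \cite[Lemme~2.1]{HiSka} to prove invertibility of $\underline{D}_{t,\mu}$. The paper handles this by referring to the compressed replacement properties established in \cite[Lemma~9.7]{PS1}.
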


\begin{proof}
If $\cL_{t,\mu}$ is the operator obtained as in \eqref{eq:DefL} but using $HS_{\mu}(f),$ then it is an invertible operator of the form $\Id+H_{t,\mu}$ with $H_{t,\mu}$ a compact operator such that both $H_{t,\mu}$ and its adjoint map $L^2(X \sqcup -M;\Lambda_*(X \sqcup -M))$ into $\cD_{\cW_X \sqcup f^{\sharp}\cW_X}(D_{\sm X \sqcup -\sm M}^{\infty}).$  It follows from, e.g., the argument used in Lemma A.12 of \cite{PS1}, that each of the operators $E_{t,\mu},$ $G_{t,\mu},$ $H_{t,\mu}',$ $F_{t,\mu}'$ defined as in the proof of Lemma \ref{lem:UncompPert} will also have this property. Hence $\cC_{t,\mu}(f)$ will be a compact operator.

The invertibility of the perturbed signature operator follows from \cite[Lemme 2.1]{HiSka}.
\end{proof}

Also in this case we can extend the whole analysis to the signature operators twisted by the appropriate
Mishchenko bundles; we state and use this result in Proposition \ref{prop:compr-HS-perturbation} below.

\subsection{Passing to the Roe algebra} $\,$\\
Let $\bbB(\cE)$ denote the  operators acting on the Hilbert $C^*_r \Gamma$-module
$\mathcal{E}:=L^2 (X,\Lambda^* X\otimes \cG(r))\oplus L^2 (M,\Lambda^* M\otimes G(r\circ f))$. 
Recall that
there is a $C^*$-homomorphism $$L_\pi: \bbB( \cE)\to \mathcal{B}(L^2 (X_\Gamma,\Lambda_* X_\Gamma)\oplus 
L^2 (M_\Gamma,\Lambda_* M_\Gamma))$$ and that $L_\pi$ induces an isomorphism between $\bbK(\cE)$
and the Roe algebra $C^* (\cov{\sing X}\sqcup (- \cov{\sing{M}}))^\Gamma$ and between $\bbB(\cE)$
and the multiplier algebra $\mathfrak{M}(C^* (\cov{\sing X}\sqcup (- \cov{\sing{M}}))^\Gamma)$ of the Roe algebra.

 \begin{proposition}\label{prop:compr-HS-perturbation}
The compressed Hilsum-Skandalis perturbation  $\cC_{\mu,t} (f)$ is an element in $ \bbK (\cE)$.
Consequently, if $C_{\mu,t} (f):=L_\pi (\cC_{\mu,t} (f))$, then
\begin{equation}\label{compr-hs}
	C_{\mu,t} (f)\in C^* (\cov{\sing X}\sqcup (- \cov{\sing{M}}))^\Gamma))\,.
\end{equation}
\end{proposition}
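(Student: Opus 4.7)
The plan is to upgrade the preceding lemma from scalar $L^2$-compactness to $C^*_r\Gamma$-compactness on the Hilbert module $\cE$, after which the second statement is immediate from the stated isomorphism $L_\pi\colon \bbK(\cE) \xrightarrow{\sim} C^*(\cov{\sing X} \sqcup (-\cov{\sing M}))^\Gamma$.

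The key input is the result of \cite{ALMP:Novikov}, recalled above, that the resolvents of $D^{\cW_X}_{\cG(r)}$ and $D^{f^\sharp\cW_X}_{\cG(r\circ f)}$ are $C^*_r\Gamma$-compact on the respective Mishchenko-twisted Hilbert modules. Since $\mu$ is even and rapidly decreasing, the functions $x\mapsto \mu(x)$ and $x\mapsto x^k\mu(x)$ for every $k\in\bbN$ are uniformly approximable by polynomials in $(x^2+1)^{-1}$, so $\mu(D^{\cW_X}_{\cG(r)})$ and $\mu(D^{f^\sharp\cW_X}_{\cG(r\circ f)})$ are norm limits of polynomials in their respective resolvents, hence lie in the compact operators of the corresponding Hilbert module. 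Moreover, by the previous subsections, the Hilsum--Skandalis replacement $HS(f)$ extends to a bounded adjointable operator between the twisted Hilbert $C^*_r\Gamma$-modules and commutes with $d$ in the appropriate sense. Composing, the compressed replacement
\[
HS_\mu(f) = \mu(D^{f^\sharp\cW_X}_{\cG(r\circ f)}) \circ HS(f) \circ \mu(D^{\cW_X}_{\cG(r)})
\]
is then a composition of a bounded adjointable operator sandwiched between two $C^*_r\Gamma$-compacts, and therefore belongs to $\bbK(\cE)$.

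Next, I would repeat the construction of the preceding lemma using $HS_\mu(f)$ in place of $HS(f)$, working throughout in $\bbB(\cE)$. Since $HS_\mu(f)$ is $C^*_r\Gamma$-compact, the operator $\cL_{t,\mu}$ constructed via \eqref{eq:DefL} has the form $\Id + H_{t,\mu}$ with $H_{t,\mu} \in \bbK(\cE)$. Because $\bbK(\cE)$ is a two-sided ideal stable under holomorphic functional calculus of the form $f(\Id+K) = \Id + (\text{compact})$ whenever $f(1)=1$, the operators $\mathrm{sign}(\cL_{t,\mu})$ and $|\cL_{t,\mu}|^{\pm 1/2}$ are each of the form $\Id + \text{compact}$ for $t$ small. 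Consequently the perturbations $E_{t,\mu}, G_{t,\mu}, H_{t,\mu}', F_{t,\mu}'$ appearing in the decomposition $D_t = D^{\cW_X\sqcup f^\sharp\cW_X} + \cC_{t,\mu}(f)$ all lie in $\bbK(\cE)$, so upon expanding the algebraic formula for $D_t$ given in the proof of Lemma \ref{lem:UncompPert}, every term contributing to $\cC_{t,\mu}(f)$ contains at least one of these compact factors. Using that $\bbK(\cE)$ is a two-sided ideal in $\bbB(\cE)$, this yields $\cC_{t,\mu}(f) \in \bbK(\cE)$, and applying $L_\pi$ gives \eqref{compr-hs}.

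The main obstacle will be the careful bookkeeping needed to verify that each ingredient of the perturbation is not just $L^2$-bounded but truly bounded adjointable on the Hilbert $C^*_r\Gamma$-module, and to check that the identity $d^{\cG(r\circ f)}HS_\mu(f) = HS_\mu(f)d^{\cG(r)}$ persists after twisting so that the invertibility argument of \cite[Lemme 2.1]{HiSka}, invoked in the previous lemma, still applies in the Hilbert module setting and places $\cC_{t,\mu}(f)$ genuinely inside the compact operators rather than merely the multiplier algebra.
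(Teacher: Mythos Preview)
Your proposal is correct and matches the paper's approach: the paper does not give a formal proof of this proposition but simply remarks, just before the subsection containing it, that ``we can extend the whole analysis to the signature operators twisted by the appropriate Mishchenko bundles,'' i.e., the preceding lemma's argument carries over verbatim once one replaces $L^2$-compactness by $C^*_r\Gamma$-compactness via the compactness of the resolvent from \cite{ALMP:Novikov}. You have spelled out exactly this extension, and the consequence \eqref{compr-hs} then follows from the stated isomorphism $L_\pi:\bbK(\cE)\cong C^*(\cov{\sing X}\sqcup(-\cov{\sing M}))^\Gamma$.
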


\subsection{APS-index classes associated to degree one normal maps}$\,$\\
Let $\sM$ and $\sX$ Cheeger spaces with boundary. We denote 
$\sing M:=\pa \sM$ and $\sing X:= \pa \sX$. We let $\sZ:= \sX\sqcup (-\sM)$. We assume the existence 
of a smooth transverse stratified map $F:\sM\to \sX$; we further assume that $F$ sends $\pa \sM$ into $\pa \sX$ 
and that
$F_{\pa}:=F|_{\pa \sY}$ is a stratified homotopy equivalence from $\pa \sM\equiv \sing M$ into $\pa \sX\equiv \sing X$, as in the previous
subsection. Finally we assume the existence 
of a classifying map $\sX\to B\Gamma$; the latter, together with $F$, defines a $\Gamma$-covering $\Gamma\to \sZ_\Gamma\to \sZ$.
We fix a self-dual mezzoperversity $\sW$ on $\sX$ and consider the induced mezzo-perversity $F^{\sharp} \sW$
on $\sM$. This gives $\sZ$, and thus $\sZ_\Gamma$, a self-dual mezzo-perversity $\sW\sqcup F^{\sharp} \sW$.
We consider now $\sing Z:=\pa \sZ\equiv \sing X\sqcup (-\sing M)$ 
and $\cov{\sing Z}:= \pa \sZ_\Gamma$; this gives a Galois $\Gamma$-covering of Cheeger-spaces without boundary
$$\Gamma - \cov{\sing Z}\to \sing Z \;\;\equiv\;\; \Gamma- \pa \sZ_\Gamma \to \pa \sZ\,.$$
By our discussion above there is a well defined (compressed) Hilsum-Skandalis perturbation $C(F_\partial)\in C^* (\cov{\sing Z})$ (for simplicity, we will no longer include the $t,\mu$ sub-indices in the notation for the perturbation);
this is a trivializing perturbation for the signature operator on $\pa \sZ_\Gamma$ with domain fixed by $\pa \sW\sqcup f^{\sharp}
\pa \sW$. 
We thus obtain a well defined APS coarse index class that we shall denote as  $\Ind (D_\Gamma^{\sW\sqcup F^{\sharp}\sW}, C(F_{\pa}))$
in $K_* (C^* (\sZ_\Gamma)).$

\begin{proposition}\label{prop:extension}
If $F$ is a global stratified transverse homotopy equivalence, then
\begin{equation*}
\Ind (D^{\sW\sqcup F^{\sharp}\sW}_\Gamma,C(F_{\pa}) )=0
\end{equation*}
\end{proposition}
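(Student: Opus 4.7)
The plan is to extend the boundary trivializing perturbation $C(F_\pa)$ to a bounded perturbation on all of $\sZ_{\Gamma,\infty}$ that makes the perturbed signature operator globally invertible. Once this is achieved, the K-theory class underlying the coarse APS index lifts to an element of $K_*(D^*(\sZ_{\Gamma,\infty})^\Gamma)$ and hence its image under the connecting homomorphism vanishes. The key structural input is that $F$ is a global (not merely boundary) stratified homotopy equivalence, which by Proposition \ref{prop:compr-HS-perturbation} yields a compressed Hilsum-Skandalis perturbation on the whole of $\sZ_\Gamma$ that is compatible, at the boundary, with $C(F_\pa)$.

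First I would apply the compressed Hilsum-Skandalis construction to $F$ itself (as a global stratified homotopy equivalence on the stratified space without boundary $\sZ = \sX \sqcup (-\sM)$, twisted by the appropriate Mishchenko bundle coming from the classifying map $\sX \to B\Gamma$) to produce a perturbation $C(F) \in C^*(\sZ_\Gamma)^\Gamma$ such that $D_\Gamma^{\sW \sqcup F^{\sharp}\sW} + C(F)$ is $L^2$-invertible. I would choose the parameters $(t, \mu)$ in the construction consistently with those used to build $C(F_\pa)$, so that near $\pa\sZ_\Gamma$ the global perturbation $C(F)$ and the cylindrical perturbation $C(F_\pa) \otimes \Id$ agree up to an operator supported in a compact collar. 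Then I would define $B_\infty$ on $\sZ_{\Gamma,\infty}$ to equal $C_\infty = C(F_\pa) \otimes \Id$ on the open cylindrical end and $C(F)$ away from a collar of $\pa\sZ_\Gamma$, interpolating smoothly across the collar. By construction $P_0 B_\infty P_0 = P_0 C_\infty P_0$, so condition \eqref{perturbation-B} holds trivially, and $B_\infty$ is a norm limit of finite propagation operators since each ingredient is.

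Next I would establish that $D_{\Gamma,\infty}^{\sW \sqcup F^{\sharp}\sW} + B_\infty$ is invertible on all of $\sZ_{\Gamma,\infty}$. On the cylindrical part the operator has the product form $\sigma(\pa_t + D_\Gamma^{\pa\sW \sqcup F_\pa^{\sharp}\pa\sW} + C(F_\pa))$; invertibility of the boundary perturbation ensures invertibility on the full line by standard spectral theory. On $\sZ_\Gamma$ we have invertibility from step 1. A gluing/partition-of-unity argument (as in \cite[\S2]{PS-akt}) together with the fact that the collar transition region is compact then yields a uniform spectral gap for $D_{\Gamma,\infty} + B_\infty$. Choosing a chopping function $\chi$ equal to the sign function on the spectrum of $D_{\Gamma,\infty} + B_\infty$, we obtain $\chi(D_{\Gamma,\infty} + B_\infty)^2 = \Id$ exactly, so $\chi(D_{\Gamma,\infty} + B_\infty)$ is an honest involution in $D^*(\sZ_{\Gamma,\infty})^\Gamma$.

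Finally, by the independence statement following Proposition \ref{prop:relative} (i.e.\ that $\Ind^{\operatorname{rel}}(D_{\Gamma,\infty}^{\sW} + C_\infty) = \Ind^{\operatorname{rel}}(D_{\Gamma,\infty}^{\sW} + B_\infty)$ for any admissible $B_\infty$), the relative index class equals $\pa [\chi(D_{\Gamma,\infty} + B_\infty)]$ where the bracket now denotes a class coming from $K_*(D^*(\sZ_{\Gamma,\infty})^\Gamma)$ itself, via the tautological map into the quotient $K_*(D^*/C^*(\sZ_\Gamma \subset \sZ_{\Gamma,\infty})^\Gamma)$. Exactness of the six-term sequence associated to $0 \to C^*(\sZ_\Gamma \subset \sZ_{\Gamma,\infty})^\Gamma \to D^*(\sZ_{\Gamma,\infty})^\Gamma \to D^*/C^* \to 0$ forces $\pa$ of a class coming from $K_*(D^*)$ to vanish. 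Applying $c_*^{-1}$ yields $\Ind(D_\Gamma^{\sW \sqcup F^{\sharp}\sW}, C(F_\pa)) = 0$ in $K_*(C^*(\sZ_\Gamma)^\Gamma)$. The odd-dimensional case reduces to the even one by the suspension argument used in the proof of Theorem \ref{theo:k-theory-deloc}. The main obstacle is step 2 above: ensuring that $B_\infty$ can be chosen to both satisfy condition \eqref{perturbation-B} and yield a globally invertible perturbed operator, which requires carefully matching the global and cylindrical Hilsum-Skandalis constructions in a collar of $\pa\sZ_\Gamma$ and controlling the resulting spectral gap.
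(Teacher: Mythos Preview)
There is a genuine gap in your construction of the perturbation $B_\infty$. First, $\sZ = \sX \sqcup (-\sM)$ is \emph{not} a stratified space without boundary: $\sX$ and $\sM$ are Cheeger spaces with boundary, so $\pa\sZ = \sing X \sqcup (-\sing M)$. Proposition~\ref{prop:compr-HS-perturbation} therefore does not apply to produce a compressed Hilsum--Skandalis perturbation $C(F)\in C^*(\sZ_\Gamma)^\Gamma$, and the phrase ``invertibility on $\sZ_\Gamma$'' in your step~2 is ill-posed without boundary conditions. Second, even if you work on the complete space $\sZ_{\Gamma,\infty}$ and compress with $\mu(D_{\Gamma,\infty}^{\sW})$, the resulting global perturbation does not restrict on the cylinder to $C(F_\pa)\otimes\Id$: the latter is compressed with $\mu(D_\Gamma^{\pa\sW})$, while on the cylinder $\mu(D_{\Gamma,\infty}^{\sW})$ involves $\mu\big(\sigma(\pa_t + D_\Gamma^{\pa\sW})\big)$, a different operator. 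Your interpolated $B_\infty$ therefore agrees with $C_\infty$ on the cylinder only by fiat, and there is no mechanism ensuring that $D_{\Gamma,\infty}^{\sW}+B_\infty$ is invertible across the transition region. A partition-of-unity argument of the kind you invoke requires a common invertible model on an overlap, which you have not produced.

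The paper takes a different route precisely to avoid this mismatch. It uses the \emph{un-compressed} Hilsum--Skandalis perturbation $C(F)$ directly on $\sZ_{\Gamma,\infty}$; this is only relatively bounded (cf.\ Lemma~\ref{lem:UncompPert}) and does not satisfy~\eqref{perturbation-B} with respect to the compressed $C_\infty$, but it does make the perturbed signature operator globally invertible by the Hilsum--Skandalis/Wahl argument. The index class associated to this invertible operator vanishes. The nontrivial step is then to identify that vanishing class with $\Ind(D_\Gamma^{\sW\sqcup F^\sharp\sW},C(F_\pa))$: this is done in two stages following Wahl, a spectral-flow argument (purely functional-analytic) and a Bunke-type relative index theorem of the sort proved later as Proposition~\ref{gluing}. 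Your exactness argument in step~3 is correct in spirit, but it presupposes exactly the bridge between compressed and un-compressed perturbations that the paper supplies and your proposal does not.
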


\begin{proof}
There is a well-defined Hilsum-Skandalis perturbation, $C(F),$ on the manifold with cylindrical ends, $\sZ_{\Gamma,\infty}.$
(Notice that this is an `un-compressed' perturbation.)

Proceeding as in \cite{HiSka} and \cite{Wahl_higher_rho}, one can show that the associated perturbed signature operator is {\it invertible}.
One can then proceed as in \cite{Wahl_higher_rho} and show that the index class in the statement of the proposition,  viz.
$\Ind (D_\Gamma^{\sW\sqcup F^{\sharp}\sW},C(F_{\pa}) ),$ is equal to the index class of the invertible operator and hence vanishes. This is done in two steps. The first is a spectral flow argument, which is purely functional analytic and hence applies in our setting. The second is a relative index theorem following Bunke, of the type we will discuss below in Proposition \ref{gluing}, and also applies in our setting.
\end{proof}

\section{Mapping the Browder-Quinn surgery sequence to analysis}\label{sect:mapping}

\subsection{The rho class of a stratified homotopy equivalence}$\,$\\
	Let $f:\sing M\to \sing X$ be a transverse stratified homotopy equivalence.
Let $\Gamma$ be $\pi_1 (\sing X)$.
Let $\sing Z:= \sing X \sqcup (- \sing M)$. The Cheeger space $\sing Z$ comes equipped with two maps
induced respectively by $f$ and the identity and by $f$ and the classifying map for $\sing X$:
\begin{equation*}
\phi: \sing Z\to \sing X\,,\quad u: \sing Z\to B\Gamma\,.
\end{equation*}
 In particular, there is a well defined $\Gamma$ covering $\Gamma - \cov{\sing Z}\to \sing Z$.
We let $u_\Gamma:\cov{\sing Z}\to E\Gamma$ be the $\Gamma$-equivariant lift of $u$.
We fix a self-dual mezzoperversity $\cW$ on $\sing X$ and consider the associated self-dual mezzoperversity $f^{\sharp} \cW$
on $-\sing M$. We call $\cW\sqcup f^{\sharp} \cW$ the resulting self-dual mezzoperversity on $\sing Z$. We then have  self-adjoint
extensions $D^{\cW\sqcup f^{\sharp} \cW}$ on $Z$, $\De^{\cW\sqcup f^{\sharp} \cW}$ on $Z_\Gamma$  and, by Proposition \ref{prop:compr-HS-perturbation}, a well defined (compressed) Hilsum-Skandalis perturbation
$C_f\in C^* (\cov{\sing X}\sqcup (- \cov{\sing{M}}))^\Gamma\equiv C^* (\cov{\sing Z})$

\begin{definition}\label{def:rho-of-an-he}
The rho-class $\rho (\sing M\xrightarrow{f} \sing X,\cW)$ associated to $f:\sing M\to \sing X$ and the 
self-dual mezzoperversity $\cW$  is given by 
\begin{equation}\label{eq:rho}
\rho (\sing M\xrightarrow{f} \sing X,\cW):=\phi_* (\rho (D^{\cW\sqcup f^{\sharp}\cW}_\Gamma+C_f))\in K_{\dim X+1} (D^* (\cov{\sing X})^\Gamma)
\end{equation}
The universal rho class is, by definition,
\begin{equation}\label{eq:universal-rho}
\rho_\Gamma (\sing M\xrightarrow{f} \sing X,\cW):=(u_\Gamma)_* (\rho (D^{\cW\sqcup f^{\sharp}\cW}_\Gamma+C_f))\in K_{\dim X+1} 
(D^*_\Gamma)
\end{equation}
\end{definition}

We shall see in the next subsection that the rho class of a stratified homotopy equivalence 
is independent of $\cW$ and descends to $ \tS_{\BQ}(\sing X)$.

\subsection{The rho map  from $\tS_{\BQ}(\sing X)$   to $K_{\dim \sing X + 1}(D^* (\cov{\sing X})^\Gamma)$} \label{sec:RhoMap}

\begin{proposition}\label{prop:well-def-on-S}
The rho class associated to a transverse stratified homotopy equivalence $f:\sing M\to \sing X$ and a
self-dual mezzoperversity $\cW$ on $\sing X$ satisfies the following properties:
\item{1]} 
it is independent of the choice of $\cW$;
\item{2]} it gives a well-defined map 
\begin{equation}\label{well-def-on-S}
\rho: \tS_{\BQ}(\sing X)\longrightarrow K_{\dim \sing X + 1}(D^* (\cov{\sing X})^\Gamma)
\end{equation}
We denote by $\rho [\sing M\xrightarrow{f} \sing X]$ the image of  $ [\sing M\xrightarrow{f} \sing X]$ through the rho map.
\end{proposition}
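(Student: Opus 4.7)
The plan is to deduce both assertions from one bordism argument combining Proposition~\ref{prop:extension} (vanishing of the coarse APS-index class of a global stratified homotopy equivalence) with the delocalized APS theorem, Theorem~\ref{theo:k-theory-deloc}.

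For 1], fix two self-dual mezzoperversities $\cW_0, \cW_1$ on $\sing X$ and set $\sX := \sing X \times I$, $\sM := \sing M \times I$, $F := f \times \id_I$; then $F$ is a global transverse stratified homotopy equivalence. Item 5] of Theorem~\ref{theo:main-bordism} produces an adapted self-dual mezzoperversity $\sW$ on $\sX$ restricting to $\cW_j$ on $\sing X \times \{j\}$, so that $F^{\sharp}\sW$ restricts to $f^{\sharp}\cW_j$ on $\sing M \times \{j\}$. Put $\sZ := \sX \sqcup (-\sM)$; its boundary consists of two copies of $\sing Z = \sing X \sqcup (-\sing M)$ with opposite orientations, and the boundary Hilsum-Skandalis perturbation $C(F_\pa)$ agrees with $C_{f,j}$ on the $j$-th component. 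By Proposition~\ref{prop:extension} the coarse APS-index class of $(D^{\sW \sqcup F^{\sharp}\sW}_\Gamma, C(F_\pa))$ vanishes, so Theorem~\ref{theo:k-theory-deloc} forces $j_* \rho(D^{\pa\sW \sqcup f^{\sharp}\pa\sW}_\Gamma + C(F_\pa)) = 0$ in $K_{\dim X + 1}(D^*(\sZ_\Gamma)^\Gamma)$. The boundary rho class decomposes, with a sign reflecting the orientation reversal between the two ends, as $\rho(D^{\cW_0 \sqcup f^{\sharp}\cW_0}_\Gamma + C_{f,0}) - \rho(D^{\cW_1 \sqcup f^{\sharp}\cW_1}_\Gamma + C_{f,1})$. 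Pushing this vanishing forward along the $\Gamma$-equivariant map $\cov{\sZ} \to \cov{\sing X}$ obtained from the projection onto $\sing Z$ composed with $\phi$, which restricts to $\phi$ on each boundary component, yields $\phi_* \rho(\cdot, \cW_0) = \phi_* \rho(\cdot, \cW_1)$.

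For 2], by 1] we may fix any $\cW$ on $\sing X$. Suppose $f_0: \sing M_0 \to \sing X$ and $f_1: \sing M_1 \to \sing X$ represent the same class in $\tS_{\BQ}(\sing X)$; Definition~\ref{def:SurgeryGps} supplies a Cheeger space $\sing N$ with $\pa \sing N = \sing M_0 \sqcup (-\sing M_1)$ and a BQ-equivalence $\Phi: \sing N \to \sing X \times I$ whose boundary restrictions are $f_0$ and $f_1$. Choose a self-dual mezzoperversity $\sW$ on $\sing X \times I$ extending $\cW$ at both endpoints, pull it back via $\Phi$ to $\Phi^{\sharp}\sW$ on $\sing N$, and repeat the argument with $\sZ := (\sing X \times I) \sqcup (-\sing N)$, $F := \id \sqcup \Phi$, and pushforward along $\sZ \to \sing X \times I \to \sing X$. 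This yields $\rho[f_0] = \rho[f_1]$ in $K_{\dim X + 1}(D^*(\cov{\sing X})^\Gamma)$.

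The main technical points will be to confirm the decomposition of the boundary rho class with the correct signs coming from the orientations of the two ends of the bordism, and the naturality of the Higson-Roe pushforward: specifically that the composition of the boundary map $j_*$ with the pushforward induced by $\sZ \to \sing X$ restricts, on each boundary component of $\sZ$, to the map $\phi_*$ of Definition~\ref{def:rho-of-an-he}. Both follow from the functoriality of the Higson-Roe $C^*$- and $D^*$-algebras under $\Gamma$-equivariant continuous maps of $\Gamma$-cocompact proper metric spaces, together with the analytic properties of the signature operator on Cheeger spaces established in Section~\ref{sect:classes}.
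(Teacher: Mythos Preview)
Your overall strategy coincides with the paper's: build a Cheeger bordism $\sZ=\sX\sqcup(-\sM)$ carrying a global transverse stratified homotopy equivalence, invoke Proposition~\ref{prop:extension} to kill the APS index class, apply the delocalized APS theorem, and push forward to $K_*(D^*(\cov{\sing X})^\Gamma)$. Part 2] is fine as written (with $\sW=\cW\times I$ on $\sing X\times I$ in the product stratification, and $\Phi$ already a BQ-equivalence).

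There is, however, a real gap in your execution of part 1]. You set $\sX:=\sing X\times I$ and declare that $F=f\times\id_I$ is a transverse stratified homotopy equivalence, and then invoke item 5] of Theorem~\ref{theo:main-bordism} to obtain $\sW$ on $\sX$ restricting to $\cW_0$ at one end and $\cW_1$ at the other. These two steps are in tension. With the \emph{product} stratification on $\sing X\times I$, each singular stratum of $\sing X$ gives a single connected stratum $Y^k\times[0,1]$, and a self-dual mezzoperversity along it is constant in $t$; you cannot interpolate between $\cW_0\neq\cW_1$. The Cheeger bordism underlying item 5] is not the product: it is the Banagl stratification of $\sing X\times[0,1]$, in which each $Y^k$ contributes three strata $Y^k\times[0,1/2)$, $Y^k\times\{1/2\}$, $Y^k\times(1/2,1]$, the middle one being Witt (its link is the suspension $SZ^k$), so that independent choices of mezzoperversity on the two outer pieces are allowed. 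The paper spells this out explicitly. Once you adopt the Banagl stratification on both $\sX$ and $\sM$, you must then \emph{verify} that $F(\zeta,t)=(f(\zeta),t)$ remains a transverse stratified homotopy equivalence for this refined stratification, and that $F^{\sharp}\sW$ is exactly the Banagl interpolation between $f^{\sharp}\cW_0$ and $f^{\sharp}\cW_1$ on $\sM$. Both facts hold (essentially because $f$ is transverse and the Banagl construction is functorial under transverse maps), but they are not automatic from your phrasing and are precisely the content the paper supplies.
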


\begin{proof}
Let $g$ and $g^\prime$ two $\operatorname{iie}$-metrics on $\sing X$ and let $\cW$ and $\cW^\prime$ be two self-dual mezzoperversities adapted respectively to $g$ and $g^\prime$. Let $r:\sing X \to B\Gamma$ be a classifying map.
Recall, following Banagl, how it is proved that $(\sing X,g,\cW, r)$ is Cheeger-bordant to $(\sing X,g^\prime,\cW^\prime,r)$;
we refer the reader to \cite[Section 4.4]{ALMP:Novikov} for the details.
We consider the pseudomanifold with boundary 
\begin{equation*}
	\sX = \sing X \times [0,1]_t.
\end{equation*}
Instead of the product stratification,  we stratify $\sX$ using the strata of $\sing X$ as follows:\\
i) The regular stratum $\sm X$ of $\sing X$ contributes $X \times [0,1]$\\
ii) Every singular stratum of $\sing X,$ $Y^k,$ contributes three strata to $\sX,$
\begin{equation*}
	Y^k \times [0,1/2), \quad Y^k \times (1/2, 1], \quad Y^k \times \{1/2 \}.
\end{equation*}
The link of $\sX$ at $Y^k \times [0,1/2)$ and $Y^k \times (1/2, 1]$ is equal to $Z^k,$
while the link of $\sX$ at $Y^k \times \{ 1/2 \}$ is seen to be the (unreduced) suspension of $Z^k,$ $S Z^k.$
Since the lower middle perversity intersection homology of $S Z^k,$ when $\dim Z^k = 2j-1,$ is given by
\begin{equation*}
	I^{\bar m}H_i(S Z^k) = 
	\begin{cases}
		I^{\bar m}H_{i-1}(Z^k) & i>j \\
		0 & i=j \\
		I^{\bar m}H_i(Z^k) & i <k
	\end{cases}
\end{equation*}
we see that $\sX$  satisfies the Witt condition at the strata $Y^k \times \{ 1/2 \}.$ Put it differently, we do not need 
to fix a self-dual
mezzoperversity at this stratum.

Let us endow $\sX$ with any $\operatorname{iie}$ metric $G$ such that,
for some $t_0>0,$ 
\begin{equation*}
	G\rest{X \times [0,t_0)} = g + dt^2, \quad
	G\rest{X \times (1-t_0, 1]} = g' + dt^2.
\end{equation*}
Next we  endow $\sX$ with a self-dual mezzoperversity $\sW$ as follows:
let $Y^1, \ldots, Y^T$ be an ordering of the strata of $\hat X$ with non-decreasing depth.
Denote 
\begin{equation*}
	\cW = \{ W^1 \lra Y^1, \ldots, W^T \lra Y^T \}, \quad
	\cW' = \{ (W^1)' \lra Y^1, \ldots, (W^T)' \lra Y^T \}
\end{equation*}
and denote the fiber of, e.g., $W^j \lra Y^j$ at the point $q \in Y^j,$ by $W^j_q.$
Let us define
\begin{equation*}
	W^1_- \lra Y^1 \times [0,1/2)
\end{equation*}
by requiring that the Hodge-de Rham isomorphism identifies all of the fibers.
Once this is done, we can define $W^2_- \lra Y^2 \times [0,1/2)$ in the same way, and inductively define $W^3_- \lra Y^3 \times [0,1/2), \ldots, W^T_- \lra Y^T \times [0,1/2).$

We define $W^j_+ \lra Y^j \times (1/2 \times 1]$ in the same way to obtain
\begin{multline*}
	\sW = \{ W^1_- \lra Y^1 \times [0,1/2), W^1_+ \lra Y^1 \times (1/2, 1], 
		\ldots, \\ 
	W^T_- \lra Y^T \times [0,1/2), W^T_+ \lra Y^T \times (1/2, 1] \},
\end{multline*}
a self-dual mezzoperversity over $\sX.$
So, in words, we extend the metrics $g$ and $g'$ arbitrarily to an $\operatorname{iie}$ metric $G$ without changing them in collar neighborhoods of the boundary, and then we choose a Hodge mezzoperversity by extending the de Rham mezzoperversities trivially from $Y^i$ to $Y^i \times [0,1/2)$ on the left and from $Y^i$ to $Y^i \times (1/2, 1]$ on the right.
Since the strata induced by $Y^k \times [0,1/2)$ are disjoint from the strata induced by $Y^k \times (1/2,1],$ there is no compatibility required between the corresponding mezzoperversities.\\
Finally, define $R:\sX \lra B\Gamma$ by $R(\zeta, t) = r(\zeta).$
The result is a Cheeger-bordism
\begin{equation*}
	(\sX, G, \sW, R:\sX \lra B\Gamma),
\end{equation*}
between $(\hat X,g,\cW, r:\sing X \lra B\Gamma)$ and $(\hat X',g',\cW', r:\sing X \lra B\Gamma).$

Let us go back to the proof of our Proposition. 
Let $f:\sing M \to \sing X$ be a transverse stratified homotopy equivalence. We want to show
that the rho class is independent of the choice of the self-dual mezzoperversity $\cW$ on $\sing X$.
Let $g$, $\cW$ and $g^\prime$, $\cW^\prime$ as above and consider $f^*g$, $f^{\sharp} W$ and
$f^* (g^\prime)$, $f^{\sharp} \cW^\prime$ on $\sing M$. We can consider $\sM:=\sing M\times [0,1]$, stratified as above.
Remark now that, by definition,
the map $F:\sM\to \sX$, $F(\zeta,t)=f(\zeta)$ is such  that $F^{\sharp} \sW$, adapted to $F^* G$, is precisely equal to the self-dual mezzoperversity 
producing the Cheeger bordism between $(\sing M,f^*g, f^{\sharp} W,(r\circ f): \sing M\to B\Gamma)$ and
$(\sing M,f^* (g^\prime),f^{\sharp} \cW^\prime, (r\circ f): \sing M\to B\Gamma)$. Moreover, $F$ is a (transverse) stratified 
homotopy equivalence between $\sM$ and $\sX$.  

We thus have a stratified Cheeger-space with boundary,
$$\sZ:= (-\sM)\sqcup \sX$$
which is the disjoint union of two stratified Cheeger spaces with boundary, endowed with a stratified homotopy equivalence
$F:\sM\to \sX$, with self-dual mezzoperversities $F^{\sharp} \sW$ on $\sM$ and $\sW$ on $\sX$ and with a classifying map
into $B\Gamma$, the latter producing a Galois $\Gamma$-covering $\Gamma - \cov{\sZ}\to \sZ$; moreover, by construction, the self-dual mezzoperversity on the manifold with boundary $ (-\sM)\sqcup \sX$
 restricts to give $f^{\sharp}\cW\sqcup \cW$
on one boundary, the one corresponding to $t=0$, and  $f^{\sharp}\cW^\prime\sqcup \cW^\prime$ on the other boundary, the one 
corresponding to $t=1$. For later use we denote by $j_0$ and $j_1$ the obvious inclusions of 
$(-\cov{\sing M})\sqcup \cov{\sing X}$ into $\sZ_\Gamma$ as the $t=0$ and $t=1$ boundary respectively.
We now apply Proposition \ref{prop:extension} and obtain that
$$\Ind (D^{\sW\sqcup F^{\sharp} \sW},C(F_{\pa}))=0 \quad\text{in}\quad K_* (C^* (\sZ_\Gamma)^\Gamma)\,.$$
By applying the delocalized APS-index theorem we then obtain that
\begin{equation}\label{0=difference}
0=(j_0)_* (\rho (D^{\cW\sqcup f^{\sharp} \cW}+C_f)) - (j_1)_* (\rho (D^{\cW^\prime \sqcup f^{\sharp} \cW^\prime}+C^\prime_f)) \quad\text{in}\quad K_* (C^* (\sZ_\Gamma)^\Gamma)
\,.
\end{equation}
Observe now that there is an obvious $\Gamma$-equivariant map $\sZ_\Gamma\to \sX_\Gamma=\cov{\sing X}\times [0,1]$, induced by $F$ and the identity, and thus, by projecting onto the first factor,  a $\Gamma$-equivariant map $\sZ_\Gamma\to \cov{\sing X}$. We can push-forward the equality \eqref{0=difference}  through this map and use functoriality in order to obtain
$$0= \rho (\sing M\xrightarrow{f} \sing X,\cW) - \rho (\sing M\xrightarrow{f} \sing X,\cW^\prime)\in K_{*+1} (D^* (\cov{\sing X})^\Gamma)\;\;\text{with}\;\; *=\dim\sing X\,;$$
this shows indeed that  the rho class is independent of the choice of  self-dual mezzoperversity.

The proof of item 2] is very similar. 

\end{proof}

\subsection{The map from  $\tN_{\BQ}(\sing X)$ to $K_{\dim \sing X} (\sing X)$}$\,$\\
We have defined $\tN_{\BQ}(\sing X)$ as equivalence classes of {\em transverse} degree one normal maps into $\sing X$  which are diffeomorphisms when restricted to strata of dimension less than five. Our task is to
map an element $[\sing M\xrightarrow{f} \sing X]\in \tN_{\BQ}(\sing X)$ to $K_{\dim \sing X} (\sing X)$, or, more precisely, to
$K_{\dim \sing X} (\sing X)\otimes\bbZ[1/2]$.
Following the original treatment of Higson and Roe in the smoth setting, we shall in fact
forget about the normal data encoded in $[\sing M\xrightarrow{f} \sing X]\in \tN_{\BQ}(\sing X)$.
\begin{equation}\label{def-of-beta}
\beta [\sing M\xrightarrow{f} \sing X]:= f_* [D^{f^{\sharp}\cW}]- [D^{\cW}]\;\;\in\;\; K_{\dim \sing X} (\sing X)\otimes\bbZ[1/2]
\end{equation}
We then have the following

\begin{proposition}
\item{1]} The right hand side of \eqref{def-of-beta} is independent of the choice of 
self-dual mezzoperversity $\cW$.
\item{2]} The map $\beta$ is well defined: if $ [\sing M_0\xrightarrow{f_0} \sing X]= [\sing M_1\xrightarrow{f_1} \sing X]$
in $\tN_{\BQ}(\sing X)$, then 
\begin{equation}\label{def-of-beta-bis}
(f_0)_* [D^{f_0^{\sharp}\cW}]- [D^{\cW}]=(f_1)_* [D^{f^{\sharp}_1\cW}]- [D^{\cW}]\;\;\in\;\; K_{\dim \sing X} (\sing X)\otimes\bbZ[1/2]
\end{equation}
\end{proposition}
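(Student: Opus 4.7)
The plan is to deduce both claims from the bordism invariance of the signature K-homology class established in Theorem \ref{theo:main-bordism}, combined with a long-exact-sequence-of-the-pair argument. Throughout, all equalities live in $K_n(\sing X) \otimes \bbZ[\tfrac12],$ where $n = \dim \sing X.$

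For the first statement (independence of $\cW$), given two self-dual mezzoperversities $\cW, \cW'$ on $\sing X,$ I would invoke the Banagl-type Cheeger bordism construction recalled in the proof of Proposition \ref{prop:well-def-on-S}: the cylinder $\sX = \sing X \times [0,1],$ with its modified stratification, carries an adapted iie metric and an interpolating self-dual mezzoperversity $\sW$ restricting to $\cW$ and $\cW'$ at the two boundary components. By Theorem \ref{theo:main-bordism}(2)--(3), one obtains a relative K-homology class $[D^{\sW}] \in K_{n+1}(\sX, \pa \sX)$ whose boundary in $K_n(\pa \sX) \otimes \bbZ[\tfrac12]$ equals $[D^{\cW}] - [D^{\cW'}].$ Since the projection $\pi : \sX \to \sing X$ is a homotopy equivalence and its restriction to each boundary copy of $\sing X$ is the identity, exactness of the long exact sequence of the pair $(\sX, \pa \sX)$ combined with $\pi_*$ forces $[D^{\cW}] = [D^{\cW'}].$ Pulling back along the transverse map $F = f \times \id : \sing M \times [0,1] \to \sX$ and then pushing forward by $f$ gives, by the same argument, $f_*[D^{f^{\sharp}\cW}] = f_*[D^{f^{\sharp}\cW'}],$ so the right-hand side of \eqref{def-of-beta} is independent of $\cW.$

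For the second statement (well-definedness on normal-bordism classes), suppose $[\sing M_0 \xrightarrow{f_0} \sing X] = [\sing M_1 \xrightarrow{f_1} \sing X]$ in $\tN_{\BQ}(\sing X),$ witnessed by a normal bordism $\Phi : \sing N \to \sing X \times I.$ Transversality of $\Phi,$ together with the preservation of the Cheeger-space property under transverse maps (Section \ref{sec:TransMaps}), makes $\sing N$ a compact Cheeger space with boundary $\sing M_0 \sqcup (-\sing M_1).$ Fixing $\cW$ on $\sing X,$ I would transport it to $\sing X \times I$ via the projection $\pi_X$ and then to $\sing N$ via transversality of $\Phi,$ producing a self-dual mezzoperversity $\sW_N$ on $\sing N$ restricting to $f_i^{\sharp}\cW$ on $\sing M_i.$ By Theorem \ref{theo:main-bordism}, the class $[D^{\sW_N}] \in K_{n+1}(\sing N, \pa \sing N)$ has boundary $[D^{f_0^{\sharp}\cW}] - [D^{f_1^{\sharp}\cW}];$ pushing forward by $\Phi$ and then by $\pi_X$ and running the same exactness argument as before yields $(f_0)_* [D^{f_0^{\sharp}\cW}] = (f_1)_* [D^{f_1^{\sharp}\cW}],$ which combined with the independence statement establishes \eqref{def-of-beta-bis}.

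The main technical obstacle will be verifying that the transverse pullback of a self-dual mezzoperversity along a BQ-normal (but not homotopy-equivalent) map produces a well-defined self-dual mezzoperversity compatible, on each boundary component, with the Hilsum--Skandalis pullback $f_i^{\sharp}\cW$ of Section \ref{sect:HS}. Once one checks that transversality forces each singular stratum of $\sing N$ to be the $\Phi$-pullback of a singular stratum of $\sing X \times I$ with identified link, so that a self-dual subspace at each stratum of $\sing X$ determines one fiberwise on $\sing N,$ both parts reduce to a direct application of Theorem \ref{theo:main-bordism} and the vanishing $\pi_* \circ i_* \circ \pa = 0$ in the long exact sequence of a pair.
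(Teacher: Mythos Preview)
Your approach is correct and follows the same underlying idea as the paper's proof---both feed the relative K-homology class from Theorem~\ref{theo:main-bordism}(3) into the exactness of the long exact sequence of a pair---but the organization differs. The paper treats the two pieces together: it forms the disjoint union $\sZ=(-\sM)\sqcup\sX$ (with $\sM=\sing M\times[0,1]$ and $\sX=\sing X\times[0,1]$, both carrying the Banagl stratification), introduces auxiliary homomorphisms $\Phi:K_*(\sZ,\pa\sZ)\to K_*(\sX,\pa\sX)$ and $\phi:K_*(\pa\sZ)\to K_*(\pa\sX)$ that take the $F_*$-difference, and then chases the commutative square relating $\pa_{\sqcup}$ and $\pa$ to conclude that the \emph{combined} difference vanishes. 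You instead run the exactness argument twice, once on $\sX$ and once on $\sM$, and thereby establish the stronger intermediate statements $[D^{\cW}]=[D^{\cW'}]$ and $f_*[D^{f^\sharp\cW}]=f_*[D^{f^\sharp\cW'}]$ separately in $K_n(\sing X)\otimes\bbZ[\tfrac12]$. That is a genuine simplification: the paper's Remark~\ref{remark:independence0} only records the first of these equalities over~$\bbQ$ (via the Chern character), whereas your argument upgrades it to $\bbZ[\tfrac12]$ directly. The same comment applies to Part~2, where you work with $\sing N$ alone rather than with $(-\sing N)\sqcup(\sing X\times I)$.

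One clarification regarding your stated ``main technical obstacle'': in the context of this proposition the map $f$ (and likewise $\Phi$) is only BQ-normal, so the Hilsum--Skandalis construction of Section~\ref{sect:HS}---which requires a stratified homotopy equivalence---is not what is meant by $f^\sharp\cW$ here. Since $f$ is transverse, each singular stratum of $\sing M$ is the pullback of a stratum of $\sing X$ with the \emph{same} link, so the flat Lagrangian subbundles comprising $\cW$ pull back tautologically. This transverse pullback is what the paper (and the definition of $\beta$) intends by $f^\sharp\cW$, and with that reading there is no compatibility issue to resolve on $\pa\sing N$.
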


\begin{proof}
We establish both statements by adapting an argument due to Higson and Roe
and by making use of Theorem \ref{theo:main-bordism} above, item 3].

Thus let $\cW$ and $\cW^\prime$ be two self-dual mezzoperversities, adapted to $\operatorname{iie}$ metrics $g$ and $g^\prime$ respectively. We must show that
$$ f_* [D^{f^{\sharp}\cW}]- [D^{\cW}]- (f_* [D^{f^{\sharp}\cW^\prime}] - [D^{\cW^\prime}])=0
\;\;\in\;\; K_{\dim \sing X} (\sing X)\otimes\bbZ[1/2]
$$
We initially follow the construction exploited in the previous subsection.
Thus we 
consider $\sX:=\sing X\times [0,1]$ and 
$\sM:=\sing M\times [0,1]$, both stratified \`a la Banagl.
We consider the transverse map $F:\sM\to \sX$, $F(\zeta,t)=f(\zeta)$ 
and consider $G$, $F^*G$, $\sW$ and $F^{\sharp} \sW$ as in the previous subsection.
We thus have a stratified Cheeger-space with boundary,
$$\sZ:= (-\sM)\sqcup \sX\equiv (-(\sing M\times [0,1]))\sqcup (\sing X\times [0,1])$$
which is the disjoint union of two stratified Cheeger spaces with boundary, endowed with a stratified transverse map
$F:\sM\to \sX$, with self-dual mezzoperversities $F^{\sharp} \sW$ on $\sM$ and $\sW$ on $\sX$; moreover, by construction, the self-dual mezzoperversity $F^{\sharp}\sW\sqcup \sW$ on the manifold with boundary $ (-\sM)\sqcup \sX$
 restricts to give $f^{\sharp}\cW\sqcup \cW$
on one boundary, the one corresponding to $t=0$, and  $f^{\sharp}\cW^\prime\sqcup \cW^\prime$ on the other boundary, the one 
corresponding to $t=1$. 

Remark first of all that the K-homology groups of a disjoint union of two spaces $A\sqcup B$ is equal to the direct sum
of the individual K-homology groups. We define two group homomorphisms
\begin{equation*}
\begin{gathered}
\Phi: K_* (\sZ,\partial \sZ)\to K_* (\sX,\partial \sX)=K_* (\sing X\times [0,1],\sing X\times\{0,1\})\,,\\
\phi: K_* (\partial \sZ) \to K_* (\pa \sX)=K_* (\sing X\times\{0,1\})
\end{gathered}
\end{equation*}
as follows:
\begin{equation*}
\Phi (\alpha_{\sM},\beta_{\sX})=F_* \alpha_{\sM} - \beta_{\sX} \,,\quad
\phi(\alpha_0,\alpha_1,\beta_0,\beta_1)= (f_* \alpha_0 -\beta_0,f_* \alpha_1 - \beta_1)
\end{equation*}
It is easy to check, using the functoriality properties of the connecting homomorphism in the long exact sequence of 
a pair, that the following diagram is commutative:
\begin{equation*}
   	\xymatrix{
	K_{*+1} (\sZ,\partial \sZ) \ar[rr]^-{\partial_{\sqcup}}  \ar[d]^{\Phi} & & K_* (\partial \sZ) \ar[d]^{\phi} \\
	K_{*+1} (\sX,\partial \sX) \ar[rr]^-{\pa} & & K_* (\pa \sX)}
\end{equation*}
The bottom horizontal homomorphism is part of the long exact sequence
$$ K_{*+1} (\sX,\partial \sX) \xrightarrow{\pa}  K_* (\pa \sX)\xrightarrow{\iota} K_* (\sX)$$
which can be rewritten as
\begin{equation}\label{exact-portion}
 K_{*+1} (\sing X\times [0,1],\sing X\times\{0,1\})\xrightarrow{\pa} K_* (\sing X\times\{0,1\}) \xrightarrow{\iota}
K_* (\sing X\times [0,1])\,.
\end{equation}
Notice that  there is a natural group homomorphism 
$$\psi: K_* (\sing X\times\{0,1\})\to K_* (\sing X)\,,\quad \psi(\gamma_0,\gamma_1)=\gamma_0 - \gamma_1 $$
and that $\psi$ factors as follows:

\begin{equation*}
\xymatrix @R=1pt @C=50pt { 
	 K_* (\sing X\times\{0,1\})\ar[rd]^-{\psi} \ar[dd]_-{\iota} \\
	& K_* (\sing X) \\
	K_* (\sing X\times [0,1]) \ar[ru]_-{\pi} }
\end{equation*}
	with $\pi$ induced by the projection onto the first factor.
Using these remarks and Theorem \ref{theo:main-bordism}, which in the present context 
states that
$$ \pa_{\sqcup} ([D^{F^{\sharp}\sW}\sqcup \sW])=([D^{f^{\sharp}\cW}],[D^{f^{\sharp} \cW^\prime}],[D^{\cW}],[D^{\cW^\prime}])
$$
 we then have
\begin{equation*}
\begin{aligned}
&f_* [D^{f^{\sharp}\cW}]- [D^{\cW}]- (f_* [D^{f^{\sharp}\cW^\prime}] - [D^{\cW^\prime}])\\&= 
\psi ( f_* [D^{f^{\sharp}\cW}]- [D^{\cW}], f_* [D^{f^{\sharp}\cW^\prime}] - [D^{\cW^\prime}])\\
&=\pi\circ \iota ( f_* [D^{f^{\sharp}\cW}]- [D^{\cW}], f_* [D^{f^{\sharp}\cW^\prime}] - [D^{\cW^\prime}])\\
&=\pi\circ\iota\circ \phi ([D^{f^{\sharp}\cW}],[D^{f^{\sharp} \cW^\prime}],[D^{\cW}],[D^{\cW^\prime}])\\
&=\pi\circ\iota\circ\phi \circ\pa_{\sqcup} ([D^{F^{\sharp\sW}\sqcup \sW}])\\
&=\pi\circ\iota\circ \pa \circ \Phi  [D^{F^{\sharp}\sW\sqcup \sW}] =0
\end{aligned}
\end{equation*}	
where in the last step we have used the exactness of \eqref{exact-portion}.\\
This establishes item 1]. Item 2] is similar, but easier.
	\end{proof}

\subsection{The index map from  $\tL_{\BQ}({\sing X})$   to $K_* (C^* (\cov{\sing X})^\Gamma)$}$\,$\\
We finally consider the (APS) index homomorphisms 
$$\tL_{\BQ}(\sing X\times [0,1])\xrightarrow{\Ind_{{\rm APS}}} K_{\dim \sing X+1} (C^* (\cov{\sing X})^\Gamma)\,,\qquad 
\tL_{\BQ}(\sing X)\xrightarrow{\Ind_{{\rm APS}}}  K_{\dim \sing X} (C^* (\cov{\sing X})^\Gamma)\,.$$
First, we  restate some of the constructions above in a way  that will
be useful to the present task.

\begin{lemma}\label{lemma:special-cycles}
Every $[\alpha] \in \tL_{\BQ}(\sing X \times [0,1])$ can be represented by a diagram of the form 
\begin{equation*}
	\alpha: (\sing M; \sing X, \sing X') \xlra{(\phi;\id, \psi)}
	(\sing X \times [0,1]; \sing X \times \{0\}, \sing X \times \{1\}) \xlra{\id}
	\sing X \times I.
\end{equation*}
Let $\alpha_1$ and $\alpha_2$ be two such diagrams representing the same class and $\beta$ the diagram obtained by gluing $\alpha_1$ and $-\alpha_2,$
\begin{equation*}
	\beta:
	(\sing W; \sing X_2', \sing X_1') \xlra{(\Phi;\psi_2, \psi_1)}
	(\sing X \times [0,1]; \sing X \times \{0\}, \sing X \times \{1\}) \xlra{\id}
	\sing X \times I,
\end{equation*}
then there are stratified spaces with corners $\sing P,$ $\sing Q$ together with a BQ-normal map $\Gamma,$
\begin{equation*}
	(\sing P; \pa_0\sing P, \pa_1 \sing P) 
	\xlra{(\Gamma; \gamma_0, \gamma_1)}
	(\sing Q; \pa_0\sing Q, \pa_1 \sing Q)
\end{equation*}
such that 
\begin{equation*}
	\lrpar{\pa_0\sing P \xlra{\gamma_0} \pa_0 \sing Q}
	=
	\lrpar{\sing W \xlra{\Phi} \sing X \times I}
\end{equation*}
and $\gamma_1$ is a BQ-equivalence.
\end{lemma}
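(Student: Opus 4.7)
The plan is to derive both parts of the lemma directly from results already in hand: the Wall realization theorem (Theorem \ref{thm:WallReal}) for the existence of the special representatives, and Lemma \ref{lem:SumLemma} together with the definition of null bordism for the uniqueness statement. No new surgery-theoretic input should be required.

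For the first statement, I would apply Wall realization to the tautological BQ-equivalence $\id \colon \sing X \to \sing X$. Theorem \ref{thm:WallReal} produces a representative of $[\alpha]$ of the form
$$
(\sing W; \pa_-\sing W, \pa_+\sing W) \xlra{F} (\sing X \times I; \sing X \times \{0\}, \sing X \times \{1\}) \xlra{\id} \sing X \times I
$$
in which $\pa_-\sing W \xlra{F|} \sing X \times \{0\}$ coincides with the prescribed BQ-equivalence $\id \colon \sing X \to \sing X$. Relabelling $\sing M := \sing W$, $\phi := F$, $\sing X' := \pa_+\sing W$, and $\psi := F|_{\pa_+\sing W}$ produces precisely the diagram demanded in the statement.

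For the second part, I would observe that the glued diagram $\beta = \alpha_1 \cup (-\alpha_2)$ has exactly the shape called for by Lemma \ref{lem:SumLemma}: both $\alpha_1$ and $\alpha_2$ carry the identity $\sing X \lra \sing X$ on the common boundary over $\sing X \times \{0\}$, so the hypothesis on coinciding boundary pre-images is satisfied, and the orientation reversal of $\alpha_2$ places its copy of $\sing X$ in the position opposite to that of $\alpha_1$. Lemma \ref{lem:SumLemma} then identifies $[\beta]$ in $\tL_{\BQ}(\sing X \times I)$ with $[\alpha_1] + [-\alpha_2] = [\alpha_1] - [\alpha_2] = 0$. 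Hence $\beta$ is $\cL$-null bordant, and unfolding Definition \ref{def:SurgeryGps} yields a diagram of stratified spaces with corners
$$
(\sing P; \pa_0\sing P, \pa_1\sing P) \xlra{(\Gamma; \gamma_0, \gamma_1)} (\sing Q; \pa_0\sing Q, \pa_1\sing Q) \xlra{\Omega} \sing X \times I \times I
$$
with $\Gamma$ BQ-normal, $\Omega$ BQ-transverse, $\gamma_1$ a BQ-equivalence, and $(\pa_0\sing P \xlra{\gamma_0} \pa_0\sing Q) = (\sing W \xlra{\Phi} \sing X \times I)$. Suppressing the target map $\Omega$ gives exactly the conclusion.

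I do not anticipate a substantive obstacle, since the entire argument is assembling tools that have already been built: the heavy lifting (the $\Pi$-$\Pi$ theorem, the $L^1=L^2$ theorem, and Wall realization) sits in the cited results. The only care needed is bookkeeping: verifying that the orientation reversal implicit in $-\alpha_2$ is compatible with the collar construction of $\beta$ used in Lemma \ref{lem:SumLemma}, and that the boundary identifications for the null bordism recover the map $\Phi$ on $\sing W$ as stated. Once those identifications are written down, the two claims follow directly.
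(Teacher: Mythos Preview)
Your proposal is correct and follows essentially the same approach as the paper: Wall realization applied to $\id:\sing X\to\sing X$ for the first claim, and Lemma~\ref{lem:SumLemma} to show $[\beta]=0$ for the second. The only difference is that the paper, after noting that $\beta$ actually lies in $\cN_{\BQ}(\sing X\times I,\pa(\sing X\times I))$, invokes Theorem~\ref{thm:exact1} to obtain a \emph{normal} bordism to a BQ-equivalence (so that in fact $\sing Q=\sing X\times I\times I$), whereas you read off the required $\Gamma:\sing P\to\sing Q$ directly from the null bordism furnished by Definition~\ref{def:SurgeryGps}; both routes yield the stated conclusion.
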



\begin{proof}
We can represent $[\alpha]$ in this way directly from the Wall representation theorem (as in Corollary \ref{cor:LAct}).
Taking
\begin{equation*}
	\sing W = \sing M_1 \bigsqcup_{\sing X} -\sing M_2.
\end{equation*}
and gluing $\alpha_1$ and $-\alpha_2$ along their common boundary yields $\beta,$ an element of $\cN_{BQ}(\sing X \times [0,1]).$ Since Lemma \ref{lem:SumLemma} implies that $\beta$ represents $[\alpha_1]-[\alpha_2] =0$ in $\tL_{BQ}(\sing X \times [0,1]),$ Theorem \ref{thm:exact1}, implies that the class of $\beta$ in $\tN_{BQ}(\sing X \times [0,1], \pa(\sing X \times [0,1]))$ is in the image of 
$\tS_{BQ}(\sing X \times [0,1], \pa(\sing X \times [0,1])).$
It follows that there is a normal bordism between $\beta$ and an element in $\tS_{{\rm BQ}}(\sing X \times [0,1], \pa(\sing X\times [0,1])),$ which yields $\Gamma: \sing P \lra \sing Q$ as above.
\end{proof}

We can now define the homomorphism $\Ind: \tL_{BQ}(\sing X\times [0,1])\to K_* (C^* (\cov{\sing X})^\Gamma)$.
Consider an element $[\alpha]\in \tL_{BQ}(\sing X\times [0,1])$ and assume, thanks to the previous Lemma,
that  \begin{equation*}
	\alpha: (\sing M; \sing X, \sing X') \xlra{(\phi;\id, \psi)}
	(\sing X \times [0,1]; \sing X \times \{0\}, \sing X \times \{1\}) \xlra{\id}
	\sing X \times I.
\end{equation*}
We have a classifying map $\sing X\to B\Gamma$, with $\Gamma=\pi_1 (\sing X)$. This induces, through $\phi:\sing M\to \sing X\times [0,1]$
a $\Gamma$-Galois covering $\sing Z_\Gamma$
on $\sing Z:= (-\sing M)\cup (\sing X\times [0,1])$ 

We now fix an incomplete iterated conic metric on $\sing X$ and we choose an adapted
 mezzoperversity $\cW$ for the resulting signature operator. 
 We take the associated {\it product mezzoperversity} $\cW\times [0,1]$
 on $\sing X\times [0,1]$. We endow $-\sing M$ with the induced metric and the induced mezzoperversity 
 $\phi^{\sharp} (\cW\times [0,1])$. We lift all these data 
 to $\sing Z_\Gamma$.
 Recall at this time that $\phi$ is {\it not} an homotopy equivalence.
 On the other hand, since $\psi$ (and, of course, the identity
$\sing X\xrightarrow{\id} \sing X$)  {\it is} a stratified homotopy equivalence 
we see that there exists a well defined (APS) index class, obtained by using the compressed Hilsum-Skandalis
perturbation associated to $\psi\sqcup\id$ and to $(\psi^{\sharp}\cW\sqcup \cW)\sqcup (\cW\sqcup \cW)$  on 
$\partial \sing Z$: this index class belongs to $K_* (C^* (\sing Z_\Gamma)^\Gamma)$.
We define $\Ind_{{\rm APS}}  (\alpha,\cW)$ as the push-forward of the above index class
to $ K_* (C^* (\sing X_\Gamma)^\Gamma)$. In what follows we shall use the canonical isomorphism
 $K_* (C^* (\sing X_\Gamma)^\Gamma)\simeq K_* (C^*_r \Gamma)$.

\begin{lemma}\label{lemma:independence-L}
If $\cW$ and $\cW^\prime$ are two mezzoperversities on $\sing X$ then
\begin{equation*}
\Ind_{{\rm APS}} (\alpha,\cW)=  \Ind_{{\rm APS}} (\alpha,\cW^\prime)\quad\text{in}\quad 
K_* (C^*_r \Gamma).
\end{equation*}
\end{lemma}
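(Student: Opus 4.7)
The plan is to promote the Banagl-bordism argument from the proof of Proposition \ref{prop:well-def-on-S} (which compared rho-classes on closed Cheeger spaces) to the present APS-index setting with boundary, by realising the difference $\Ind_{{\rm APS}}(\alpha, \cW) - \Ind_{{\rm APS}}(\alpha, \cW')$ as the APS index of a higher-dimensional cornered cycle whose ``bulk'' index vanishes via a Bunke-type relative index theorem.

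First, I would construct, as in the proof of Proposition \ref{prop:well-def-on-S}, a Cheeger bordism $(\sing X \times [0,1]_s, G, \sW_X)$ between $(\sing X, g, \cW)$ and $(\sing X, g', \cW')$ using the Banagl stratification trick along $\{s = 1/2\}$. I then propagate this bordism to the cycle $\alpha$ by forming $\sing M \times [0,1]_s$, stratified compatibly with $\phi \times \id_s : \sing M \times [0,1]_s \to \sing X \times [0,1]_t \times [0,1]_s$, equipped with the pull-back self-dual mezzoperversity $(\phi \times \id_s)^{\sharp}(\sW_X \times [0,1]_t)$, which restricts at $s = 0, 1$ to $\phi^{\sharp}(\cW \times [0,1]_t)$ and $\phi^{\sharp}(\cW' \times [0,1]_t)$ respectively. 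The corresponding disjoint-union cycle
\begin{equation*}
    \sZ_s := -\sing M \times [0,1]_s \;\sqcup\; \sing X \times [0,1]_t \times [0,1]_s,
\end{equation*}
after smoothing corners in the sense of \cite[\S2.6]{Wall:DiffTop}, has two horizontal boundary components at $s = 0$ and $s = 1$ recovering (up to orientation) the original $\sing Z$ equipped with $\cW$ and $\cW'$ respectively, and two lateral components along $t = 0, 1$ on which the maps $\id \times \id_s$ and $\psi \times \id_s$ are stratified homotopy equivalences.

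Next, I would install on each lateral face the compressed Hilsum-Skandalis trivialising perturbation of Proposition \ref{prop:compr-HS-perturbation}, extended trivially in the $s$-direction; lift the entire picture to the $\Gamma$-cover induced by the classifying map $\sing X \to B\Gamma$; and apply a Bunke-type relative APS index theorem (as in the proofs of Proposition \ref{prop:extension} and \cite[Proposition 2.33]{PS-akt}) to the resulting perturbed signature operator on $\sZ_{s,\Gamma}$. A standard spectral-flow/gluing argument then identifies the APS index of $\sZ_s$ with the difference $\Ind_{{\rm APS}}(\alpha, \cW) - \Ind_{{\rm APS}}(\alpha, \cW')$ in $K_*(C^*_r\Gamma)$. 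Simultaneously, because $\sZ_s$ is a Cheeger bordism of BQ-normal data whose entire lateral boundary carries stratified homotopy equivalences with compatible trivialising perturbations, the argument of Proposition \ref{prop:extension} applies mutatis mutandis to show that this bordism index vanishes, yielding the desired equality.

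The main obstacle is the corner structure of $\sZ_s$: it has codimension-two corners where the Banagl bordism in the $s$-direction meets the original boundary of $\sing M$ in the $t$-direction, so extending the coarse APS machinery of \S\ref{sect:bordism} (self-adjoint cylindrical-end extension via Proposition \ref{prop:cylindrical-domain}, membership in the Roe algebra via Propositions \ref{prop:classes-0} and \ref{prop:relative}) to this cornered setting requires some care. However, as noted in the proofs of Proposition \ref{prop:extension} and Theorem \ref{theo:k-theory-deloc}, the key analytic inputs are essentially functional-analytic, and Zenobi's reformulation \cite[Proposition 3.20]{Zenobi:Mapping} of the crucial step adapts verbatim to the cornered setting once the basic self-adjoint framework is in place. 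Throughout, the equality is to be read in the $\bbZ[\tfrac12]$-coefficient version of $K_*(C^*_r\Gamma)$ consistent with the rest of the paper, since Theorem \ref{theo:main-bordism}(4) (used implicitly to compare signature contributions on the horizontal faces) requires inverting $2$.
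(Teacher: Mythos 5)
Your geometric construction is exactly the one the paper uses: the Banagl interpolation $\sing X\times[0,1]_s$ with the mezzoperversity $\sW$ interpolating between $\cW\times[0,1]$ and $\cW'\times[0,1]$, its pull-back along $\phi\times\id$, the resulting Cheeger space with corners $\sZ$ whose horizontal faces carry the two index problems and whose lateral faces carry Hilsum--Skandalis perturbations. The gap is in how you extract the equality from this bordism. Your vanishing step invokes Proposition \ref{prop:extension} ``mutatis mutandis'', but the hypothesis of that proposition is that the \emph{bulk} map is a global stratified transverse homotopy equivalence: the vanishing there comes from an uncompressed Hilsum--Skandalis perturbation defined on the whole cylindrical-end space, making the bulk operator invertible. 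Here the bulk map $\Phi(m,t)=\phi(m)$ is only BQ-normal, so no such bulk perturbation exists and the argument does not transfer. Having trivializing perturbations only on (part of) the boundary cannot force an APS index to vanish --- if it did, $\Ind_{{\rm APS}}(\alpha,\cW)$ itself would always vanish and the map out of $\tL_{\BQ}$ would be trivial.

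The other step is also not well posed as stated: ``the APS index of $\sZ_s$'' is not defined by the machinery of \S\ref{sect:bordism} when the perturbation lives only on the lateral faces, because on the horizontal faces the operator is not invertible and no boundary condition of APS type has been installed; moreover, even granting a definition, such a class would sit in $K_{\dim\sing Z+1}$, one degree away from the difference $\Ind_{{\rm APS}}(\alpha,\cW)-\Ind_{{\rm APS}}(\alpha,\cW')$, so no spectral-flow/gluing identity can equate them directly --- the relation necessarily passes through a connecting map. This is precisely how the paper closes the argument: the invertibility along the lateral boundary $G$ is used to define a bivariant class $B\in KK_1(C_F(\sZ),C^*_r\Gamma)$, where $C_F(\sZ)$ consists of functions vanishing on the horizontal faces $F$; the classical ``boundary of the relative class'' computation identifies $\pi^F_*(\delta B)$ with $\Ind_{{\rm APS}}(\alpha,\cW)-\Ind_{{\rm APS}}(\alpha,\cW')$, and then exactness of $KK_1(C_F(\sZ),C^*_r\Gamma)\xrightarrow{\delta}KK_0(C(F),C^*_r\Gamma)\xrightarrow{\iota_*}KK_0(C(\sZ),C^*_r\Gamma)$ together with $\pi^F=\pi^{\sZ}\circ\iota$ forces this difference to vanish. (Note also that the paper obtains the equality integrally in $K_*(C^*_r\Gamma)$; your retreat to $\bbZ[\tfrac12]$ coefficients, motivated by Theorem \ref{theo:main-bordism}, is unnecessary once the KK-theoretic cobordism argument is used.) So either reproduce this relative-class/exact-sequence argument, or, alternatively, one could argue after the fact with the gluing formula of Proposition \ref{gluing} and Lemma \ref{lem:CobInvCorners} --- but those tools appear later and are themselves proved by Bunke-type analysis, not by Proposition \ref{prop:extension}.
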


\begin{proof}
Consider the stratified manifolds with corners $$\sM:=\sing M\times [0,1]_t\quad\text{and}\quad \sX:=(\sing X\times [0,1])\times [0,1]_t.$$
We stratify $(\sing X\times [0,1])\times [0,1]_s$ as we did in the proof of Proposition \ref{prop:well-def-on-S};
the  mezzoperversity $\cW$ on $\sing X$ induces a product mezzoperversity, denoted $\cW\times [0,1]$, on 
$(\sing X\times [0,1])\times \{0\}$; similarly, the mezzoperversity $\cW^\prime$ on $\sing X$
induces a mezzoperversity, denoted $\cW^\prime \times [0,1]$ on 
$(\sing X\times [0,1])\times \{1\}$. We know that there is a mezzoperversity $\sW$ on $\sX\equiv (\sing X\times [0,1])\times [0,1]_t$ interpolating between 
$\cW\times [0,1]$ and $\cW^\prime\times [0,1]$.
Similarly, we stratify $-(\sing M\times [0,1]_t)$ as in  Proposition \ref{prop:well-def-on-S}; let $\Phi:\sM\to \sX$
the map $\Phi(m,t)=\phi(m)$; the mezzoperversity $\Phi^{\sharp} \sW$ interpolates between $\phi^{\sharp} (\cW\times [0,1])$
and $\phi^{\sharp} (\cW^\prime \times [0,1])$.
Consider now the stratified manifold with corners
$$ \sZ:= -\sM\sqcup \sX\equiv \left( -(\sing M\times [0,1]_t )\right)\sqcup \left((\sing X\times [0,1])\times [0,1]_t\right).$$
This  is a Cheeger space with corners. Consider the following  boundary hypersurfaces of $\sZ$:
$$F= \left( -\sing M\times \{s=0\} \sqcup ((\sing X\times [0,1])\times \{s=0\}) \right) \sqcup \left( -\sing M\times \{s=1\} \sqcup ((\sing X\times [0,1])\times \{s=1\}) \right)
$$
and
$$G= -(\pa \sing M\times[0,1]_s) \sqcup (\pa (\sing X\times [0,1])\times [0,1]_s)
.$$
Consider the signature operator on $\sZ:=-\sM\sqcup \sX$; using appropriate Hilsum-Skandalis
perturbations we can perturb this operator and make it invertible at $G$. To fix notation, let us assume that
$\sing X$ is odd dimensional, so that $\sing M$ and $\sing X\times [0,1]$
 are even dimensional.
We can define a bivariant class 
$B\in KK_1 (C_F(\sZ),C^*_r \Gamma)$, 
with $C_F(\sZ)$ denoting the continuous functions on $\sZ$
which vanish on $F$ \footnote{Without further
hypothesis we could only define a
bivariant class in $KK_1 (C_{\pa 
\sZ} (\sZ),C^*_r \Gamma)$}. Consider $\pi^F: F\to {\rm point}$
and $\pi^{\sZ}:\sZ\to {\rm point}$. 
Denote by $\iota$ the natural inclusion $F\hookrightarrow \sZ:=-\sM\sqcup \sX$
and by $q:\sZ\to F$ the restriction map to $F$. Obviously $\pi^F=\pi^{\sZ}\circ \iota.$
Consider the semi-split short exact sequence
$$0\to C_F(\sZ)\xrightarrow{j} C (\sZ)\xrightarrow{q}  C(F)\to 0$$
Part of the associated long exact sequence in K-theory is
$$KK_1 (C_F(\sZ),C^*_\Gamma)\xrightarrow{\delta} KK_0 (C (F),C^*_r \Gamma)\xrightarrow{\iota_*}
KK_0 (C(\sZ),C^*_\Gamma)\,.$$
In particular, by exactness,  $\iota_* \circ \delta=0$.
Then,  on the one hand a classic argument shows that 
$$\pi^F_* (\delta B)= \Ind_{{\rm APS}} (\alpha,\cW)- \Ind_{{\rm APS}} (\alpha,\cW^\prime)\quad\text{in}\quad  KK_0 (\bbC,C^*_r \Gamma)=
K_0 (C^*_r \Gamma)$$
and, on the other hand, $\pi^F_* (\delta B)=\pi^{\sZ}_* \circ \iota_* (\delta B)=\pi^{\sZ}_* \circ \iota_* \circ \delta (B)=0$ by exactness.
Thus 
$$\Ind_{{\rm APS}} (\alpha,\cW)- \Ind_{{\rm APS}} (\alpha,\cW^\prime)=0$$
as required.
\end{proof}

We shall need a generalization of the previous Lemma. Fix an incomplete iterated conic metric 
on $\sing X\times [0,1]$ (product-type near the boundary) and assume that $\cW_{\sing X\times [0,1]}$
is a mezzoperversity for the resulting signature operator on $\sing X\times [0,1]$; thus $\cW_{\sing X\times [0,1]}$
is {\it not} necessarily the product mezzoperverity associated to a mezzoperversity on $\sing X$.
We can still define an index class $\Ind_{{\rm APS}} (\alpha, \cW_{\sing X\times [0,1]})$ by considering $\phi^{\sharp} (\cW_{\sing X\times [0,1]})$ on $\sing M$ and proceeding as above.

\begin{lemma}\label{lemma:independence-L-bis}
If $\cW_{\sing X\times [0,1]} $ and $\cW_{\sing X\times [0,1]} ^\prime$ are two mezzoperversities on $\sing X\times [0,1]$ then
\begin{equation*}
\Ind_{{\rm APS}} (\alpha,\cW_{\sing X\times [0,1]} )=  \Ind_{{\rm APS}} (\alpha,\cW_{\sing X\times [0,1]} ^\prime)\quad\text{in}\quad  K_* (C^*_r \Gamma).
\end{equation*}
\end{lemma}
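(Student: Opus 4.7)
The plan is to mimic the proof of Lemma \ref{lemma:independence-L}, replacing the product mezzoperversities $\cW\times [0,1]$ and $\cW'\times [0,1]$ with the given mezzoperversities $\cW_{\sing X\times[0,1]}$ and $\cW_{\sing X\times[0,1]}'$ and constructing an interpolating mezzoperversity by hand. Since Cheeger bordism identifies the signature index class independently of a choice of self-dual mezzoperversity (Theorem \ref{theo:main-bordism}, items 4] and 5]), the essential task is to produce a suitable interpolating bordism on a Cheeger space with corners and then to re-run the bivariant $KK$-argument verbatim.

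First, I would build the interpolating mezzoperversity. Let $\sX := (\sing X\times [0,1])\times [0,1]_t,$ stratified in the Banagl fashion in the $t$-direction (as in the proof of Proposition \ref{prop:well-def-on-S}) while keeping the product stratification in the $[0,1]$ direction, so that $\sX$ is a Cheeger space with corners whose boundary hypersurfaces are
\begin{equation*}
(\sing X\times [0,1])\times\{0\},\quad (\sing X\times [0,1])\times\{1\},\quad (\pa(\sing X\times[0,1]))\times[0,1]_t.
\end{equation*}
Banagl's extension procedure (see the proof of Proposition \ref{prop:well-def-on-S}, and Theorem \ref{theo:main-bordism}, item 5]) produces an iie metric $G$ on $\sX,$ collared at the two $t$-boundary faces, together with a self-dual mezzoperversity $\sW$ restricting to $\cW_{\sing X\times [0,1]}$ at $t=0$ and to $\cW_{\sing X\times [0,1]}'$ at $t=1.$ Pulling back through $\Phi:\sM\to\sX,$ $\Phi(m,t)=(\phi(m),t),$ with $\sM:=\sing M\times[0,1]_t$ stratified in the corresponding Banagl manner, yields an iie metric $\Phi^*G$ and a self-dual mezzoperversity $\Phi^{\sharp}\sW$ on $\sM$ interpolating between $\phi^{\sharp}\cW_{\sing X\times [0,1]}$ and $\phi^{\sharp}\cW_{\sing X\times[0,1]}'.$

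Second, I would form the Cheeger space with corners $\sZ := -\sM\sqcup\sX$ with classifying covering $\sZ_\Gamma$ lifted from the one on $\sing X\times[0,1].$ Its boundary decomposes as
\begin{equation*}
F = \bigl(-\sing M\sqcup(\sing X\times[0,1])\bigr)\big|_{t=0}\;\sqcup\;\bigl(-\sing M\sqcup(\sing X\times[0,1])\bigr)\big|_{t=1},
\end{equation*}
\begin{equation*}
G = -(\pa\sing M\times[0,1]_t)\sqcup(\pa(\sing X\times[0,1])\times[0,1]_t).
\end{equation*}
Crucially, on each $t$-slice of $G$ the map $\psi\sqcup\id$ is a stratified homotopy equivalence (independent of $t$, since $\Phi$ and the stratification of $\sX$ were built as products in $t$ near $G$), so the Hilsum-Skandalis perturbation of the signature operator along $G$ constructed in Section \ref{sect:HS} extends in the $t$-direction and produces a family perturbation that is a norm limit of finite propagation operators, trivializing the signature operator along $G.$

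Third, with the perturbation above in place, one obtains a well-defined bivariant class $B\in KK_1(C_F(\sZ),C^*_r\Gamma),$ and the argument of Lemma \ref{lemma:independence-L} goes through unchanged. Namely, using the semi-split exact sequence $0\to C_F(\sZ)\to C(\sZ)\to C(F)\to 0$ and the corresponding boundary map $\delta,$ exactness of
\begin{equation*}
KK_1(C_F(\sZ),C^*_r\Gamma)\xrightarrow{\delta}KK_0(C(F),C^*_r\Gamma)\xrightarrow{\iota_*}KK_0(C(\sZ),C^*_r\Gamma)
\end{equation*}
forces $\pi^F_*(\delta B)=\pi^{\sZ}_*\circ\iota_*\circ\delta(B)=0,$ while on the other hand the standard identification of $\delta B$ with the APS index of the perturbed operator on each boundary face gives
\begin{equation*}
\pi^F_*(\delta B)=\Ind_{{\rm APS}}(\alpha,\cW_{\sing X\times [0,1]})-\Ind_{{\rm APS}}(\alpha,\cW_{\sing X\times [0,1]}'),
\end{equation*}
which yields the claim.

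The main obstacle I expect is technical rather than conceptual: showing that Banagl's bordism technique really does produce a self-dual mezzoperversity $\sW$ on the corner space $\sX$ interpolating two \emph{non-product} mezzoperversities on $\sing X\times[0,1],$ with the correct compatibility with $\pa(\sing X\times[0,1])\times[0,1]_t$ so that the Hilsum-Skandalis perturbation along $G$ is available and consistent. The remaining KK-theoretic steps are functional analytic and transpose verbatim from the proof of Lemma \ref{lemma:independence-L}.
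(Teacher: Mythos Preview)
Your proposal is correct and follows essentially the same approach as the paper: the paper's proof consists of the single sentence ``The proof given for Lemma \ref{lemma:independence-L} can be easily adapted to this more general case,'' and what you have written is precisely that adaptation spelled out in detail. Your identification of the one genuinely new technical point---that Banagl's interpolation must now be carried out for non-product mezzoperversities on $\sing X\times[0,1]$ with the correct behavior at the corner faces---is exactly the content hidden in the paper's word ``adapted.''
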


\begin{proof}
The proof given for Lemma \ref{lemma:independence-L} can be easily adapted to this more general case.
\end{proof}

We now describe a gluing theorem following Bunke \cite{Bunke}.\\
Let $\sing Z$ be a stratified space with boundary and $\sing H$ a compact hypersurface transverse to the stratification that does not meet the boundary of $\sing Z.$ We can view $\sing Z$ as two stratified spaces with boundary glued along $\sing H,$
\begin{equation*}
	\sing Z = \sing Z^1 \bigcup_{ \sing H}  \sing Z^2
\end{equation*}
We decompose a   $\Gamma$-cover of $ \sing Z$ accordingly:
\begin{equation*}
	 \sing Z_{\Gamma} =  \sing Z_{\Gamma}^1 \bigcup_{ \sing H_\Gamma}  \sing Z_{\Gamma}^2.
\end{equation*}
We assume that $ \sing Z$ is Cheeger, we fix an iterated incomplete edge metric $g$ which is
of product type near $ \sing H$; finally,  we fix a selfdual mezzoperversity $\mathcal{W}$ adapted to $g$.
This restricts to a selfdual mezzoperversity on the hypersurface $\sing H$ which we denote $\mathcal{W}^H$. 
Similarly, we obtain selfdual mezzoperversities $\sW^1$ on $\sing Z^1$ and  $\sW^2$ on $\sing Z^2$.
We lift all these structures to the $\Gamma$-covers with minimal change of notation.

Let $D_{\Gamma}$ be the signature operator  on $Z_\Gamma$, the regular part of $\sing Z_\Gamma$. 
We assume that a trivializing perturbation $Q_{\pa}$ of the boundary operator has been fixed; the latter gives a grafted
perturbation $Q_\infty$ on the associated manifold with cylindrical ends and thus an index class $\Ind_{{\rm APS}} (D_\Gamma^{\mathcal{W}},Q_\pa )\in K_* (C^*_r \Gamma)$, where the canonical 
isomorphism $K_* (C^* (\sing Z_\Gamma)^\Gamma)\simeq  K_* (C^*_r \Gamma)$ has been used. We can assume,
without loss of generality, that the perturbation $Q_\infty$ is localized  away from $\sing H_\Gamma$.
 
The signature 
operator near $H_\Gamma$, the regular part of  $\sing H_\Gamma$, will decompose in the usual
way, given that the metric is of product type near $H_\Gamma$.
Let $C_H$ be a perturbation of $D_{\Gamma}^{\mathcal{W}^H}$ such that $D_{\Gamma}^{\mathcal{W}^H}+ C_H$ is invertible
\footnote{a simple argument using the cobordism invariance of the signature index class with
Cheeger boundary conditions shows that such a  perturbation always exists.}
and let
\begin{equation*}
	D^{\sW^1}_{\Gamma,\infty} + C_{H,\infty}^1, \quad
	D^{\sW^2}_{\Gamma,\infty} + C_{H,\infty}^2
\end{equation*}
be perturbed differential operators on the spaces obtained from $\sing Z^1_{\Gamma},$ $\sing Z^2_{\Gamma}$ by attaching an infinite half-cylinder along $\sing H_{\Gamma}.$ We obtain in this way well defined index classes 
$$\Ind_{{\rm APS}} (D^{\sW^1}_{\Gamma}, Q^1_\pa \sqcup C_{H})\,,\;\;
	\Ind_{{\rm APS}}(D^{\sW^2}_{\Gamma}, Q^2_\pa\sqcup C_{H})\quad\text{in}\quad K_* (C^*_r \Gamma)$$
	where $Q^j_\pa$ is $Q_\pa$ restricted  to $\pa \sing Z_\Gamma\cap \sing Z^j_\Gamma$.

\begin{proposition}\label{gluing}(Gluing)
With notation as above, the index classes satisfy
\begin{equation*}
	\Ind_{{\rm APS}} (D_{\Gamma}^{\cW}, Q_\pa) = 
\Ind_{{\rm APS}} (D^{\sW^1}_{\Gamma}, Q^1_\pa \sqcup C_{H}) + 
	\Ind_{{\rm APS}} (D^{\sW^2}_{\Gamma}, Q^2_\pa\sqcup C_{H})\quad\text{in}\quad K_* (C^*_r \Gamma)
\end{equation*}
If $\sing Z$ is without boundary, then  
\begin{equation*}
	\Ind(D_{\Gamma}^{\cW}) = 
\Ind_{{\rm APS}} (D^{\sW^1}_{\Gamma},  C_{H}) + 
	\Ind_{{\rm APS}} (D^{\sW^2}_{\Gamma},  C_{H})\quad\text{in}\quad K_* (C^*_r \Gamma)
\end{equation*}
\end{proposition}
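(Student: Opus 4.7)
The plan is to reduce the gluing identity to the purely functional-analytic relative index argument of \cite[Proposition 2.33]{PS-akt}, passing to the cylindrical-end picture where both sides are realized as coarse relative index classes. Attaching $(-\infty,0] \times \pa \sing Z_\Gamma$ to $\sing Z_\Gamma$ yields $\sing Z_{\Gamma, \infty}$, on which the grafted perturbation $Q_\infty$ of $Q_\pa$ is defined, and as in Proposition \ref{prop:relative} the class $\Ind_{{\rm APS}}(D^{\cW}_\Gamma, Q_\pa)$ equals $c_*^{-1}(\Ind^{{\rm rel}}(D^{\cW}_{\Gamma,\infty} + Q_\infty))$ via the isomorphism \eqref{relative-higson-roe}. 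The right-hand summands are similarly realized on the cylindrical-end manifolds $\sing Z^j_{\Gamma, \infty}$, obtained by attaching a cylinder at $\sing H_\Gamma$ as well as at $\pa \sing Z_\Gamma \cap \sing Z^j_\Gamma$, equipped with the grafted perturbations $Q^j_\infty$ and $C_{H, \infty}$.

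The key step is a cylinder-stretching argument. Since the metric, the mezzoperversity $\cW$, and the perturbation $Q_\infty$ are all product-type near $\sing H_\Gamma$, inserting a cylinder $\sing H_\Gamma \times [-T,T]$ at $\sing H_\Gamma$ produces a family $\sing Z_{\Gamma, \infty}^T$ on which the coarse relative index class is independent of $T$: the stretching is an isometry off a compact set, and the uniform bounds coming from Proposition \ref{prop:classes-0} show that $\phi(D^{\cW}_{\Gamma, \infty,T} + Q_\infty)$ for $\phi \in C_0(\bbR)$ represents a continuous family in the Roe algebra. Next, one adds a compactly supported perturbation $\chi(t) C_H$ on the inserted cylinder, where $\chi$ is a smooth cutoff supported in $[-T+1,T-1]$ and equal to $1$ on $[-T+2,T-2]$. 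Because the loop $s \mapsto D^{\cW^H}_\Gamma + sC_H$ realized by $\chi(t)$ returns to the starting operator, the spectral-flow argument of \cite[Proposition 2.33]{PS-akt} shows that the resulting coarse relative index class is unchanged.

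For $T$ sufficiently large, exponential-decay estimates for the model operator $\pa_t + D^{\cW^H}_\Gamma + C_H$ on the cylinder show that $D^{\cW}_{\Gamma, \infty, T} + Q_\infty + \chi C_H$ is uniformly invertible on the region where $\chi \equiv 1$. Consequently, in the coarse relative $K$-theory, the class decouples as a sum of classes supported on the two components of $\sing Z_\Gamma$ inside $\sing Z^T_{\Gamma, \infty}$ separated by the invertible region. Identifying each component (after truncating the invertible middle) with $\sing Z^j_{\Gamma, \infty}$ equipped with $Q^j_\infty \sqcup C_{H, \infty}$, we obtain the desired equality. The closed case $\pa \sing Z = \emptyset$ is recovered by omitting the perturbation $Q_\pa$ throughout.

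The main technical obstacle, as in the proof of Theorem \ref{theo:k-theory-deloc}, is verifying that the cylinder insertion and the localization estimates on the long cylinder interact correctly with the iterated-edge structure and the mezzoperversity near the intersection of $\sing H$ with singular strata. The transversality of $\sing H$ to the stratification, together with the product-type hypothesis on the metric, ensures that the iterated fibration structure restricts to $\sing H$ and is genuinely product on the collar, so that $\cD_{\cW}(D^{\cW}_\Gamma)$ near $\sing H_\Gamma$ factors as an $L^2$-factor in $t$ tensored with $\cD_{\cW^H}(D^{\cW^H}_\Gamma)$. Once this compatibility is in place, all of the analytic inputs (self-adjointness, finite propagation, local compactness, the spectral-flow identity) transfer verbatim from \cite[Proposition 2.33]{PS-akt} and from the alternative argument of Zenobi cited in the proof of Theorem \ref{theo:k-theory-deloc}.
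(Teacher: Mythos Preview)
Your approach is genuinely different from the paper's. The paper does not stretch the neck or invoke \cite[Proposition~2.33]{PS-akt}; it observes instead that Bunke's $K$-theoretic relative index theorem \cite{Bunke} is purely functional-analytic and transfers to the present setting with two modifications: the Sobolev scale $H^\ell$ is replaced by the intrinsic domains $\cD(D^\ell)$ equipped with their graph norms, and the term $R(\lambda)\,\mathrm{grad}(f)\,R(\lambda)$ in Bunke's parametrix estimates becomes $R(\lambda)(\mathrm{grad}(f)+[C,f])R(\lambda)$, which remains compact. Bunke's Proposition~1.13 then yields the index classes and his Theorem~1.15 gives the additivity, the index of the translation-invariant cylinder operator being zero.

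Your neck-stretching plan is a legitimate alternative strategy, and the first three steps (passing to cylindrical ends, $T$-independence, inserting $\chi(t)C_H$ via the stability result you cite) can be made precise along the lines you sketch. The weak point is the fourth step: the assertion that invertibility of the perturbed operator on the region $\{\chi\equiv 1\}$ forces the coarse relative index class to split as a sum over the two halves. This is exactly the content of an excision or relative-index theorem, and it does not follow from \cite[Proposition~2.33]{PS-akt} or from Zenobi's argument for Theorem~\ref{theo:k-theory-deloc}; you would need a coarse Mayer--Vietoris argument for the relative Roe algebras or, in effect, Bunke's Theorem~1.15 itself. So the step you flag as a consequence of ``localization estimates'' is precisely the step the paper imports from Bunke. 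The paper's route is shorter because it borrows the excision machinery wholesale; your route stays entirely within the Higson--Roe picture but leaves the central excision step as an unfilled obligation.
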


\begin{proof}
A happy byproduct of the functional analytic nature of Bunke's proof is that it applies almost unchanged to our setting.
Let $D$ denote either of $D^{\sW^j}_{\Gamma,\infty} + C_{H,\infty}^j,$ $j\in\{1,2\},$ with $\cD(D)$ its self-adjoint domain.
Replace the definition of the spaces $H^\ell,$ $\ell\geq 0,$ in \cite[(2)]{Bunke} by
\begin{equation*}
	H^\ell = \cD(D^\ell), \quad
	\norm{\phi}_{\ell}^2 = \sum_{k=0}^{\ell} \norm{D^k\phi}_{L^2}^2
\end{equation*}
where the norm on the right hand side is the pointwise norm coming from the Hilbert $C^*$-module structure.

Note that since $D^2+\Id$ is invertible we can take a compact exhaustion of the regular part of our space, approximate the constant function one, and find a non-negative $f \in \CIc(Z^j_{\Gamma,\infty})$ such that $D^2+f$ is invertible, thereby satisfying Bunke's assumption 1.

With these conventions, the analytical results in \S1.2 of Bunke now hold verbatim save that the expressions $R(\lambda)\mathrm{grad}(f)R(\lambda)$ should be replaced by 
\begin{equation*}
	R(\lambda)(\mathrm{grad}(f)+ [C,f])R(\lambda)
\end{equation*}
where $C$ is the perturbation at $H.$ This replacement is still a compact operator and hence the argument in Proposition 1.13 of Bunke yields well-defined index classes. The argument in Theorem 1.15 of Bunke then yields the equality of the index classes we seek, once we take into account that the index of a translation invariant operator on the infinite cylinder vanishes.
\end{proof}

Recall that an articulated stratified space without boundary $\sing L$ is the (entirety of the) boundary of a stratified space with corners. Thus $\sing L$ is a finite union of stratified spaces with corners together with identifications of their boundary faces and the absence of boundary says that there are no unmatched faces. 
If $\sing L$ is the boundary of a Cheeger space, $\sing L = \pa \sing X,$ so in particular each of its constituent stratified spaces with corners is a Cheeger space, then a choice of mezzoperversity on $\sing X$ induces compatible mezzoperversities on the constituents of $\sing L,$ and the boundary identifications (which are stratified diffeomorphisms) give rise to Hilsum-Skandalis perturbations, and so we have an index class,
\begin{equation*}
	\Ind((\pa\sing X)_{\Gamma}) \in K_*(C^*_r\Gamma),
\end{equation*}
where $\Gamma$ is the fundamental group of $\sing X.$

\begin{lemma}\label{lem:CobInvCorners}
If $\sing X$ is a Cheeger space with corners then
\begin{equation*}
	\Ind((\pa \sing X)_{\Gamma})  =0 \Min K_*(C^*_r\Gamma)[\tfrac12].
\end{equation*}
\end{lemma}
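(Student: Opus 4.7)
The plan is to exhibit $\sing X$ itself as a Cheeger-bordism witnessing the vanishing of $\Ind((\pa\sing X)_{\Gamma})$, reducing the statement to Theorem \ref{theo:main-bordism}(4). The articulated structure of $\pa \sing X$ is determined by the codimension-two corner identifications, each of which is a stratified diffeomorphism; the role of the (compressed) Hilsum-Skandalis perturbations in the definition of $\Ind((\pa\sing X)_{\Gamma})$ is precisely to make the signature operator invertible transversely to these identifications.

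First, I would round the corners of $\sing X$ using the standard procedure (see, e.g., \cite[\S2.6]{Wall:DiffTop}, \cite[\S6]{Albin:Hodge}) to obtain a Cheeger space with boundary (but no corners) $\widetilde{\sing X}$, equipped with the induced iie-metric and induced self-dual mezzoperversity $\widetilde{\sW}$, together with the composed classifying map $\widetilde{\sing X} \lra \sing X \lra B\Gamma.$ The boundary $\pa \widetilde{\sing X}$ is a (non-articulated) Cheeger space without boundary obtained from $\pa \sing X$ by replacing each identified pair of boundary faces by a smooth cylindrical tube.

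Second, I would apply Proposition \ref{gluing} along the core of each such cylindrical tube to compare the two index classes. Cutting $\pa \widetilde{\sing X}_{\Gamma}$ along these cores and endowing each cut with an appropriate trivializing perturbation $C_H$ produces an articulated Cheeger space isomorphic to $(\pa \sing X)_{\Gamma}$. Because the identifications in $\pa \sing X$ are stratified \emph{diffeomorphisms} (not merely homotopy equivalences), the compressed Hilsum-Skandalis perturbation associated to such an identification is homotopic, through trivializing perturbations that are norm limits of finite-propagation operators, to the translation-invariant perturbation $C_H$ on the cylinder. Applying Proposition \ref{gluing} then yields
\begin{equation*}
        \Ind(D_{\Gamma}^{\pa \widetilde{\sW}}) = \Ind((\pa \sing X)_{\Gamma}) \quad \text{in } K_*(C^*_r\Gamma)[\tfrac{1}{2}].
\end{equation*}

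Third, and finally, the Cheeger space with boundary $(\widetilde{\sing X}, \widetilde{\sW})$ together with its classifying map to $B\Gamma$ exhibits $(\pa \widetilde{\sing X}, \pa\widetilde{\sW})$ as Cheeger-bordant to the empty set in the sense of Section \ref{sect:bordism}. Theorem \ref{theo:main-bordism}(4) therefore gives $\Ind(D_{\Gamma}^{\pa \widetilde{\sW}}) = 0$ in $K_*(C^*_r\Gamma)[\tfrac{1}{2}]$, and combining with the previous step yields $\Ind((\pa \sing X)_{\Gamma}) = 0$ as required. The main obstacle I anticipate is the second step: one must check carefully that the compressed Hilsum-Skandalis perturbation attached to a diffeomorphism-identification really does agree, up to the homotopy invariance of the index class afforded by Proposition \ref{gluing}, with the translation-invariant perturbation on the rounding cylinder. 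This is essentially the statement that the Hilsum-Skandalis construction is natural with respect to diffeomorphisms, but verifying it in the compressed, $C^*$-algebraic picture requires spelling out the comparison of the operators $HS_{\mu}(\id)$ and $\mu(D^{\pa \sW})^2$ and invoking the functoriality arguments of \cite{PS-akt} in the stratified setting.
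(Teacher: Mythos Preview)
Your approach is essentially the same as the paper's: round the corners of $\sing X$ to obtain a Cheeger space with (smooth) boundary, invoke cobordism invariance (Theorem~\ref{theo:main-bordism}(4), equivalently \cite[Theorem~4.8]{ALMP:Novikov}) to kill the index of that smooth boundary, and then use the gluing formula (Proposition~\ref{gluing}) to pass from the smooth boundary to the articulated one. The paper phrases the rounding as taking a sublevel set $\{\rho_{\pa\sing X}\geq\eps\}$ of a total boundary defining function, but the content is identical.

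One remark on your flagged obstacle: you are making the second step harder than necessary. In Proposition~\ref{gluing} the trivializing perturbation $C_H$ at the cut hypersurface is \emph{arbitrary}; you are free to take $C_H$ to be exactly the compressed Hilsum--Skandalis perturbation associated to the identity (or diffeomorphism) identification along the corner. With that choice the right-hand side of the gluing formula is, by definition, $\Ind((\pa\sing X)_{\Gamma})$, and no separate homotopy between perturbations is needed. So the comparison you worry about is absorbed into the freedom already present in the gluing proposition.
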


\begin{proof}
Our convention is that every boundary hypersurface $\sing M$ of $\sing X$ is collared, i.e., has a neighborhood of the form $[0,1)_{\rho_{\sing M}} \times \sing M$ in $\sing X,$ consistent with the stratification of $\sing X.$ We refer to $\rho_{\sing M}$ as a boundary defining function for $\sing M.$
By a `total boundary defining function' for $\sing X,$ we mean a function $\rho_{\pa \sing X}$ obtained by taking the product of boundary defining functions, one per boundary hypersurface of $\sing X.$
Since the boundary hypersurfaces are collared, for all $\eps>0$ sufficiently small the set $\{ \rho_{\pa \sing X} \geq \eps \}$ is a stratified space with boundary and $\pa \sing X$ can be obtained from $\pa\{ \rho_{\pa \sing X} \geq \eps \}$ by `introducing corners' (i.e., partitioning it and considering as an articulated manifold).
Cobordism invariance of the signature on Cheeger spaces \cite[Theorem 4.8]{ALMP:Novikov} implies that the signature of $\pa\{ \rho_{\pa \sing X} \geq \eps \}$ vanishes in $K_*(C^*_r\Gamma)[\tfrac12]$ (see Remark \ref{rmk:12Enough}) and then Proposition \ref{gluing} implies that the signature of $\pa \sing X$ vanishes.
\end{proof}

The following Proposition is the main result of this Subsection:

\begin{proposition} 
Let $\cW$ be a mezzoperversity for $\sing X$. Let
\begin{align*}
&\alpha_1=\left( (\sing M_1; \sing X, \sing X'_1) \xlra{(\phi_1;\id, \psi_1)}
	(\sing X \times [0,1]; \sing X \times \{0\}, \sing X \times \{1\}) \xlra{\id}
	\sing X \times I \right) \\
	&\alpha_2=\left( (\sing M_2; \sing X, \sing X'_2) \xlra{(\phi_2;\id, \psi_2)}
	(\sing X \times [0,1]; \sing X \times \{0\}, \sing X \times \{1\}) \xlra{\id}
	\sing X \times I \right)
\end{align*}
be two elements in $\cL_{BQ}(\sing X\times [0,1])$. Assume that $[\alpha_1]=[\alpha_2]$ in  $\tL_{BQ}(\sing X\times [0,1])$.
Then
$$\Ind_{{\rm APS}} (\alpha_1,
\cW)=\Ind_{{\rm APS}}(\alpha_2,\cW).$$
Consequently, using also the Lemma  \ref{lemma:independence-L}, we can define a  map
$$\tL_{BQ}(\sing X\times [0,1])\ni \zeta\longrightarrow \Ind_{{\rm APS}} (\zeta)\in K_{\dim \sing X + 1} (C^*_r \Gamma)$$
by setting $\Ind_{{\rm APS}}(\zeta):= \Ind_{{\rm APS}}(\alpha,\cW)$ for any representative  $\alpha$ of $\zeta$, $[\alpha]=\zeta$,
and any choice
of mezzoperversity $\cW$. This map is a homomorphism of abelian groups.
\end{proposition}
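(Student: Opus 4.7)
The plan is to establish well-definedness of $\Ind_{{\rm APS}}$ on $\tL_{BQ}$ first, and then derive additivity almost tautologically from the definition. For well-definedness, suppose $[\alpha_1]=[\alpha_2]$ and, thanks to Lemma \ref{lemma:special-cycles}, take both $\alpha_i$ in the special Wall-type form. Form the glued cycle $\beta = \alpha_1 \cup_{\sing X} (-\alpha_2)$ over $\sing X \times I$; by Lemma \ref{lem:SumLemma} this represents $[\alpha_1] - [\alpha_2] = 0$ in $\tL_{BQ}(\sing X \times I)$. The gluing theorem (Proposition \ref{gluing}) applied to the cut along the shared hypersurface $\sing X$ (where the perturbations are identity-induced Hilsum--Skandalis perturbations, hence cancel) yields
\begin{equation*}
\Ind_{{\rm APS}}(\beta, \cW) = \Ind_{{\rm APS}}(\alpha_1, \cW) - \Ind_{{\rm APS}}(\alpha_2, \cW),
\end{equation*}
so it suffices to prove $\Ind_{{\rm APS}}(\beta,\cW) = 0$.

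The second half of Lemma \ref{lemma:special-cycles} produces a Cheeger space with corners $\sing P$ and a BQ-normal map $\Gamma:(\sing P;\pa_0\sing P,\pa_1\sing P) \to (\sing Q;\pa_0\sing Q,\pa_1\sing Q)$ with $\gamma_0 = \Phi$ and $\gamma_1$ a global BQ-equivalence. Choose any mezzoperversity $\sW_{\sing Q}$ on $\sing Q$ restricting to $\cW \times I$ on $\pa_0\sing Q = \sing X \times I$ (its interior choice is immaterial by Lemma \ref{lemma:independence-L-bis}), and pull back via $\Gamma$ to obtain $\Gamma^{\sharp}\sW_{\sing Q}$ on $\sing P$. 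The articulated boundary $\pa \sing P$ decomposes as $\sing W$ glued to $\pa_1 \sing P$ along the common corner $\pa \sing W$; at this corner we place the compressed Hilsum--Skandalis perturbation associated to the BQ-equivalences $\psi_1 \sqcup \psi_2$. Lemma \ref{lem:CobInvCorners} (cobordism invariance on Cheeger spaces with corners) gives $\Ind((\pa\sing P)_{\Gamma}) = 0$ in $K_*(C^*_r\Gamma)[\tfrac12]$. Applying Proposition \ref{gluing} to the decomposition of $\pa \sing P$,
\begin{equation*}
0 = \Ind((\pa\sing P)_{\Gamma}) = \Ind_{{\rm APS}}(\beta,\cW) \,+\, \Ind_{{\rm APS}}(\gamma_1:\pa_1\sing P \to \pa_1\sing Q,\; C(\psi_1 \sqcup \psi_2)).
\end{equation*}
Since $\gamma_1$ is a global stratified homotopy equivalence, Proposition \ref{prop:extension} makes the second summand vanish, whence $\Ind_{{\rm APS}}(\beta,\cW) = 0$, completing the proof of well-definedness.

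For the homomorphism property, the group operation on $\tL_{BQ}(\sing X \times I)$ is generated by disjoint union and the signature operator on a disjoint union splits as a direct sum on the corresponding $L^2$ spaces (with domains, mezzoperversities, and perturbations likewise splitting), so $\Ind_{{\rm APS}}$ is manifestly additive on representatives; combined with well-definedness this makes it a group homomorphism. The main obstacle in the argument above is the bookkeeping for the corner of $\sing P$: one must verify that the compressed Hilsum--Skandalis perturbations associated with $\psi_1, \psi_2$ can be installed simultaneously at the boundary of $\sing W$ and at the corner boundary of $\pa_1 \sing P$ in a compatible way, so that Proposition \ref{gluing} genuinely applies along this codimension-two stratum and Proposition \ref{prop:extension} yields the vanishing for the BQ-equivalent piece despite the presence of corners. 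This compatibility is a consequence of the functional-analytic nature of both results (as emphasized in their proofs, following the Bunke-style argument) and the locality of Hilsum--Skandalis perturbations, but checking it explicitly is where the technical weight of the proof lies.
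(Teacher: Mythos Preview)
Your proof is correct and follows essentially the same route as the paper's: reduce via Lemma~\ref{lemma:special-cycles} and the gluing formula to showing that a null-bordant cycle has vanishing APS index, then use cobordism invariance (Lemma~\ref{lem:CobInvCorners}) on the bounding space with corners, split the resulting vanishing boundary index via Proposition~\ref{gluing}, and kill the remaining summand using that it comes from a BQ-equivalence (Proposition~\ref{prop:extension}). The only cosmetic differences are that the paper works directly with an abstract null bordism rather than invoking the second half of Lemma~\ref{lemma:special-cycles}, and that it derives the homomorphism property from Lemma~\ref{lem:SumLemma} together with the gluing formula rather than from disjoint union alone; also, where you write $\Ind((\pa\sing P)_{\Gamma})$ the relevant Cheeger space with corners is really $(-\sing P)\sqcup\sing Q$, but this is only a notational slip.
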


\begin{proof}
By Lemma \ref{lemma:special-cycles} and the gluing formula, Proposition \ref{gluing}, it suffices to show that if $\alpha_1$ is null bordant then $\Ind_{APS}(\alpha_1, \cW)=0.$

Let 
\begin{equation*}
	(\sing N; \pa_1 \sing N, \pa_2 \sing N) \xlra{\Phi}
	(\sing Z; \pa_1 \sing Z, \pa_2 \sing Z) \xlra{\Omega}
	\sing (X \times I) \times I
\end{equation*}
be a null bordism of $\alpha.$ Thus, $\Phi$ is BQ-normal, $\Phi|:\pa_2\sing N \lra \pa_2\sing Z$ is a BQ-equivalence, $\Omega$ is BQ-transverse, and
\begin{multline*}
	\lrpar{
	(\pa_1\sing N, \pa_{12}\sing N) \xlra{\Phi|} 
	(\pa_1 \sing Z, \pa_{12}\sing Z) \xlra{\pi\circ\Omega|} 
	\sing X}\\
	=
	\lrpar{
	(\sing M_1; \sing X, \sing X'_1) \xlra{(\phi_1;\id, \psi_1)}
	(\sing X \times [0,1]; \sing X \times \{0\}, \sing X \times \{1\}) \xlra{\id}
	\sing X \times I 
	}.
\end{multline*}
By Lemma \ref{lem:CobInvCorners}, we know that 
\begin{equation*}
	\Ind ((\pa_1 \sing N\xrightarrow{\pa_1\Phi}\pa_1 \sing Z)
		\cup (\pa_2 \sing N\xrightarrow{\pa_2\Phi}\pa_2 \sing Z),
	(\pa_1\Phi^{\sharp} (\cW^{\pa_1\sing Z})\sqcup 
	\cW^{\pa_1\sing Z})
	\sqcup (\pa_2\Phi^{\sharp} (\cW^{\pa_2\sing Z})
	\sqcup \cW^{\pa_2 \sing Z}))=0
\end{equation*}
for any mezzoperversity $\cW^{\sing Z}$ on $\sing Z.$
By Proposition \ref{gluing}, we can write this as the sum of two APS indices,
\begin{equation*}
	\Ind_{APS}(\pa_1 \sing N\xrightarrow{\pa_1\Phi}\pa_1 \sing Z, 
	\pa_1\Phi^{\sharp} (\cW^{\pa_1\sing Z})\sqcup 
	\cW^{\pa_1\sing Z})
	+\Ind_{APS}(\pa_2 \sing N\xrightarrow{\pa_2\Phi}\pa_2 \sing Z, 
	\pa_2\Phi^{\sharp} (\cW^{\pa_2\sing Z})\sqcup 
	\cW^{\pa_2\sing Z})
\end{equation*}
However, the second summand is equal to zero since $\Phi|:\pa_2\sing N \lra \pa_2\sing Z$ is a BQ-equivalence, and hence so is the first summand.

The fact that $\Ind_{{\rm APS}} $ is a homomorphism of abelian groups follows from  Lemma \ref{lem:SumLemma}
and the gluing formula. The Proposition is proved.

\end{proof}

We can use the gluing result in Proposition \ref{gluing} to simplify the APS index map from an $\cL$-cycle $\alpha \in \cL_{\BQ}(\sing X \times I)$ if it restricts to be a diffeomorphism on the boundary.

\begin{proposition}\label{aps-versus-closed}
Let $\alpha \in \cL_{\BQ}(\sing X \times I)$ be given by
\begin{equation*}
	\alpha
	=\left( (\sing M; \sing X, \sing X') \xlra{(\phi;\id, \psi)}
	(\sing X \times [0,1]; \sing X \times \{0\}, \sing X \times \{1\}) \xlra{\id}
	\sing X \times I \right)
\end{equation*}
where $\psi$ is a diffeomorphism $\sing X' \lra \sing X.$
Let $G(\alpha) \in \cL_{\BQ}(\sing X \times \bbS^1)$ be the $\cL_{\BQ}$-cycle given by
\begin{equation*}
	G(\alpha)
	=\left( G(\sing M) = \sing M/ (\sing X\sim_{\psi}\sing X') \xlra{G(\phi)}
	\sing X \times [0,1]/ (\sing X \times \{0\}\sim \sing X \times \{1\}) \xlra{\id}
	\sing X \times \bbS^1 \right).
\end{equation*}
Given a mezzoperversity $\cW$ on $\sing X$ we have
\begin{equation*}
	\Ind_{{\rm APS}} (\alpha, \cW) = \Ind  (G(\alpha), \cW) \Min K_*(C^*_r\Gamma)
\end{equation*}
where on the left $\cW$ is lifted to $\sing X \times [0,1]$ and pulled-back to $\sing M,$ while on the right $\cW$ is lifted to $\sing X \times \bbS^1$ and pulled-back to $G(\sing M).$ Note that the index map on the right does not require boundary conditions; put differently, this is the index class of a cycle involving Cheeger spaces without boundary.
\end{proposition}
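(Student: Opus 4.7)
The plan is to derive the equality from the gluing formula of Proposition \ref{gluing} applied to the closed cycle $G(\alpha)$. First I will verify that $G(\alpha)$ is a well-defined $\cL$-cycle in $\cL_{\BQ}(\sing X \times \bbS^1)$: since $\psi$ is a stratified diffeomorphism, the quotients $G(\sing M) = \sing M/(\sing X \sim_\psi \sing X')$ and $\sing X \times \bbS^1$ are bona fide smooth Cheeger spaces without boundary, and the quotient map $G(\phi)$ is BQ-transverse (the iterated fibration structure extends smoothly across the glue locus). The product mezzoperversity built from $\cW$ on $\sing X \times \bbS^1$ restricts, on the cut, to the product mezzoperversity on $\sing X \times [0,1]$ used in $\Ind_{{\rm APS}}(\alpha, \cW)$; the same compatibility holds for its pullback to $G(\sing M)$ versus $\phi^\sharp(\cW \times [0,1])$ on $\sing M$.

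Next I will introduce the hypersurface $\sing H_X := \sing X \times \{0\} = \sing X \times \{1\} \subset \sing X \times \bbS^1$ (the glue locus) and its preimage $\sing H_M := (G(\phi))^{-1}(\sing H_X) \subset G(\sing M)$, which is the image of $\sing X$ (equivalently of $\sing X'$) under the quotient map. Cutting $\sing X \times \bbS^1$ along $\sing H_X$ and $G(\sing M)$ along $\sing H_M$ exactly reverses the gluings and reproduces the original cycle $\alpha$, now read as a cycle with boundary $\sing X \sqcup \sing X' \to \sing X \sqcup \sing X$ via $(\id \sqcup \psi)$. I will then invoke the gluing formula of Proposition \ref{gluing}, applied to the (non-separating) hypersurface $\sing H_X \sqcup (-\sing H_M)$ inside the cycle $G(\alpha) \sqcup (-(\sing X \times \bbS^1))$, to obtain
\begin{equation*}
\Ind(G(\alpha),\cW) \;=\; \Ind_{{\rm APS}}\bigl(\alpha, C\bigr) \Min K_*(C^*_r\Gamma),
\end{equation*}
where $C$ is any trivializing perturbation for the signature operator on $\sing X \sqcup \sing X$. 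Since Bunke's proof, which underlies Proposition \ref{gluing}, is entirely functional-analytic, its extension to a non-separating cut --- where a single piece appears but with two boundary copies of the hypersurface --- is routine.

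Finally, I will identify $C$ with the compressed Hilsum--Skandalis perturbation $\cC_{t,\mu}(\id \sqcup \psi)$ of Proposition \ref{prop:compr-HS-perturbation}. Because $\id$ and $\psi$ are genuine stratified diffeomorphisms, the relevant Hilsum--Skandalis replacements are essentially isomorphisms of the Hilbert $C^*_r\Gamma$-module of $L^2$-forms, so the resulting perturbed operator is invertible (Lemma \ref{lem:UncompPert}) and the perturbation lies in $C^*(\cov{\sing X \sqcup \sing X})^\Gamma$, hence qualifies as a trivializing perturbation. Independence of the coarse APS index class on the choice of trivializing perturbation (following the arguments around \cite[Prop.~2.33]{PS-akt}, which are purely functional-analytic) then yields $\Ind_{{\rm APS}}(\alpha,C) = \Ind_{{\rm APS}}(\alpha,\cW)$, completing the argument. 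The main obstacle I anticipate is justifying the non-separating form of the gluing formula and carefully tracking that the cut mezzoperversities on both pieces agree with the product mezzoperversity induced from $\cW$; this should follow by an explicit verification in a collar neighborhood of $\sing H_X$ combined with the localizability of the domain $\cD_{\cW}(D)$ established in the proof of Proposition \ref{prop:cylindrical-domain}.
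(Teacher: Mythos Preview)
Your overall strategy---apply the gluing formula to the closed cycle $G(\alpha)$ and recover the APS index of $\alpha$---is the same as the paper's, but the implementations differ in one important respect. You cut along a \emph{single} copy of the glue locus $\sing H_X$, which is a \emph{non-separating} hypersurface in $\sing X\times\bbS^1$ (and likewise in $G(\sing M)$). As you yourself flag, Proposition~\ref{gluing} is stated only for a separating hypersurface, so invoking it here requires an extension you have not proved. While the non-separating variant of Bunke's gluing theorem is certainly true and its proof is in the same spirit, it is not quite ``routine'': one must keep track of the two boundary components of the cut piece carrying opposite orientations, and check that the perturbation $C_H$ used on one side matches (after the orientation flip) the one on the other side so that the glued operator really is $D^{\cW}_\Gamma$ on the closed space.

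The paper sidesteps this entirely with a simple trick: instead of cutting once at the glue locus, it cuts along \emph{two parallel copies} $\sing H=(\sing N\sqcup\sing Y)\times\{-\tfrac12,\tfrac12\}$ inside a collar $(\sing N\sqcup\sing Y)\times(-1,1)$. This hypersurface \emph{is} separating, so Proposition~\ref{gluing} applies verbatim and yields
\[
\Ind(D^{\cW}_\Gamma;\sing Z)
=\Ind_{{\rm APS}}\bigl(D^{\cW^1}_\Gamma,C_H;\ \sing M\sqcup(\sing X\times[0,1])\bigr)
+\Ind_{{\rm APS}}\bigl(D^{\cW^1}_\Gamma,C_H;\ (\sing N\sqcup\sing Y)\times(-\tfrac12,\tfrac12)\bigr).
\]
The first summand is $\Ind_{{\rm APS}}(\alpha,\cW)$, and the second is the index of an invertible translation-invariant operator on a cylinder, hence zero. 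Thus the paper's approach costs one extra (trivial) summand but avoids any extension of the gluing formula. Your discussion of perturbation-independence (to pass between a generic $C_H$ and the compressed Hilsum--Skandalis perturbation defining $\Ind_{{\rm APS}}(\alpha,\cW)$) is correct and is implicitly used in the paper as well.
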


\begin{remark}
The index of $(G(\alpha), \cW)$ is, by definition, the index of the signature operator on 
\begin{equation*}
	G(\sing M) \cup (\sing X \times \bbS^1)
\end{equation*}
(twisted using a reference map to $B\Gamma,$ with $\Gamma = \pi_1\sing X$). Another application of Proposition \ref{gluing} shows that this index coincides with that of the signature operator on
\begin{equation*}
	\sing M \bigsqcup_{\substack{
	\sing X \sim \sing X \times \{0\}\\\sing X'\sim_{\psi} \sing X \times \{1\}}}
	\sing X \times I,
\end{equation*}
as in the original definition of Higson-Roe \cite{higson-roeIII}.
\end{remark}

%
\begin{proof}
Let $\sing N \subseteq G(\sing M)$ and $\sing Y \subseteq \sing X \times \bbS^1$ be the image of the boundary of $\sing M,$ respectively of $\sing X \times [0,1],$ under the identification maps $\sing M \lra G(\sing M),$ $\sing X \times [0,1] \lra \sing X \times \bbS^1.$
Without loss of generality we assume that $G(\phi)$ is collared near these subsets, i.e., that there are neighborhoods on which
\begin{equation*}
	G(\phi)|: \sing N \times (-1,1) \lra \sing Y \times (-1,1)
\end{equation*}
is the identity on the second factor. We assume that the stratifications respect the product structure of these neighborhoods.

We will apply our gluing result to this situation. Let $\sing Z = G(\sing M) \sqcup (\sing X \times \bbS^1)$ endowed with the natural map to $B\Gamma,$ $\Gamma = \pi_1\sing X,$ and let 
\begin{equation*}
	\sing H 
	= (\sing N \times \{-\tfrac12,\tfrac12\})
	\sqcup (\sing Y \times \{-\tfrac12,\tfrac12\}).
\end{equation*}
Given a mezzoperversity $\sW$ on $\sing X$ adapted to a wedge metric on $\sing Z$ that respects the product decomposition of the neighborhood 
$(\sing N \sqcup \sing Y) \times (-1,1),$ we can apply Proposition \ref{gluing} to see that
\begin{equation*}
	\Ind(D_{\Gamma}^{\cW};\sing Z)
	=
	\Ind_{{\rm APS}}(D_{\Gamma}^{\cW^1}, C_H;\sing M \sqcup (\sing X \times [0,1]))
	+\Ind_{{\rm APS}}(D_{\Gamma}^{\cW^1}, C_H;(\sing N \sqcup \sing Y) \times (-\tfrac12,\tfrac12))
\end{equation*}
where $C_H$ is the Hilsum-Skandalis perturbation
and we have used more explicit notation than is our wont.
Finally note that the first summand on the right hand side is $\Ind(\alpha, \cW),$ and the second term vanishes, as it is the index of an invertible translation-invariant operator on an infinite cylinder (indeed, as we have already remarked,
even though we denote our classes as
APS classes, they are really classes on manifolds with cylindrical ends).
\end{proof}

\subsection{Mapping stratified surgery to analysis}$\,$\\
At this point we have shown that all of the maps in the following diagram are well defined and independent of the choice of mezzoperversity:
\begin{equation}\label{eq:SurgToAnOne}
	\xymatrix{
	\tL_{\BQ}(\sing X \times I) \ar@{..>}[r] \ar[d]^-{\Ind_{{\rm APS}}} & 
	\tS_{\BQ}(\sing X) \ar[r]^-{\eta}  \ar[d]^-{\rho} &
	\tN_{\BQ}(\sing X) \ar[r]^-{\theta} \ar[d]^-{\beta} &
	\tL_{\BQ}(\sing X) \ar[d]^-{\Ind_{{\rm APS}}}\\
	\tK_{\dim \sing X+1}(C^*_r\Gamma)[\tfrac12] \ar[r] &
	\tK_{\dim \sing X+1}(D^* (\cov{\sing X})^\Gamma)[\tfrac12] \ar[r] &
	\tK_{\dim \sing X}(\sing X)[\tfrac12] \ar[r] &
	\tK_{\dim \sing X}(C^*_r\Gamma)[\tfrac12] }
\end{equation}
We now establish the commutativity of this diagram.

The key fact for establishing commutativity of the first square is the behaviour of the rho class under composition (\cite[Theorem 9.1]{Wahl_higher_rho}, \cite[(4.14)]{PS-akt}):

\begin{proposition}\label{prop:composition-rho}
Let $\sing L,$ $\sing M,$ $\sing V,$ be Cheeger spaces and
\begin{equation*}
	\sing M \xlra f \sing V, \quad
	\sing L \xlra g \sing M
\end{equation*}
transverse stratified homotopy equivalences. We fix a self-dual mezzoperversity $\cW$ on $\sing V$
and we consider the induced mezzoperversities $f^{\sharp} \cW$ on $\sing M$ and $g^{\sharp} (f^{\sharp} \cW)$
on $\sing L$. If $\sing V_\Gamma$ is a $\Gamma$-covering of $\sing V$ then we lift these mezzoperversities
to the induced coverings $f^* \sing V_\Gamma$ on $M$ and $g^* (f^* (\sing V_\Gamma))$ on $L$.
The following identity holds in $K_* (D^* (\sing V_\Gamma)^\Gamma)$:
\begin{equation*}
	\rho(\sing L \xlra{f\circ g} \sing V, \cW)] 
	+ \wt f_*(\rho( \sing M \xlra{\id} \sing M, f^{\sharp} \cW ))
	= \wt f_*(\rho (\sing L \xlra{g} \sing M,f^{\sharp} \cW )) 
	+ \rho(\sing M \xlra f \sing V,\cW].
\end{equation*}
where in the first summand on the right-hand side it is  the rho class of the perturbed signature operator 
on the covering  $g^* (f^* (\sing V_\Gamma)) \sqcup f^* (\sing V_\Gamma)$ that appears.\\
Consequently, with a small abuse of notation, we have
\begin{equation}\label{eq:compose-rho}
	\rho[\sing L \xlra{f\circ g} \sing V] 
	+ \wt f_*(\rho [\sing M \xlra{\id} \sing M])
	= \wt f_*(\rho [\sing L \xlra{g} \sing M]) 
	+ \rho[\sing M \xlra f \sing V].
\end{equation}
\end{proposition}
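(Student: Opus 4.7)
The plan is to adapt the proofs of \cite[Theorem 9.1]{Wahl_higher_rho} and the analogous formula (4.14) of \cite{PS-akt} to the stratified setting, exploiting the fact that all of the ingredients of those proofs are now at our disposal. In the smooth case the argument rests on three pieces: constructing a bordism whose boundary assembles the four rho classes appearing in the identity, equipping this bordism with a global perturbation of the signature operator which restricts to the relevant Hilsum--Skandalis perturbations on the four boundary pieces and becomes invertible on cylindrical ends, and applying the delocalized APS index theorem. In our stratified framework these are respectively available through Section \ref{sect:bordism}, Section \ref{sect:HS} (in particular Proposition \ref{prop:extension}), and Theorem \ref{theo:k-theory-deloc}.

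First I would build a Cheeger bordism $\sZ$ with corners by suitably stacking the mapping cylinders of $g$ and $f$ (or equivalently, by interpolating between the product bordisms $\sing L\times I$, $\sing M\times I$, $\sing V\times I$), equipped with a self-dual mezzoperversity $\sW$ adapted to $\cW$ and restricting to the pulled-back mezzoperversities on each boundary hypersurface. The articulated boundary of $\sZ$ has four hypersurfaces, each of the form $\sing A\sqcup(-\sing B)$, corresponding respectively to the stratified homotopy equivalences $f\circ g$, $f$, $g$ and $\id_{\sing M}$; each carries a compressed Hilsum--Skandalis perturbation as in Section \ref{sect:HS}. Following the strategy of Proposition \ref{prop:extension}, the crucial point is that because all four maps are stratified homotopy equivalences, the four boundary perturbations extend to a perturbation on $\sZ_\Gamma$ whose grafted extension on the cylindrical-end space $\sZ_{\Gamma,\infty}$ renders the signature operator $L^2$-invertible.

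Next, I would apply Theorem \ref{theo:k-theory-deloc} to $\sZ_\Gamma$. Invertibility of the globally extended perturbation yields vanishing of the coarse APS index class in $K_*(C^*(\sZ_\Gamma)^\Gamma)$, so the image of the sum of the four boundary rho classes in $K_*(D^*(\sZ_\Gamma)^\Gamma)$ must also vanish. Pushing this equality forward to $K_*(D^*(\cov{\sing V})^\Gamma)$ along the map $\sZ\to\sing V$ built from $f$ and the projection, and reading off the signs coming from the induced orientations of the four boundary hypersurfaces, yields exactly the claimed identity: the two boundary terms whose defining equivalences factor through $\sing M$ acquire the pushforward $\wt f_*$, and the resulting four-term alternating sum is precisely \eqref{eq:compose-rho}.

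The main obstacle is the bookkeeping at the corners of $\sZ$: one must verify that the four compressed Hilsum--Skandalis perturbations on the boundary hypersurfaces match compatibly on the codimension-two corner strata where pairs of them meet, so that they genuinely extend to a single perturbation in the multiplier algebra $\mathfrak{M}(C^*(\sZ_\Gamma)^\Gamma)$ rather than merely agreeing after passing to K-theory. This involves tracking the mezzoperversities $\cW$, $f^{\sharp}\cW$ and $g^{\sharp}f^{\sharp}\cW$ through the corner structure, checking that the compressed resolvents entering the definition of $HS_\mu(f)$ and $HS_\mu(g)$ can be chosen consistently on overlapping regions, and using the corner cobordism invariance established in Section \ref{sect:bordism}. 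Once this compatibility is in hand the remainder of the argument is purely functional analytic and parallels \cite[\S 9]{Wahl_higher_rho} and \cite[\S 4]{PS-akt} essentially verbatim.
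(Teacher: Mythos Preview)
Your proposal is correct and follows essentially the same approach as the paper: the paper's proof simply states that the argument from \cite{PS-akt} (itself based on \cite[Proposition 7.1]{Wahl_higher_rho}) together with the delocalized APS index theorem (Theorem \ref{theo:k-theory-deloc}) carries over to the present stratified setting. You have accurately unpacked what that argument entails---the bordism construction, the global Hilsum--Skandalis perturbation rendering the operator invertible, the application of Theorem \ref{theo:k-theory-deloc}, and the pushforward---and the corner-compatibility issue you flag is indeed the only point requiring care, though it is handled exactly as in the smooth case once the stratified ingredients of Sections \ref{sect:bordism} and \ref{sect:HS} are in place.
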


\begin{remark}
It can be shown that the $\rho$-invariant of the identity map vanishes, but we will not need this here.
\end{remark}

\begin{proof}
The proof given in  \cite{PS-akt}, based in turn on the proof of \cite[Proposition 7.1]{Wahl_higher_rho}
and on the delocalized APS index theorem, applies to the present situation.
\end{proof}

Recall now how $\tL_{\BQ}(\sing X \times I)$  acts  on $ \tS_{\BQ}(\sing X)$.
If $[\alpha] \in \tL_{\BQ}(\sing X \times I)$ and $[\beta] \in \tS_{\BQ}(\sing X)$ then we can choose representatives of the form
\begin{equation*}
\begin{gathered}
	\beta: \sing M \xlra{f} \sing X \\
	\alpha: (\sing W; \sing M, \sing M') \xlra{(\phi;\id, \phi_2)}
	(\sing M \times [0,1]; \sing M \times \{0\}, \sing M \times \{1\}) \xlra{\id}
	\sing M \times I,
\end{gathered}
\end{equation*}
and then the class of $f\circ \phi_2: \sing M' \lra \sing X$ in $\tS_{\BQ}(\sing X)$ is well-defined and denoted $\pa(\alpha)(\beta).$
The map
\begin{equation*}
	\xymatrix @R=1pt {
	\tL_{\BQ}(\sing X \times I) \times \tS_{\BQ}(\sing X) \ar[r] & \tS_{\BQ}(\sing X) \\
	([\alpha],[\beta]) \ar@{|->}[r] & \pa(\alpha)(\beta) }
\end{equation*}
defines the  group action of the Browder-Quinn L-group of $\sing X \times I$ on the structure set of $\sing X.$
In order to show that the first diagram in \eqref{eq:SurgToAnOne} commutes we need to show that 
\begin{equation}\label{first-diagram}
\rho (\pa(\alpha)(\beta))- \rho (\beta)=\iota_* (\Ind_{{\rm APS}} (\alpha))
\end{equation}
with $\Ind_{{\rm APS}} (\alpha)\in \tK_{\dim \sing X+1}(C^* (\sing X_\Gamma)^\Gamma)[\tfrac12] =
\tK_{\dim \sing X+1}(C^*_r\Gamma)[\tfrac12] $ and $\iota: C^* (\sing X_\Gamma)^\Gamma\to D^* (\sing X_\Gamma)^\Gamma$
the natural inclusion.
The left hand side of \eqref{first-diagram} is, by definition,
$$\rho [ \sing M' \xrightarrow{f\circ \phi_2} \sing X] - \rho [ \sing M \xlra{f} \sing X]\,.$$
We now apply Proposition \ref{prop:composition-rho} and obtain that this difference equals:
$$\tilde{f}_* \rho [\sing M' \xrightarrow{\phi_2} \sing M] - \tilde{f}_* \rho [\sing M \xrightarrow{\id} \sing M]$$
and a direct  application of the delocalized APS index theorem shows that this difference is precisely
equal to $\iota_* (\Ind_{\Gamma,{\rm APS}} (\alpha))$. This establishes the commutativity of the first square in the
diagram.\\
The second square  in the diagram is proved to commute exactly as in \cite{PS-akt}.\\
Finally, for the third square, we observe that the image of a class in $\tN_{\BQ}(\sing X)$
is a union of two closed Cheeger spaces and that for such an element in $\tL_{\BQ}(\sing X)$
the APS-index class is just the index class of Subsection \ref{subsect:index-classes}; the commutativity
of the third square then follows  by the functoriality of the boundary map in the Higson-Roe surgery
sequence.

\subsection{Mapping stratified surgery to analysis on all strata}\label{sect:all-strata}$\,$\\
The use of transverse maps in the definition of the Browder-Quinn surgery sequence implies that there are well-defined restriction maps from the long exact sequence of a stratified space to the corresponding sequence of a singular stratum.

Recall from \S\ref{sec:TransMaps} that if $Y \in \cS(\sing X)$ then the closure of $Y$ in $\hat X$ is a stratified space denoted $\sing Y.$ An $\cL$-cycle over $\sing X$ restricts to an $\cL$-cycle over $\sing Y,$ and a null bordism over $\sing X$ restricts to an $\cL$-cycle over $\sing Y.$ The restriction of a normal invariant or a Thom-Mather structure from $\sing X$ to $\sing Y$ is an $\cL$-cycle of the same type. Thus we have commutative diagrams
\begin{equation*}
	\xymatrix{
	\tL_{\BQ}(\sing X \times I) \ar@{..>}[r] \ar[d] & 
	\tS_{\BQ}(\sing X) \ar[r]^-{\eta}  \ar[d] &
	\tN_{\BQ}(\sing X) \ar[r]^-{\theta} \ar[d] &
	\tL_{\BQ}(\sing X) \ar[d] \\
	\tL_{\BQ}(\sing Y \times I) \ar@{..>}[r] &
	\tS_{\BQ}(\sing Y) \ar[r]^-{\eta}  &
	\tN_{\BQ}(\sing Y) \ar[r]^-{\theta} &
	\tL_{\BQ}(\sing Y) }
\end{equation*}
which we can extend arbitrarily to the left.
Note that the vertical arrows are generally neither injective nor surjective.

Let us introduce the abbreviations,
\begin{equation*}
\begin{gathered}
	\tK_{[j]}(C^*;{\cS(\sing X)})[\tfrac12] 
	= \tK_{\dim \sing X+j}(C^*_r\Gamma)[\tfrac12] \oplus
	\bigoplus_{Y \in \cS(X)}
	\tK_{\dim \sing Y+j}(C^*_r\Gamma(\sing Y))[\tfrac12], \\
	\tK_{[j]}(D^*;{\cS(\sing X)})[\tfrac12] 
	= 
	\tK_{\dim \sing X+j}(D^* ({\sing X}_{\Gamma})^{\Gamma})[\tfrac12] 
	\oplus
	\bigoplus_{Y \in \cS(X)}
	\tK_{\dim \sing Y+j}
		(D^* ({\sing Y}_{\Gamma(\sing Y)})^{\Gamma(\sing Y)})[\tfrac12] \\
	\tK_{[j]}(D^*/C^*;{\cS(\sing X)})[\tfrac12] 
	= \tK_{\dim \sing X+j-1}(\sing X)[\tfrac12]
	\oplus
	\bigoplus_{Y \in \cS(X)}
	\tK_{\dim \sing Y+j-1}(\sing Y)[\tfrac12].
\end{gathered}
\end{equation*}
By restricting to each singular stratum an making use of their respective commutative diagram \eqref{eq:SurgToAnOne} we end up with a combined diagram
\begin{equation}\label{eq:SurgToAnAll}
	\xymatrix{
	\tL_{\BQ}(\sing X \times I) \ar@{..>}[r] \ar[d]^-{\oplus\Ind}& 
	\tS_{\BQ}(\sing X) \ar[r]^-{\eta}  \ar[d]^-{\oplus\rho} &
	\tN_{\BQ}(\sing X) \ar[r]^-{\theta} \ar[d]^-{\oplus\beta} &
	\tL_{\BQ}(\sing X) \ar[d]^-{\oplus\Ind} \\
	\tK_{[1]}(C^*;{\cS(\sing X)})[\tfrac12] \ar[r] &
	\tK_{[1]}(D^*;{\cS(\sing X)})[\tfrac12] \ar[r] &
	\tK_{[1]}(D^*/C^*;{\cS(\sing X)})[\tfrac12] \ar[r] &
	\tK_{[0]}(C^*;{\cS(\sing X)})[\tfrac12] }
\end{equation}
$ $\\

\begin{remark}\label{rmk:SupNormal}
In \cite[\S12.4]{W-book}, for Witt spaces with simply connected links (also known as `supernormal spaces', see 
\cite{CW-CRAS}) 
we find that
\begin{equation*}
	\tL_{\BQ}(\sing X)[\tfrac12]
	= \tL(\bbZ\pi_1(\sing X))[\tfrac12] 
	\oplus
	\bigoplus_{Y \in \cS(X)}
	\tL(\bbZ\pi_1(\sing Y))[\tfrac12] 
\end{equation*}
(where $\sing Y,$ the `closed stratum', is the closure of $Y$ in $\sing X$ with the induced stratification)
so that in this case the vertical arrows in \eqref{eq:SurgToAnAll} map from the {\em algebraic} $L$-groups.
\end{remark}

\section{Further considerations}\label{sect:applications}

In \cite{Chang-Weinberger}, Chang and Weinberger use the surgery exact sequence of a manifold to show that torsion in its fundamental group implies the existence of infinitely many homotopy equivalent manifolds that are homeomorphically distinct.

In this section we use the argument in {\em loc. cit.} as a launching pad to discuss related topics. First, as an answer to `how to map in to/out of a BQ L-group?' we establish a couple of long exact sequences. Secondly we establish Atiyah's $L^2$-signature theorem for Cheeger spaces. Finally we combine these to discuss a version of the result of \cite{Chang-Weinberger} for Cheeger spaces.

\subsection{A long exact sequence for the Browder-Quinn L-groups} \label{sec:LESLBQ}

It is interesting to connect the Browder-Quinn L-groups with the usual (Wall) L-groups of a smooth manifold. If $\sing X$ is a smoothly stratified space then, on the one hand, recall that we can identify the regular part of $\sing X$ with the (interior of) the resolution of $\sing X,$ $\res X,$ and so the inclusion, $i,$ of the regular part induces
\begin{equation*}
	\tL_k(\res X) \xlra{i_*} \tL_{\BQ, d^{\res X}(k)}(\sing X),
\end{equation*}
where $d^{\res X}(k)$ is the dimension function that is equal to $k$ on the inverse image of the regular part of $\sing X.$
On the other hand, if $X^{\dagger}$ is a minimal stratum (a stratum of greatest depth), and hence a closed manifold, then the fact that the Browder-Quinn L-groups are defined using transverse maps means that we have a natural restriction map
\begin{equation*}
	\tL_{\BQ,d}(\sing X) \xlra{R} \tL_{d_{X^{\dagger}}}(X^{\dagger}),
\end{equation*}
where $d_{X^{\dagger}}$ is the restriction of the dimension function to $X^{\dagger}.$\\

Both of these maps fit into a long exact sequence of Browder-Quinn L-groups. An example of the former is found in \cite{BQ} and of the latter in \cite[\S 6]{W-book}. We treat these as special cases of long exact sequences in L-groups associated with inclusions.\\

We recall from, e.g., \cite[Lemma 8.3.1]{Wall:DiffTop}, a standard construction of long exact sequences in cobordism. Suppose that $\alpha$ and $\beta$ denote two possible types of structure a manifold can have, and that a $\beta$ structure implies an $\alpha$ structure (for example, if $Y \subseteq X$ and $\alpha$ structure could be `is endowed with a map to $X$' and an $\beta$ structure `is endowed with a map to $Y$). Denote by $\Omega_n^{\alpha},$ $\Omega_n^{\beta}$ the cobordism groups of $n$-dimensional manifolds with the corresponding structure, and denote by $\Omega_n^{\alpha,\beta}$ the cobordism group of manifolds with boundary with an $\alpha$ structure and a compatible $\beta$ structure on the boundary then, with the obvious maps, there is a long exact sequence
\begin{equation*}
	\ldots \Omega_n^{\beta} 
	\lra \Omega_n^{\alpha} 
	\lra \Omega_n^{\alpha,\beta}
	\xlra{\pa}
	\Omega_{n-1}^{\beta} \lra \ldots.
\end{equation*}
The proof of exactness in {\em loc. cit.} does not depend on the specific structures $\alpha$ and $\beta$ and adapts easily to the situations we consider below.\\

{\bf One type of relative L-group.}$\,$\\
In \cite{Wall}, Wall explains how to associate to a map $h:V \lra W$ between two manifolds an L-group that moreover fits into a long exact sequence with the L-groups of $V$ and $W.$ We now observe that the same is true if $\sing V$ and $\sing W$ are smoothly stratified spaces and $h:\sing V \lra \sing W$ is a transverse map between them.
Let us denote by $\cL_{\BQ,d}(\sing V \xlra h \sing W)$ the set of commutative diagrams of the form
\begin{equation}\label{eq:RelDiags}
	\xymatrix{
	(\sing S; \pa_1 \sing S, \pa_2 \sing S) \ar[r]^-\phi &
	(\sing T; \pa_1 \sing T, \pa_2 \sing T) \ar[r]^-\eta &
	\sing W \\
	\pa_2 \sing S \ar@{^(->}[u] \ar[r]^-{\phi|} &
	\pa_2 \sing T \ar@{^(->}[u] \ar[r]^-{\eta|} &
	\sing V \ar[u]^h }
\end{equation}
where $\sing S$ is a smoothly stratified space with dimension function $d,$ and boundary $\pa_1\sing S \cup \pa_2\sing S,$ and similarly $\sing T,$ $\phi$ is a BQ-normal map and restricts to a BQ-equivalence between $\pa_1\sing S$ and $\pa_1\sing T$ and $\eta$ is BQ-transverse. As usual $\tL_{\BQ,d}(\sing V \xlra h \sing W)$ then denotes cobordism classes of such cycles.
As above, these groups fits into a long exact sequence
\begin{equation*}
	\ldots \xlra{\pa}
	\tL_{\BQ,d}(\sing V) \xlra{h_*} 
	\tL_{\BQ,d}(\sing W)
	\xlra{j}
	\tL_{\BQ,d}(\sing V \xlra h \sing W)
	\xlra{\pa}
	\tL_{\BQ,d-1}(\sing V) \xlra{h_*} \ldots
\end{equation*}
where the maps are given by
\begin{equation*}
\begin{gathered}
	h_*\lrspar{
	(\sing S'; \pa \sing S') \xlra{\phi'} (\sing T', \pa \sing T') \xlra{\eta'} \sing V}
	=
	\lrspar{
	(\sing S'; \pa \sing S') \xlra{\phi'} 
	(\sing T', \pa \sing T') 
	\xlra{h\circ \eta'} \sing W}, \\
	j\lrspar{
	(\sing S''; \pa \sing S'') \xlra{\phi''} (\sing T'', \pa \sing T'') \xlra{\eta''} \sing W}
	=
    \left[
    \vcenter{\xymatrix{ 	
	(\sing S''; \pa \sing S'', \emptyset) \ar[r]^-{\phi''} &
	(\sing T''; \pa \sing T'', \emptyset) \ar[r]^-{\eta''} &
	\sing W \\
	\emptyset \ar@{^(->}[u] \ar[r] &
	\emptyset \ar@{^(->}[u] \ar[r] &
	\sing V \ar[u]^h }} \right], \\
	\pa \left[
    \vcenter{\xymatrix{ 	
	(\sing S; \pa_1 \sing S, \pa_2 \sing S) \ar[r]^-\phi &
	(\sing T; \pa_1 \sing T, \pa_2 \sing T) \ar[r]^-\eta &
	\sing W \\
	\pa_2 \sing S \ar@{^(->}[u] \ar[r]^-{\phi|} &
	\pa_2 \sing T \ar@{^(->}[u] \ar[r]^-{\eta|} &
	\sing V \ar[u]^h }} \right]
	= \lrspar{ (\pa_2\sing S; \pa(\pa_2 \sing S)) \xlra{\phi|}
	(\pa_2 \sing T; \pa(\pa_2\sing T)) \xlra{\eta|}
	\sing V }.
\end{gathered}
\end{equation*}
%

If we apply this to the inclusion of the regular part, $i: \res X \lra \sing X,$ we obtain
\begin{equation}\label{eq:LESalaBQ}
	\ldots \xlra{\pa}
	\tL_{k}(\res X) \xlra{i_*} 
	\tL_{\BQ,d^{\res X}(k)}(\sing X)
	\xlra{j}
	\tL_{\BQ,d^{\res X}(k)}(\res X \xlra i \sing X)
	\xlra{\pa}
	\tL_{k-1}(\res X) \xlra{i_*} \ldots
\end{equation}
(cf. \cite[Proposition 4.8]{BQ}).\\

{\bf Another type of relative L-group.}$\,$\\
(For this sequence cf. \cite[Theorem 5.4]{Dovermann-Schultz}.)
Let $\sing X$ be a stratified space, $\sing \Sigma \subseteq \sing X$ a closed subset of $\sing X$ made up of a union of strata.
Let $\cL_{\BQ,d}(\sing X; \sing \Sigma)$ denote the $\cL$-cycles over $X$ with dimension function $d$ whose restriction to $\sing \Sigma$ is a BQ-equivalence, and $\tL_{\BQ,d}(\sing X;\sing\Sigma)$ the corresponding bordism classes.
We allow $\sing\Sigma = \emptyset$ for which
\begin{equation*}
	\tL_{\BQ,d}(\sing X;\emptyset) = 
	\tL_{\BQ,d}(\sing X).
\end{equation*}

Analogously to the above, if $\sing\Sigma' \subseteq \sing X$ is another closed subset of $\sing X$ made up of a union of strata, with $\sing \Sigma \subseteq \sing\Sigma'$ then there is a relative group $\tL_{\BQ,d}(\sing X;\sing \Sigma \subseteq \sing \Sigma')$ with classes represented by diagrams of the form
\begin{equation}\label{eq:DiagRel}
	(\sing M;\pa_0 \sing M, \pa_1\sing M) 
	\xlra{\phi} 
	(\sing N; \pa_0 \sing N, \pa_1 \sing N) 
	\xlra{\omega} 
	\sing X
\end{equation}
such that $\phi$ is BQ-normal, $\omega$ is BQ-transverse, $\pa_0\phi$ is a BQ-equivalence, $\pa_1\phi$ restricted to the preimage of $\sing\Sigma'$ is a BQ-equivalence, and $\phi$ restricted to the preimage of $\sing\Sigma$ is a BQ-equivalence.

There are natural inclusion maps
\begin{equation*}
	\tL_{\BQ,d}(\sing X; \sing\Sigma')
	\lra \tL_{\BQ,d}(\sing X; \sing\Sigma), \quad
	\tL_{\BQ,d}(\sing X; \sing\Sigma) \lra
	\tL_{\BQ,d}(\sing X;\sing \Sigma \subseteq \sing \Sigma')	
\end{equation*}
which fit into a long exact sequence
\begin{equation*}
	\ldots \lra \tL_{\BQ,d}(\sing X; \sing\Sigma')
	\lra \tL_{\BQ,d}(\sing X; \sing\Sigma)
	\lra
	\tL_{\BQ,d}(\sing X;\sing \Sigma \subseteq \sing \Sigma')	
	\xlra{\pa_1} \tL_{\BQ,d-1}(\sing X; \sing\Sigma') \lra \ldots.
\end{equation*}
Exactness of this sequence follows from the the usual construction of relative sequences in bordism, see \cite{Wall:DiffTop}.

Let us consider the case where $\sing\Sigma = \emptyset$ and $\sing \Sigma' = X^{\dagger},$ a minimal stratum of $\sing X.$ We point out that there are compatible restriction maps
\begin{equation*}
	\tL_{\BQ,d}(\sing X;\emptyset)
	\lra
	\tL_{d(Y)}(X^{\dagger}), \quad
	\tL_{\BQ,d}(\sing X;\emptyset \subseteq X^{\dagger})		
	\lra
	\tL_{d(Y)}(X^{\dagger}),
\end{equation*}
both of which are onto, since any $\cL$-cycle over $X^{\dagger}$ has a lift to an $\cL$-cycle over $\sing X,$
\begin{equation*}
	\lrpar{
	(M;\pa M) \xlra\phi (N, \pa N) \xlra\omega X^{\dagger}} \\
	\mapsto
	\lrpar{
	(\omega\circ\phi)^*\cT_{X^{\dagger}} \lra
	\omega^*\cT_{X^{\dagger}} \lra
	\lrpar{ \cT_{X^{\dagger}} \hookrightarrow } \sing X }
\end{equation*}
(where $\cT_{X^{\dagger}}$ is a tubular neighborhood of $X^{\dagger}$ in $\sing X$).
The restriction map
$\tL_{\BQ,d}(\sing X;\emptyset \subseteq X^{\dagger})	\lra \tL_{d(Y)}(X^{\dagger})$ is also injective.
Indeed, assume that \eqref{eq:DiagRel} is such that $\phi$ restricts to $X^{\dagger}$ to be a BQ-equivalence and consider
\begin{equation}\label{eq:DiagRelTriv}
	M\times [0,1] \xlra{\phi \times \id}
	N \times [0,1] \xlra{\omega\times \id}
	X \times [0,1].
\end{equation}
Since $\phi \times \id$ restricted to $\pa M \times [0,1] \cup M \times \{1\}$ is a BQ-equivalence over $X^{\dagger},$ we recognize \eqref{eq:DiagRelTriv} as a null bordism for \eqref{eq:DiagRel}. Thus we have established the long exact sequence
\begin{equation}\label{eq:LESalaW}
	\ldots \lra \tL_{\BQ,d}(\sing X; X^{\dagger})
	\lra \tL_{\BQ,d}(\sing X)
	\lra
	\tL_{d(X^{\dagger})}(X^{\dagger})
	\lra \tL_{\BQ,d-1}(\sing X; X^{\dagger}) \lra \ldots.
\end{equation}
%



%
\subsection{Atiyah's $L^2$ signature theorem} \label{sec:Atiyah}$\,$\\
If $M$ is a closed even-dimensional manifold and $L \lra M$ is a regular cover with transformation group $\Gamma$ then given any elliptic differential operator on $M,$ Atiyah's $L^2$-index theorem asserts the equality of its index with the $\Gamma$-equivariant $L^2$-index of its lift to $L.$ Let $C^*\Gamma$ denote the maximal group $C^*$-algebra associated to $\Gamma$.
Both of these numeric indices can be obtained from the $C^*\Gamma$ index of the elliptic operator, an element
in $K_0 (C^*\Gamma)$, by applying
two traces: $\tau_{\Gamma}: K_{0} (C^*\Gamma)\to \bbC$ is obtained by extending  the trace 
$\tau_\Gamma:\bbC\Gamma\to \bbC$ given by
$$\tau_{\Gamma} (\sum_\gamma \alpha_\gamma \,\gamma):= \alpha_e$$
whereas $
	\tau: K_{0} (C^*\Gamma)\to \bbC$
is obtained by extending  the trace $\tau: \bbC\Gamma\to \bbC$,
$$\tau (\sum_\gamma \alpha_\gamma \,\gamma):= \sum_\gamma \alpha_\gamma\,.$$
Atiyah's theorem can be summarized as
\begin{equation*}
	(\tau_\Gamma - \tau) \circ \Ind_{\Gamma}=0.
\end{equation*}
In \cite[Theorem 6.5]{ALMP:Hodge}, it is shown that the signature operator admits a parametrix 
which is $\epsilon$-local; once we have this key information, Atiyah's original proof carries over to the setting of Cheeger spaces essentially unchanged. However, as some of the arguments will be useful below, we present instead a proof that follows \cite[Appendix]{Chang-Weinberger}.\\

Recall that whenever $\sing X$ is an even-dimensional Cheeger space and $r: \sing X \lra B\Gamma$ is the classifying map of a regular $\Gamma$-cover we have a signature class
\begin{equation*}
	\Ind(\sing X_{\Gamma}):=
	\Ind(D^{\sG(r)}) \in K_0(C^*\Gamma)\otimes_{\bbZ}\bbZ[\tfrac{1}{2}]\equiv
	K_0(C^*\Gamma)[\tfrac{1}{2}]
\end{equation*}
where $\sG(r)$ denotes the flat bundle of $C^*\Gamma$-modules corresponding to $r,$ and $D^{\sG(r)}$ is the twisted signature operator. (As already remarked around Theorem \ref{theo:main-bordism}, the index class
 in $ K_0(C^*\Gamma)$  is defined using a choice of a mezzoperversity but it is in fact independent of this choice
  in $ K_0(C^*\Gamma)[\frac{1}{2}]$, which is why we omit it from the notation.) Also in {\em loc. cit.}, for any topological space $L$ there are bordism groups
$\Omega^{\Che}_n(L)$ of pairs $(\sing M, f:\sing M \lra L)$ where $\sing M$ is a Cheeger space of dimension $n$ and $f$ is continuous. 
The signature class $\Ind(D^{\sG(r)})$ only depends on the bordism class 
\begin{equation*}
	[(\sing X, r:\sing X \lra B\Gamma)] \in
	\Omega^{\Che}_{\dim \sing X}(B\Gamma),
\end{equation*}
and defines a group homomorphism \cite[Corollary 5.11]{ALMP:Novikov}
\begin{equation*}
	\sigma_{\Gamma}: 
	\Omega^{\Che}_{\dim \sing X}(B\Gamma) \lra
	 K_0(C^*\Gamma)[\frac{1}{2}]
\end{equation*}

\begin{proposition}\label{prop:IndexInduction}
Any homomorphism $f:\Gamma_1 \lra \Gamma_2$ between discrete groups induces a commutative diagram
\begin{equation*}
	\xymatrix{
	\Omega^{\Che}_n(B\Gamma_1) \ar[r]^-{\sigma_{\Gamma_1}} \ar[d]^-{Bf_*} 
		& K_n(C^*\Gamma_1)[\tfrac{1}{2}] \ar[d]^-{f_*} \\
	\Omega^{\Che}_n(B\Gamma_2) \ar[r]^-{\sigma_{\Gamma_2}} 
		& K_n(C^*\Gamma_2)[\tfrac{1}{2}] }
\end{equation*}
\end{proposition}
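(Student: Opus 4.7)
The plan is to prove commutativity at the level of representatives, exploiting the fact that the Mishchenko bundles for $r$ and for $Bf \circ r$ are related by extension of scalars, together with the naturality of the K-theoretic index with respect to $*$-homomorphisms of coefficient $C^*$-algebras. Fix a representative $[\sing X, r:\sing X \lra B\Gamma_1]$, an $\iie$-metric $g$, and a self-dual mezzoperversity $\cW$ adapted to $g$. By Theorem \ref{theo:main-bordism}, $\sigma_{\Gamma_1}[\sing X, r]$ is represented by the index of the signature operator on $\sing X$ with mezzoperversity $\cW$, twisted by $\sG(r)$, while $\sigma_{\Gamma_2}[\sing X, Bf\circ r]$ is represented by the analogous operator twisted by $\sG(Bf\circ r)$; after inverting $2$, both classes are independent of $\cW$.

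The key algebraic observation is that, by naturality of the Mishchenko construction, $\sG(Bf \circ r) \cong \sG(r) \otimes_{C^*\Gamma_1} C^*\Gamma_2$, where $C^*\Gamma_2$ is regarded as a $(C^*\Gamma_1, C^*\Gamma_2)$-bimodule through the canonical $*$-homomorphism $f_\sharp : C^*\Gamma_1 \lra C^*\Gamma_2$ induced by $f$. This bimodule defines a class in $KK(C^*\Gamma_1, C^*\Gamma_2)$ whose Kasparov product with a K-theory class realizes precisely the induced map $f_* : K_n(C^*\Gamma_1) \lra K_n(C^*\Gamma_2)$. Consequently, the statement reduces to the operator-level identity
\begin{equation*}
	f_*\bigl(\Ind(D^{\cW,\sG(r)})\bigr) \;=\; \Ind(D^{\cW,\sG(Bf\circ r)}) \quad \text{in } K_n(C^*\Gamma_2)[\tfrac12].
\end{equation*}

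The index-theoretic content is then that the signature index class is multiplicative under extension of scalars by the bimodule $C^*\Gamma_2$. On the regular part of $\sing X$ this is classical Mishchenko-Fomenko calculus, where the parametrix for the twisted operator can be constructed module by module. To extend this to the singular setting, one invokes the $\varepsilon$-local parametrix for the signature operator on Cheeger spaces produced in Theorem 6.5 of \cite{ALMP:Hodge}: applying $f_\sharp$ fiberwise converts a parametrix with $C^*\Gamma_1$-compact remainder into one with $C^*\Gamma_2$-compact remainder, and this implements $f_*$ at the level of index classes. The main technical obstacle is verifying that the mezzoperversity boundary conditions are preserved under this extension of scalars. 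This is handled by recalling that $\cW$ is defined stratum by stratum as a choice of flat sub-bundles of the fiberwise $L^2$-cohomology of the links, a construction that commutes with tensoring by any $(C^*\Gamma_1, C^*\Gamma_2)$-bimodule; hence the domain of the operator obtained from $D^{\cW,\sG(r)}$ by extension of scalars coincides with the domain of $D^{\cW,\sG(Bf\circ r)}$. Once this compatibility is established, commutativity of the diagram follows from standard naturality of the Mishchenko-Fomenko index.
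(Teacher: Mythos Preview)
Your proposal is correct and follows essentially the same approach as the paper: both reduce to the bundle identification $\sG(Bf\circ r)\cong\sG(r)\otimes_{C^*\Gamma_1}C^*\Gamma_2$ and the fact that $f_*$ on $K$-theory is implemented by tensoring with the bimodule $C^*\Gamma_2$. The paper simply asserts the resulting equality of index classes, whereas you supply additional justification via the $\varepsilon$-local parametrix and compatibility of the mezzoperversity domain with extension of scalars; this extra care is reasonable but not a different strategy.
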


\begin{proof}
Given $r:\sing X \lra B\Gamma_1$ we want to show
\begin{equation*}
	\Ind(D^{\sG((Bf)\circ r)})
	=f_*(\Ind(D^{\sG(r)}) )\in K_n(C^*\Gamma_2)[\frac{1}{2}].
\end{equation*}
For each $\Gamma_i,$ let
\begin{equation*}
	C^*\Gamma_i - \cU_{\Gamma_i} \lra B\Gamma_i
\end{equation*}
be the universal Mishchenko bundle and note that
\begin{equation*}
	(Bf)^*\cU_{\Gamma_2} 
	= \cU_{\Gamma_1} \otimes_{C^*\Gamma_1}C^*\Gamma_2,
\end{equation*}
where the tensor product makes use of $f_*,$ and correspondingly
\begin{equation*}
	\sG((Bf)\circ r) = \sG(r) \otimes_{C^*\Gamma_1}C^*\Gamma_2.
\end{equation*}
Thus we have
\begin{equation*}
	\Ind(D^{\sG((Bf)\circ r)})
	= \Ind(D^{\cG(r)})\otimes_{C^*\Gamma_1}C^*\Gamma_2.
\end{equation*}
On the other hand, the map 
\begin{equation*}
	K_n(C^*_r\Gamma)[\tfrac{1}{2}] = KK_n(\bbC, C^*\Gamma_1)[\tfrac{1}{2}]
	\xlra{f_*}
	KK_n(\bbC, C^*\Gamma_2)[\tfrac{1}{2}]= 
	K_n(C^*_r\Gamma_2)[\tfrac{1}{2}] 
\end{equation*}
is precisely obtained by tensoring KK-cycles $\cdot \mapsto \cdot \otimes_{C^*\Gamma_1}C^*\Gamma_2,$ and so the proof is complete.
\end{proof}

\begin{theorem}[Atiyah's $L^2$-signature theorem for Cheeger spaces]\label{thm:Atiyah}
If $\sing X$ is an even-dimensional Cheeger space and $r:\sing X \lra BG$ is the classifying space of a regular cover, then
\begin{equation*}
	\tau_\Gamma(\Ind(D^{\sG(r)}))
	= \tau(\Ind(D^{\sG(r)})).
\end{equation*}
\end{theorem}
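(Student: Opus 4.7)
The plan is to reduce each of $\tau(\Ind(D^{\sG(r)}))$ and $\tau_\Gamma(\Ind(D^{\sG(r)}))$ to the ordinary signature $\sign(\sing X)$. The tool for the first is Proposition \ref{prop:IndexInduction}, while for the second I will use the $\epsilon$-local parametrix of \cite[Theorem 6.5]{ALMP:Hodge} together with the finite-propagation functional calculus of Proposition \ref{prop:classes-0}.

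For $\tau$, I would first identify this trace, as a linear functional on $K_0(C^*\Gamma)$, with the $K$-theoretic pushforward along the trivial group homomorphism $\epsilon\colon \Gamma \to \{1\}$. This holds because $\tau$ is by definition the continuous extension to $C^*\Gamma$ of the augmentation $\bbC\Gamma \to \bbC$, and that extension realizes on $K$-theory precisely as $\epsilon_*\colon K_0(C^*\Gamma) \to K_0(\bbC) = \bbZ$ followed by the inclusion $\bbZ \hookrightarrow \bbC$. Proposition \ref{prop:IndexInduction} applied to $\epsilon$ then gives
\begin{equation*}
\tau(\Ind(D^{\sG(r)})) \;=\; \sigma_{\{1\}}(B\epsilon_*[\sing X, r]) \;=\; \sign(\sing X),
\end{equation*}
since $B\epsilon \circ r$ is null-homotopic, so the pulled-back Mishchenko bundle is the trivial line bundle and the twisted signature reduces to the ordinary signature of the closed Cheeger space $\sing X$ (well-defined and independent of the mezzoperversity by Remark \ref{remark:independence0}).

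For $\tau_\Gamma$, I cannot use a group homomorphism and must proceed analytically. Under the canonical isomorphism $K_0(C^*\Gamma) \cong K_0(C^*(\cov{\sing X})^\Gamma)$, the number $\tau_\Gamma(\Ind(D^{\sG(r)}))$ is identified with the $\Gamma$-equivariant $L^2$-index of the lifted signature operator $\De$ on $\cov{\sing X}$. The $\epsilon$-local parametrix together with Proposition \ref{prop:classes-0} imply that for each $t>0$ the heat operator $e^{-t\De^2}$ is a $\tau_\Gamma$-trace-class smoothing operator whose Schwartz kernel is $\Gamma$-equivariantly related to that of $e^{-tD^2}$ on $\sing X$. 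A fundamental-domain integration and the McKean-Singer formula on closed Cheeger spaces then give
\begin{equation*}
\tau_\Gamma \STr(e^{-t\De^2}) \;=\; \Str(e^{-tD^2}) \;=\; \sign(\sing X), \qquad t>0,
\end{equation*}
and letting $t\to\infty$ yields $\tau_\Gamma(\Ind(D^{\sG(r)})) = \sign(\sing X)$. The main obstacle is justifying the fundamental-domain identity in spite of the conical singularities: one must exhibit $e^{-t\De^2}$ as a limit of $\Gamma$-equivariant operators with kernels compactly supported near the diagonal (via the finite-propagation approximation in Proposition \ref{prop:classes-0}), verify that both the supertrace on $\sing X$ and the $\tau_\Gamma$-trace on $\cov{\sing X}$ commute with this limit, and conclude by McKean-Singer; once this is in place the remainder of the argument adapts the classical Atiyah reasoning essentially verbatim.
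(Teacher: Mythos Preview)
Your approach is correct but genuinely different from the paper's. You split the problem into computing $\tau$ via the augmentation homomorphism and Proposition~\ref{prop:IndexInduction}, and computing $\tau_\Gamma$ via Atiyah's original heat-kernel/parametrix argument using the $\epsilon$-local parametrix of \cite[Theorem 6.5]{ALMP:Hodge}. The paper explicitly acknowledges, in the paragraph preceding the theorem, that this direct analytic route works ``essentially unchanged'' once the $\epsilon$-local parametrix is available, but it deliberately presents a different proof following \cite[Appendix]{Chang-Weinberger}: one embeds $G$ into an acyclic group $A$, uses that $\Omega^{\Che}_*$ is a generalized homology theory (so $\Omega^{\Che}_*(\mathrm{pt})\cong\Omega^{\Che}_*(BA)$), and the commutativity of \eqref{eq:CommTraces} and \eqref{eq:CommBordism} with injective homomorphisms reduces the statement on $\Omega^{\Che}(BG)$ to the trivial case $\Omega^{\Che}(\mathrm{pt})$.

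What each approach buys: your argument is more self-contained analytically and closer to Atiyah's original, but the heat-kernel step you flag as the ``main obstacle'' does require care (the kernels on cover and base are \emph{not} equal for fixed $t$; rather, both supertraces are $t$-independent and agree in the $t\to 0$ limit via finite propagation). The paper's bordism argument avoids this analysis entirely, and, as the authors note, produces the diagrams \eqref{eq:CommTraces}--\eqref{eq:CommBordism} that are reused later (e.g.\ in Proposition~\ref{prop:comm-key}).
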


\begin{proof} (We follow \cite[Appendix]{Chang-Weinberger}.)

\noindent
Given any injective group homomorphism $f: \Gamma_1 \lra \Gamma_2$ we have a commutative diagram
\begin{equation}\label{eq:CommTraces}
	\xymatrix{
	& K(C^*\Gamma_1) \ar[dd]^{f_*} \ar[ld]_-{\tau} \ar[rd]^{\tau_\Gamma} &\\
	\bbZ & & \bbR \\
	& K(C^*\Gamma_2) \ar[lu]^-{\tau} \ar[ru]_{\tau_\Gamma} &}
\end{equation}
(see \cite[Lemma 2.22]{PS2})
which combined with Proposition \ref{prop:IndexInduction} yields a commutative diagram
\begin{equation}\label{eq:CommBordism}
	\xymatrix{
	& \Omega^{\Che}(B\Gamma_1) 
	\ar[dd]^-{Bf_*} \ar[ld]_-{\sigma} \ar[rd]^-{\sigma_{(2)}} &\\
	\bbZ & & \bbR \\
	& \Omega^{\Che}(B\Gamma_2) \ar[lu]^-{\sigma} \ar[ru]_-{\sigma_{(2)}} &}
\end{equation}
Now let 
\begin{equation*}
	\{e\} \xlra{i} G \xlra h A
\end{equation*}
be the inclusion of the identity into $G$ and an injection of $G$ into an acyclic group $A.$ (Recall that a group $A$ is called acyclic if $BA$ is an acyclic space and that any group has an injective homomorphism into an acyclic group, discrete if $G$ is discrete.)

As $BA$ is acyclic its suspension is contractible and since, by the Eilenberg-Steenrod axioms, generalized homology theories are stably invariant, they must vanish on $BA.$ From \cite{ALMP:Novikov} we know that $\Omega^{\Che}_*$ is a generalized homology theory and so
\begin{equation*}
	(B(hi))_*:\Omega^{\Che}_*(\pt) \lra 
	\Omega^{\Che}_*(BA)
\end{equation*}
is an isomorphism.

Thus from the commutative diagram
\begin{equation*}
	\xymatrix @R=50pt @C=50pt{
	& \Omega^{\Che}(\pt) 
	\ar[d]^-{Bi_*} \ar[ld]_-{\sigma} \ar[rd]^-{\sigma_{(2)}} & \\
	\bbZ & \Omega^{\Che}(BG) 
		\ar[d]^-{Bh_*}\ar[l]_-{\sigma} \ar[r]^-{\sigma_{(2)}}
	& \bbR \\
	& \Omega^{\Che}(BA) \ar[lu]^-{\sigma} \ar[ru]_-{\sigma_{(2)}} &}
\end{equation*}
the equality $\sigma = \sigma_{(2)}$ on $\Omega^{\Che}(BG)$ reduces to the equality on $\Omega^{\Che}(\pt)$ where it is immediate.

\end{proof}



\subsection{Torsion elements and the cardinality of the BQ-structure set.}$\,$
In this subsection we adapt an argument of \cite{Chang-Weinberger} to the setting of Cheeger spaces.
For any Cheeger space $\sing X$ of odd dimension with $\pi_1\sing X = \Gamma$ we have a group homomorphism 
\begin{equation}\label{eq:DefAlpha}
	\xymatrix{
	\tL_{\BQ}(\sing X \times I) 
		\ar[rd]_{\Ind_{{\rm APS}}} \ar[rr]^-{\alpha} & &\bbC \\
	& K_0(C^*\Gamma) \ar[ru]_{\tau_\Gamma-\tau} &}
\end{equation}
where $\tau$ and $\tau_\Gamma$ are as in \S\ref{sec:Atiyah}, which is actually valued in $\bbR,$ because the index class is ``self-adjoint".
Combining Proposition \ref{aps-versus-closed} with Atiyah's $L^2$-signature theorem (Theorem \ref{thm:Atiyah}) we have the following result.

\begin{proposition}\label{prop:alpha}
The homomorphism $\alpha$ vanishes on the image of the map 
\begin{equation*}
	\theta: \tN_{\BQ}(\sing X \times I, \sing X \times \pa I) 
		\lra\tL_{\BQ}(\sing X \times I)
\end{equation*}
from the surgery exact sequence of $\sing X.$
\end{proposition}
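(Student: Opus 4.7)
The plan is to reduce the vanishing to Atiyah's $L^2$-signature theorem (Theorem \ref{thm:Atiyah}) by passing, via Proposition \ref{aps-versus-closed}, from the APS index on a manifold with boundary to the signature index of a closed Cheeger space.

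Given $[\beta] \in \tN_{\BQ}(\sing X \times I, \sing X \times \pa I)$, I would first choose a representative
\begin{equation*}
(\sing M; \pa_0 \sing M, \pa_1 \sing M) \xlra{(\phi; h_0, h_1)} (\sing X \times I; \sing X \times \{0\}, \sing X \times \{1\}) \xlra{\id} \sing X \times I,
\end{equation*}
where by the relative--boundary condition defining $\tN_{\BQ}(\sing X \times I, \sing X \times \pa I)$ both $h_0$ and $h_1$ are stratum preserving diffeomorphisms. Reparametrizing via $h_0$, I obtain an equivalent cycle in which $\pa_0\sing M = \sing X$, $h_0 = \id$, and $h_1 = \psi\colon \sing X' \to \sing X$ is a stratum preserving diffeomorphism. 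This is precisely the form of cycle addressed by Proposition \ref{aps-versus-closed}, which (once a mezzoperversity $\cW$ on $\sing X$ has been chosen, to be eliminated at the end using Lemma \ref{lemma:independence-L}) gives
\begin{equation*}
\Ind_{{\rm APS}}(\theta[\beta]) \;=\; \Ind(G(\theta[\beta])) \quad \text{in } K_0(C^*\Gamma)[\tfrac12].
\end{equation*}
By the remark following Proposition \ref{aps-versus-closed}, the right-hand side coincides with the signature index class of the closed Cheeger space $G(\sing M)\sqcup(\sing X \times \bbS^1)$, equipped with its natural reference maps to $B\Gamma$; equivalently, it is the signature class of $\sing M \cup_{\pa \sing M} (\sing X \times I)$ in the original Higson--Roe formulation.

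Since $\sing X$ is odd dimensional, this closed Cheeger space is even dimensional, so Theorem \ref{thm:Atiyah} applies to each of its connected (or rather each summand's) signature classes and yields $\tau_\Gamma = \tau$ on $\Ind(G(\theta[\beta]))$. Combining with \eqref{eq:DefAlpha} we conclude
\begin{equation*}
\alpha(\theta[\beta]) \;=\; (\tau_\Gamma - \tau)\bigl(\Ind_{{\rm APS}}(\theta[\beta])\bigr) \;=\; 0.
\end{equation*}
The only delicate bookkeeping point is that the definition \eqref{eq:DefAlpha} of $\alpha$ requires the index class in the maximal group $C^*$-algebra (on which $\tau_\Gamma$ is defined); this is harmless because, as noted in Section 3.6, the entire construction of $\Ind_{{\rm APS}}$ on Cheeger spaces, the gluing formula Proposition \ref{gluing}, and the closed--space reformulation of Proposition \ref{aps-versus-closed} all have verbatim maximal-algebra counterparts, and it is the maximal version to which Theorem \ref{thm:Atiyah} applies directly.
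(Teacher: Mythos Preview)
Your proposal is correct and follows precisely the approach indicated in the paper: the paper's proof is just the sentence preceding the proposition, ``Combining Proposition \ref{aps-versus-closed} with Atiyah's $L^2$-signature theorem (Theorem \ref{thm:Atiyah}) we have the following result,'' and you have simply unpacked this by observing that the relative-boundary condition forces the boundary maps to be diffeomorphisms, so Proposition \ref{aps-versus-closed} applies and reduces $\Ind_{{\rm APS}}$ to the signature index of a closed even-dimensional Cheeger space, where Theorem \ref{thm:Atiyah} gives $\tau_\Gamma=\tau$. Your remark about the maximal $C^*$-algebra is a helpful clarification that the paper leaves implicit.
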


By exactness of the surgery sequence this proposition says that $\alpha$ vanishes on those elements of $\tL_{\BQ}(\sing X \times I)$ that act trivially on $\tS_{\BQ}(\sing X).$ Conversely, if $x \in \tL_{\BQ}(\sing X \times I)$ is such that $\alpha(x) \neq 0,$ then we can show 
that $x$ acts non-trivially on $\tS_{\BQ}(\sing X).$

Indeed, let 
$\rho: \tS_{\BQ}(\sing X) \lra K_{\dim \sing X + 1}(D^* (\cov{\sing X})^\Gamma)$
be the $\rho$-map from \S\ref{sec:RhoMap} and let $\rho_{\Gamma}$ be the composition with the natural map induced by the classifying map
\begin{equation*}
	\rho_{\Gamma}:
	\tS_{\BQ}(\sing X) \lra 
	K_{0}(D^*_{\Gamma}).
\end{equation*}
In \cite{piazza-zenobi-add} it is shown that $\rho_{\Gamma}(\iota)=0,$ where $\iota$ denotes the class in $\tS_{\BQ}(\sing X)$ represented by the identity map (the context in  \cite{piazza-zenobi-add} is that of differentiable or
topological manifold, but it is easy to see that the same arguments establish the more general statement given here).
On the other hand, Benameur-Roy in \cite{BR},  have defined a homomorphism
\begin{equation*}
	\beta_{{\rm CG}}: K_0(D^*_{\Gamma}) \lra \bbR
\end{equation*}
for which our main result, together with Corollary 3.20 in  \cite{BR}, implies the following.

\begin{proposition} The map $\rho_{{\rm CG}}= \beta_{{\rm CG}}\circ \rho_{\Gamma}: \tS_{\BQ}(\sing X)\to \bbR$ satisfies
\begin{equation*}
	\rho_{{\rm CG}} (\iota)=0 \quad\text{and}\quad \rho_{{\rm CG}} (\pa(x)(\iota))=\alpha (x)
\end{equation*}
where $\pa(x)(\iota)$ denotes the action of 
$\tL_{\BQ}(\sing X \times I)$ on $\tS_{\BQ}(\sing X)$ defined in Corollary \ref{cor:LAct}.

In particular, if $\alpha(x)\neq 0,$ then $x$ acts non-trivially on $\iota.$
\end{proposition}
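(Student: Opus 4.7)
The plan is to derive both equalities by transporting the commutativity of the left square in \eqref{eq:SurgToAnOne} through the trace map $\beta_{\mathrm{CG}}$, using the Benameur--Roy construction to interpret $\beta_{\mathrm{CG}}$ in terms of the classical $L^2$-trace gap $\tau_{\Gamma} - \tau$.

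First I would dispose of $\rho_{\mathrm{CG}}(\iota)=0$: since $\rho_{\Gamma}(\iota)=0$ by the cited result of Piazza--Zenobi, applying the homomorphism $\beta_{\mathrm{CG}}$ gives $\rho_{\mathrm{CG}}(\iota)=\beta_{\mathrm{CG}}(\rho_{\Gamma}(\iota))=0$.

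For the main identity $\rho_{\mathrm{CG}}(\partial(x)(\iota))=\alpha(x)$, I would invoke the commutativity of the first square of \eqref{eq:SurgToAnOne} established earlier in \S\ref{sect:mapping} in the form
\begin{equation*}
\rho(\partial(\alpha)(\beta))-\rho(\beta)=\iota_{*}(\mathrm{Ind}_{\mathrm{APS}}(\alpha)),
\end{equation*}
taken with $\beta=\iota$ and $\alpha=x$. Composing with the classifying map (as in the definition of $\rho_{\Gamma}$) and using $\rho_{\Gamma}(\iota)=0$ yields
\begin{equation*}
\rho_{\Gamma}(\partial(x)(\iota))=\iota_{*}\bigl(\mathrm{Ind}_{\mathrm{APS}}(x)\bigr)\quad\text{in }K_{0}(D^{*}_{\Gamma}),
\end{equation*}
where $\iota_{*}: K_{0}(C^{*}\Gamma)\to K_{0}(D^{*}_{\Gamma})$ is induced by the natural inclusion.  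Applying $\beta_{\mathrm{CG}}$ and using Corollary 3.20 of \cite{BR}, which identifies $\beta_{\mathrm{CG}}\circ\iota_{*}$ with the trace-gap functional $\tau_{\Gamma}-\tau$ on $K_{0}(C^{*}\Gamma)$, gives
\begin{equation*}
\rho_{\mathrm{CG}}(\partial(x)(\iota))=(\tau_{\Gamma}-\tau)\bigl(\mathrm{Ind}_{\mathrm{APS}}(x)\bigr)=\alpha(x),
\end{equation*}
the last equality being the very definition of $\alpha$ in \eqref{eq:DefAlpha}.

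The non-triviality statement is then immediate: if $\alpha(x)\neq 0$, then $\rho_{\mathrm{CG}}(\partial(x)(\iota))\neq 0=\rho_{\mathrm{CG}}(\iota)$, so $\partial(x)(\iota)\neq\iota$ in $\tS_{\BQ}(\sing X)$. The only genuinely non-routine input is the compatibility $\beta_{\mathrm{CG}}\circ\iota_{*}=\tau_{\Gamma}-\tau$, so the main obstacle is purely bookkeeping: making sure that the Benameur--Roy delocalized trace, which is defined directly on $K_{0}(D^{*}_{\Gamma})$, really restricts on the image of $\iota_{*}$ to the standard difference of traces on $K_{0}(C^{*}\Gamma)$. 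Everything else is a transcription of the square of \eqref{eq:SurgToAnOne} under $\beta_{\mathrm{CG}}$.
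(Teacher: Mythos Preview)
Your argument is correct and follows exactly the route the paper indicates: the paper does not spell out a proof but states that the proposition follows from ``our main result'' (the commutativity of the first square in \eqref{eq:SurgToAnOne}, i.e., the identity $\rho(\partial(\alpha)(\beta))-\rho(\beta)=\iota_*(\Ind_{\mathrm{APS}}(\alpha))$) together with Corollary~3.20 of \cite{BR}, and you have simply unpacked that implication. The use of $\rho_{\Gamma}(\iota)=0$ from \cite{piazza-zenobi-add} and the identification $\beta_{\mathrm{CG}}\circ\iota_*=\tau_{\Gamma}-\tau$ from \cite{BR} are precisely the two external inputs the paper invokes.
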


It is pointed out in \cite{Chang-Weinberger} that, since $\alpha$ is a homomorphism into $\bbR,$ the existence of a non-zero element in its range implies that its range has infinite cardinality. They also point out that, if $C_k$ is the cyclic group of order $k$ then, for any $\ell \in \bbN,$ the homomorphism $\alpha_k:L_{4\ell}(\bbZ C_k) \lra \bbR$ defined as in \eqref{eq:DefAlpha} has range of infinite cardinality. Thus the idea is to use these elements to find elements in the range of $\alpha.$



Assume that the dimension of $\sing X$ is $4\ell-1,$ for some $\ell>1.$
Let $i: \res X \lra \sing X$ be the inclusion of the regular part, which we recall is a BQ-transverse map, and let $d^{\wt X}(4\ell)$ be the dimension function for $\sing X$ that is equal to $4\ell$ on the inverse image of the regular part of $\sing X,$ so that as in \S\ref{sec:LESLBQ} we have a homomorphism
\begin{equation*}
	 i_* : L_{4\ell} (\res X)=L_{4\ell} (\bbZ \pi_1 (\res X))
	 \lra \tL_{\BQ, d^{\res X}(4\ell)}(\sing X) 
	 =\tL_{\BQ, d^{\res X}(4\ell)}(\sing X\times I).
\end{equation*}
Note that we also have a homomorphism $\wt\alpha:\tL_{4\ell}(\bbZ\pi_1(\res X)) \lra \bbR$ defined as above.

\begin{proposition}\label{prop:comm-key}
If the map $i_*:\pi_1(\res X) \lra \pi_1(\sing X)$ is injective and
there is a monomorphism $p: C_k \lra \pi_1(\res X)$
 then the following diagram commutes
\begin{equation*}
	\xymatrix{
	\tL_{4\ell}(\bbZ C_k) \ar[rd]_-{\alpha_k} \ar[r]^-{p_*} &
	\tL_{4\ell}(\bbZ \pi_1(\res X)) \ar[r]^-{i_*} \ar[d]^{\wt\alpha} & 
	\tL_{\BQ, d^{\res X}(4\ell)}(\sing X \times I) \ar[ld]^-{\alpha} \\
	& \bbR & }
\end{equation*}

\end{proposition}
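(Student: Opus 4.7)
The plan is to verify the two triangles separately: the left triangle $\wt\alpha \circ p_* = \alpha_k$ and the right triangle $\alpha \circ i_* = \wt\alpha$. Both will be deduced from a single naturality principle: the APS index class transforms functorially under a group homomorphism $f:\Gamma_1 \to \Gamma_2$ via tensoring with Mishchenko bundles, in the spirit of Proposition \ref{prop:IndexInduction}, combined with the trace diagram \eqref{eq:CommTraces} applied to the injective homomorphisms $p$ and $i_*$ (the latter injectivity being part of the hypothesis).

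The first step is to establish an APS version of Proposition \ref{prop:IndexInduction}. Concretely, given an $\cL$-cycle representing an element of $\tL_{\BQ}(\sing X \times I)$ with classifying map $r:\sing X \to B\Gamma_1$, replacing $r$ by $(Bf)\circ r$ amounts to replacing the Mishchenko bundle $\sG(r)$ by $\sG(r) \otimes_{C^*\Gamma_1} C^*\Gamma_2$. Since the compressed Hilsum--Skandalis perturbation from \S\ref{sect:HS} is constructed functorially out of the HS replacement (which itself is natural with respect to the Mishchenko bundle tensor operation), and since the coarse APS index is built using the Gromov--Lawson--type construction of \S\ref{sect:bordism}, the resulting class in $K_0(C^*\Gamma_2)$ is precisely $f_*$ of the class in $K_0(C^*\Gamma_1)$. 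This is the technical heart of the argument and is where I expect the main obstacle to lie.

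Granted this naturality, the left triangle is immediate. Using classical Wall realization, represent $x \in \tL_{4\ell}(\bbZ C_k)$ by a cycle whose reference map is to $BC_k$; then $p_*(x)$ is represented by the same cycle with reference map composed with $Bp$. Naturality gives $\Ind_{{\rm APS}}(p_*(x)) = p_*\Ind_{{\rm APS}}(x)$ in $K_0(C^*\pi_1(\res X))$. Since $p$ is injective, the trace diagram \eqref{eq:CommTraces} yields $(\tau_{\pi_1(\res X)} - \tau)\circ p_* = \tau_{C_k} - \tau$, so
\begin{equation*}
    \wt\alpha(p_*(x)) = (\tau_{\pi_1(\res X)} - \tau)(p_*\Ind_{{\rm APS}}(x)) = (\tau_{C_k}-\tau)(\Ind_{{\rm APS}}(x)) = \alpha_k(x).
\end{equation*}

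For the right triangle, let $q:\pi_1(\res X) \to \pi_1(\sing X) = \Gamma$ be the homomorphism induced by $i$, injective by hypothesis. Given $y \in \tL_{4\ell}(\bbZ\pi_1(\res X))$, by Theorem \ref{thm:WallReal} applied with the BQ-equivalence $\res X \xrightarrow{\id} \res X$ (of depth-zero stratified spaces) we represent $y$ by an $\cL$-cycle supported on the regular part; the map $i_*$ simply composes the reference map with $i$. Applying naturality to $q$ gives $\Ind_{{\rm APS}}(i_*(y)) = q_*\Ind_{{\rm APS}}(y)$ in $K_0(C^*\Gamma)$, and by \eqref{eq:CommTraces} applied to the injective $q$,
\begin{equation*}
    \alpha(i_*(y)) = (\tau_\Gamma - \tau)(q_*\Ind_{{\rm APS}}(y)) = (\tau_{\pi_1(\res X)} - \tau)(\Ind_{{\rm APS}}(y)) = \wt\alpha(y).
\end{equation*}
In verifying this second triangle it may be useful to invoke Proposition \ref{aps-versus-closed} to convert the APS indices to indices of the signature operator on closed Cheeger spaces, at which point Proposition \ref{prop:IndexInduction} applies directly; the extension to cylindrical-end/APS indices is then handled by the gluing formula, Proposition \ref{gluing}.
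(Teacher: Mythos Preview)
Your proposal is correct and follows essentially the same route as the paper: split into the two triangles, use the trace compatibility diagram \eqref{eq:CommTraces} for injective group homomorphisms, and reduce the right triangle to naturality of the APS index class under change of Mishchenko bundle via $\sG(r)\mapsto \sG(r)\otimes_{C^*\Gamma_1}C^*\Gamma_2$, i.e., the argument of Proposition~\ref{prop:IndexInduction}. The only cosmetic differences are that the paper cites the left triangle as a classical fact from Chang--Weinberger rather than re-deriving it, and it phrases the naturality step as a commutative square \eqref{comm-L-K} valid for any BQ-transverse map $F:\sing X_1\to\sing X_2$ (then specialized to $i:\res X\to\sing X$), applying the Mishchenko-bundle argument directly to the APS class rather than first passing through Proposition~\ref{aps-versus-closed}.
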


\begin{proof}
The commutativity of the left triangle is a classical result, 
already used by Chang-Weinberger. To see that the second triangle commutes, i.e., that $\wt \alpha = \alpha \circ i_*,$ it suffices in view of \eqref{eq:CommTraces} to show that whenever $F: \sing X_1 \lra \sing X_2$ is a BQ-transverse map we have a commutative diagram
\begin{equation}\label{comm-L-K}
	\xymatrix{
	\tL_{\BQ,d}(\sing X_1) \ar[r]^-{F_*} \ar[d]_-{\Ind_{{\rm APS}}} 
		& \tL_{\BQ,d}(\sing X_2) \ar[d]^-{\Ind_{{\rm APS}}} \\
	K_*(C^*\Gamma_1)[\frac{1}{2}] \ar[r]^-{f_*} & K_*(C^*\Gamma_2)[\frac{1}{2}] }
\end{equation}
where $\Gamma_j:=\pi_1 (\sing X_j)$ and $f: \Gamma_1 \lra \Gamma_2$ is the map induced by $F.$ (For our application $F$ is given by the inclusion $\res X \xlra i \sing X.$)\\

If $\gamma \in \tL_{\BQ,d}(\sing X_1)$ is represented by 
\begin{equation*}
	(\sing M; \sing X_1, \sing X_1^\prime) \xrightarrow{(\phi;\id, \psi)}
	(\sing X_1 \times [0,1]; \sing X_1 \times \{0\}, \sing X_1 \times \{1\}) \xlra{\id}
	\sing X_1 \times I 
\end{equation*}
then $\Ind_{{\rm APS}}([\gamma])$ is equal to $\Ind(\eth_{\sign}^{\cG(r)})$ where
$\eth_{\sign}$ is the signature operator on $\sing Z = \sing M \sqcup (\sing X_1 \times I)$ and, if $R:\sing X_1 \times [0,1] \lra B\Gamma_1$ is the classifying map of the universal cover of $\sing X_1 \times [0,1],$ then 
\begin{equation*}
	r: \sing Z \lra B\Gamma_1 \text{ is given by }
	R\circ \phi \sqcup R
\end{equation*}
Notice that it is implicit the choice of a mezzoperversity and the statement that the index class is in fact independent
of this choice.
It follows that
\begin{equation*}
	\Ind_{{\rm APS}}(F_*[\gamma]) = \Ind(D^{\cG((Bf)\circ r)})
\end{equation*}
and by Proposition \ref{prop:IndexInduction} this is equal to
$f_*(\Ind(D^{\sG(r)}) )\in K(C^*_r\Gamma_2)[\frac{1}{2}],$ as required.\end{proof}


An immediate consequence of the discussion above is the infinite cardinality of the structure set.

\begin{corollary}
Let $\sing X$ be a Cheeger space of dimension $4\ell-1,$ $\ell>1,$ such that $\pi_1 (\res X)$ has an element of finite order and 
$i_*: \pi_1(\res X)\lra \pi_1(\sing X)$ is injective.
There exist elements $x_j\in  \tL_{\BQ, d}(\sing X\times I)$, $j\in\bbZ$, such that $\alpha(x_i)\not= \alpha(x_j)$
for $i\not= j$. Consequently, the elements $\partial(x_j)(\iota)$ are all distinct in $\tS_{\BQ}(\sing X)$. In particular
$$ | \tS_{\BQ}(\sing X) | = \infty$$
\end{corollary}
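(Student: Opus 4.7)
The plan is to assemble the corollary from the three ingredients already prepared in this subsection: the Chang--Weinberger calculation for cyclic groups, the commutative square of Proposition \ref{prop:comm-key}, and the detection statement for the $\rho_{\mathrm{CG}}$-map. No new geometric input is needed; the work is essentially bookkeeping around the diagram
\begin{equation*}
	\tL_{4\ell}(\bbZ C_k) \xrightarrow{p_*}
	\tL_{4\ell}(\bbZ \pi_1(\res X)) \xrightarrow{i_*}
	\tL_{\BQ, d^{\res X}(4\ell)}(\sing X \times I)
	\xrightarrow{\pa(\,\cdot\,)(\iota)} \tS_{\BQ}(\sing X).
\end{equation*}

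First I would fix a nontrivial torsion element of $\pi_1(\res X)$ of order $k$ and let $p\colon C_k \hookrightarrow \pi_1(\res X)$ be the resulting monomorphism; this uses the injectivity of $i_*\colon \pi_1(\res X) \hookrightarrow \pi_1(\sing X)$ only to enable the application of Proposition \ref{prop:comm-key}. Appealing to the Chang--Weinberger result recalled immediately before Proposition \ref{prop:comm-key}, the homomorphism $\alpha_k\colon \tL_{4\ell}(\bbZ C_k) \lra \bbR$ has image of infinite cardinality, so I can choose a sequence $\{y_j\}_{j \in \bbZ} \subseteq \tL_{4\ell}(\bbZ C_k)$ with $\alpha_k(y_j) \neq \alpha_k(y_i)$ whenever $i\neq j$.

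Next, set $x_j := i_* \circ p_*(y_j) \in \tL_{\BQ, d^{\res X}(4\ell)}(\sing X \times I)$ (using Remark \ref{rem:Classical} to identify the Wall L-group of $\res X$ with the classical $L$-group of its fundamental group). The commutativity of the diagram in Proposition \ref{prop:comm-key} yields
\begin{equation*}
	\alpha(x_j) = \wt\alpha(p_*(y_j)) = \alpha_k(y_j),
\end{equation*}
so the values $\{\alpha(x_j)\}_{j \in \bbZ}$ are pairwise distinct, establishing the first assertion. For the second assertion I invoke the proposition that
\begin{equation*}
	\rho_{\mathrm{CG}}(\pa(x)(\iota)) = \alpha(x) \quad \text{for all } x \in \tL_{\BQ}(\sing X \times I).
\end{equation*}
Applied to $x_i$ and $x_j$ with $i\neq j$, this gives $\rho_{\mathrm{CG}}(\pa(x_i)(\iota)) \neq \rho_{\mathrm{CG}}(\pa(x_j)(\iota))$, so the structures $\pa(x_j)(\iota) \in \tS_{\BQ}(\sing X)$ are pairwise distinct and $\tS_{\BQ}(\sing X)$ is infinite.

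I do not anticipate a serious obstacle: the hardest step, namely verifying that $\alpha$ is insensitive to the replacement of the $L$-class of $\res X$ by the Browder--Quinn $L$-class of $\sing X$, is exactly what Proposition \ref{prop:comm-key} provides (via the naturality diagram \eqref{comm-L-K} under the BQ-transverse inclusion $\res X \hookrightarrow \sing X$). The only mild care needed is to work with the correct dimension function $d^{\res X}(4\ell)$ so that $i_*$ lands in the intended $\tL_{\BQ}$-group, and to keep track that the Chang--Weinberger calculation is insensitive to inverting $2$, so passing to $K_0(C^*\Gamma)[\tfrac12]$ in the definition of $\alpha$ causes no loss.
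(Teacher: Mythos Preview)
Your proposal is correct and follows exactly the approach the paper sets up: the paper states the corollary as ``an immediate consequence of the discussion above'' without spelling out a proof, and what you have written is precisely that discussion made explicit --- choose a cyclic subgroup, invoke the Chang--Weinberger nontriviality of $\alpha_k$, push forward via Proposition~\ref{prop:comm-key} to get elements $x_j$ with distinct $\alpha$-values, and then separate the resulting structures using $\rho_{\mathrm{CG}}(\pa(x)(\iota))=\alpha(x)$.
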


\begin{remark}\label{BROY}
Write  $\partial(x_i)(\iota)=[\sing M _j \xrightarrow{f_i} \sing X] \in \tS_{\BQ}(\sing X)$ with $f_i$
a transverse stratified homotopy equivalence. 
If $\sing X$ is a Witt space of depth one, then we claim that $\sing M _i$ is not stratified diffeomorphic to $\sing M_k$ for $i\not= k$.
Indeed, let 
$$x_i=[ (\sing W_i ;\sing X,\sing M_i)\xrightarrow{(\phi;\id,f_i)} (\sing X\times [0,1];
\sing X\times \{0\}, \sing X\times \{1\})\xrightarrow{\id}\sing X\times [0,1]]
$$
and consider $\alpha (x_i)$. This is the difference of two numbers: one is the Von Neumann-index 
on the total space of a Galois $\Gamma$-covering with boundary and the other is the usual index on  the
base of such Galois covering.
We point out that the operators we are considering are invertible on the boundary because they have been 
perturbed by the Hilsum-Skandalis perturbation.
We now write the APS-index formula, upstairs and downstairs, following \cite{Piazza-Vertman}, and take the difference; we find ourselves with the Cheeger-Gromov rho invariant of the signature operator of $\sing M_i\sqcup (-\sing X)$
pertubed by the Hilsum-Skandalis perturbation associated to  $f_i$. Now we proceed as in \cite[Section 10]{PS1}, taking
an $\epsilon$-concentrated Hilsum-Skandalis perturbation and letting $\epsilon\downarrow 0$.
We then obtain, finally, that $\alpha( x_i)$ is equal to the difference of the Cheeger-Gromov rho-invariants
of $\sing M_i$ and $\sing X$. Since $\alpha( x_i)\not= \alpha(x_k)$ for $i\not= k$ we can conclude 
that the Cheeger-Gromov rho-invariants of $\sing M _i $
and  $\sing M_k$ are indeed different,  and the statement follows from the stratified diffeomorphism invariance
of the Cheeger-Gromov rho invariant on Witt spaces of depth 1, established in \cite{Piazza-Vertman}.  
\end{remark}

\begin{remark}
If $\sing X$ has depth one we can use van Kampen's theorem to see that the map $\pi_1(\res X) \lra \pi_1(\sing X)$ induced by inclusion is an isomorphism if and only if the link of the singular stratum of $\sing X$ is simply connected (i.e., $\sing X$ is supernormal). 
%
For $\sing X$ of arbitrary depth, using Remark \ref{rmk:SupNormal} in the setting of simply connected links lets us argue as above to see that torsion in the fundamental group of any `closed stratum' $\sing Y$ forces $|\tS_{\BQ}(\sing X) | = \infty.$ Indeed note that the condition on fundamental groups is superseded by the injective map 
\begin{equation*}
	\tL(\bbZ\pi_1(\sing Y))[\tfrac12] \lra \tL_{\BQ}(\sing X)[\tfrac12].
\end{equation*}

%
%
\end{remark}


\newcommand{\etalchar}[1]{$^{#1}$}

\end{document}